\documentclass[12pt]{amsart}
\usepackage{SSdefn}
\usepackage{eucal}

\let\wh\widehat
\let\mf\mathfrak

\renewcommand{\fg}{\mathfrak{g}}
\newcommand{\osp}{\mathfrak{osp}}
\newcommand{\Pin}{\mathbf{Pin}}
\newcommand{\Mp}{\mathbf{Mp}}
\newcommand{\FB}{\mathbf{FB}}
\newcommand{\OSp}{\mathbf{OSp}}

\newcommand{\spin}{\mathbf{spin}}
\newcommand{\osc}{\mathbf{osc}}
\newcommand{\dspin}{{(\rd\Delta)}}
\newcommand{\dosc}{{(\rd\nabla)}}
\newcommand{\uspin}{{(\ru\Delta)}}
\newcommand{\uosc}{{(\ru\nabla)}}
\newcommand{\db}{{({\rm db})}}
\newcommand{\ub}{{({\rm ub})}}
\newcommand{\dsb}{{({\rm dsb})}}
\newcommand{\usb}{{({\rm usb})}}
\renewcommand{\Vec}{\mathrm{Vec}}
\DeclareMathOperator{\Cl}{Cl}

\title{Infinite rank spinor and oscillator representations}

\author{Steven V Sam}
\address{Department of Mathematics, University of Wisconsin, Madison, WI}
\email{\href{mailto:svs@math.wisc.edu}{svs@math.wisc.edu}}
\urladdr{\url{http://math.wisc.edu/~svs/}}

\author{Andrew Snowden}
\address{Department of Mathematics, University of Michigan, Ann Arbor, MI}
\email{\href{mailto:asnowden@umich.edu}{asnowden@umich.edu}}
\urladdr{\url{http://www-personal.umich.edu/~asnowden/}}

\date{February 24, 2017}

\thanks{SS was partially supported by a Miller research fellowship and NSF grant DMS-1500069. AS was supported by NSF grants DMS-1303082 and DMS-1453893.}
\subjclass[2010]{%
05E05, 
15A66, 
15A69, 
20G05.
}

\begin{document}

\begin{abstract}
We develop a functorial theory of spinor and oscillator representations parallel to the theory of Schur functors for general linear groups. This continues our work on developing orthogonal and symplectic analogues of Schur functors. As such, there are a few main points in common. We define a category of representations of what might be thought of as the infinite rank pin and metaplectic groups, and give three models of this category in terms of: multilinear algebra, diagram categories, and twisted Lie algebras. We also define specialization functors to the finite rank groups and calculate the derived functors.
\end{abstract}

\maketitle

\setcounter{tocdepth}{1}

\tableofcontents

\section{Introduction}

This paper is a continuation of \cite{infrank, lwood}. Let us briefly recall the setup and motivation. The theory of polynomial representations of $\GL_n(\bC)$ is closely tied to the theory of polynomial functors, or equivalently, the theory of polynomial representations of the infinite general linear group $\GL_\infty(\bC)$. The main important property is that polynomial functors can be evaluated on a vector space, and this evaluation is an exact functor. When one attempts to generalize this theory to the orthogonal and symplectic groups, a few complications arise. In \cite{infrank}, we introduced an analogue of the category of polynomial functors for these groups along with an evaluation functor; however, it fails to be right-exact (or left-exact, depending on the conventions used). In \cite{lwood}, we compute the higher derived functors on simple objects. In fact, the calculation of the Euler characteristic of these functors appeared in the literature much earlier in \cite{koiketerada}, though not under this setup.

While this accounts for all finite-dimensional representations of the orthogonal and symplectic groups, there is an issue remaining: the orthogonal group has a double cover, the pin group, which has many more finite-dimensional representations (in some sense, the representations that factor through the orthogonal group only account for ``half'' of them), the so-called spinor representations. 

The purpose of this paper is to develop an analogous functorial theory for spinor representations. In fact, when dealing with spinor representations from a multilinear perspective, as we do in this paper, it becomes transparent that there is a parallel theory of oscillator representations (which are infinite-dimensional) for the symplectic Lie algebra, so we develop them both simultaneously. Furthermore, it will follow that the correct version of a symplectic Lie algebra on an odd-dimensional space to use here is the orthosymplectic Lie superalgebra of a symplectic space with $1$ odd variable (the finite-dimensional representation theory of this Lie superalgebra is semisimple, unlike the stabilizer Lie algebra of a maximal rank alternating form on a $2n+1$ dimensional space, and also unlike most other finite-dimensional classical Lie superalgebras).

As in \cite{infrank}, the first step is to construct a category of ``infinite rank'' spinor (or oscillator) representations and evaluation functors to the representation categories in finite rank. The idea of working with tensor categories of representations of infinite rank Lie algebras previously appeared in \cite{penkovserganova} and \cite{penkovstyrkas}. We give three different models for these categories: one in terms of multilinear algebraic constructions, one in terms of diagram categories, and one in terms of finite length modules over a twisted Lie (super)algebra. The latter interpretation allows us to use intuition from commutative algebra and there is an advantage in using modules which are not finite length (see \cite{deg2tca} for this line of inquiry in the case of orthogonal and symplectic groups). In particular, we use this interpretation to construct explicit projective resolutions for the simple objects in the infinite rank category. 

We then calculate the homology of these complexes after applying the evaluation functor. As in \cite{lwood}, this calculation proceeds by interpreting the homology groups as Tor groups for certain modules over a Lie algebra which is the nilpotent radical in a larger classical Lie (super)algebra (coming from a Howe dual pair), and this Tor is calculated using the ``geometric technique'' explicated in \cite{weyman}. One difference is that in \cite{lwood}, the nilpotent radical is always abelian, so that one is strictly dealing with commutative rings. As a side product, we find analogues of determinantal ideals in the universal enveloping algebras of free $2$-step nilpotent Lie (super)algebras and calculate their minimal free resolutions in \S\ref{sec:determinantal}.

Finally, some of the results in this paper are foreshadowed by existing combinatorial results. The universal character ring for symplectic and orthogonal groups developed in \cite{koiketerada} provide alternative bases for the ring of symmetric functions which behave very similarly to the Schur functions. In fact, there is a third such basis with the same multiplication structure constants (see \cite[Theorem 6]{shimozono} which summarizes \cite{kleber}). This paper shows that this basis comes from an analogous theory of universal character ring for spinor representations (or oscillator representations), see also \cite{koike-diff} for an elaboration on the combinatorics.

\subsection{Background and notation}

We always work over the complex numbers $\bC$. $\cV$ denotes the category of polynomial representations of $\GL_\infty$, see \cite[\S\S 5,6]{expos} for its basic properties and other equivalent categories. We will freely make use of Schur functors $\bS_\lambda$; for background, the reader might consult \cite[\S 6]{fultonharris}. The symmetric group on $n$ letters is denoted $S_n$.

We will also make use of standard partition notation, see \cite[\S 1]{expos} for basic definitions. The transpose of a partition $\lambda$ is denoted $\lambda^\dagger$. Let $Q_{-1}$ be the set of partitions with the following inductive definition. The empty partition belongs to $Q_{-1}$.  A non-empty partition $\mu$ belongs to $Q_{-1}$ if and only if the number of rows in $\mu$ is one more than the number of columns, i.e., $\ell(\mu)=\mu_1+1$, and the partition obtained by deleting the first row and column of $\mu$, i.e., $(\mu_2-1, \ldots, \mu_{\ell(\mu)}-1)$, belongs to $Q_{-1}$. The first few partitions in $Q_{-1}$ are $0$, $(1,1)$, $(2,1,1)$, $(2,2,2)$. Define $Q_1 = \{\lambda \mid \lambda^\dagger \in Q_{-1}\}$. We write $Q_{-1}(2i)$ for the set of $\lambda \in Q_{-1}$ with $|\lambda|=2i$, and similarly we define $Q_1(2i)$. 

The significance of these sets are the following decompositions (see \cite[\S I.A.7, Ex.~4,5]{macdonald}):
\begin{align} \label{eqn:wedge-plethysm}
\lw^{i}(\Sym^2(E)) = \bigoplus_{\substack{\mu \in Q_1(2i)}} \bS_{\mu}(E), \qquad 
\lw^{i}(\lw^2(E)) = \bigoplus_{\substack{\mu \in Q_{-1}(2i)}} \bS_{\mu}(E).
\end{align}

\begin{remark} \label{rmk:Q1-num}
Each partition in $Q_{-1}$ is a union of certain hook partitions which are determined by their first part $a$. So alternatively, they can be specified by a strictly decreasing sequence $a_1 > a_2 > \cdots > a_r$. So the total number of partitions $\mu \in Q_{-1}$ with $\mu_1 \le n$ is $2^n$. In particular, the total number of partitions $\mu \in Q_1$ with $\ell(\mu) \le n$ is also $2^n$.
\end{remark}

Given a partition $\lambda$, let $\rank(\lambda) = \max \{ i \mid \lambda_i \ge i\}$ denote the size of its main diagonal. 

Given two vector spaces $E, F$, let $E \uplus F$ denote the $\bZ/2$-graded vector space with even part $E$ and odd part $F$.

Finally, $\delta$ is usually used to denote the highest weight of the spinor representation (so implicitly $\delta$ depends on the rank of the group, but will usually be clear from the context). Similarly, $\eta$ will be used to denote the highest weight of the oscillator representation.

\subsection{Acknowledgements}

We thank Robert Laudone for carefully going through the formulas in \S\ref{sec:spinor} and pointing out some scalar errors that were originally present.

\section{Infinite rank spinor representations} \label{sec:spinor}

\subsection{Basic definitions}

Let $\bW=\bC^{\infty}=\bigcup_{n \ge 1} \bC^n$, let $\bW_* = \bigcup_{n \ge 1} (\bC^n)^*$ be its restricted dual, and put 
\[
\ol{\bV}=\bW \oplus \bW_*, \qquad \bV = \ol{\bV} \oplus \bC.
\]
We let $\be$ be a basis vector for the one dimensional space $\bC$ in $\bV$. We put an orthogonal form $\omega$ on $\ol{\bV}$ by 
\[
\omega((v, f), (v', f')) = f'(v) + f(v').
\]
We extend this to an orthogonal form, also called $\omega$, to $\bV$ by setting $\omega(\be,\be) = 1$ and $\omega(\be,v) = 0$ for all $v \in \ol{\bV}$. Put
\begin{displaymath}
\fg = \lw^2(\bV)
= \lw^2(\bW) \oplus (\bW \otimes \be) \oplus (\bW \otimes \bW_*) \oplus (\bW_* \otimes \be) \oplus \lw^2(\bW_*).
\end{displaymath}
We regard $\fg$ as $\bZ$-graded, with $\bW$ of degree~1, $\bW_*$ of degree~$-1$, and $\be$ of degree~0. We define elements of $\fg$ as follows.
\begin{itemize}
\item For $v, w \in \bW$ we let $x_{v,w}=v \wedge w$ and $x_v = v \otimes \be$.
\item For $v \in \bW$ and $\phi \in \bW_*$ we let $h_{v,\phi}=v \otimes \phi$.
\item For $\phi, \psi \in \bW_*$ we let $y_{\phi,\psi}=\phi \wedge \psi$ and $y_\phi = \phi \otimes \be$.
\end{itemize}
Define a map $\fg \to \fgl(\bV)$ as follows. Suppose $u \in \bW \subset \bV$. Then
\begin{displaymath}
x_{v,w} u=0, \qquad x_v u = 0, \qquad h_{v,\phi} u = \phi(u) v, \qquad y_\phi u = \phi(u) \be, \qquad y_{\phi,\psi} u = \psi(u) \phi - \phi(u) \psi.
\end{displaymath}
We define the action on $\eta \in \bW_*$ in an analogous manner:
\begin{displaymath}
x_{v,w} \eta= \eta(w)v - \eta(v)w, \qquad x_v \eta = -\eta(v) \be, \qquad h_{v,\phi} \eta = -\eta(v) \phi, \qquad y_\phi \eta = 0, \qquad y_{\phi,\psi} \eta = 0.
\end{displaymath}
We also put
\begin{displaymath}
x_{v,w} \be=0, \qquad x_v \be = v, \qquad h_{v,\phi} \be = 0, \qquad y_\phi \be = -\phi, \qquad y_{\phi,\psi} \be=0.
\end{displaymath}
Then $\fg$ is closed under the Lie bracket on $\fgl(\bV)$ and so is a Lie algebra. It preserves the orthogonal form on $\bV$, and can be rightfully called $\fso(2\infty+1)$. We call $\bV$ the {\bf standard representation} of $\fg$. A representation of $\fg$ is {\bf algebraic} if it appears as a subquotient of a finite direct sum of tensor powers of the standard representation. The category of algebraic representations is denoted $\Rep(\fg)$ and is studied in \cite[\S 4]{infrank}. It is a symmetric monoidal abelian category.

Now let $\Delta^n=\lw^n{\bW}$, and let $\Delta=\bigoplus_{n \ge 0} \Delta^n$ be the exterior algebra on $\bW$. For $v \in \bW$, we let $X_v$ be the operator on $\Delta$ given by 
\[
X_v(w)=v \wedge w.
\]
The operators $X_v$ and $X_w$ supercommute, that is, $X_vX_w+X_wX_v=0$. For $\phi \in \bW_*$, we let $Y_{\phi}$ be the operator on $\Delta$ given by
\begin{displaymath}
Y_{\phi}(v_1 \wedge \cdots \wedge v_n)=\sum_{i=1}^n (-1)^{i-1} \phi(v_i) v_1 \wedge \cdots \wedge \wh{v}_i \wedge \cdots \wedge v_n.
\end{displaymath}
The operators $Y_{\phi}$ and $Y_{\psi}$ also supercommute and $X_vY_{\phi} + Y_{\phi}X_v = \phi(v)$. We let 
\[
H_{v,\phi}=X_v Y_{\phi};
\]
this is the usual action of the element $v \phi \in \mf{gl}(\bW)$ on $\Delta$. Finally, define $D$ by 
\[
D(v_1 \wedge \cdots \wedge v_n) = (-1)^n v_1 \wedge \cdots \wedge v_n.
\]
Then $D$ supercommutes with all $X_v$ and all $Y_\phi$. 

Define a representation $\rho$ of $\fg$ on $\Delta$ as follows:
\begin{align*}
&\rho(x_{v,w}) = X_v X_w, \qquad
\rho(x_v) = \tfrac{1}{\sqrt{2}} X_v D, \qquad
\rho(h_{v,\phi})=H_{v,\phi}-\tfrac{1}{2} \phi(v), \\
&\rho(y_\phi) = \tfrac{1}{\sqrt{2}}D Y_\phi, \qquad
\rho(y_{\phi,\psi})=Y_{\phi} Y_{\psi}.
\end{align*}
We leave it to the reader to verify that this is a well-defined representation. Alternatively, one can use \cite[\S 20.1]{fultonharris}, specifically the map in \cite[(20.6)]{fultonharris} (here we make the identifications $\bW = \bW \otimes \be \subset \bigwedge^2 \bV$ and $\bW_* = \bW_* \otimes \be \subset \bigwedge^2 \bV$). This is the {\bf spinor representation} of $\fg$.

The map $\bV \to \End(\Delta)$ defined by $v \mapsto X_v$ and $\phi \mapsto Y_{\phi}$ and $\be \mapsto \frac{1}{\sqrt{2}} D$ is a map of $\fg$-representations (see \cite[Proof of Lemma 20.16]{fultonharris}). It follows that the map 
\begin{equation}
\label{eq:Vdelta}
\bV \otimes \Delta \to \Delta
\end{equation}
given by $v \otimes x \mapsto X_v x$ and $\phi \otimes x \mapsto Y_{\phi} x$ and $\be \otimes x \mapsto \frac{1}{\sqrt{2}} Dx$ is a map of $\fg$-representations.

A representation of $\fg$ is {\bf spin-algebraic} if it appears as a subquotient of a finite direct sum of representations of the form 
\[
T^n = \bV^{\otimes n} \otimes \Delta.
\]
Let $\Rep^{\spin}(\fg)$ denote the category of spin-algebraic representations of $\fg$. It is an abelian category, and is naturally a module over the tensor category $\Rep(\fg)$, i.e., we have a bifunctor given by tensor product
\begin{align} \label{eqn:fg-tensor}
\otimes \colon \Rep(\fg) \times \Rep^\spin(\fg) \to \Rep^\spin(\fg).
\end{align}

\subsection{Weyl's construction} \label{sec:weyl-spin}

Pick a basis $e_1, e_2, \dots$ for $\bW$ and let $e_1^*, e_2^*, \dots$ be the dual basis of $\bW_*$. Also set $e_{-i} = e_i^*$ and put $e_0=\be$. Let $\ft$ be the diagonal torus in $\mf{gl}(\bW) \subset \fg$, which is spanned by the $h_i=h_{e_i,e_i^*}$. Let $X$ be the lattice of integral characters of $\ft$ of finite support: $X$ consists of all maps $\ft \to \bC$ which send each $h_i$ to an integer, and all but finitely many $h_i$ to~0. We identify $X$ with the set of sequences $(a_1, a_2, \ldots)$ of integers with $a_i=0$ for $i \gg 0$. The {\bf magnitude} of an element $(a_i)$ of $X$ is $\sum_i |a_i|$. The magnitude of an algebraic representation of $\mf{gl}(\bW)$ or $\fg$ is the maximum magnitude of a weight in it. 

Let $X'$ be the set of characters $\ft \to \bC$ such that each $h_i$ is sent to a half-integer (and not an integer) and $h_i$ is sent to $-\tfrac{1}{2}$ for $i \gg 0$. Then $X'$ is isomorphic to the set of sequences $(a_1, a_2, \ldots)$ where $a_i$ is a half-integer and $a_i=-\tfrac{1}{2}$ for $i \gg 0$. The set $X'$ is not a group under addition, but one can add elements of $X$ and $X'$ and get an element of $X'$; in fact, $X'$ is an $X$-torsor. Let $\lambda_0 \in X'$ be the sequence $(a_i)$ with $a_i = -\tfrac{1}{2}$ for all $i$. We define the {\bf magnitude} of an element $\lambda \in X'$ to be the magnitude of $\lambda - \lambda_0$. 

Let $\fp \subset \fg$ be the parabolic subalgebra spanned by the $h$'s and $y$'s and let $\fn$ be the nilpotent subalgebra spanned by the $y$'s. Let also $\fb \subset \fg$ be the Borel subalgebra spanned by the $y$'s and $h_{e_i, e_j^*}$ where $j < i$. Call a non-zero vector of a $\fg$-representation a highest weight vector if it is annihilated by $\fb$.

The space $T^n$ inherits a grading from the grading of $\Delta$.

\begin{proposition} \label{prop:spin-mag}
Let $M$ be a non-zero $\fg$-submodule of $T^n$.
\begin{enumerate}[\rm (a)]
\item $M \cap \bV^{\otimes n} \otimes \Delta^0 \ne 0$.
\item If $M$ is simple, then it contains a unique, up to scalar, highest weight vector. The weight of this vector has magnitude $n$. 
\end{enumerate}
\end{proposition}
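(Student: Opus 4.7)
The plan is to establish the identity $(T^n)^\fn = \bW_*^{\otimes n} \otimes \Delta^0$ (a subspace of $\bV^{\otimes n} \otimes \Delta^0$), from which both parts follow. The containment $\supseteq$ is immediate from the action formulas: $\fn$ annihilates $\bW_* \subset \bV$ and also $\Delta^0 \subset \Delta$ (since $y_\phi \cdot 1 = y_{\phi,\psi} \cdot 1 = 0$). For $\subseteq$, introduce the total degree on $T^n$ by setting $\deg(v_{j_1} \otimes \cdots \otimes v_{j_n} \otimes e_I) = \sum_k \deg_\bV(v_{j_k}) + |I|$ with $\deg_\bV(e_i) = 1$, $\deg_\bV(e_i^*) = -1$, $\deg_\bV(\be) = 0$. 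This equals the sum of the coordinates of the $\ft$-weight (relative to $\lambda_0$), is bounded below by $-n$, and is strictly lowered by the generators of $\fn$ (which have $\ft$-weights $-e_i^*$ and $-e_i^* - e_j^*$). Writing an $\fn$-invariant $\xi$ by total-degree slices $\xi = \sum_d \xi_d$, each $\xi_d$ is itself $\fn$-invariant, so it suffices to show $\xi_d = 0$ for $d > -n$. I would do this by decomposing each slice by the refined type $(\vec\epsilon, k)$ (with $\vec\epsilon \in \{+, 0, -\}^n$ recording the distribution of the $\bV$-factors among $\bW$, $\bC\be$, $\bW_*$ and $k$ the $\Delta$-degree) and analyzing $y_\phi \xi_d = 0$ piece by piece; varying $\phi \in \bW_*$ forces the relevant coefficient sequences to be nonzero at every index of $\bW$ or $\bW_*$, which contradicts the finite support of algebraic tensors unless the piece vanishes.

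Granted the identity, part (a) is immediate: for any nonzero $\fg$-submodule $M$ and any weight vector $\xi \in M$, the orbit $U(\fn) \cdot \xi \subseteq M$ has total degrees in the finite range $\{-n, \ldots, \deg \xi\}$; combined with the finite-dimensionality of the weight spaces of $T^n$, this makes $U(\fn) \cdot \xi$ finite-dimensional, so it contains a nonzero element of minimal total degree, which is annihilated by $\fn$ and hence lies in $\bW_*^{\otimes n} \otimes \Delta^0 \subseteq \bV^{\otimes n} \otimes \Delta^0$.

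For part (b), observe that $\fgl(\bW) \subset \fg$ normalizes $\fn$, so $M^\fn$ is a $\fgl(\bW)$-submodule of $M$ contained in $\bW_*^{\otimes n} \otimes \Delta^0$. By Schur--Weyl, $\bW_*^{\otimes n}$ is semisimple as a $\fgl(\bW)$-module, a direct sum of copies of $\bS_\lambda(\bW_*)$ for partitions $\lambda \vdash n$. A highest weight vector of $M$ (under $\fb$) is an element of $M^\fn$ additionally annihilated by the operators $h_{e_i, e_j^*}$ with $j < i$, that is, a $\fgl(\bW)$-lowest weight vector of some $\bS_\lambda(\bW_*)$-summand; each such summand has a unique (up to scalar) lowest weight vector of $\ft$-weight $\lambda_0 - \sum_i \lambda_i e_i^*$, which has magnitude $|\lambda| = n$. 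Uniqueness of the highest weight vector in simple $M$ is the standard argument: if $v$ is any HWV then $M = U(\fg) \cdot v = U(\fn^+) \cdot v$, where $\fn^+$ is the opposite nilpotent spanned by the $x$'s, forcing the $\lambda$-weight space of $M$ to equal $\bC v$ and any two HWVs to share the same weight. The main obstacle is the slice-by-slice analysis in the first paragraph, which requires careful bookkeeping of how the $y_\phi$-action moves among the $(\vec\epsilon, k)$-pieces and crucially exploits the infinite rank of $\bW$.
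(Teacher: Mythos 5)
Your reduction of (a) and (b) to the single identity $(T^n)^{\fn} = \bW_*^{\otimes n} \otimes \Delta^0$ is a legitimate strategy, and it is genuinely different from the paper's: the paper never computes the full space of $\fn$-invariants, but instead filters $T^n$ by the exterior degree on $\Delta$ (so that on the associated graded $\fn$ acts only through $\bV^{\otimes n}$) and descends inductively from $F^{r+1}T^n$ to $F^rT^n$ by applying a well-chosen $y_{e_j^*}$; for (b) it restricts to the finite-rank subalgebras $\fso(2n+1)$ and quotes the magnitude bound of \cite[Proposition 4.1.8]{infrank}. The problem is that in your write-up the identity itself --- which in your scheme carries the entire content of the proposition --- is only a plan: you say you ``would'' decompose each total-degree slice by type and argue that $y_\phi \xi_d = 0$ forces infinitely many nonzero coefficients, and you yourself flag this slice-by-slice analysis as the main obstacle. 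The inclusion $(T^n)^{\fn} \subseteq \bW_*^{\otimes n}\otimes\Delta^0$ is exactly where all the work lies (it does appear to be true, and small cases such as $n=1,2$ confirm the finite-support mechanism you describe), so as it stands the proposal has a genuine gap: the key lemma is asserted, not proved.

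There is also a false auxiliary claim in your deduction of (a): $\rU(\fn)\cdot\xi$ need not be finite-dimensional. For example $\xi = \be^{\otimes n}\otimes 1$ satisfies $y_{e_i^*}\xi \ne 0$ for every $i$, so the orbit meets infinitely many weight spaces; finite-dimensionality of the individual weight spaces of $T^n$ does not rescue the claim. The step is repairable without it: since $M$ is $\ft$-stable it is spanned by weight vectors, every weight vector is homogeneous for your total degree, that degree is bounded below by $-n$ on $T^n$, and $\fn$ strictly lowers it, so a nonzero weight vector of $M$ of minimal total degree is annihilated by $\fn$. Your derivation of (b) from the identity (lowest-weight vectors for $\fgl(\bW)$ inside $\bW_*^{\otimes n}\otimes\Delta^0$, plus the standard pointed-cone argument for uniqueness in a simple module) is essentially fine, except that the opposite nilpotent algebra must include the $h_{e_i,e_j^*}$ with $i<j$ as well as the $x$'s.
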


\begin{proof}
(a) The grading on $T^n$ makes it into a filtered $\fp$-module:
\[
F^0 T^n \subset F^1 T^n \subset F^2 T^n \subset F^3 T^n \subset \cdots
\]
where $F^i T^n = \sum_{j \le i} \bV^{\otimes n} \otimes \bigwedge^j \bW$. The associated graded module is $\bV^{\otimes n} \otimes \Delta$, with $\fn$ acting trivially on $\Delta$ and in the usual way on $\bV$. 

We now claim that $M \cap F^r T^n \ne 0$, for each $r \ge 0$. This is clear for $r \gg 0$. Suppose that $M \cap F^{r+1} T^n \ne 0$; we will show that $M \cap F^r T^n \ne 0$. Since $\bV^{\otimes n}$ contains non-zero vectors annihilated by $\fn$, the above discussion shows that we can pick non-zero $v \in M \cap F^{r+1} T^n$ which is killed by $\fn$ in the associated graded. If $v \in M \cap F^r T^n$, there is nothing to do, so assume otherwise and write 
\[
v = \sum c_{I,J} (e_{i_1} \otimes \cdots \otimes e_{i_n}) \otimes (e_{j_1} \wedge \cdots \wedge e_{j_{r+1}})
\]
where $i_1, \dots, i_n$ are integers and $0 < j_1 < \cdots < j_{r+1}$. For any $X \in \fn$, we have $Xv \in F^r T^n$, and we can pick $X$ so that $Xv$ is non-zero: let $X = y_{e_j^*}$ where $j \in J$ and $c_{I,J} \ne 0$. So $M \cap F^r T^n \ne 0$ and so by induction, $M \cap F^0 T^n \ne 0$. This completes the proof.

(b) For each $n$, the span of the tensors built out of $e_1, \dots, e_n, e_1^*, \dots, e_n^*$ is a simple submodule of $M$ for the subalgebra $\fso(2n+1)$ (assuming it is non-zero), so the uniqueness of a vector annihilated by $\fb \cap \fso(2n+1)$ (i.e., highest weight vector) follows by generalities of representation theory. Furthermore, this vector will be the same for each $n$, provided that $n$ is large enough for this submodule to be non-zero. Furthermore, from the discussion above, this vector belongs to $\bV^{\otimes n} \otimes \Delta^0$. The action of $\fgl(\bW)$ on $\bV^{\otimes n} \otimes \Delta^0$ is the usual action tensored with $-\tfrac{1}{2}$ times the trace character. Since $\fb \cap \fgl(\bW)$ is a Borel subalgebra of $\fgl(\bW)$, the weight of this vector has magnitude $n$ by \cite[Proposition 4.1.8]{infrank}.
\end{proof}

Let $t_i \colon T^n \to T^{n-1}$ be the map given by applying the map $\bV \otimes \Delta \to \Delta$ from \eqref{eq:Vdelta} to the $i$th factor. An important property of these maps is that $t_i t_j+t_j t_i$ is induced from the map $\bV^{\otimes n} \to \bV^{\otimes (n-2)}$ given by pairing the $i$th and $j$th factors. (To prove this, it suffices to treat the $n=2$ case, where it follows from an easy calculation using the formulas defining \eqref{eq:Vdelta}.) Define $T^{[n]}$ to be the intersection of the kernels of the $t_i$. This is stable under the action of $S_n \times \fg$. Finally, define
\begin{displaymath}
\Delta_{\lambda}=\Hom_{S_n}(\bM_{\lambda}, T^{[n]})
\end{displaymath}
where $\bM_\lambda$ is the irreducible representation of $S_n$ indexed by $\lambda$.

To analyze $\Delta_\lambda$, we first need to understand the finite-dimensional case. So let $V$ be an $2n+1$-dimensional orthogonal space and define $\Delta^{(n)}_\lambda$ in an analogous way.

\begin{lemma} \label{lem:odd-spinor-ker}
Let $\delta$ be the highest weight for $\Delta^{(n)}$. If $\ell(\lambda) \le n$ then $\Delta^{(n)}_\lambda$ is an irreducible representation of $\Spin(V)$ of highest weight $\lambda + \delta$.
\end{lemma}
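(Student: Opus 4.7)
My plan is to establish the full $S_m \times \Spin(V)$-decomposition (with $m = |\lambda|$)
\[
T^{[m]} = \bigoplus_{\substack{|\mu| = m \\ \ell(\mu) \le n}} \bM_\mu \otimes L_{\mu + \delta},
\]
where $L_\nu$ denotes the irreducible $\Spin(V)$-representation of highest weight $\nu$; the lemma then follows by taking $S_m$-isotypic components, giving $\Delta^{(n)}_\lambda = \Hom_{S_m}(\bM_\lambda, T^{[m]}) = L_{\lambda + \delta}$.

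For the existence of these summands, for each partition $\mu$ with $|\mu| = m$ and $\ell(\mu) \le n$, I would exhibit an explicit $\Spin(V)$-highest weight vector $v_\mu \otimes \mathbf{1}_\Delta$ of weight $\mu + \delta$ inside $T^{[m]}$. Here $\mathbf{1}_\Delta \in \Delta^0$ is the generator of the bottom graded piece, which serves as a $\fb$-highest weight vector of $\Delta^{(n)}$ of weight $\delta$: the $y$'s act through $Y_\phi$, which lowers the $\Delta$-grading, and the relevant $h$'s reduce to $H_{e_i, e_j^*}$-type operators that likewise annihilate $\mathbf{1}_\Delta$. The vector $v_\mu \in V^{\otimes m}$ is a $\GL(\bW)$-highest weight vector built via a Young symmetrizer from tensors in $\bW_* = \langle e_1^*, \ldots, e_n^* \rangle$---possible precisely because $\ell(\mu) \le n$. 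Three verifications are routine: (i) the weight of $v_\mu \otimes \mathbf{1}_\Delta$ is $\mu + \delta$; (ii) $\fb \cap \Spin(V)$ annihilates it, since the $y$'s vanish on $\bW_*$-tensors and on $\mathbf{1}_\Delta$, and the $h$'s in $\fb$ are chosen to kill $v_\mu$; (iii) each $t_i$ annihilates it, because $t_i$ reduces to applying $Y_{e_j^*}$ to $\mathbf{1}_\Delta$ (as $v_\mu$ has no $\bW$- or $\be$-components), and this vanishes. The $S_m$-orbit of this vector yields $\bM_\mu$-many highest weight vectors, whose $\Spin(V)$-span embeds a copy of $\bM_\mu \otimes L_{\mu + \delta}$ into $T^{[m]}$.

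For the exhaustion, I would adapt the filtration argument of Proposition~\ref{prop:spin-mag}(a) to the finite-rank setting: filter $T^m$ by the $\Delta$-grading so that on the associated graded $\fn$ acts only through the $V^{\otimes m}$-factor, and show that every simple $\Spin(V)$-constituent of $T^{[m]}$ must have a highest weight vector detectable in the bottom piece $V^{\otimes m} \otimes \Delta^0$. There the combined $t_i$-vanishing and $\fb$-annihilation conditions force the highest weight vector to coincide with one produced in the construction above, and the bound $\ell(\mu) \le n$ emerges because $v_\mu$ can be built from $e_1^*, \ldots, e_n^*$ only when $\mu$ fits inside an $n$-row frame.

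The main obstacle is this exhaustion step: the filtration argument parallels Proposition~\ref{prop:spin-mag}(a) but must be adapted to the finite-rank setting (where $\Delta$ is now bounded by degree $n$), and one must check carefully that the contraction conditions precisely enforce $\ell(\mu) \le n$ and do not leave room for any additional highest weight vectors. A cleaner backup is a character comparison: match $\chi(T^{[m]})$ against $\sum_\mu \dim(\bM_\mu) \chi_{L_{\mu + \delta}}$ via the classical Littlewood branching of $\bS_\nu(V)$ from $\GL(V)$ to $\Spin(V)$, combined with the fusion rule for tensoring the resulting constituents with $\Delta^{(n)}$.
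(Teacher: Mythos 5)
Your existence half is fine, and your overall plan (exhibit the singular vectors $v_\mu\otimes 1$ and show there are no others) is genuinely different from the paper's proof, which never argues inside $T^{[m]}$ directly: it quotes the presentation of the Howe-duality module from \cite[Proposition 4.8]{exceptional} to realize $V_{\lambda+\delta}$ as the \emph{cokernel} of the $t$-maps on $\bS_\lambda(V)\otimes\Delta^{(n)}$, and then converts this into the kernel description by self-duality. The problem is concentrated exactly where you flag it, and the specific plan you give for the exhaustion does not work: the finite-rank analogue of Proposition~\ref{prop:spin-mag}(a) is \emph{false}. The filtration argument makes no use of the contraction maps, so if it adapted it would show that every nonzero $\fg$-submodule of $V^{\otimes m}\otimes\Delta^{(n)}$ meets $V^{\otimes m}\otimes\Delta^0$, and hence that every simple constituent has its $\fb$-highest weight vector there. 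Already for $m=1$ this fails: the copy of $\Delta^{(n)}$ inside $V\otimes\Delta^{(n)}$ has highest weight vector
\[
w \;=\; \tfrac{1}{\sqrt{2}}\,\be\otimes 1 \;+\; \sum_{j=1}^{n} e_j^*\otimes e_j,
\]
which is killed by all $y_\phi$, $y_{\phi,\psi}$ and the lower-triangular $h$'s (direct check: $y_{e_k^*}(\be\otimes 1)=-e_k^*\otimes 1$ cancels against $\sum_j e_j^*\otimes \tfrac{1}{\sqrt2}D Y_{e_k^*}e_j$), yet has nonzero $\Delta^1$-component, and the submodule it generates misses $V\otimes\Delta^0$. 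This vector is not killed by $t_1$, so there is no contradiction with the lemma; the point is that the filtration argument is blind to the $t_i$, while such ``coevaluation'' vectors exist in finite rank (in infinite rank the sum is infinite, which is why Proposition~\ref{prop:spin-mag} is special to that setting). Any correct exhaustion must therefore use the conditions $t_i w=0$ in an essential way just to force a singular vector of $T^{[m]}$ into the bottom graded piece, and that interaction is precisely the nontrivial content the paper imports from \cite{exceptional}; it is not a routine check.

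Two further remarks. If the localization to the bottom piece were available, your argument would indeed close cleanly, by an observation you almost state: for $w\otimes 1\in V^{\otimes m}\otimes\Delta^0$, vanishing of $t_i(w\otimes 1)$ forces the $i$th slot of $w$ into $\bW_*$, since the contraction sends $e_j\otimes 1\mapsto e_j\in\Delta^1$ and $\be\otimes 1\mapsto \tfrac{1}{\sqrt2}\in\Delta^0$ and these outputs are linearly independent; then $\fb$-singularity reduces to $\GL(\bW)$ Schur--Weyl on $\bW_*^{\otimes m}$, producing exactly a copy of $\bM_\mu$ in weight $\mu+\delta$ for each $\mu\vdash m$ with $\ell(\mu)\le n$, and semisimplicity gives your decomposition. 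But your backup does not repair the gap: a character comparison presupposes knowing $\chi(T^{[m]})$, i.e.\ the ranks of the $t_i$ on $V^{\otimes m}\otimes\Delta^{(n)}$, which is exactly what is being proved; Littlewood branching plus the fusion rule only computes $\chi(T^{m})$. To complete your route you would need to import the separation-of-variables input the paper uses (\cite[Proposition 4.8]{exceptional}) or the structure of the spin centralizer algebra as in \cite{koike-spin}.
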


\begin{proof}
This follows from the presentation of the module $M$ in \cite[Proposition 4.8]{exceptional}. More specifically, let $E$ be an $n$-dimensional vector space and set $A = \Sym(E \otimes V)$. We have a degree $1$ $A$-linear $\GL(E) \times \Spin(V)$-equivariant map
\[
E \otimes \Delta^{(n)} \otimes A \to \Delta^{(n)} \otimes A
\]
which is induced by inclusion $E \otimes \Delta^{(n)} \to \Delta^{(n)} \otimes E \otimes \Delta^{(n)}$ which is $t \colon E \to \Delta^{(n)} \otimes E$ tensored with the identity on $\Delta^{(n)}$. The quotient of this map is the module $M$ which has a $\GL(E) \times \Spin(V)$-equivariant decomposition
\[
M = \bigoplus_{\substack{\lambda\\ \ell(\lambda) \le n}} \bS_\lambda E \otimes \Delta_\lambda^{(n)}.
\]
The Cauchy identity says that $A = \bigoplus_\lambda \bS_\lambda E \otimes \bS_\lambda V$. Taking the $\bS_\lambda E$-isotypic component of the presentation of $M$ then gives us 
\[
\Delta^{(n)} \otimes \bS_{\lambda/1} V \to \Delta^{(n)} \otimes \bS_\lambda V \to \Delta^{(n)}_\lambda \to 0
\]
where $\bS_{\lambda/1} V$ denotes the skew Schur functor. So $\Delta^{(n)}_\lambda$ is the quotient of $\bS_\lambda(V) \otimes \Delta^{(n)}$ by the image of the maps $t_i$. To get it as the kernel of the maps $t_i$, we can translate by taking duals. 
\end{proof}

\begin{proposition} \label{prop:spin-simples}
As $\lambda$ ranges over all partitions, the representations $\Delta_{\lambda}$ are a complete irredundant set of simple objects of $\Rep^{\spin}(\fg)$.
\end{proposition}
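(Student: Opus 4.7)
My plan is to follow the strategy of the algebraic (non-spin) case in \cite{infrank}, replacing its Weyl-construction inputs by Lemma~\ref{lem:odd-spinor-ker} and Proposition~\ref{prop:spin-mag}. Since $S_n$ is finite and its action on $T^{[n]}$ commutes with $\fg$, there is a canonical $(S_n \times \fg)$-equivariant decomposition
\[
T^{[n]} = \bigoplus_{|\lambda| = n} \bM_\lambda \otimes \Delta_\lambda,
\]
so it suffices to verify that each $\Delta_\lambda$ is a non-zero simple, that distinct partitions yield non-isomorphic $\Delta_\lambda$, and that every simple of $\Rep^{\spin}(\fg)$ arises this way.

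For non-vanishing and simplicity I would fix $\lambda$ and restrict to the finite-rank subalgebra $\fso(2N+1) \subset \fg$ spanned by $e_1,\dots,e_N,e_1^*,\dots,e_N^*,\be$ for each $N \ge \ell(\lambda)$. The inclusion of the finite-rank standard and spinor representations is compatible with the maps $t_i$ and the $S_n$-action, so the finite-rank analogue $\Delta^{(N)}_\lambda$ embeds into $\Delta_\lambda$. By Lemma~\ref{lem:odd-spinor-ker}, $\Delta^{(N)}_\lambda$ is irreducible of highest weight $\lambda+\delta$, hence non-zero and generated by its highest weight vector $v_\lambda^{(N)}$; these stabilize as $N$ grows and define a vector $v_\lambda \in \Delta_\lambda$ annihilated by $\fb$. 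By Proposition~\ref{prop:spin-mag}(b), any simple $\fg$-submodule of $\Delta_\lambda \subset T^n$ has a unique (up to scalar) highest weight vector; by finite-rank simplicity this vector must restrict to $v_\lambda^{(N)}$ inside each $\Delta^{(N)}_\lambda$ for large $N$, and hence equals $v_\lambda$. Since $v_\lambda$ generates each $\Delta^{(N)}_\lambda$ and these exhaust $\Delta_\lambda$, the submodule it generates is all of $\Delta_\lambda$, forcing $\Delta_\lambda$ to be simple. Distinct $\lambda$ then give distinct highest weights $\lambda+\delta$, so the $\Delta_\lambda$ are pairwise non-isomorphic.

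For completeness, let $M$ be a simple object of $\Rep^{\spin}(\fg)$; by definition $M$ is a simple subquotient of some $T^n$. A restriction-to-$\fso(2N+1)$-and-colimit argument, using Weyl complete reducibility at each finite rank, shows that $T^n$ is semisimple as a $\fg$-module, and therefore $M$ is realized as a submodule of $T^n$. Proposition~\ref{prop:spin-mag}(b) then produces a unique highest weight vector in $M$; matching this vector to $v_\lambda$ for the appropriate $\lambda$ identifies $M \cong \Delta_\lambda$. The hard part will be the semisimplicity of $T^n$ needed to upgrade ``subquotient'' to ``subobject'': while each finite-rank truncation is completely reducible by Weyl's theorem, a general $\fg$-submodule of $T^n$ need not come from a finite-rank one, so the passage to the limit is the most delicate point, though it is by now standard in the framework of \cite{infrank}.
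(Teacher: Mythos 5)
Your first two steps (the $S_n\times\fg$ decomposition of $T^{[n]}$ and the simplicity of $\Delta_\lambda$ via the finite-rank modules $\Delta_\lambda^{(N)}$ of Lemma~\ref{lem:odd-spinor-ker}) are essentially the paper's argument, but the completeness step contains a genuine error: $T^n$ is \emph{not} semisimple, and no restriction-to-finite-rank-and-colimit argument can make it so. The category $\Rep^\spin(\fg)$ is very far from semisimple --- this is the whole point of computing derived specializations and the nonzero higher $\ext$ groups in Corollary~\ref{cor:spin-ext}. Concretely, already $T^1=\bV\otimes\Delta$ fails: the map $t_1\colon \bV\otimes\Delta\to\Delta$ is surjective (e.g.\ $\be\otimes x\mapsto \tfrac{1}{\sqrt{2}}Dx$) with kernel $T^{[1]}=\Delta_{(1)}$, so $\Delta_\emptyset=\Delta$ is a quotient of $T^1$; but it cannot be a submodule, since by Proposition~\ref{prop:spin-mag}(b) any simple submodule of $T^1$ has highest weight of magnitude $1$, while the highest weight of $\Delta$ has magnitude $0$. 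Thus $0\to\Delta_{(1)}\to T^1\to\Delta_\emptyset\to 0$ is non-split (equivalently, $T^1$ is the indecomposable injective envelope of $\Delta_{(1)}$ by Proposition~\ref{prop:spin-inj} and has two composition factors). Weyl complete reducibility at each finite rank does not pass to the limit because the finite-rank isotypic decompositions are not compatible as $N$ grows; the same phenomenon already occurs for $\Rep(\fg)$ in \cite{infrank}, so the appeal to that framework points the wrong way. Consequently your plan to upgrade ``subquotient of $T^n$'' to ``submodule of $T^n$'' cannot work.

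The repair does not need semisimplicity: the maps $t_i$ give a left-exact sequence $0\to T^{[n]}\to T^n\to (T^{n-1})^{\oplus n}$, so any simple subquotient of $T^n$ is either a constituent of $T^{[n]}$ --- hence some $\Delta_\lambda$ with $|\lambda|=n$ by your decomposition --- or a constituent of $T^{n-1}$, which is handled by induction on $n$; this is the paper's route. Two smaller points: your simplicity argument only shows that every \emph{simple} submodule of $\Delta_\lambda$ equals $\Delta_\lambda$, and at this stage finite length is not yet known, so you cannot assume an arbitrary nonzero submodule contains a simple one; argue instead that any nonzero vector lies in some $\Delta_\lambda^{(N)}$ and, by finite-rank irreducibility, generates $\Delta_\lambda^{(N')}$ for all $N'\ge N$, hence all of $\Delta_\lambda$. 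Finally, your irredundancy argument via the distinct highest weights $\lambda+\delta$ is fine and is a legitimate alternative to the paper's use of linear independence of characters.
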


\addtocounter{equation}{-1}
\begin{subequations}
\begin{proof}
Let $\bW^{(n)}$ be the span of $e_1, \ldots, e_n$, and define $\bW_*^{(n)}$ similarly. As above, we have a representation $\Delta_{\lambda}^{(n)}$ of $\fg^{(n)} \cong \fso(2n+1)$, which is simple by Lemma~\ref{lem:odd-spinor-ker}. Since $\Delta_{\lambda} = \bigcup_{n \ge 1} \Delta_{\lambda}^{(n)}$ and $\fg=\bigcup_{n \ge 1} \fg^{(n)}$, it follows that $\Delta_{\lambda}$ is irreducible for $\fg$, and hence is a simple object of $\Rep^\spin(\fg)$. By the definition of $\Delta_{\lambda}$ and semisimplicity of $S_n$, we have a decomposition
\begin{equation} \label{tdecomp}
T^{[n]} = \bigoplus_{|\lambda|=n} \bM_\lambda \otimes \Delta_\lambda
\end{equation}
as a representation of $S_n \times \fg$, and so every constituent of $T^{[n]}$ (as a $\fg$-representation) is isomorphic to some $\Delta_{\lambda}$. We have an exact sequence
\begin{equation} \label{tseq}
0 \to T^{[n]} \to T^n \to (T^{n-1})^{\oplus n},
\end{equation}
and so (by induction) every simple constituent of $T^n$ is one of $T^{[m]}$ for some $m \le n$, and thus of the form $\Delta_\lambda$ with $|\lambda| \le n$. Since every simple object of $\Rep^{\spin}(\fg)$ is a constituent of some $T^n$, it follows that every simple object is of the form $\Delta_\lambda$ for some $\lambda$.

Under the action of the diagonal torus, the characters of $\bS_\lambda(\bV) \otimes \Delta$ are linearly independent, and in the Grothendieck group of $\Rep^\spin(\fg)$, we have 
\[
[\bS_\lambda(\bV) \otimes \Delta] = [\Delta_\lambda] + \sum_{|\mu| < |\lambda|} c^\lambda_\mu [\Delta_\mu]
\]
for some coefficients $c^\lambda_\mu$. So the characters of the $\Delta_\lambda$ are linearly independent, and hence they are pairwise non-isomorphic.
\end{proof}
\end{subequations}

\begin{corollary}
Every object of $\Rep^\spin(\fg)$ has finite length.
\end{corollary}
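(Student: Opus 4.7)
The plan is to reduce the corollary to the single assertion that each $T^n$ has finite length. This reduction is immediate from the definition of $\Rep^{\spin}(\fg)$: every object is, by construction, a subquotient of a finite direct sum of $T^n$'s, and in any abelian category the class of finite-length objects is closed under finite direct sums and under subquotients.

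To handle $T^n$, I would argue by induction on $n$. For the base case, $T^0 = \Delta = \Delta_\emptyset$ is simple by Proposition~\ref{prop:spin-simples}, and in particular has length one. For the inductive step, I would combine the decomposition \eqref{tdecomp} with the exact sequence \eqref{tseq}. The former exhibits
\[
T^{[n]} = \bigoplus_{|\lambda|=n} \bM_\lambda \otimes \Delta_\lambda
\]
as a finite direct sum of copies of the simple objects $\Delta_\lambda$ (only finitely many partitions of $n$, and each $\bM_\lambda$ is finite-dimensional over $\bC$), so $T^{[n]}$ has finite length. The latter embeds $T^n/T^{[n]}$ as a subobject of $(T^{n-1})^{\oplus n}$, which has finite length by the inductive hypothesis, so $T^n/T^{[n]}$ does as well. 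Thus $T^n$ is an extension of one finite-length object by another, hence itself of finite length.

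There is no real obstacle here; all the genuine content has already been extracted in the proof of Proposition~\ref{prop:spin-simples}. The only point to keep in mind is the standard abelian-category fact that a subobject of a finite-length object is again of finite length, which is what allows the descent from $(T^{n-1})^{\oplus n}$ to its subobject $T^n/T^{[n]}$ in the inductive step.
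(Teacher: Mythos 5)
Your proof is correct and follows essentially the same route as the paper: finite length of $T^{[n]}$ via the decomposition \eqref{tdecomp} into simples $\Delta_\lambda$, induction on $n$ using the exact sequence \eqref{tseq} to handle $T^n$, and the reduction of a general object to the $T^n$'s via closure of finite-length objects under subquotients and finite sums. The added explicit base case and the remark about subobjects of finite-length objects are harmless elaborations of the same argument.
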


\begin{proof}
The simplicity of $\Delta_{\lambda}$ and \eqref{tdecomp} shows that $T^{[n]}$ is finite length. The sequence \eqref{tseq} then inductively shows that $T^n$ is finite length. Since every object is a quotient of a finite sum of $T^n$'s, the result follows.
\end{proof}

\begin{proposition} \label{prop:spin-hom-trace}
We have the following:
\begin{displaymath}
\Hom_{\fg}(\Delta_\lambda, T^n) = \begin{cases}
\bM_{\lambda} & \textrm{if $n=\vert \lambda \vert$} \\
0 & \textrm{otherwise}
\end{cases}.
\end{displaymath}
\end{proposition}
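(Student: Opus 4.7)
The plan is to bootstrap from the two structural results just established: the magnitude statement of Proposition~\ref{prop:spin-mag}(b), and the decomposition $T^{[n]} = \bigoplus_{|\mu|=n} \bM_\mu \otimes \Delta_\mu$ together with the fact (from Proposition~\ref{prop:spin-simples}) that the $\Delta_\mu$ are pairwise non-isomorphic simple objects.

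First I would observe that any non-zero $f \in \Hom_\fg(\Delta_\lambda, T^n)$ is automatically injective, since $\Delta_\lambda$ is simple. Proposition~\ref{prop:spin-mag}(b) applied to the simple $\fg$-submodule $f(\Delta_\lambda) \subset T^n$ tells us that the highest weight of $\Delta_\lambda$ has magnitude $n$. On the other hand, $\Delta_\lambda$ sits inside $T^{[|\lambda|]} \subset T^{|\lambda|}$ by construction, so the same proposition forces the magnitude of that highest weight to be $|\lambda|$. Comparing, we get $n=|\lambda|$, which disposes of both the $n < |\lambda|$ and $n > |\lambda|$ cases.

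Next, for the remaining case $n=|\lambda|$, the key step is to show that every $f \in \Hom_\fg(\Delta_\lambda, T^n)$ actually lands inside $T^{[n]}$. The maps $t_i \colon T^n \to T^{n-1}$ are $\fg$-equivariant, so each composite $t_i \circ f$ is an element of $\Hom_\fg(\Delta_\lambda, T^{n-1})$. Simplicity of $\Delta_\lambda$ makes $t_i \circ f$ either zero or injective; if it were injective, then by the magnitude argument of the previous paragraph we would need $n-1 = |\lambda| = n$, which is absurd. Hence $t_i \circ f = 0$ for every $i$, so $f$ factors through $T^{[n]} = \bigcap_i \ker t_i$.

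Finally, using the $S_n \times \fg$-decomposition $T^{[n]} = \bigoplus_{|\mu|=n} \bM_\mu \otimes \Delta_\mu$ from \eqref{tdecomp} and the fact that the simples $\Delta_\mu$ with $|\mu|=n$ are pairwise non-isomorphic, Schur's lemma gives
\[
\Hom_\fg(\Delta_\lambda, T^{[n]}) = \bigoplus_{|\mu|=n} \bM_\mu \otimes \Hom_\fg(\Delta_\lambda, \Delta_\mu) = \bM_\lambda,
\]
completing the computation. The main conceptual step is the second one, where one needs to see that a potential ``leak'' through some $t_i$ is ruled out precisely by the magnitude count; after that, the proposition reduces cleanly to the already-established structure of $T^{[n]}$.
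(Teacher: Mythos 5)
Your proof is correct and follows essentially the same route as the paper: rule out $n \neq |\lambda|$ by comparing the magnitude of the highest weight (Proposition~\ref{prop:spin-mag}(b)) for $\Delta_\lambda$ embedded in $T^{|\lambda|}$ versus in $T^n$, reduce to $T^{[n]}$ using the vanishing of $\Hom_\fg(\Delta_\lambda, T^{n-1})$ (the paper phrases this via the sequence \eqref{tseq} and left-exactness), and conclude from \eqref{tdecomp} and Schur's lemma. The only cosmetic difference is that you dispose of the case $|\lambda| > n$ by the same magnitude comparison, whereas the paper cites the bound $|\mu| \le n$ on constituents of $T^n$ from Proposition~\ref{prop:spin-simples}.
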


\begin{proof}
By Proposition~\ref{prop:spin-simples}, every constituent of $T^n$ is of the form $\Delta_{\mu}$ with $\vert \mu \vert \le n$. Thus if $\vert \lambda \vert>n$, there are no non-zero maps $\Delta_{\lambda} \to T^n$. Now suppose that $\vert \lambda \vert<n$. Then the image of a non-zero map $\Delta_{\lambda} \to T^n$ has non-zero intersection with $\bV^{\otimes n} \otimes \Delta^0$ by Proposition~\ref{prop:spin-mag}. Furthermore, the weight of the highest weight vector of $\Delta_\lambda$ with respect to $\fb$ has magnitude $|\lambda| < n$ which contradicts Proposition~\ref{prop:spin-mag}(b). Thus, again, there are no non-zero maps $\Delta_{\lambda} \to T^n$. Finally, suppose $n=\vert \lambda \vert$. Using the sequence \eqref{tseq} we see that $\Hom_{\fg}(\Delta_{\lambda}, T^n)=\Hom_{\fg}(\Delta_{\lambda}, T^{[n]})$, and the result follows from \eqref{tdecomp}.
\end{proof}

\subsection{Diagram category}

A {\bf spin-Brauer diagram} between a set $L$ and $L'$ is a triple $(U, \Gamma, f)$ where 
\begin{itemize}
\item $U$ is a subset of $L$ equipped with a total order,
\item $\Gamma$ is a (partial) matching on $L \setminus U$,
\item $f$ is a bijection $L \setminus (U \cup V(\Gamma)) \to L'$. Here $V(\Gamma)$ is the vertex set of $\Gamma$.
\end{itemize}
We just write $\Gamma$ for such diagrams, and think of the vertices in $U$ as circled. Here we have drawn an example of a spin-Brauer diagram:
\[
\Gamma = \begin{xy}
(-28, 7)*{}="A1"; (-18, 7)*{}="B1"; (-8, 7)*{}="C1"; (2, 7)*{}="D1"; (12, 7)*{}="E1"; 
(-28, -7)*{}="A2"; (-18, -7)*{}="B2"; (-8, -7)*{}="C2"; (2, -7)*{}="D2"; (12, -7)*{}="E2"; (22, -7)*{}="F2"; (32, -7)*{}="G2"; (42, -7)*{}="H2"; (52, -7)*{}="I2";
"A1"*{\bullet}; "B1"*{\bullet}; "C1"*{\bullet}; "D1"*{\bullet}; "E1"*{\bullet}; 
"A2"*{\bullet}; "B2"*{\bullet}; "C2"*{\bullet}; "D2"*{\bullet}; "E2"*{\bullet}; "F2"*{\bullet}; "G2"*{\circ}; "H2"*{\bullet}; "I2"*{\circ};
"A1"; "C2"; **\dir{-};
"B1"; "B2"; **\dir{-};
"C1"; "D2"; **\dir{-};
"E1"; "F2"; **\dir{-};
"A2"; "E2"; **\crv{(-5, 2)};
"H2"; "D1"; **\dir{-};
\end{xy}.
\]
We have not indicated the total ordering on the elements of $U$ in this drawing. Suppose $\Gamma$ is a spin-Brauer diagram and $\Gamma'$ is obtained by switching the two consecutive elements $i, j \in U$ in the total order. Let $\Gamma''$ be the spin-Brauer diagram obtained by removing $i$ and $j$ from $U$ in $\Gamma$, and placing an edge between them. The {\bf Clifford relation} is $\Gamma+\Gamma'=\Gamma''$. Similar types of diagrams were considered by Koike \cite{koike-spin}.

The {\bf downwards spin category}, denoted $\dspin$, is the following $\bC$-linear category. The objects are finite sets $L$ and $\Hom_\dspin(L,L')$ is the quotient of the vector space spanned by the spin-Brauer diagrams by the Clifford relations. Given $(U, \Gamma, f) \in \Hom_\dspin(L,L')$ and $(U', \Gamma', f') \in \Hom_\dspin(L', L'')$, define their composition to be $(U \cup f^{-1}(U'), \Gamma \cup f^{-1}(\Gamma'), g)$ where $g$ is the restriction of $f'f$ to
$L \setminus (U \cup f^{-1}(U') \cup V(\Gamma) \cup V(f^{-1}(\Gamma')))$. To give a sense of this with our example above, if 
\[
\Gamma' = \begin{xy}
(-28, 7)*{}="A1"; (-18, 7)*{}="B1"; 
(-28, -7)*{}="A2"; (-18, -7)*{}="B2"; (-8, -7)*{}="C2"; (2, -7)*{}="D2"; (12, -7)*{}="E2"; 
"A1"*{\bullet}; "B1"*{\bullet}; 
"A2"*{\bullet}; "B2"*{\bullet}; "C2"*{\bullet}; "D2"*{\bullet}; "E2"*{\circ}; 
"A1"; "A2"; **\dir{-};
"B1"; "D2"; **\dir{-};
"B2"; "C2"; **\crv{(-13,2)};
\end{xy}
\]
then the composition $\Gamma' \Gamma$ is obtained by first concatenating the diagrams
\[
\begin{xy}
(-28, 21)*{}="A0"; (-18, 21)*{}="B0"; 
(-28, 7)*{}="A1"; (-18, 7)*{}="B1"; (-8, 7)*{}="C1"; (2, 7)*{}="D1"; (12, 7)*{}="E1"; 
(-28, -7)*{}="A2"; (-18, -7)*{}="B2"; (-8, -7)*{}="C2"; (2, -7)*{}="D2"; (12, -7)*{}="E2"; (22, -7)*{}="F2"; (32, -7)*{}="G2"; (42, -7)*{}="H2"; (52, -7)*{}="I2";
(-28, -7)*{}="A2"; (-18, -7)*{}="B2"; (-8, -7)*{}="C2"; (2, -7)*{}="D2"; (12, -7)*{}="E2"; 
"A0"*{\bullet}; "B0"*{\bullet}; 
"A0"; "A1"; **\dir{-};
"B0"; "D1"; **\dir{-};
"B1"; "C1"; **\crv{(-13,16)};
"A1"*{\bullet}; "B1"*{\bullet}; "C1"*{\bullet}; "D1"*{\bullet}; "E1"*{\circ}; 
"A2"*{\bullet}; "B2"*{\bullet}; "C2"*{\bullet}; "D2"*{\bullet}; "E2"*{\bullet}; "F2"*{\bullet}; "G2"*{\circ}; "H2"*{\bullet}; "I2"*{\circ};
"A1"; "C2"; **\dir{-};
"B1"; "B2"; **\dir{-};
"C1"; "D2"; **\dir{-};
"E1"; "F2"; **\dir{-};
"A2"; "E2"; **\crv{(-5, 2)};
"H2"; "D1"; **\dir{-};
\end{xy}
\]
and then simplifying to get the diagram
\[
\Gamma' \Gamma = \begin{xy}
(-28, 7)*{}="A1"; (-18, 7)*{}="B1"; 
(-28, -7)*{}="A2"; (-18, -7)*{}="B2"; (-8, -7)*{}="C2"; (2, -7)*{}="D2"; (12, -7)*{}="E2"; (22, -7)*{}="F2"; (32, -7)*{}="G2"; (42, -7)*{}="H2"; (52, -7)*{}="I2";
"A1"*{\bullet}; "B1"*{\bullet}; 
"A2"*{\bullet}; "B2"*{\bullet}; "C2"*{\bullet}; "D2"*{\bullet}; "E2"*{\bullet}; "F2"*{\circ}; "G2"*{\circ}; "H2"*{\bullet}; "I2"*{\circ};
"A1"; "C2"; **\dir{-};
"A2"; "E2"; **\crv{(-5, 2)};
"B2"; "D2"; **\crv{(-8, 0)};
"H2"; "B1"; **\dir{-};
\end{xy}.
\]
The {\bf upwards spin category} is $\uspin = \dspin^{\rm op}$. A {\bf representation} of $\dspin$ is a $\bC$-linear functor $F \colon \dspin \to \Vec$. Let $\Mod_\dspin$ (resp., $\Mod_\dspin^\rf$) denote the abelian category of all (resp., finite length) representations of $\dspin$. 

For a finite set $L$, put $\cK_L=\bV^{\otimes L} \otimes \Delta$. Given a spin-Brauer diagram $(U,\Gamma,f)$ between $L$ and $L'$, consider the sequence of maps
\begin{displaymath}
\cK_L \to \cK_{L \setminus U} \to \cK_{L \setminus (U \cup V(\Gamma))} \to \cK_{L'},
\end{displaymath}
defined as follows. The first map applies the map $\bV \otimes \Delta \to \Delta$ to the tensor factors indexed by $U$, in order (according to the total order on $U$). The second map applies the pairing $\bV^{\otimes 2} \to \bC$ to the tensor factors indexed by the edges of $\Gamma$. The third map is induced by the bijection $f$. One readily verifies that the above definition gives $\cK$ the structure of a $\bC$-linear functor $\dspin \to \Rep^{\spin}(\fg)$.
 
In \cite[(4.2.5)]{infrank}, we defined $\db$ (resp., $\ub$), which in our current notation is the subcategory of $\dspin$ (resp., $\uspin$) where we only use spin-Brauer diagrams with $U = \emptyset$. This has a symmetric monoidal structure using disjoint union $\amalg$. Clearly, $\amalg$ extends to a bifunctor $\db \times \dspin \to \dspin$. Using \cite[(2.1.14)]{infrank}, we get a convolution tensor product
\begin{equation} \label{eqn:amalg-spin0}
\amalg_{\#} \colon \Mod_\db^\rf \times \Mod_\dspin^\rf \to \Mod_\dspin^\rf,
\end{equation}

\begin{theorem} \label{thm:kernel-spin}
The kernel $\cK$ induces mutually quasi-inverse anti-equivalences between $\Mod_\dspin^\rf$ and $\Rep^{\spin}(\fg)$. Moreover, under this equivalence, the tensor product in \eqref{eqn:amalg-spin0} corresponds to the tensor product in \eqref{eqn:fg-tensor}.
\end{theorem}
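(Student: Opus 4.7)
The proof follows the template of \cite{infrank}. Define the contravariant functor
\[
\Psi\colon \Rep^\spin(\fg) \to \Mod_\dspin^\rf, \qquad \Psi(M)(L) = \Hom_\fg(M, \cK_L);
\]
the covariance of $\cK$ in $L$ makes $\Psi(M)$ a bona fide representation of $\dspin$. The goal is to show $\Psi$ is an anti-equivalence; its quasi-inverse is then built from $\cK$ via presentations on the diagram side.

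The central technical step is full faithfulness of $\cK$: the natural map
\[
\cK\colon \Hom_\dspin(L, L') \to \Hom_\fg(\cK_L, \cK_{L'})
\]
is a bijection. I would reduce to the finite-rank setting by restricting to $\fg^{(N)} \cong \fso(2N+1)$ for each $N$ and invoking the spin Schur--Weyl duality (cf.\ \cite{koike-spin}), which identifies the centralizer algebra of $\Spin(2N+1)$ on the finite-rank analog of $\cK_L$ with the quotient of the spin-Brauer algebra by the Clifford relations; the infinite-rank claim follows by passing to the limit once the stable range $N \ge |L|$ is reached. Alternatively, one can decompose both sides into matching isotypic pieces using Propositions \ref{prop:spin-mag}, \ref{prop:spin-simples}, and \ref{prop:spin-hom-trace}: on the algebraic side, the $S_n$-isotypic structure of $T^{[n]}$ together with the exact sequence \eqref{tseq} reduces $\Hom_\fg(\cK_L,\cK_{L'})$ to contributions from pairings (the Brauer component) and one-legged contractions into $\Delta$ (the circled component), exactly matching the combinatorics of spin-Brauer diagrams modulo Clifford relations.

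Once full faithfulness on the $\cK_L$ is established, Yoneda gives $\Psi(\cK_L) \cong P_L := \Hom_\dspin(L, -)$, the representable projective of $\Mod_\dspin^\rf$. Since the $\cK_L$ generate $\Rep^\spin(\fg)$ (by the definition of spin-algebraic) and the $P_L$ generate $\Mod_\dspin^\rf$ (by Yoneda), the anti-equivalence extends to the whole categories via presentations, using that both sides are finite-length abelian: given $F \in \Mod_\dspin^\rf$ with presentation $P_{L'} \to P_L \to F \to 0$, assign the kernel of the unique lift $\cK_L \to \cK_{L'}$ in $\Rep^\spin(\fg)$. For the tensor-product compatibility, observe that $\bV^{\otimes L} \otimes \cK_{L'} \cong \cK_{L \amalg L'}$ by direct inspection, matching the disjoint-union bifunctor $\amalg\colon \db \times \dspin \to \dspin$; combining this with the analogous equivalence $\Rep(\fg) \simeq (\Mod_\db^\rf)^{\rm op}$ from \cite{infrank} and extending by bifunctoriality yields the asserted identification of $\otimes$ with $\amalg_\#$.

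The hard part will be full faithfulness: verifying that the Clifford relations are exactly the relations among spin-Brauer diagrams in $\End_\fg(\cK_{[n]})$. While the finite-rank spin Schur--Weyl result is classical, the bookkeeping of signs coming from the fermionic operators $X_v, Y_\phi$ and the interaction of the grading operator $D$ with the circled vertices requires care; once settled, everything else is essentially formal.
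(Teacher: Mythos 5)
Your route is genuinely different from the paper's: the paper does not prove full faithfulness of $\cK$ on morphism spaces at all. It invokes the general kernel formalism of \cite[Theorem 2.1.11]{infrank}, whose two hypotheses are exactly the decomposition statements already established, namely that $\Hom_{S_n}(\bM_\lambda,\cK_n)$ is a simple object (Proposition~\ref{prop:spin-simples}) and that $\Hom_{\fg}(\Delta_\lambda,\cK_n)$ is $\bM_\lambda$ or zero (Proposition~\ref{prop:spin-hom-trace}); the tensor compatibility is \cite[Proposition 2.1.16]{infrank}. Your proposal instead tries to rebuild that machinery by hand, and as written it has two genuine gaps.

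First, the load-bearing step — that $\Hom_\dspin(L,L')\to\Hom_\fg(\cK_L,\cK_{L'})$ is bijective, i.e.\ that spin-Brauer diagrams modulo the Clifford relations span \emph{and are linearly independent in} these Hom spaces — is only asserted. The finite-rank surjectivity you attribute to spin Schur--Weyl duality, the injectivity in a stable range, and the passage from $\fg^{(N)}$ to the infinite-rank Hom space (which is an intersection over $N$ of a priori larger spaces, with $\Delta$ replaced by finite exterior algebras at each finite level) each need an argument; you acknowledge this yourself, so the proof is incomplete precisely at its hardest point. Second, even granting full faithfulness, the Yoneda-plus-presentations extension does not close without knowing that the objects $\cK_L=\bV^{\otimes L}\otimes\Delta$ are injective cogenerators of $\Rep^\spin(\fg)$: to get $\Psi\circ G\cong\mathrm{id}$ you must apply $\Hom_\fg(-,\cK_{L''})$ to $0\to G(F)\to\cK_L\to\cK_{L'}$ and retain exactness, and to get $G\circ\Psi\cong\mathrm{id}$ you need every object to admit a copresentation $0\to M\to\cK_L^{\oplus a}\to\cK_{L'}^{\oplus b}$. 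Your only justification, ``the $\cK_L$ generate by the definition of spin-algebraic,'' yields subquotients, not embeddings, and injectivity (the content of Proposition~\ref{prop:spin-inj}, proved independently in the paper) is never addressed. These are exactly the issues the cited \cite[Theorem 2.1.11]{infrank} is designed to package, which is why the paper's proof reduces to checking the two multiplicity hypotheses rather than proving a double-centralizer theorem.
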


\begin{proof}
We apply \cite[Theorem 2.1.11]{infrank}, so we need to check its two hypotheses. Write $\cK_n$ for $\cK_n$ evaluated on the set $\{1,\dots,n\}$. The first hypothesis is that $\hom_{S_n}(\bM_\lambda, \cK_n)$ is irreducible; this follows from Proposition~\ref{prop:spin-simples}. The second hypothesis is a converse statement that $\hom_{\Rep^\spin(\fg)}(\Delta_\lambda, \cK_n)$ is a nonzero irreducible representation of $S_n$; this follows from Proposition~\ref{prop:spin-hom-trace}. The compatibility with tensor products is \cite[Proposition 2.1.16]{infrank}.
\end{proof}

\begin{proposition} \label{prop:spin-inj}
For every partition $\lambda$, the representation $\bS_\lambda(\bV) \otimes \Delta$ is an injective object of $\Rep^\spin(\fg)$, and is the injective envelope of the simple $\Delta_\lambda$.
\end{proposition}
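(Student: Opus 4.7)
The plan is to use the anti-equivalence of Theorem~\ref{thm:kernel-spin} between $\Mod_\dspin^\rf$ and $\Rep^\spin(\fg)$ to transfer projectivity of representables on the diagram side to injectivity of the objects $T^n = \bV^{\otimes n} \otimes \Delta$ on the Lie algebra side. The functor $\cK$ sends $\{1,\dots,n\} \in \dspin$ to $T^n$, and under the anti-equivalence $T^n$ corresponds to the representable functor $h_n = \Hom_\dspin(\{1,\dots,n\}, -)$, which is projective in $\Mod_\dspin^\rf$ by Yoneda. Since an anti-equivalence exchanges projectives and injectives, $T^n$ is injective in $\Rep^\spin(\fg)$.

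Next, the $S_n$-action on $\bV^{\otimes n}$ by permutation of tensor factors commutes with the diagonal $\fg$-action (and is trivial on the $\Delta$ factor), so I obtain an $S_n \times \fg$-equivariant decomposition
\[
T^n \;=\; \bigoplus_{|\lambda|=n} \bM_\lambda \otimes \bigl(\bS_\lambda(\bV) \otimes \Delta\bigr),
\]
where $\bS_\lambda(\bV) := \Hom_{S_n}(\bM_\lambda, \bV^{\otimes n})$ is a $\fg$-submodule of $\bV^{\otimes n}$. Taking $n=|\lambda|$ exhibits $\bS_\lambda(\bV) \otimes \Delta$ as a $\fg$-direct summand of the injective $T^n$, so it is itself injective, proving the first assertion.

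For the injective envelope claim, I compute the socle of $\bS_\lambda(\bV) \otimes \Delta$ using the identification $\bS_\lambda(\bV) \otimes \Delta = \Hom_{S_n}(\bM_\lambda, T^n)$ and commutation of the $S_n$- and $\fg$-actions:
\[
\Hom_\fg\bigl(\Delta_\mu,\,\bS_\lambda(\bV) \otimes \Delta\bigr) \;=\; \Hom_{S_n}\bigl(\bM_\lambda,\,\Hom_\fg(\Delta_\mu, T^n)\bigr).
\]
By Proposition~\ref{prop:spin-hom-trace}, the inner Hom vanishes unless $|\mu|=n$, in which case it equals $\bM_\mu$; the outer Hom is then $\Hom_{S_n}(\bM_\lambda, \bM_\mu)$, which is $\bC$ for $\mu=\lambda$ and zero otherwise. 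Hence the socle of $\bS_\lambda(\bV) \otimes \Delta$ is a single copy of the simple object $\Delta_\lambda$. Because every object of $\Rep^\spin(\fg)$ has finite length, an injective object with simple socle $S$ is the injective envelope of $S$, completing the argument.

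The main obstacle is the very first step: one must verify that under the specific form of the anti-equivalence in Theorem~\ref{thm:kernel-spin}, the object $\cK_n$ really does correspond to the representable $h_n$ (equivalently, that $\cK$ is fully faithful on objects so that $\Hom_\fg(\cK_n,\cK_L) = \Hom_\dspin(n,L)$). This is essentially a bookkeeping exercise with \cite[Theorem 2.1.11]{infrank}, but one must track variance carefully. As a backup route, one can prove injectivity of $T^n$ directly by showing $\Ext^1_\fg(\Delta_\mu, T^n) = 0$ for every simple $\Delta_\mu$ and invoking the finite length corollary; the rest of the argument is unchanged.
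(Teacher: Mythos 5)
Your argument is correct and is essentially the proof the paper intends: the paper simply defers to the analogue of \cite[Proposition 3.2.14]{infrank}, whose mechanism is exactly yours --- $T^n$ is injective because it corresponds under the anti-equivalence of Theorem~\ref{thm:kernel-spin} to a representable (hence projective) object, $\bS_\lambda(\bV)\otimes\Delta$ is a direct summand of $T^{|\lambda|}$ by Schur--Weyl, and its socle is the single copy of $\Delta_\lambda$ given by Proposition~\ref{prop:spin-hom-trace}, so finite length yields the injective envelope statement. The only bookkeeping worth recording explicitly is that $h_n$ does lie in $\Mod_\dspin^\rf$ (it vanishes on sets of size greater than $n$ and has finite-dimensional values, since morphisms in $\dspin$ only map onto sets of equal or smaller size), so the anti-equivalence applies to it and sends it to $\cK_n=T^n$ by Yoneda, which settles the variance issue you flag.
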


\begin{proof}
We can use an argument similar to the proof of \cite[Proposition 3.2.14]{infrank}.
\end{proof}

\begin{corollary}
For every $n \ge 0$, $T^n$ is an injective object of $\Rep^\spin(\fg)$.
\end{corollary}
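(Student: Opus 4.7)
The plan is to deduce this directly from Proposition~\ref{prop:spin-inj} by decomposing $\bV^{\otimes n}$ via Schur--Weyl duality and then tensoring with $\Delta$. Concretely, since $S_n$ acts on $\bV^{\otimes n}$ by permutation of tensor factors and this action commutes with the $\fg$-action, the $S_n$-isotypic decomposition is $\fg$-equivariant. By the standard construction of Schur functors, the $\bM_\lambda$-isotypic piece of $\bV^{\otimes n}$ is $\bM_\lambda \otimes \bS_\lambda(\bV)$, giving
\[
\bV^{\otimes n} \;\cong\; \bigoplus_{|\lambda|=n} \bM_\lambda \otimes \bS_\lambda(\bV)
\]
as $S_n \times \fg$-representations.

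Tensoring both sides with $\Delta$ (which carries a trivial $S_n$-action), I obtain
\[
T^n \;=\; \bV^{\otimes n} \otimes \Delta \;\cong\; \bigoplus_{|\lambda|=n} \bigl(\bS_\lambda(\bV) \otimes \Delta\bigr)^{\oplus \dim \bM_\lambda}
\]
in $\Rep^\spin(\fg)$. By Proposition~\ref{prop:spin-inj}, each summand $\bS_\lambda(\bV) \otimes \Delta$ is injective, and a finite direct sum of injective objects in an abelian category is again injective, so $T^n$ is injective.

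There is essentially no obstacle here: the only point worth double-checking is that the Schur--Weyl decomposition of $\bV^{\otimes n}$ is valid for the infinite-rank standard representation $\bV = \bW \oplus \bW_* \oplus \bC$, which is immediate because the argument is purely formal in $S_n$ (it simply picks out isotypic components) and because $\bS_\lambda(\bV)$ is defined exactly as the multiplicity space of $\bM_\lambda$ in $\bV^{\otimes n}$.
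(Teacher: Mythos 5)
Your proposal is correct and matches the paper's own proof: the paper likewise writes $T^n = \bigoplus_{|\lambda|=n} (\bS_\lambda(\bV) \otimes \Delta)^{\oplus \dim \bM_\lambda}$ via the Schur--Weyl decomposition of $\bV^{\otimes n}$ and invokes Proposition~\ref{prop:spin-inj} together with the fact that a finite direct sum of injectives is injective. No issues.
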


\begin{proof}
We have a decomposition $T^n = \bigoplus_{\lambda,\ |\lambda|=n} (\bS_\lambda(\bV) \otimes \Delta)^{\oplus \dim \bM_\lambda}$.
\end{proof}

\subsection{Universal property} \label{ss:spinuniv}

Consider pairs of categories $(\cA, \cB)$ where $\cA$ is a symmetric monoidal abelian category and $\cB$ is an $\cA$-module, i.e., we have a biadditive bifunctor $\otimes \colon \cA \times \cB \to \cB$ equipped with the appropriate extra structure (e.g., an associator).

Suppose that $(\cA', \cB')$ is a second pair. By a functor $F \colon (\cA, \cB) \to (\cA', \cB')$ we mean a symmetric tensor functor $F_1 \colon \cA \to \cA'$ and a functor $F_2 \colon \cB \to \cB'$ of $\cA$-modules, both of which are additive. We say that $F$ is left-exact if both $F_1$ and $F_2$ are. Let ${\rm LEx}^{\otimes}( (\cA, \cB), (\cA', \cB'))$ denote the category whose objects are left-exact functors $(\cA, \cB) \to (\cA', \cB')$ and whose morphisms are natural transformations compatible with the extra structure.

Suppose that $A \in \cA$ and $\omega \colon \Sym^2(\cA) \to \bC$ is a symmetric bilinear form on $A$. If $\cA$ has infinite direct sums, we can form the Clifford algebra $\Cl(A)$ as the usual quotient of the tensor algebra on $A$ (see \cite[\S 20.1]{fultonharris}). We can then speak of $\Cl(A)$-modules in $\cB$. Even if $\cA$ does not have infinite direct sums, we can still define the notion of a $\Cl(A)$-module in $\cB$: it is an object $B$ of $\cB$ equipped with a morphism $t \colon A \otimes B \to B$ such that the two maps
\begin{displaymath}
f, g \colon A \otimes A \otimes B \to B 
\end{displaymath}
given by
\begin{displaymath}
f(x \otimes y \otimes m)=t(x \otimes t(y \otimes m))+t(y \otimes t(x \otimes m)), \qquad
g(x \otimes y \otimes m) = \omega(x, y) m
\end{displaymath}
agree. We let $T(\cA, \cB)$ be the category whose objects are tuples $(A, \omega, B, t)$ as above. We write $(A, B)$ for an object of $T(\cA, \cB)$ when there is no danger of confusion.

Given $(A, B) \in T(\cA, \cB)$, define $\cK(A) \colon \db \to \cA$ by $L \mapsto A^{\otimes L}$ and similarly, define $\cK(B) \colon \dspin \to \cB$ by $L \mapsto A^{\otimes L} \otimes B$. For an object $M$ of $\Mod_\ub^\rf$ and an object $N$ of $\Mod_\uspin^\rf$, define (using \cite[(2.1.9)]{infrank}) objects of $\cA$ and $\cB$, respectively,
\begin{align*}
S_M(A) = M \otimes^\db \cK(A),\qquad S_N(B) = N \otimes^\dspin \cK(B).
\end{align*}
Then $(M,N) \mapsto (S_M(A), S_N(B))$ is a left-exact tensor functor $(\Mod_\ub^\rf, \Mod_\uspin^\rf) \to (\cA, \cB)$.

\begin{theorem} \label{thm:spin-univ}
Giving a left-exact tensor functor
\begin{displaymath}
(\Rep(\fg), \Rep^\spin(\fg)) \to (\cA, \cB)
\end{displaymath}
is the same as giving an object of $T(\cA, \cB)$.  More precisely, let $\bM$ be the object of $\Mod_\ub^\rf$ corresponding to $\bV$ in $\Rep(\fg)$ and let $\bN$ be the object of $\Mod_\uspin^\rf$ corresponding to $\Delta$ in $\Rep^\spin(\fg)$. Then the functors
\[
{\rm LEx}^{\otimes}( (\Mod_\ub^\rf, \Mod_\uspin^\rf), (\cA, \cB)) \to T(\cA, \cB), \qquad (F_1, F_2) \mapsto (F_1(\bM), F_2(\bN))
\]
and
\[
T(\cA, \cB) \to {\rm LEx}^{\otimes}( (\Mod_\ub^\rf, \Mod_\uspin^\rf), (\cA, \cB)), \qquad (A,B) \mapsto ((M,N) \mapsto (S_M(A), S_N(B)))
\]
are mutually quasi-inverse equivalences.
\end{theorem}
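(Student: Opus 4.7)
The plan is to reduce Theorem \ref{thm:spin-univ} to the diagram category description of Theorem \ref{thm:kernel-spin} together with its analogue for $(\Rep(\fg), \Mod_\db^\rf)$ from \cite{infrank}, and then to read off the universal property directly from the generators and relations of the diagram pair $(\db, \dspin)$. The key point is that the three basic kinds of morphisms in a spin-Brauer diagram (a circled vertex, a matching edge, and a bijection), together with the Clifford relation, match exactly the data of a tuple $(A, \omega, B, t)$ in $T(\cA, \cB)$.

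First I would check that the first functor $(F_1, F_2) \mapsto (F_1(\bM), F_2(\bN))$ is well-defined. Set $A = F_1(\bM)$ and $B = F_2(\bN)$. Under the equivalence, the pairing $\bV \otimes \bV \to \bC$ in $\Rep(\fg)$ corresponds to a morphism in $\Hom_\db(\{1,2\}, \emptyset)$, and applying $F_1$ produces a symmetric form $\omega \colon \Sym^2(A) \to \mathbf{1}$. Similarly the map $\bV \otimes \Delta \to \Delta$ from \eqref{eq:Vdelta} corresponds to the spin-Brauer diagram in $\Hom_\dspin(\{1\}, \emptyset)$ with one circled vertex, and applying $F_2$ produces $t \colon A \otimes B \to B$. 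The Clifford relation between two circled vertices with their ordering swapped, and the same two vertices connected by an edge, becomes the identity $f = g$ in the definition of $T(\cA, \cB)$ after applying $F_2$.

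Next I would check that the second functor is well-defined. Given $(A, \omega, B, t)$, one defines $\cK(A) \colon \db \to \cA$ sending $L \mapsto A^{\otimes L}$, with edges acting through $\omega$ and bijections acting in the obvious way; this is routine. Likewise $\cK(B) \colon \dspin \to \cB$ sends $L \mapsto A^{\otimes L} \otimes B$, with circled vertices acting through $t$, edges through $\omega$, and bijections as before. Functoriality on $\dspin$ is exactly the condition that the Clifford relation be respected, which is the compatibility $f = g$ imposed on $(A, \omega, B, t)$. The convolution formulas $S_M(A) = M \otimes^\db \cK(A)$ and $S_N(B) = N \otimes^\dspin \cK(B)$ then produce a left-exact tensor functor via \cite[(2.1.9)]{infrank}, with tensor compatibility from \cite[Proposition 2.1.16]{infrank}.

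The main obstacle is verifying that the two constructions are mutually quasi-inverse. One direction is essentially a calculation: $S_{\bM}(A) \cong A$ and $S_{\bN}(B) \cong B$ by the unit object property of $\bM$ and $\bN$ in their respective convolution categories, and one must check that the induced form and Clifford action match the given $\omega$ and $t$. The other direction requires that an arbitrary left-exact tensor functor be recovered from its value on $(\bV, \Delta)$; this reduces to the statement that every spin-Brauer diagram is a composite of circled vertices, matching edges, and bijections, modulo only the Clifford relation, which is built into the definition of $\dspin$. Once these two comparisons are established, the equivalence follows from the general framework of \cite[\S 2.1]{infrank}, parallel to the argument in the orthogonal case in \cite{infrank}, with the Clifford action on $\Delta$ providing the new ingredient.
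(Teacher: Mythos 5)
Your proposal is correct and follows essentially the same route as the paper, which simply invokes the proof of \cite[Theorem 3.4.2]{infrank}: your outline is a faithful adaptation of that argument, with the circled vertices and the Clifford relation in $\dspin$ supplying the extra datum $(B,t)$ with $f=g$, and the convolution formulas $S_M(A)$, $S_N(B)$ together with the framework of \cite[\S 2.1]{infrank} handling left-exactness and the quasi-inverse check.
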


\begin{proof}
Same proof as \cite[Theorem 3.4.2]{infrank}.
\end{proof}

\subsection{Twisted Lie algebras}

Let $\fa=\bC^{\infty} \oplus \Sym^2(\bC^{\infty})$, with the usual $\GL_{\infty}$ action. Define a bracket on $\fa$ by $[(v, f), (v', f')] = (0, vv')$, where $v,v' \in \bC^{\infty}$ and $f,f' \in \Sym^2(\bC^{\infty})$. This bracket is ``graded-anticommutative'' in that it is anti-commutative in even degrees and commutative in odd degrees. Let $\cU(\fa)$ be its universal enveloping algebra, defined in the obvious manner. We only consider $\fa$- or $\cU(\fa)$-modules with compatible polynomial $\GL_{\infty}$ action.

\begin{theorem}
We have an equivalence of abelian categories $\Mod_{\cU(\fa)} \simeq \Mod_\uspin$. Under this equivalence, the simple $\cU(\fa)$-module $\bS_{\lambda}$ corresponds to $\Delta_{\lambda}$ if we make the identification $\Mod^\rf_\uspin = \Rep^\spin(\fg)$ from Theorem~\ref{thm:kernel-spin}.
\end{theorem}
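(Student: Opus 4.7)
The plan is to adapt the argument for the analogous equivalence in \cite[\S 4]{infrank}, which identifies $\Mod_\ub$ with modules over the twisted commutative algebra $\Sym(\Sym^2 \bC^\infty)$. The crucial input is PBW for Lie superalgebras: since $\fa$ has $\bC^\infty$ in odd degree and $\Sym^2(\bC^\infty)$ in even degree, PBW yields an isomorphism $\cU(\fa) \cong \Sym(\Sym^2(\bC^\infty)) \otimes \bigwedge(\bC^\infty)$ in $\cV$. Consequently, a polynomial $\cU(\fa)$-module is the same as an object $M \in \cV$ equipped with action maps $t \colon \bC^\infty \otimes M \to M$ and $\mu \colon \Sym^2(\bC^\infty) \otimes M \to M$ satisfying the Clifford relation $t(v) t(w) + t(w) t(v) = \mu(vw)$.

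I would construct a comparison functor $\Phi \colon \Mod_\uspin \to \Mod_{\cU(\fa)}$ as follows. Using the standard Schur--Weyl equivalence $\Mod_\FB \simeq \cV$, a $\uspin$-representation $F$, restricted to the groupoid $\FB \subset \uspin$, produces an object $\Phi(F) \in \cV$. The additional generating morphisms of $\uspin$ beyond $\FB$---from the matching $\Gamma$-component and the ordered subset $U$-component of spin-Brauer diagrams---furnish respectively the maps $\mu$ and $t$ on $\Phi(F)$. The key verification is that the Clifford relation defining $\uspin$, obtained by transposing adjacent elements of $U$ and comparing with the matching of the same pair, translates precisely to the Clifford relation in $\cU(\fa)$ above, so $\Phi(F)$ acquires a canonical $\cU(\fa)$-module structure.

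To show $\Phi$ is an equivalence, I would exhibit a quasi-inverse by $\Psi(M)([n]) = \Hom_{\cU(\fa)}(\cU(\fa) \otimes (\bC^\infty)^{\otimes n}, M)$, with $\uspin$-transitions induced by the PBW-identified generators. The representable $\uspin$-modules $\Hom_\uspin([n], -)$ correspond under $\Phi$ to the free $\cU(\fa)$-modules $\cU(\fa) \otimes (\bC^\infty)^{\otimes n}$, and these generate their respective categories under colimits; checking fully faithfulness on this collection and compatibility with colimits (both sides being cocomplete) then suffices.

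For the simples, the simple $\cU(\fa)$-module $\bS_\lambda$ has $\fa$ acting through its augmentation, so as an $\FB$-module it is concentrated in degree $|\lambda|$ with value $\bM_\lambda$. Under $\Phi^{-1}$ this matches the simple $\uspin$-module supported at $[|\lambda|]$ on $\bM_\lambda$; by Theorem~\ref{thm:kernel-spin} (transposed from $\dspin$ to $\uspin$) combined with \eqref{tdecomp} and Proposition~\ref{prop:spin-hom-trace}, this simple corresponds to $\Delta_\lambda$. The main obstacle is the careful bookkeeping of signs and orderings when matching the Clifford-type relations on both sides; once this alignment is in place, the remainder is routine and parallels \cite[\S 4]{infrank}.
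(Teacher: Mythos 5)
Your proposal follows essentially the same route as the paper: both pass through Schur--Weyl duality ($\Mod_{\FB} \simeq \cV$) and match the generators of $\uspin$ beyond bijections (circled points, matched pairs) with the actions of $\fa_1 = \bC^\infty$ and $\fa_2 = \Sym^2(\bC^\infty)$, with the Clifford relation on spin-Brauer diagrams corresponding to the identification of $\Sym^2(\fa_1)$ with $\fa_2$ in $\cU(\fa)$; the paper packages this by exhibiting the $\FB$-object $\sU$ corresponding to $\cU(\fa)$ and invoking the formalism of \cite[\S 2.4]{infrank}, whereas you build the comparison functor and quasi-inverse directly, which amounts to the same thing. One correction is needed in your second paragraph: a polynomial $\cU(\fa)$-module is \emph{not} just an $M \in \cV$ with maps $t$ and $\mu$ satisfying $t(v)t(w)+t(w)t(v)=\mu(vw)$. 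The enveloping algebra also imposes that the $\mu$-operators commute with each other and with the $t$-operators (in the paper's terms, killing $\lw^2(\fa_2)$ and the antisymmetric part of $(\fa_1\otimes\fa_2)\oplus(\fa_2\otimes\fa_1)$), and these relations do not follow from the Clifford relation alone; they are exactly what is needed, for instance, for your $\Psi$ to respect composition in $\uspin$. Fortunately they correspond to the diagrammatic facts that the matching $\Gamma$ is unordered and carries no ordering relative to $U$, so they hold on both sides and your argument goes through once they are included in the dictionary. The identification of the simples, via the augmentation action on $\bS_\lambda$ together with Theorem~\ref{thm:kernel-spin}, \eqref{tdecomp} and Proposition~\ref{prop:spin-hom-trace}, is correct and is in fact spelled out more explicitly than in the paper.
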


\begin{proof}
Let $\FB$ be the groupoid of finite sets, and let $\Mod_{\FB}$ be the category of functors $\FB \to \Vec$. Schur--Weyl duality induces a symmetric monoidal equivalence between $\Mod_{\FB}$ and the category of polynomial $\GL_{\infty}$-representations (see \cite[(5.4.5)]{expos}). Let $\wt{\sU} \in \Mod_{\FB}$ be defined as follows: $\wt{\sU}_S$ has a basis consisting of elements $e_{U,\Gamma}$ where $U$ is a subset of $S$ equipped with a total order and $\Gamma$ is a perfect matching on $S \setminus U$. Let $\sU$ be defined as follows: $\sU_S$ is the quotient of $\wt{\sU}_S$ by the relations $e_{U,\Gamma}+e_{U',\Gamma}=e_{U'',\Gamma'}$, where $U'$ is obtained from $U$ by switching (in the order) two consecutive elements $i$ and $j$, $U''$ is $U \setminus \{i,j\}$, and $\Gamma'$ is $\Gamma$ together with the additional edge $(i,j)$.

We claim that $\cU(\fa)$ corresponds to $\sU$ under Schur--Weyl duality.
Under this equivalence, the space $\fa = \bC^\infty \oplus \Sym^2(\bC^\infty)$ corresponds to the functor $M \colon \FB \to \Vec$ given by
\begin{displaymath}
M_S = \begin{cases} \bC & \text{if $S$ has cardinality 1 or 2} \\ 0 & \text{otherwise} \end{cases}
\end{displaymath}
where $\bC$ is the trivial representation. Now, if $V,W \in \Mod_{\FB}$ then, by definition, we have
\begin{displaymath}
(V \otimes W)_S = \bigoplus_{S = A \amalg B} V_A \otimes W_B.
\end{displaymath}
It follows that $(\rT M)_S$ (where $\rT$ is the tensor algebra) has for a basis the set of partitions of $S$ into sets of size~1 and~2, together with a total order on the pieces of the partition. Now, $\cU(\fa)$ is obtained from $\rT(\fa)$ by killing $\lw^2(\fa_2)$, the anti-symmetric part of $(\fa_1 \otimes \fa_2) \oplus (\fa_2 \otimes \fa_1)$, and identifying $\Sym^2(\fa_1)$ with $\fa_2$ in the natural way. Under Schur--Weyl, the first two relations correspond to ignoring the ordering on 2-element sets; thus the quotient of $\rT M$ by these relations yields $\wt{\sU}$. Further quotienting by the Schur--Weyl dual of the third relations gives $\sU$. This proves the claim that $\cU(\fa)$ corresponds to $\sU$.

We now claim that $\Mod_{\sU}$ is equivalent to $\Mod_\uspin$. Let $\wt{\uspin}$ be the category whose objects are finite sets and whose morphisms are spin-Brauer diagrams. Then $\Mod_{\wt{\sU}}$ is equivalent to $\Mod_{\wt{\uspin}}$; this can be proved following the reasoning in \cite[\S 2.4]{infrank}. From this, the claim easily follows: imposing the Clifford relations on $\wt{\uspin}$ amounts to passing from $\wt{\sU}$ to $\sU$.
\end{proof}

\begin{corollary} \label{cor:spin-ext}
Let $c^\lambda_{\mu,\nu}$ be the Littlewood--Richardson coefficient. We have
\[
\dim \ext^i(\Delta_\mu, \Delta_\lambda) = \sum_{\substack{\nu = \nu^\dagger,\\ 2i = |\nu| + \rank(\nu)}} c^\lambda_{\mu, \nu}.
\]
\end{corollary}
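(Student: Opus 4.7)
My plan is to reduce the computation to an algebraic Ext calculation via the preceding equivalence, and then extract the dimension from a minimal free resolution.

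First, the preceding theorem identifies $\Mod^\rf_{\cU(\fa)}$ with $\Rep^\spin(\fg)$, sending the simple $\cU(\fa)$-module $\bS_\lambda$ (with trivial $\fa$-action) to $\Delta_\lambda$. Under this identification,
\[
\ext^i(\Delta_\mu, \Delta_\lambda) \;=\; \ext^i_{\cU(\fa)}(\bS_\mu, \bS_\lambda),
\]
with both sides computed in the $\GL_\infty$-equivariant category. This translates the problem to one of homological algebra over the twisted enveloping algebra.

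The key input I would need is a minimal $\GL_\infty$-equivariant free resolution $F_\bullet \to \bC$ of the trivial $\cU(\fa)$-module, whose $i$-th term I expect to take the form
\[
F_i \;\cong\; \cU(\fa) \otimes \bigoplus_{\substack{\nu = \nu^\dagger \\ 2i = |\nu| + \rank(\nu)}} \bS_\nu(\bC^\infty).
\]
Establishing this --- equivalently, computing $\operatorname{Tor}^{\cU(\fa)}_\bullet(\bC, \bC)$ as a $\GL_\infty$-representation and identifying its Schur content with self-conjugate partitions of the prescribed size and rank --- is the main obstacle. From the introduction this is the content of \S\ref{sec:determinantal}, where such resolutions for ``determinantal'' ideals in the enveloping algebra of a free 2-step nilpotent Lie superalgebra are worked out via the geometric technique. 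Since $\bS_\mu$ carries the trivial $\cU(\fa)$-action, tensoring $F_\bullet$ by $\bS_\mu$ over $\bC$ preserves minimality and freeness, producing a minimal free resolution of $\bS_\mu$.

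Finally, I apply $\Hom_{\cU(\fa)}(-, \bS_\lambda)$ term by term. Freeness-adjunction collapses the $i$-th term to
\[
\Hom_{\GL_\infty}\!\left(\bigoplus_{\substack{\nu = \nu^\dagger \\ 2i = |\nu| + \rank(\nu)}} \bS_\nu \otimes \bS_\mu, \ \bS_\lambda\right),
\]
and minimality forces all induced differentials to factor through the augmentation ideal $\cU(\fa)_+$, which annihilates the trivial module $\bS_\lambda$. Hence the differentials in the resulting complex vanish, and the Littlewood--Richardson identity $\dim \Hom_{\GL_\infty}(\bS_\nu \otimes \bS_\mu, \bS_\lambda) = c^\lambda_{\mu, \nu}$ yields exactly the claimed dimension formula.
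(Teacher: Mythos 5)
Your proposal is correct and is essentially the paper's own proof: both take the minimal free resolution of $\bC$ over $\cU(\fa)$ whose $i$-th term is $\cU(\fa)\otimes\bigoplus_{\nu=\nu^\dagger,\;2i=|\nu|+\rank(\nu)}\bS_\nu$ (this is Proposition~\ref{prop:spin-koszul}, rather than \S\ref{sec:determinantal}), tensor it with a Schur functor to obtain a minimal free resolution of a simple module, and use minimality (differentials land in the augmentation ideal, which kills the trivial module) to conclude that the Ext groups are the Hom spaces themselves, counted by Littlewood--Richardson coefficients. The only cosmetic difference is that you resolve $\bS_\mu$ and apply $\Hom_{\cU(\fa)}(-,\bS_\lambda)$ (the standard variance), whereas the paper tensors the resolution by $\bS_\lambda$ and applies $\hom_{\cU(\fa)}(\bS_\mu,-)$; the bookkeeping and the resulting formula are the same.
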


\begin{proof}
If $\bK_\bullet$ is the minimal free resolution of $\bC$ over $\cU(\fa)$, then 
\[
\bK_i = \cU(\fa) \otimes \bigoplus_{\substack{\nu = \nu^\dagger,\\ 2i = |\nu| + \rank(\nu)}} \bS_\nu
\]
(see Proposition~\ref{prop:spin-koszul} below). Then $\bK_\bullet \otimes \bS_\lambda$ gives a minimal free resolution of the simple $\cU(\fa)$-module $\bS_\lambda$. Applying $\hom_{\cU(\fa)}(\bS_\mu, -)$ kills the differentials in this complex, and we get the sum written above.
\end{proof}

\begin{remark}
Let $\fa'$ be the super vector space $\bC^{\infty|\infty}[1] \oplus \Sym^2(\bC^{\infty|\infty})$. This has a natural bracket, defined similarly to the one on $\fa$, that gives $\fa'$ the structure of a Lie superalgebra. The category of $\fa$-modules is equivalent to a certain category of $\fa'$-modules, and in this way one can connect $\Rep^{\spin}(\fg)$ to representations of $\fa'$.
\end{remark}

\subsection{Half-spinor representations}

Put
\begin{displaymath}
\ol{\fg} = \lw^2(\ol{\bV}) = \lw^2(\bW) \oplus (\bW \otimes \bW_*) \oplus \lw^2(\bW_*),
\end{displaymath}
Then $\ol{\fg} \subset \fg$ is a Lie subalgebra, $\ol{\bV} \subset \bV$ is stable by $\ol{\fg}$, and thus a representation, called the {\bf standard representation}. A representation of $\ol{\fg}$ is {\bf algebraic} if it is a subquotient of a finite direct sum of tensor powers of the standard representation. We write $\Rep(\ol{\fg})$ for the category of such representations. 

The restriction of the action to $\ol{\fg}$ preserves the natural $\bZ/2$-grading on $\Delta$, and so $\Delta$ splits into a sum of two subrepresentations $\Delta^+$ (the even piece) and $\Delta^-$ (the odd piece), the {\bf half-spinor representations} of $\ol{\fg}$. The action of the $h$'s preserves the $\bZ$-grading on $\Delta$. By restriction, we get maps of $\ol{\fg}$-modules 
\[
\ol{\bV} \otimes \Delta^+ \to \Delta^-, \qquad \ol{\bV} \otimes \Delta^- \to \Delta^+,
\]
both of which are surjective.

A representation of $\ol{\fg}$ is {\bf spin-algebraic} if it appears as a subquotient of a finite direct sum of representations of the form 
\[
\ol{T}^n = \ol{\bV}^{\otimes n} \otimes \Delta.
\]
The space $\ol{T}^n$ breaks up into a sum $\ol{T}^{n,+} \oplus \ol{T}^{n,-}$, where $\ol{T}^{n,\pm} = \ol{\bV}^{\otimes n} \otimes \Delta^{\pm}$. Any spin-algebraic representation appears as a subquotient of a finite direct sum of the representations $\ol{T}^{n,+}$ (and similarly for $\ol{T}^{n,-}$).  We write $\Rep^{\spin}(\ol{\fg})$ for the category of spin-algebraic representations of $\ol{\fg}$. It is an abelian category, and is naturally a module over the tensor category $\Rep(\ol{\fg})$, i.e., we have a bifunctor given by tensor product
\begin{align*}
\otimes \colon \Rep(\ol{\fg}) \times \Rep^\spin(\ol{\fg}) \to \Rep^\spin(\ol{\fg}).
\end{align*}

\begin{proposition}
Any non-zero $\ol{\fg}$-submodule of $\ol{T}^{n,+}$ has magnitude $n$ and any non-zero $\ol{\fg}$-submodule of $\ol{T}^{n,-}$ has magnitude $n+1$.
\end{proposition}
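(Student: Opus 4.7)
The plan is to mirror the proof of Proposition~\ref{prop:spin-mag}, now using a filtration adapted to the $\bZ/2$-grading of $\Delta$. Write $\ol{\fn} = \ol{\fg} \cap \fn$; since $\ol{\fg}$ does not contain the degree-$1$ generators $y_\phi$ of $\fn$, the subalgebra $\ol{\fn}$ is spanned by the $y_{\phi, \psi}$, which shift the $\Delta$-grading by $-2$. Moreover $\ol{\fg}$ preserves the parity of the $\Delta$-degree, so we obtain a filtration by submodules for the parabolic $\ol{\fg} \cap \fp$,
\[
F^r \ol{T}^{n,+} = \ol{\bV}^{\otimes n} \otimes \bigoplus_{\substack{j \le r\\ j \text{ even}}} \Delta^j, \qquad F^r \ol{T}^{n,-} = \ol{\bV}^{\otimes n} \otimes \bigoplus_{\substack{j \le r\\ j \text{ odd}}} \Delta^j,
\]
whose bottom pieces are $F^0 \ol{T}^{n,+} \cong \ol{\bV}^{\otimes n}$ and $F^1 \ol{T}^{n,-} \cong \ol{\bV}^{\otimes n} \otimes \bW$.

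Given a nonzero $\ol{\fg}$-submodule $M$, the first step is to show $M$ meets the bottom piece; this mimics Proposition~\ref{prop:spin-mag}(a). Proceed by descending induction: assuming $M \cap F^{r+2} \ne 0$ but $M \cap F^r = 0$, pick $v \in M \cap F^{r+2}$ whose image in the associated graded $F^{r+2}/F^r \cong \ol{\bV}^{\otimes n} \otimes \Delta^{r+2}$ is annihilated by $\ol{\fn}$ acting on the $\ol{\bV}^{\otimes n}$-factor. Such a $v$ exists because $\bW_*^{\otimes n} \subset \ol{\bV}^{\otimes n}$ furnishes nonzero $\ol{\fn}$-invariants (the formulas give $y_{\phi,\psi}\eta=0$ for $\eta \in \bW_*$). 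For a suitable $y = y_{e_{j_a}^*, e_{j_b}^*}$ with $j_a, j_b$ in the support of the $\Delta$-component of $v$'s top layer, the action of $y$ on the $\ol{\bV}^{\otimes n}$-factor kills that layer (so $yv \in F^r$), while the action $Y_{e_{j_a}^*} Y_{e_{j_b}^*}$ on $\Delta^{r+2}$ is nonzero. Iterating reaches $F^0$ (for $+$) or $F^1$ (for $-$).

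The second step, mimicking Proposition~\ref{prop:spin-mag}(b), analyzes the magnitude in the bottom piece. There the action of $\fgl(\bW) \subset \ol{\fg}$ is the standard action on $\ol{\bV}^{\otimes n}$ (or $\ol{\bV}^{\otimes n} \otimes \bW$), twisted by the character $-\tfrac{1}{2}\operatorname{tr}$ coming from the $\Delta^0$-factor (resp.\ $\Delta^1$-factor). Under the identification $X' \to X$, $\lambda \mapsto \lambda - \lambda_0$, the twist is absorbed into the shift of basepoint, so the $X'$-magnitude of a weight in the bottom piece equals the $\fgl(\bW)$-magnitude of the corresponding weight in $\ol{\bV}^{\otimes n}$ (resp.\ $\ol{\bV}^{\otimes n} \otimes \bW$). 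Applying \cite[Proposition 4.1.8]{infrank} then yields magnitudes $n$ and $n+1$, respectively.

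The main technical obstacle lies in the descent step: one must simultaneously kill the $\ol{\bV}^{\otimes n}$-action of $y_{\phi, \psi}$ on the top layer of $v$ while ensuring its $\Delta$-action remains nonzero, which requires the matching choice of $\ol{\fn}$-invariant representative $v$ and element $y$. The argument follows the pattern of Proposition~\ref{prop:spin-mag}(a), with the only substantive change being that the step size in the filtration becomes $2$ instead of $1$.
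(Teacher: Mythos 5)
Your proposal is correct and follows essentially the same route as the paper: the paper's proof is precisely the adaptation of Proposition~\ref{prop:spin-mag} you describe, reducing a nonzero submodule to the bottom filtration pieces $\ol{\bV}^{\otimes n} \otimes \Delta^0$ (for $+$) and $\ol{\bV}^{\otimes n} \otimes \Delta^1$ (for $-$) and then computing magnitudes there via the twisted $\fgl(\bW)$-action. Your added details (step size $2$ in the filtration, using the elements $y_{\phi,\psi}$ of $\ol{\fg} \cap \fn$ in the descent, absorbing the $-\tfrac{1}{2}\operatorname{tr}$ twist into the basepoint $\lambda_0$) are exactly the intended modifications.
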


\begin{proof}
The proof is similar to the proof of Proposition~\ref{prop:spin-mag}, but with the following statements: any non-zero submodule of $\ol{\bV}^{\otimes n} \otimes \Delta^+$ intersects $\ol{\bV}^{\otimes n} \otimes \Delta^0$, and any non-zero submodule of $\ol{\bV}^{\otimes n} \otimes \Delta^-$ intersects $\ol{\bV}^{\otimes n} \otimes \Delta^1$.
\end{proof}

We define $\ol{T}^{[n]}$ to be the intersection of the kernels of the maps $\ol{T}^n \to \ol{T}^{n-1}$. This breaks up as $\ol{T}^{[n],+} \oplus \ol{T}^{[n],-}$, where $\ol{T}^{[n], \pm}$ is the intersection of the kernels of maps $\ol{T}^{n,\pm} \to \ol{T}^{n-1,\mp}$. Define
\begin{displaymath}
\ol{\Delta}_{\lambda} = \Hom_{S_n}(\bM_{\lambda}, \ol{T}^{[n]}), \qquad
\ol{\Delta}_{\lambda}^{\pm} = \Hom_{S_n}(\bM_{\lambda}, \ol{T}^{[n],\pm})
\end{displaymath}
Note that $\ol{\Delta}_{\lambda} = \ol{\Delta}^+_{\lambda} \oplus \ol{\Delta}_{\lambda}^-$. 

To analyze $\Delta^{\pm}_\lambda$, we first need to understand the finite-dimensional case. So let $V$ be a $2n$-dimensional orthogonal space, and define $\Delta^{(n), \pm}_\lambda$ in an analogous way.

\begin{lemma} \label{lem:even-spinor-ker}
Let $\delta_\pm$ be the highest weight for $\Delta^{(n),\pm}$. If $\ell(\lambda) \le n$ then $\Delta^{(n),\pm}_\lambda$ is an irreducible representation of $\Spin(V)$ of highest weight $\lambda + \delta_\pm$. 
\end{lemma}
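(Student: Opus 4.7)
The plan is to mimic the proof of Lemma \ref{lem:odd-spinor-ker} in the even-dimensional setting, accounting for the fact that multiplication by $V \subset \Cl(V)$ swaps the parity of the half-spinor representations. Let $E$ be an $n$-dimensional vector space, set $A = \Sym(E \otimes V)$, and let $\dim V = 2n$. The action of $V$ on $\Delta^{(n)} = \Delta^{(n),+} \oplus \Delta^{(n),-}$ interchanges the two summands, so the analogue of the $A$-linear map used in the odd case splits as two degree-$1$ $\GL(E) \times \Spin(V)$-equivariant maps
\begin{displaymath}
E \otimes \Delta^{(n),\pm} \otimes A \;\longrightarrow\; \Delta^{(n),\mp} \otimes A,
\end{displaymath}
each induced by $E \to \Delta^{(n),\mp} \otimes E \otimes \Delta^{(n),\pm}$ tensored with the identity, where the first arrow comes from the parity-swapping Clifford action $t \colon V \to \End(\Delta^{(n)})$.

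First I would invoke the presentation of the corresponding finite-rank module from \cite[Proposition~4.8]{exceptional} (applied with the even orthogonal group in place of the odd one) to obtain a $\GL(E) \times \Spin(V)$-equivariant decomposition
\begin{displaymath}
M^{\pm} = \bigoplus_{\ell(\lambda) \le n} \bS_\lambda E \otimes \Delta^{(n),\pm}_\lambda,
\end{displaymath}
where $M^{\pm}$ is the cokernel of the displayed map. Combined with Cauchy's identity $A = \bigoplus_\lambda \bS_\lambda E \otimes \bS_\lambda V$, extracting the $\bS_\lambda E$-isotypic component gives a presentation
\begin{displaymath}
\Delta^{(n),\mp} \otimes \bS_{\lambda/1} V \;\longrightarrow\; \Delta^{(n),\pm} \otimes \bS_\lambda V \;\longrightarrow\; \Delta^{(n),\pm}_\lambda \;\longrightarrow\; 0,
\end{displaymath}
exhibiting $\Delta^{(n),\pm}_\lambda$ as the quotient of $\Delta^{(n),\pm} \otimes \bS_\lambda V$ by the images of the $t_i$. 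Dualizing converts this into the kernel description used in the definition of $\Delta^{(n),\pm}_\lambda$.

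Once this presentation is in hand, irreducibility and the computation of the highest weight follow by standard highest-weight theory: the generator of the quotient corresponding to the highest weight vector of $\bS_\lambda V$ tensored with the highest weight vector of $\Delta^{(n),\pm}$ has weight $\lambda + \delta_{\pm}$, and since $\ell(\lambda) \le n$, this is a dominant weight for $\Spin(V)$; then one checks that the unique irreducible quotient of $\Delta^{(n),\pm} \otimes \bS_\lambda V$ with this highest weight is not killed in the quotient, while all other constituents necessarily are.

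The main obstacle is verifying that \cite[Proposition 4.8]{exceptional} (or a close analogue) applies in the even case with the correct parity bookkeeping; the odd case treated in Lemma~\ref{lem:odd-spinor-ker} benefits from the operator $D$ keeping $\Delta^{(n)}$ a single irreducible $\Spin$-module, whereas here one must track the $\bZ/2$-grading consistently through the presentation and dualization. Apart from this parity care, the argument is a direct transcription of the proof of Lemma~\ref{lem:odd-spinor-ker}.
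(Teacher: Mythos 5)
Your proposal follows essentially the same route as the paper, which simply says the proof is the same as Lemma~\ref{lem:odd-spinor-ker} but with a different input from \cite{exceptional}. The "close analogue" you were worried about is exactly \cite[Proposition 4.2]{exceptional}, which gives the presentation in the even orthogonal case with the half-spinor parity structure built in, so no separate verification of the parity bookkeeping for Proposition 4.8 is needed.
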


\begin{proof}
Same as the proof of Lemma~\ref{lem:odd-spinor-ker} except that we use \cite[Proposition 4.2]{exceptional}.
\end{proof}

\begin{proposition}
The representations $\ol{\Delta}_\lambda^{\pm}$ are a complete irredundant set of simple objects of $\Rep^\spin(\ol{\fg})$. 
\end{proposition}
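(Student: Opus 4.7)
My plan is to mirror the proof of Proposition~\ref{prop:spin-simples} step by step, carefully tracking the $\bZ/2$-grading $\Delta = \Delta^+ \oplus \Delta^-$. For simplicity of each $\ol{\Delta}^\pm_\lambda$, I would consider the finite-rank subalgebra $\ol{\fg}^{(n)} \cong \fso(2n)$ built from $\bW^{(n)}$ and $\bW_*^{(n)}$, together with the corresponding finite-rank modules $\ol{\Delta}^{(n),\pm}_\lambda$. By Lemma~\ref{lem:even-spinor-ker}, each $\ol{\Delta}^{(n),\pm}_\lambda$ is simple of highest weight $\lambda + \delta_\pm$ once $\ell(\lambda) \le n$. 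Since $\ol{\Delta}^\pm_\lambda = \bigcup_n \ol{\Delta}^{(n),\pm}_\lambda$ and $\ol{\fg} = \bigcup_n \ol{\fg}^{(n)}$, the infinite-rank module is also simple, hence a simple object of $\Rep^\spin(\ol{\fg})$.

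To show every simple arises this way, I would use that semisimplicity of $S_n$ gives a decomposition
\[
\ol{T}^{[n],\pm} = \bigoplus_{|\lambda|=n} \bM_\lambda \otimes \ol{\Delta}^\pm_\lambda
\]
as $S_n \times \ol{\fg}$-modules, so every constituent of $\ol{T}^{[n],\pm}$ is one of the $\ol{\Delta}^\pm_\lambda$. Because the contraction $\ol{\bV} \otimes \Delta^\pm \to \Delta^\mp$ flips parity, the analogue of \eqref{tseq} is the exact sequence
\[
0 \to \ol{T}^{[n],\pm} \to \ol{T}^{n,\pm} \to (\ol{T}^{n-1,\mp})^{\oplus n},
\]
and induction on $n$, carrying the sign through, shows that every simple constituent of every $\ol{T}^{n,\pm}$ has the form $\ol{\Delta}^{\pm'}_\mu$ for some partition $\mu$ and sign $\pm'$. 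Since any spin-algebraic representation of $\ol{\fg}$ is a subquotient of a finite direct sum of such $\ol{T}^{n,\pm}$, every simple object of $\Rep^\spin(\ol{\fg})$ is isomorphic to some $\ol{\Delta}^\pm_\lambda$.

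For irredundancy, I would compare highest weights with respect to $\fb \cap \ol{\fg}$. Using the preceding magnitude proposition, the highest weight vector of $\ol{\Delta}^\pm_\lambda$ lies in $\ol{\bV}^{\otimes n} \otimes \Delta^0$ in the $+$ case and in $\ol{\bV}^{\otimes n} \otimes \Delta^1$ in the $-$ case, and its weight is $\lambda + \delta_\pm$. This weight determines both $\lambda$ and the sign, so the $\ol{\Delta}^\pm_\lambda$ are pairwise non-isomorphic. The main bookkeeping obstacle is the sign-flipping behavior of the contraction maps: unlike the odd case, the inductive step in the exact sequence and the highest-weight comparison both need to carry the parity explicitly, and the $\pm$ distinction must be certified at the level of the finite-dimensional highest weights $\delta_\pm$ rather than merely by characters of $\bS_\lambda(\ol{\bV}) \otimes \Delta$. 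Once that accounting is done, every other ingredient transports directly from Proposition~\ref{prop:spin-simples}.
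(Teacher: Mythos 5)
Most of your plan does transport correctly and is what the paper intends by ``similar'': simplicity via the finite-rank subalgebras $\ol{\fg}^{(n)} \cong \fso(2n)$ and Lemma~\ref{lem:even-spinor-ker}, and completeness via the decomposition of $\ol{T}^{[n],\pm}$ together with the parity-flipping exact sequence $0 \to \ol{T}^{[n],\pm} \to \ol{T}^{n,\pm} \to (\ol{T}^{n-1,\mp})^{\oplus n}$. The gap is in your irredundancy step. You assert, citing the magnitude proposition, that $\ol{\Delta}^\pm_\lambda$ has a highest weight vector lying in $\ol{\bV}^{\otimes n}\otimes\Delta^0$ (resp.\ $\ol{\bV}^{\otimes n}\otimes\Delta^1$) of weight $\lambda+\delta_\pm$. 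In the infinite-rank setting this fails for the minus family: already $\Delta^- = \ol{\Delta}^-_\emptyset$ contains no nonzero vector annihilated by $\fb \cap \ol{\fg}$. Indeed, any $\fb$-invariant vector of $\Delta^-$ would have to lie in $\Delta^1$ (the operators $Y_\phi Y_\psi$ have trivial joint kernel on $\lw^k\bW$ for $k \ge 2$), but for $v = \sum_k c_k e_k \in \Delta^1$ one has $h_{e_i,e_k^*}v = c_k e_i$ for every $i > k$, so no nonzero $v$ is killed by all of $\fb \cap \ol{\fg}$. This reflects the fact that the finite-rank highest weight vectors (e.g.\ $e_n$ for $\Delta^{(n),-}$) move with $n$ and do not stabilize, and correspondingly ``$\delta_-$'' is not a well-defined element of the weight lattice $X'$ of the infinite diagonal torus. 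Note also that the even-case magnitude proposition, unlike Proposition~\ref{prop:spin-mag}(b), deliberately asserts nothing about existence or uniqueness of highest weight vectors, so it cannot be cited for this step.

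So the distinctness of $\ol{\Delta}^+_\lambda$ from $\ol{\Delta}^-_\mu$ (and of the $\ol{\Delta}^\pm_\lambda$ among themselves) is not established as written. The repair should follow the odd case more closely: work with characters and the triangular relation between $[\bS_\lambda(\ol{\bV})\otimes\Delta^\pm]$ and the classes $[\ol{\Delta}^{\pm'}_\mu]$, and separate the two families using the magnitude statement you already quoted (a nonzero submodule of $\ol{T}^{n,+}$ has magnitude $n$ while one of $\ol{T}^{n,-}$ has magnitude $n+1$), equivalently the parity of the $\bZ$-grading on $\Delta$ carried by every weight, rather than an infinite-rank highest weight vector. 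With that substitution the rest of your argument goes through.
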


\begin{proof}
The proof is similar to the proof of Proposition~\ref{prop:spin-simples}.
\end{proof}

Let $\{\pm\}$ be the category with objects labeled $+$ and $-$ and no non-identity morphisms. Define $\Mod_{\dspin \times \{\pm\}}^\rf$ in the same way as $\Mod_\dspin^\rf$ and define a representation $\ol{\cK}$ of $\dspin \times \{\pm\}$ by 
\begin{align*}
\ol{\cK}_{(L,+)} = \begin{cases} \ol{\bV}^{\otimes L} \otimes \Delta^+ & \text{if $\# L$ is even}\\
\ol{\bV}^{\otimes L} \otimes \Delta^- & \text{if $\# L$ is odd}
\end{cases}, \qquad
\ol{\cK}_{(L,-)} = \begin{cases} \ol{\bV}^{\otimes L} \otimes \Delta^- & \text{if $\# L$ is even}\\
\ol{\bV}^{\otimes L} \otimes \Delta^+ & \text{if $\# L$ is odd}
\end{cases}.
\end{align*}
On morphisms of $\dspin \times \{\pm\}$, $\ol{\cK}$ is defined using the maps $t_i$ as in the definition of $\cK$. Then:

\begin{theorem} \label{thm:kernel-spin2}
The kernel $\ol{\cK}$ induces mutually quasi-inverse anti-equivalences between $\Mod_{\dspin \times \{\pm\}}^\rf$ and $\Rep^{\spin}(\ol{\fg})$, as tensor categories.
\end{theorem}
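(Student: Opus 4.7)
The plan is to apply the same machinery used for Theorem~\ref{thm:kernel-spin}: verify the hypotheses of \cite[Theorem 2.1.11]{infrank} for the kernel $\ol{\cK}$, and then invoke \cite[Proposition 2.1.16]{infrank} for the tensor compatibility. The parity twist in the definition of $\ol{\cK}$ is the only genuinely new feature, so I would organize the argument around neutralizing it at the outset.

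First I would observe that $\dspin \times \{\pm\}$ decouples, under $\ol{\cK}$, into two independent copies of $\dspin$. The parity constraint on a spin-Brauer morphism $L \to L'$ (namely $|L| - |L'| \equiv |U| \pmod{2}$), combined with the fact that each application of $t_i$ flips the parity of the $\Delta$-factor, forces any morphism $(L,\epsilon) \to (L',\epsilon')$ with nonzero image under $\ol{\cK}$ to satisfy $\epsilon = \epsilon'$. Consequently it suffices to check the hypotheses of \cite[Theorem 2.1.11]{infrank} separately for each sign, and within a fixed sign the story is formally identical to Theorem~\ref{thm:kernel-spin} with $\Delta^{\pm}$ in place of $\Delta$.

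Next I would establish the half-spinor analog of Proposition~\ref{prop:spin-hom-trace}, namely that $\Hom_{\ol{\fg}}(\ol{\Delta}_\mu^\epsilon, \ol{T}^{n,\epsilon'})$ equals $\bM_\mu$ when $n = |\mu|$ and the parities $\epsilon, \epsilon'$ are compatible with the parity of $n$, and vanishes otherwise. The proof transcribes that of Proposition~\ref{prop:spin-hom-trace} verbatim, using the preceding magnitude proposition for $\ol{\fg}$ in place of Proposition~\ref{prop:spin-mag}, the short exact sequence $0 \to \ol{T}^{[n],\pm} \to \ol{T}^{n,\pm} \to (\ol{T}^{n-1,\mp})^{\oplus n}$ in place of \eqref{tseq}, and the $S_n \times \ol{\fg}$-decomposition of $\ol{T}^{[n],\pm}$ into the $\ol{\Delta}_\lambda^{\pm}$ (a direct byproduct of the preceding simples proposition) in place of \eqref{tdecomp}. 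With this Hom-computation in hand, both hypotheses of \cite[Theorem 2.1.11]{infrank} hold: irreducibility of the $\bM_\lambda$-isotypic in $\ol{\cK}$ follows from the simples proposition, and the reciprocal Hom-condition is exactly what I just proved.

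For the tensor statement, I would extend the bifunctor $\amalg \colon \db \times \dspin \to \dspin$ trivially in the $\{\pm\}$-coordinate, giving $\db \times (\dspin \times \{\pm\}) \to \dspin \times \{\pm\}$, produce the convolution $\Mod_\db^\rf$-module structure via \cite[(2.1.14)]{infrank}, and match it with the $\Rep(\ol{\fg})$-action on $\Rep^{\spin}(\ol{\fg})$ using \cite[Proposition 2.1.16]{infrank}. The main obstacle is purely notational: keeping the parity bookkeeping (of $|L|$, of $\epsilon$, and of the number of circled vertices in a spin-Brauer diagram) mutually consistent. Once the initial decoupling isolates a single $\epsilon$, all of the remaining mathematical content is a transcription of the earlier proof.
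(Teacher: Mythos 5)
Your overall route is the paper's own: rerun the proof of Theorem~\ref{thm:kernel-spin}, checking the hypotheses of the kernel theorem of \cite{infrank} against half-spinor analogues of Propositions~\ref{prop:spin-simples} and~\ref{prop:spin-hom-trace}, with the tensor statement coming from the convolution product. The decoupling over $\{\pm\}$ and the first hypothesis are fine. The gap is in your claim that the analogue of Proposition~\ref{prop:spin-hom-trace} ``transcribes verbatim.'' In the ungraded argument the vanishing for $|\lambda|<n$ comes from a magnitude clash: the image would be a submodule of $T^n$, hence has magnitude $n$, while $\Delta_\lambda$ has magnitude $|\lambda|<n$. In the half-spinor setting the two towers are offset by one: nonzero submodules of $\ol{T}^{n,+}$ have magnitude $n$, but nonzero submodules of $\ol{T}^{m,-}$ have magnitude $m+1$, so $\ol{\Delta}^-_\mu$ has magnitude $|\mu|+1$. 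For a hypothetical nonzero map $\ol{\Delta}^-_\mu \to \ol{T}^{n,+}$ the magnitude comparison therefore gives $|\mu| = n-1$, which is also permitted by the constituent bound (indeed $\ol{\Delta}^-_\mu$ with $|\mu|=n-1$ \emph{is} a constituent of $\ol{T}^{n,+}$), so your step ``$n=|\mu|$, now use \eqref{tseq} and \eqref{tdecomp}'' is never reached. Thus the cross-parity vanishing $\Hom_{\ol{\fg}}(\ol{\Delta}^-_\mu, \ol{T}^{|\mu|+1,+})=0$ (equivalently, that the socle of $\ol{T}^{n,+}$ is exactly $\ol{T}^{[n],+}$) is not covered by the three ingredients you cite, and it is exactly the kind of Hom-vanishing the kernel theorem needs in the mixed cases.

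This missing case is not formal bookkeeping. In finite rank one has $V \otimes \Delta^+ \cong V_{(1)+\delta_+} \oplus V_{\delta_-}$ for $\fso(2m)$, so the corresponding Hom is nonzero there; the vanishing is an honestly infinite-rank phenomenon. Already for $n=1$ it amounts to the non-splitness of the surjection $\ol{\bV} \otimes \Delta^+ \to \Delta^-$: a splitting would send $e_1$ to a finite combination $c_0\, e_1 \otimes 1 + \sum_{i \ge 2} c_i\, e_i^* \otimes (e_1 \wedge e_i)$ annihilated by every $y_{e_1^*, e_j^*}$, which forces $c_j = -c_0$ for all $j \ge 2$ and hence all coefficients to vanish by finiteness of support. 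So before invoking the kernel theorem you need an additional argument for the mixed-sign vanishing --- for instance, a sharpened analogue of Proposition~\ref{prop:spin-mag}(b) showing that every simple submodule of $\ol{T}^{n,+}$ (resp.\ $\ol{T}^{n,-}$) is generated by a distinguished vector in $\ol{\bV}^{\otimes n} \otimes \Delta^0$ (resp.\ $\ol{\bV}^{\otimes n} \otimes \Delta^1$), together with an argument that a copy of a minus-type (resp.\ plus-type) simple cannot contain such a vector, or a direct finite-support computation as above carried out in general. With that supplied, the rest of your outline, including the tensor compatibility, goes through as in the paper.
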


\begin{proof}
Similar to the proof of Theorem~\ref{thm:kernel-spin}.
\end{proof}

Consider a pair of categories $(\cA, \cB)$ as in \S \ref{ss:spinuniv}. Define $\ol{T}(\cA, \cB)$ to be the category whose objects are tuples $(A, \omega, B_+, B_-, t)$ where $(A, \omega, B_+ \oplus B_-, t) \in T(\cA, \cB)$ and the morphism $t$ decomposes as $A \otimes B_+ \to B_-$ and $A \otimes B_- \to B_+$. In other words, $B_+ \oplus B_-$ is a $\bZ/2$-graded module over $\Cl(A)$. The morphisms in this category are defined in a way similar to those in $T(\cA, \cB)$.

\begin{theorem} 
The data of a left-exact tensor functor $(\Rep(\ol{\fg}), \Rep^\spin(\ol{\fg}))$ to $(\cA, \cB)$ is the same as an object of $\ol{T}(\cA, \cB)$. 
\end{theorem}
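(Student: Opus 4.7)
The plan is to follow the template of Theorem~\ref{thm:spin-univ}, with the only new feature being the $\bZ/2$-grading carried by the half-spinor representations. By Theorem~\ref{thm:kernel-spin2} we have an anti-equivalence $\Rep^\spin(\ol{\fg}) \simeq \Mod_{\dspin \times \{\pm\}}^\rf$, and the analogous anti-equivalence for $\Rep(\ol{\fg}) \simeq \Mod_\db^\rf$ is already in \cite{infrank}. These presentations let us specify a left-exact tensor functor out of $(\Rep(\ol{\fg}), \Rep^\spin(\ol{\fg}))$ by saying what it does to the generators $\ol{\bV}$ and $\Delta^\pm$ and then invoking the convolution recipe of \S\ref{ss:spinuniv}.

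In the forward direction, given a left-exact tensor functor $(F_1, F_2)$, I would set $A = F_1(\ol{\bV})$ and $B_\pm = F_2(\Delta^\pm)$. The orthogonal form $\omega$ on $\ol{\bV}$ and the two Clifford maps $\ol{\bV} \otimes \Delta^\pm \to \Delta^\mp$ listed in the definition of the half-spinor representations transport under $(F_1, F_2)$ to a form on $A$ and to two maps $A \otimes B_\pm \to B_\mp$, assembling the required object $(A, \omega, B_+, B_-, t)$ of $\ol{T}(\cA, \cB)$. The Clifford relation is inherited because it is already an identity in the category $\Rep^\spin(\ol{\fg})$.

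For the inverse direction, given such a tuple, I would construct the kernel $\ol{\cK}(B_+, B_-) \colon \dspin \times \{\pm\} \to \cB$ exactly as in Theorem~\ref{thm:kernel-spin2}, sending $(L,+)$ to $A^{\otimes L} \otimes B_+$ or $A^{\otimes L} \otimes B_-$ according to the parity of $\#L$ (and reversed for $(L,-)$), with morphisms interpreted via the components of $t$ at circled vertices and via $\omega$ at matching edges. The Clifford relation imposed on spin-Brauer diagrams is exactly the identity satisfied by $t$ as an object of $\ol{T}(\cA, \cB)$, so this gives a well-defined functor. The convolution $N \otimes^{\uspin \times \{\pm\}} \ol{\cK}(B_+, B_-)$ then provides the functor $F_2$, and $F_1$ is constructed analogously from $\cK(A) \colon \db \to \cA$.

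The main obstacle will be the grading bookkeeping needed to see that $\ol{\cK}(B_+,B_-)$ lands in the correct component of $\cB$: applying any morphism in $\dspin \times \{\pm\}$ toggles the $\pm$ label according to the parity of circled vertices, and matching this toggle against the odd action of $A$ on $B_+ \oplus B_-$ requires a case analysis on diagram components. Once this compatibility is verified, the fact that the two constructions are mutually quasi-inverse is a formal consequence and follows as in the proof of \cite[Theorem 3.4.2]{infrank}, by checking that evaluation of the constructed functor at the distinguished generators $\ol{\bV}$ and $\Delta^\pm$ returns the original tuple.
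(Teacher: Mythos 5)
Your proposal is correct and follows essentially the same route as the paper, whose proof is simply the observation that the argument of \cite[Theorem 3.4.2]{infrank} (the template already used for Theorem~\ref{thm:spin-univ}) carries over, with the equivalence of Theorem~\ref{thm:kernel-spin2} and the parity bookkeeping for $\Delta^\pm$ being the only new inputs. You have spelled out exactly those ingredients, so no further comparison is needed.
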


\begin{proof}
Similar to the proof of \cite[Theorem 3.4.2]{infrank}.
\end{proof}

\section{Infinite rank oscillator representations}

\subsection{Transpose duality} \label{ss:transpose}

All of the objects constructed in \S\ref{sec:spinor} can be thought of as representations of $\GL(\bW)$, and so belong to the category $\Rep(\GL)$ described in \cite[\S 3.1]{infrank}. We can use the transpose duality functor \cite[(3.3.8)]{infrank} to define an auto-equivalence on $\Rep(\GL)$ which sends the simple object $V_{\lambda, \mu}$ to $V_{\lambda^\dagger, \mu^\dagger}$ and which is an anti-symmetric monoidal equivalence in each variable.

In particular, we can apply this equivalence to the objects constructed in \S\ref{sec:spinor} to get a completely parallel theory of oscillator representations. In the rest of the section, we state the relevant results and omit the proofs since they follow from duality.

\subsection{Basic definitions}

Let $\bW=\bC^{\infty}$ and let $\bW_*$ be its restricted dual and put 
\[
\ol{\bV}=\bW \oplus \bW_*, \qquad 
\bV = \bC \uplus \ol{\bV}.
\]
Thus $\bV$ is a $\bZ/2$-graded vector space; in fact, we regard it as $\bZ$-graded with $\deg(\bC) = 0$ and $\deg(\bW) = 1$ and $\deg(\bW_\ast)=-1$. As before, we let $\be$ be a basis vector of $\bC$. We put a form $\omega$ on $\ol{\bV}$ by 
\[
\omega((v, f), (v', f')) = f'(v) - f(v').
\]
We extend this to a supersymmetric form, also called $\omega$, to $\bV$ by $\omega(\be,\be) = 1$ and $\omega(\be,v) = 0$ for all $v \in \ol{\bV}$. Put
\begin{align*}
\fg = \fosp(1|2\infty) &\cong \Sym^2(\bV)\\
& = \Sym^2(\bW) \oplus \bW \oplus (\bW \otimes \bW_*) \oplus \bW_* \oplus \Sym^2(\bW_*),
\end{align*}
We name some elements of $\fg$:
\begin{itemize}
\item For $v, w \in \bW$ we let $x_{v,w}=vw \in \Sym^2(\bW) \subset \fg$ and $x_v = v \otimes \be \in \bW \otimes \bC \subset \fg$; 
\item For $v \in \bW$ and $\phi \in \bW_*$ we let $h_{v,\phi}=v \otimes \phi \in \bW \otimes \bW_* \subset \fg$; 
\item For $\phi, \psi \in \bW_*$ we let $y_{\phi, \psi}=\phi\psi \in \Sym^2(\bW_*) \subset \fg$ and $y_\phi = \be \otimes \phi \in \bC \otimes \bW \subset \fg$. 
\end{itemize}

Define a map $\fg \to \fgl(\bV)$ as follows. Suppose $u \in \bW \subset \ol{\bV}$. Then
\begin{displaymath}
x_{v,w} u=0, \qquad x_v u = 0, \qquad h_{v,\phi} u = \phi(u) v, \qquad y_\phi u = \phi(u), \qquad y_{\phi,\psi} u = \psi(u) \phi + \phi(u) \psi.
\end{displaymath}
We define the action of $\eta \in \bW_*$ in an analogous manner:
\begin{displaymath}
x_{v,w} \eta= \eta(w)v + \eta(v)w, \qquad x_v \eta = \eta(v), \qquad h_{v,\phi} \eta = \eta(v) \phi, \qquad y_\phi \eta = 0, \qquad y_{\phi,\psi} \eta = 0.
\end{displaymath}
We also set
\begin{displaymath}
x_{v,w} \be=0, \qquad x_v \be = v, \qquad h_{v,\phi} \be = 0, \qquad y_\phi \be = \phi, \qquad y_{\phi,\psi} \be=0.
\end{displaymath}
Then $\fg$ is closed under the Lie superbracket on $\fgl(\bV)$ and so is a Lie superalgebra. It preserves the form $\omega$ on $\bV$ which is why we have called it $\osp(1|2\infty)$. We call $\bV$ the {\bf standard representation} of $\fg$. We say that a representation of $\fg$ is {\bf algebraic} if it appears as a subquotient of a finite direct sum of tensor powers of the standard representation.

Let $\nabla^n=\Sym^n{\bW}$, and let $\nabla=\bigoplus_{n \ge 0} \nabla^n$ be the symmetric algebra on $\bW$. For $v \in \bW$, we let $X_v$ be the operator on $\nabla$ given by 
\[
X_v(w)=vw.
\]
The operators $X_v$ and $X_w$ commute. For $\phi \in \bW_*$, we let $Y_{\phi}$ be the operator on $\nabla$ given by
\begin{displaymath}
Y_{\phi}(v_1 \cdots v_n)=\sum_{i=1}^n \phi(v_i) v_1 \cdots \wh{v}_i \cdots v_n.
\end{displaymath}
The operators $Y_{\phi}$ and $Y_{\psi}$ also commute and $X_vY_{\phi} - Y_{\phi}X_v = -\phi(v)$. We let 
\[
H_{v,\phi}=X_v Y_{\phi};
\]
this is the usual action of the element $v \phi \in \mf{gl}(\bW)$ on $\nabla$. Finally, define $D$ by 
\[
D(v_1 \cdots v_n) = (-1)^n v_1 \cdots v_n.
\]
Then $D$ supercommutes with all $X_v$ and all $Y_\phi$.

We now define a representation $\rho$ of $\fg$ on $\nabla$, as follows:
\begin{align*}
&\rho(x_{v,w}) = X_v X_w, \qquad
\rho(x_v) = X_v D, \qquad
\rho(h_{v,\phi})=H_{v,\phi}-\tfrac{1}{2} \phi(v),\\
&\rho(y_\phi) = D Y_\phi, \qquad
\rho(y_{\phi,\psi})=Y_{\phi} Y_{\psi}.
\end{align*}
We leave it to the reader to verify that this is a well-defined representation. This is the {\bf oscillator representation} of $\fg$. 

The map $\bV \to \End(\nabla)$ defined by $v \mapsto X_v$ and $\phi \mapsto Y_{\phi}$ and $\be \mapsto D$ is a map of $\fg$-representations. It follows that the map 
\[
\bV \otimes \nabla \to \nabla
\]
given by $v \otimes x \mapsto X_v x$ and $\phi \otimes x \mapsto Y_{\phi} x$ and $1 \otimes x \mapsto Dx$ is a map of $\fg$-representations.

We say that a representation of $\fg$ is {\bf osc-algebraic} if it appears as a subquotient of a finite direct sum of representations of the form 
\[
T^n = \bV^{\otimes n} \otimes \nabla.
\]
We write $\Rep^{\osc}(\fg)$ for the category of osc-algebraic representations of $\fg$. It is an abelian category, and is naturally a module over the tensor category $\Rep(\fg)$, i.e., we have a bifunctor given by tensor product
\begin{align*}
\otimes \colon \Rep(\fg) \times \Rep^\osc(\fg) \to \Rep^\osc(\fg).
\end{align*}

\subsection{Weyl's construction (infinite case)}

Let $t_i \colon T^n \to T^{n-1}$ be the map given by applying the map $\bV \otimes \nabla \to \nabla$ to the $i$th factor. An important property of these maps is that $t_i t_j - t_j t_i$ is induced from the map $\bV^{\otimes n} \to \bV^{\otimes (n-2)}$ given by the symplectic pairing on the $i$th and $j$th factors. Define $T^{[n]}$ to be the intersection of the kernels of the $t_i$. This is stable under the action of $S_n \times \fg$. Finally, define
\begin{displaymath}
\nabla_{\lambda}=\Hom_{S_n}(\bM_{\lambda}, T^{[n]})
\end{displaymath}
where $\bM_\lambda$ is the irreducible representation of $S_n$ indexed by $\lambda$. We now have a number of results that are analogous to those in \S\ref{sec:weyl-spin}.

\begin{proposition}
As $\lambda$ ranges over all partitions, the representations $\nabla_{\lambda}$ are a complete irredundant set of simple objects of $\Rep^{\osc}(\fg)$.
\end{proposition}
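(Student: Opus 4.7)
The plan is to mimic the proof of Proposition~\ref{prop:spin-simples} step by step, replacing the spinor ingredients with their oscillator counterparts. Alternatively, since \S\ref{ss:transpose} asserts that transpose duality converts the spinor setup into the oscillator setup and interchanges $\Delta_\lambda$ with $\nabla_{\lambda^\dagger}$, one could simply invoke Proposition~\ref{prop:spin-simples} and transport it through that anti-equivalence. I describe the direct argument, which makes the structural parallel transparent.

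First I would establish the oscillator analog of Lemma~\ref{lem:odd-spinor-ker}: for a finite-dimensional orthosymplectic form on $V = \bC \uplus \bC^{2n}$ and $E$ an $n$-dimensional auxiliary vector space, the module $M = \Sym(E \otimes V)/(t)$ has a $\GL(E) \times \fosp(1|2n)$-equivariant decomposition $M = \bigoplus_{\ell(\lambda) \le n} \bS_\lambda E \otimes \nabla^{(n)}_\lambda$, exactly as in the spinor case but using the symplectic/odd Howe dual pair. Taking the $\bS_\lambda E$-isotypic component yields a presentation
\[
\nabla^{(n)} \otimes \bS_{\lambda/1} V \to \nabla^{(n)} \otimes \bS_\lambda V \to \nabla^{(n)}_\lambda \to 0,
\]
which identifies $\nabla^{(n)}_\lambda$ as a simple $\fosp(1|2n)$-module of highest weight $\lambda + \eta$ after the usual translation by duality. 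The main obstacle is this step: one needs to verify that the oscillator analog of \cite[Proposition 4.8]{exceptional} applies, making essential use of the fact (emphasized in the introduction) that the finite-dimensional representation theory of $\fosp(1|2n)$ is semisimple, so that Howe duality behaves as cleanly as in the spinor case.

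With simplicity in finite rank established, I would note that $\nabla_\lambda = \bigcup_{n \ge 1} \nabla^{(n)}_\lambda$ and $\fg = \bigcup_{n \ge 1} \fg^{(n)}$, so $\nabla_\lambda$ is irreducible as a $\fg$-module and hence a simple object of $\Rep^\osc(\fg)$. The definition of $\nabla_\lambda$ combined with semisimplicity of $S_n$ gives the decomposition
\[
T^{[n]} = \bigoplus_{|\lambda|=n} \bM_\lambda \otimes \nabla_\lambda,
\]
and the $t_i$ provide the exact sequence
\[
0 \to T^{[n]} \to T^n \to (T^{n-1})^{\oplus n}.
\]
An induction on $n$ then shows that every simple constituent of each $T^n$, and thus of every osc-algebraic representation, is some $\nabla_\mu$ with $|\mu| \le n$, proving completeness.

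For irredundance, I would argue via characters under the diagonal torus in $\mf{gl}(\bW)$. The characters of $\bS_\lambda(\bV) \otimes \nabla$ are linearly independent as $\lambda$ varies, and in the Grothendieck group of $\Rep^\osc(\fg)$ there is an upper-triangular relation
\[
[\bS_\lambda(\bV) \otimes \nabla] = [\nabla_\lambda] + \sum_{|\mu| < |\lambda|} c^\lambda_\mu\, [\nabla_\mu],
\]
forcing the $[\nabla_\lambda]$ to be linearly independent and hence the $\nabla_\lambda$ to be pairwise non-isomorphic. The principal conceptual hurdle, as noted, is checking the Howe-dual presentation in finite rank; everything else is a formal transcription of the spinor argument, with the orthogonal pairing replaced by the supersymmetric form $\omega$ and with the Clifford relations replaced by the symmetric/super relations already incorporated into the definition of the $t_i$.
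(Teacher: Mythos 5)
Your proposal is correct, but the argument you spell out is not the one the paper uses: the paper proves this proposition (like everything in the oscillator section) by a one-line appeal to the transpose duality of \S\ref{ss:transpose}, which carries the entire spinor package --- $\bV$, $\Delta$, the maps $t_i$, $T^{[n]}$, $\Delta_\lambda$ --- to the oscillator package and transports Proposition~\ref{prop:spin-simples} directly; this is exactly the alternative you mention in your opening paragraph and then set aside. The direct route you describe is a faithful transcription of the proof of Proposition~\ref{prop:spin-simples}: the union over finite ranks gives simplicity, the decomposition of $T^{[n]}$ together with the sequence $0 \to T^{[n]} \to T^n \to (T^{n-1})^{\oplus n}$ gives completeness, and the triangular character relation gives irredundance --- all of these steps do go through verbatim. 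The one place where the transcription is not purely formal is the finite-rank input, which you rightly flag but do not close: you need that $\nabla^{(n)}_\lambda$ is an irreducible $\fosp(1|2n)$-module of highest weight $\lambda+\eta$, i.e., an oscillator analogue of \cite[Proposition 4.8]{exceptional} (essentially the Howe-duality statement that appears later as Proposition~\ref{prop:fN-minus}). This is true, but it is precisely the ingredient the paper avoids re-proving by invoking duality, and in the direct approach it is most cleanly obtained by applying transpose duality to Lemma~\ref{lem:odd-spinor-ker} itself rather than left ``to verify''; note also that semisimplicity of finite-dimensional $\fosp(1|2n)$-representations is not by itself what is needed here, since $\nabla^{(n)}$ and the modules $\nabla^{(n)}_\lambda$ are infinite-dimensional. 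In short, either route works; what the duality route buys is that no new finite-rank Howe-duality verification is needed, while your direct route makes the parallel structure explicit but must supply that verification to be complete.
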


\begin{corollary}
Every object of $\Rep^\osc(\fg)$ has finite length.
\end{corollary}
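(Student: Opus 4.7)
The plan is to transport the argument used for the spinor corollary after Proposition~\ref{prop:spin-simples} essentially verbatim, using the transpose duality of \S\ref{ss:transpose} to justify that the needed intermediate facts hold in the oscillator setting as well.

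First I would record the analogue of the decomposition \eqref{tdecomp}: by the definition $\nabla_\lambda = \Hom_{S_n}(\bM_\lambda, T^{[n]})$ together with semisimplicity of $\bC[S_n]$, we have an $(S_n \times \fg)$-equivariant decomposition
\[
T^{[n]} = \bigoplus_{|\lambda|=n} \bM_\lambda \otimes \nabla_\lambda.
\]
Combined with the preceding proposition (which gives simplicity of each $\nabla_\lambda$), this shows $T^{[n]}$ is of finite length in $\Rep^\osc(\fg)$, being a finite direct sum of copies of simples.

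Next I would use the analogue of \eqref{tseq}: by construction, $T^{[n]}$ is the intersection of the kernels of the $n$ maps $t_i \colon T^n \to T^{n-1}$, so we have an exact sequence
\[
0 \to T^{[n]} \to T^n \to (T^{n-1})^{\oplus n}.
\]
Arguing by induction on $n$ (the case $n = 0$ being just $T^0 = \nabla$, which is $\nabla_\emptyset$), the right-hand term has finite length, hence so does its submodule $T^n / T^{[n]}$, and combined with the finite length of $T^{[n]}$ this shows $T^n$ has finite length.

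Finally, by the definition of $\Rep^\osc(\fg)$, every object is a subquotient of a finite direct sum of $T^n$'s, so has finite length. The only point requiring any care is the inductive step, which is routine; there is no real obstacle, since the entire argument for the spinor corollary transports because the structural facts (existence of the kernel filtration, decomposition into simples by Schur--Weyl, the tautological sequence relating $T^n$ to $T^{[n]}$ and $T^{n-1}$) are all established in the preceding oscillator subsection or follow by transpose duality from their spinor counterparts.
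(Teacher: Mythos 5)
Your proof is correct and is essentially the paper's argument: the paper omits the proof here, intending the verbatim transport (via transpose duality) of the spinor corollary's proof, which uses exactly the decomposition of $T^{[n]}$ into $\bM_\lambda \otimes \nabla_\lambda$, the exact sequence $0 \to T^{[n]} \to T^n \to (T^{n-1})^{\oplus n}$, induction on $n$, and the definition of osc-algebraic representations — precisely the steps you carry out.
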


\begin{proposition}
Any non-zero $\fg$-submodule of $T^n$ has magnitude $n$.
\end{proposition}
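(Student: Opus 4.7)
The proof parallels Proposition~\ref{prop:spin-mag}, with the spinor representation $\Delta$ replaced throughout by the oscillator representation $\nabla = \Sym(\bW)$. Let $\fp \subset \fg$ denote the parabolic subalgebra spanned by the $h$'s and $y$'s, with nilpotent radical $\fn$ spanned by the $y$'s. Consider the filtration
\[
F^r T^n = \sum_{j \le r} \bV^{\otimes n} \otimes \Sym^j(\bW),
\]
which is a $\fp$-module filtration of $T^n$ whose associated graded is $\bV^{\otimes n} \otimes \nabla$ with $\fn$ acting trivially on the $\nabla$-factor.

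The first step is to establish the oscillator analog of Proposition~\ref{prop:spin-mag}(a): any nonzero submodule $M$ satisfies $M \cap (\bV^{\otimes n} \otimes \nabla^0) \neq 0$. Argue by downward induction on $r$ to show $M \cap F^r T^n \neq 0$: for $r \gg 0$ this is immediate, and for the inductive step pick a nonzero $v \in M \cap F^{r+1} T^n$ killed by $\fn$ modulo $F^r T^n$ (possible since $\bV^{\otimes n}$ contains nonzero vectors annihilated by $\fn$). If $v \notin F^r T^n$, expand its top-degree component in the monomial basis of $\Sym^{r+1}(\bW)$ and apply $y_{e_j^*}$ for an index $j$ appearing in a nonzero monomial; this yields a nonzero element of $M \cap F^r T^n$.

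For the magnitude claim, observe that $h_{v, \phi}$ acts on $\nabla^0 = \bC$ as the scalar $-\tfrac{1}{2} \phi(v)$, so the $\fgl(\bW)$-action on $\bV^{\otimes n} \otimes \nabla^0$ coincides with the standard action on $\bV^{\otimes n}$ tensored with $-\tfrac{1}{2}$ times the trace character. Since $M$ has finite length by the preceding corollary, pick a simple submodule $M' \subseteq M$ and extract a highest weight vector for $\fb \cap \fgl(\bW)$, which by the filtration argument lies in $M' \cap (\bV^{\otimes n} \otimes \nabla^0)$. The magnitude result for $\fgl(\bW)$ highest weights applied to the $\bV^{\otimes n}$-component (exactly as invoked in Proposition~\ref{prop:spin-mag}(b), via \cite[Proposition 4.1.8]{infrank}) then yields magnitude $n$ relative to the oscillator vacuum $\eta_0$.

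The main obstacle is the upper bound on magnitudes: unlike $\Delta$, the module $\nabla$ is infinite-dimensional with weights of unbounded magnitude, so one must verify that every simple constituent $\nabla_\mu$ of $T^n$ satisfies $|\mu| \le n$. This should follow by induction on $n$ from the oscillator analog of the exact sequence $0 \to T^{[n]} \to T^n \to (T^{n-1})^{\oplus n}$, together with the fact that $T^{[n]}$ is semisimple with constituents $\nabla_\lambda$, $|\lambda|=n$; combined with the lower bound from the filtration step, this pins the magnitude of $M$ to be exactly $n$.
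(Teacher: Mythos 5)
Your overall plan (rerun the proof of Proposition~\ref{prop:spin-mag} with $\Delta$ replaced by $\nabla$) is reasonable, and your filtration step is a faithful and correct adaptation of part (a); but note that the paper itself does not reprove anything here: by \S\ref{ss:transpose} the whole oscillator section, including this proposition, is obtained from the spinor case by transpose duality. Measured against a direct proof, your argument has two genuine problems. First, in the magnitude step you extract a highest weight vector for $\fb \cap \fgl(\bW)$ and assert that ``by the filtration argument'' it lies in $\bV^{\otimes n} \otimes \nabla^0$. The filtration argument does no such thing: it forces into degree $0$ those vectors annihilated by $\fn$ (the $y$'s), whereas annihilation by $\fb \cap \fgl(\bW)$ alone says nothing about the $\nabla$-degree — each graded piece $\bV^{\otimes n} \otimes \Sym^j(\bW)$ is $\fgl(\bW)$-stable and contains its own $\fgl(\bW)$-highest weight vectors, so the claim is false as stated. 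To run the argument you need a vector killed by the full Borel $\fb$ of $\fg$; in the spinor proof its existence and uniqueness came from the finite-rank truncation being a finite-dimensional simple $\fso(2n+1)$-module, and that step does not transfer verbatim because the $\fosp(1|2n)$-truncations are infinite dimensional. The easy repair is to skip highest weight vectors of $M$ altogether: your step 1 produces a nonzero $\fgl(\bW)$-submodule $M \cap (\bV^{\otimes n} \otimes \nabla^0)$ of $\bV^{\otimes n}$ twisted by $-\tfrac{1}{2}$ times the trace character, and \cite[Proposition 4.1.8]{infrank} applies directly to it.

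Second, your closing paragraph rests on a false premise. There is no asymmetry with the spinor case: $\Delta$ is also infinite dimensional with weights of unbounded magnitude (the weights of $\bigwedge^j \bW$ have magnitude $j$ relative to $\lambda_0$), and indeed the two situations are exchanged by transpose duality, so no new obstacle appears on the oscillator side. Moreover the proposed fix does not do what you want: knowing that every constituent $\nabla_\mu$ of $T^n$ has $|\mu| \le n$ bounds nothing about weight magnitudes (already $\nabla_\emptyset = \nabla$ has weights of arbitrarily large magnitude), and conversely a weight of magnitude $n$ in degree $0$ does not by itself produce a constituent with $|\mu| = n$. What the proposition is actually used for (in the oscillator analogue of Proposition~\ref{prop:spin-hom-trace}) is exactly the content of Proposition~\ref{prop:spin-mag}: nonzero submodules meet $\bV^{\otimes n} \otimes \nabla^0$, and the relevant (highest) weights there have magnitude exactly $n$. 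So the last paragraph should be deleted and replaced by the repair above, or the whole statement should simply be deduced from Proposition~\ref{prop:spin-mag} by the duality of \S\ref{ss:transpose}, which is the paper's route.
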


\begin{proposition}
We have the following:
\begin{displaymath}
\Hom_{\fg}(\nabla_\lambda, T^n) = \begin{cases}
\bM_{\lambda} & \textrm{if $n=\vert \lambda \vert$} \\
0 & \textrm{otherwise}
\end{cases}.
\end{displaymath}
\end{proposition}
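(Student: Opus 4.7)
The plan is to mirror the proof of Proposition~\ref{prop:spin-hom-trace}, since the section opens by noting that all oscillator results follow from transpose duality applied to their spinor counterparts. One could formally invoke the anti-equivalence on $\Rep(\GL)$ from \S\ref{ss:transpose}, which sends the simple $V_{\lambda,\mu}$ to $V_{\lambda^\dagger, \mu^\dagger}$ and is an anti-tensor functor; this transports the identity $\Hom_{\fg}(\Delta_\lambda, T^n) = \bM_\lambda$ (with $n=|\lambda|$) to the analogous identity for $\nabla_\lambda$, up to replacing $\lambda$ by $\lambda^\dagger$, which is absorbed into the classification of simples. Alternatively, one can reproduce the three-step argument directly in the oscillator setting, using the simples proposition, the magnitude proposition, and an exact sequence of the form $0 \to T^{[n]} \to T^n \to (T^{n-1})^{\oplus n}$.

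First, for $|\lambda|>n$, I would use the simples proposition just stated: by induction on $n$, every composition factor of $T^n$ is of the form $\nabla_\mu$ with $|\mu| \le n$. Indeed, the decomposition $T^{[n]} = \bigoplus_{|\mu|=n} \bM_\mu \otimes \nabla_\mu$ handles the kernel term, and the cokernel embeds into $(T^{n-1})^{\oplus n}$ to which induction applies. Thus there are no nonzero maps $\nabla_\lambda \to T^n$ in this range.

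Second, for $|\lambda|<n$, I would use the magnitude proposition. The image of any nonzero map $\nabla_\lambda \to T^n$ is a nonzero submodule of $T^n$, hence has magnitude $n$. But the weight of the highest weight vector of $\nabla_\lambda$ has magnitude $|\lambda|<n$: here one takes the highest weight vector to lie in $\bS_\lambda(\bW) \otimes \nabla^0 \subset \bV^{\otimes n} \otimes \nabla$ and applies the oscillator analog of \cite[Proposition 4.1.8]{infrank}, observing that the $\mf{gl}(\bW)$-action on $\nabla^0$ differs from the standard one only by the scalar $-\tfrac{1}{2}\mathrm{tr}$ coming from $\rho(h_{v,\phi}) = H_{v,\phi} - \tfrac{1}{2}\phi(v)$, so this shift does not affect the magnitude. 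This contradicts magnitude $n$, so the $\Hom$ vanishes.

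Finally, for $n=|\lambda|$, the exact sequence shows $\Hom_{\fg}(\nabla_\lambda, T^n) = \Hom_{\fg}(\nabla_\lambda, T^{[n]})$ (any map into $(T^{n-1})^{\oplus n}$ is zero by the previous case), and the decomposition of $T^{[n]}$ together with Schur's lemma yields $\bM_\lambda$. The main obstacle is simply verifying in the super setting that the maps $t_i$ still produce a well-defined exact sequence of the above shape; the key input is that $t_i t_j - t_j t_i$ factors through the symplectic pairing on the $i$th and $j$th factors (as noted in the text), so the composition $T^n \to (T^{n-1})^{\oplus n}$ restricted to $T^{[n]}$ lands in $T^{[n]}$ and vanishes, giving the desired sequence. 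Once this is in place, everything else is a straightforward transcription of the spinor proof.
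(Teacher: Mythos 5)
Your proposal matches the paper: the paper gives no separate proof of this proposition, stating in \S\ref{ss:transpose} that all the oscillator results follow by applying transpose duality to their spinor counterparts, which is exactly your first suggestion. Your alternative direct argument is also sound, being a faithful transcription of the proof of Proposition~\ref{prop:spin-hom-trace} using the oscillator versions of the simples, magnitude, and kernel-sequence statements (and note the sequence $0 \to T^{[n]} \to T^n \to (T^{n-1})^{\oplus n}$ is exact simply by the definition of $T^{[n]}$ as the intersection of the kernels of the $t_i$, so no extra verification is needed there).
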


\subsection{Diagram category}

An {\bf osc-Brauer diagram} between a set $L$ and $L'$ is a triple $(U, \Gamma, f)$ where 
\begin{itemize}
\item $U$ is a subset of $L$ equipped with a total order on $U$, 
\item $(\Gamma, f)$ is a directed Brauer diagram on $(L\setminus U) \amalg L'$, i.e., $\Gamma$ is a directed matching on $L \setminus U$ and $f \colon L \setminus (U \cup V(\Gamma)) \to L'$ is a bijection. Here $V(\Gamma)$ is the vertex set of $\Gamma$.
\end{itemize}
We just write $\Gamma$ for such diagrams, and think of the vertices in $U$ as circled. The main difference between osc-Brauer diagrams and spin Brauer diagrams is that the matching on $L$ is directed. Pictorially, we may represent this as follows
\[
\Gamma = \begin{xy}
(-28, 7)*{}="A1"; (-18, 7)*{}="B1"; (-8, 7)*{}="C1"; (2, 7)*{}="D1"; (12, 7)*{}="E1"; 
(-28, -7)*{}="A2"; (-18, -7)*{}="B2"; (-8, -7)*{}="C2"; (2, -7)*{}="D2"; (12, -7)*{}="E2"; (22, -7)*{}="F2"; (32, -7)*{}="G2"; (42, -7)*{}="H2"; (52, -7)*{}="I2";
"A1"*{\bullet}; "B1"*{\bullet}; "C1"*{\bullet}; "D1"*{\bullet}; "E1"*{\bullet}; 
"A2"*{\bullet}; "B2"*{\bullet}; "C2"*{\bullet}; "D2"*{\bullet}; "E2"*{\bullet}; "F2"*{\bullet}; "G2"*{\circ}; "H2"*{\bullet}; "I2"*{\circ};
"A1"; "C2"; **\dir{-};
"B1"; "B2"; **\dir{-};
"C1"; "D2"; **\dir{-};
"E1"; "F2"; **\dir{-};
"A2"; "E2"; **\crv{(-8,2)} ?>*\dir{>};
"H2"; "D1"; **\dir{-};
\end{xy}
\]
where we have used an arrowhead to denote a direction for the edge in the matching.

Suppose $\Gamma$ is an osc-Brauer diagram and $\Gamma'$ is obtained by switching the two consecutive elements $i<j$ of $U$ in the total order. Let $\Gamma''$ be the spin-Brauer diagram obtained by removing $i$ and $j$ from $U$ in $\Gamma$, and placing a directed edge from $i$ to $j$. The {\bf Weyl relation} is $\Gamma-\Gamma'=\Gamma''$.

The {\bf downwards oscillator category}, denoted $\dosc$, is the following $\bC$-linear category. The objects are finite sets. The set $\Hom_\dosc(L,L')$ is the quotient of the vector space spanned by the osc-Brauer diagrams by the Weyl relations and by the relation $\Gamma = (-1)^n \Gamma'$ if $\Gamma'$ is a directed Brauer diagram obtained from $\Gamma$ by reversing the orientation on $n$ different edges in the matching. Given $(U, \Gamma, f) \in \Hom_\dspin(L,L')$ and $(U', \Gamma', f') \in \Hom_\dspin(L', L'')$, define their composition to be $(U \cup f^{-1}(U'), \Gamma \cup f^{-1}(\Gamma'), g)$ where $g$ is the restriction of $f'f$ to $L \setminus (U \cup f^{-1}(U') \cup V(\Gamma) \cup V(f^{-1}(\Gamma')))$. The {\bf upwards oscillator category} is $\uosc = \dosc^{\rm op}$. A {\bf representation} of $\dosc$ is a linear functor $F \colon \dosc \to \Vec$. We let $\Mod_\dosc^\rf$ denote the abelian category of finite length representations of $\dosc$. 

Define a representation $\cK$ of $\dosc$ by $\cK_L=\bV^{\otimes L} \otimes \nabla$. An osc-Brauer diagram $\Gamma$ between $L$ and $L'$ induces a map $\cK_L \to \cK_{L'}$ by using the usual recipe on the Brauer part, and using the maps $\bV^{\otimes n} \otimes \nabla \to \bV^{\otimes (n-1)} \otimes \nabla$ on the $U$ part, with the ordering on $U$ specifying the order of these maps.

In \cite[(4.2.11)]{infrank}, we defined $\dsb$, which in our current notation is the subcategory of $\dosc$ where we only use osc-Brauer diagrams with $U = \emptyset$. This has a symmetric monoidal structure using disjoint union $\amalg$. Clearly, $\amalg$ extends to a bifunctor $\dsb \times \dosc \to \dosc$. Using \cite[(2.1.14)]{infrank}, we get convolution tensor products
\begin{equation} \label{eqn:amalg-spin}
\amalg_* \colon \Mod_\dsb^\rf \times \Mod_\dosc^\rf \to \Mod_\dosc^\rf,
\end{equation}

\begin{theorem} \label{thm:kernel-osc}
The kernel object $\cK$ induces mutually quasi-inverse anti-equivalences between $\Mod_\dosc^\rf$ and $\Rep^{\osc}(\fg)$. Furthermore, in both cases, the tensor product in \eqref{eqn:amalg-spin} is taken to the natural tensor product
\begin{displaymath}
\otimes \colon \Rep(\fg) \times \Rep^\osc(\fg) \to \Rep^\osc(\fg),
\end{displaymath}
\end{theorem}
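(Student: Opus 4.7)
The plan is to mirror the proof of Theorem~\ref{thm:kernel-spin}, applying \cite[Theorem 2.1.11]{infrank} with $\cK$ regarded as a $\bC$-linear functor $\dosc \to \Rep^\osc(\fg)$. The first step is to verify that $\cK$ is well-defined on morphisms: the Weyl relation $\Gamma - \Gamma' = \Gamma''$ must be realized by the operators $t_i$ on $T^n$, which comes down to the identity $t_i t_j - t_j t_i =$ (symplectic pairing on the $i$th and $j$th slots), stated in \S 4.3; similarly, the sign rule for reversing orientations on a matching edge reflects the antisymmetry of the symplectic form. This is the step most prone to sign errors, so I would do the $n=2$ case explicitly using the defining formulas for the map $\bV \otimes \nabla \to \nabla$ and then extend formally to all $n$.

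Once $\cK$ is a functor, writing $\cK_n$ for its value on $\{1,\ldots,n\}$, I verify the two hypotheses of \cite[Theorem 2.1.11]{infrank}. The first is that $\hom_{S_n}(\bM_\lambda, \cK_n)$ is a simple (or zero) object of $\Rep^\osc(\fg)$; by the definition of $\nabla_\lambda$ together with the decomposition $T^{[n]} = \bigoplus_{|\lambda| = n} \bM_\lambda \otimes \nabla_\lambda$ and the analogue of \eqref{tseq}, this reduces to the first proposition of \S 4.3 (the $\nabla_\lambda$ form a complete irredundant set of simples). The second hypothesis is that $\hom_{\Rep^\osc(\fg)}(\nabla_\lambda, \cK_n)$ is a nonzero irreducible $S_n$-representation, which is exactly the content of the last proposition of \S 4.3.

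For the compatibility of tensor products, I invoke \cite[Proposition 2.1.16]{infrank} just as in the spinor case: the monoidal structure on $\dsb$ by disjoint union is sent by $\cK$ to the ordinary tensor product of $\fg$-representations, and convolution on $\Mod_\dsb^\rf$ is matched with $\otimes$ on $\Rep(\fg)$ acting on $\Rep^\osc(\fg)$.

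Alternatively, and more in the spirit of \S\ref{ss:transpose}, the whole statement can be deduced directly from Theorem~\ref{thm:kernel-spin} by transpose duality. The auto-equivalence of $\Rep(\GL)$ sending $V_{\lambda,\mu} \mapsto V_{\lambda^\dagger, \mu^\dagger}$ interchanges $\lw^2 \leftrightarrow \Sym^2$ and $\Delta \leftrightarrow \nabla$, carries the Clifford relation (symmetric in swapping two adjacent elements of $U$) to the Weyl relation (antisymmetric), and turns undirected matchings into directed ones modulo the orientation-reversal sign. Under this dictionary $\dspin$ is carried to $\dosc$ and the spinor kernel to the oscillator kernel, so the anti-equivalence in Theorem~\ref{thm:kernel-spin} produces the desired anti-equivalence here, with tensor structures preserved because transpose duality is an anti-symmetric monoidal equivalence in each variable.
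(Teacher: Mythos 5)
Your proposal is correct, and in fact contains the paper's own argument as your "alternative" route: the paper gives no separate proof of this theorem at all — as stated in \S\ref{ss:transpose}, the entire oscillator section is obtained from the spinor section by the transpose duality auto-equivalence, with proofs omitted. So the duality deduction you sketch at the end (transpose duality interchanges $\lw^2$ and $\Sym^2$, carries $\Delta$ to $\nabla$, the Clifford relation to the Weyl relation, and the spin-Brauer kernel to the oscillator kernel, with the orientation-reversal sign accounting for the antisymmetry of the form) is exactly what the authors intend. Your primary route — rerunning the proof of Theorem~\ref{thm:kernel-spin} by applying \cite[Theorem 2.1.11]{infrank} to the oscillator kernel, first checking that $\cK$ respects the Weyl relation via $t_it_j - t_jt_i = {}$ (symplectic contraction) and the sign rule for reversing edge orientations, then verifying the two hypotheses from the oscillator analogues of Propositions~\ref{prop:spin-simples} and~\ref{prop:spin-hom-trace}, and citing \cite[Proposition 2.1.16]{infrank} for the tensor compatibility — is also valid and is a faithful mirror of the spin argument. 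The one thing to keep in mind is that this direct route is not genuinely more self-contained: the two oscillator propositions it relies on are themselves stated in the paper without proof, as consequences of duality (or would have to be reproved by mirroring the arguments of the spinor section), so both routes ultimately rest on the same inputs. The duality route is shorter; the direct route has the virtue of making the sign bookkeeping (Weyl versus Clifford relations, directed versus undirected matchings) explicit.
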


\begin{proposition}
For every partition $\lambda$, the representation $\bS_\lambda(\bV) \otimes \nabla$ is injective in $\Rep^\osc(\fg)$, and is the injective envelope of the simple $\nabla_\lambda$. 
\end{proposition}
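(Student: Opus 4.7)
The plan is to mirror the proof of Proposition~\ref{prop:spin-inj} (which itself cited \cite[Proposition 3.2.14]{infrank}). The fastest route is to appeal to the transpose duality of \S\ref{ss:transpose}: that auto-equivalence of $\Rep(\GL)$ sends the spinor picture to the oscillator picture, taking $\Delta$ to $\nabla$ and $\bS_{\lambda^\dagger}(\bV_{\rm spin}) \otimes \Delta$ to $\bS_\lambda(\bV) \otimes \nabla$. Because an equivalence of abelian categories preserves injective envelopes, the injectivity and envelope statements for $\bS_\lambda(\bV) \otimes \nabla$ fall directly out of Proposition~\ref{prop:spin-inj}. This is essentially the strategy advertised at the start of \S\ref{ss:transpose}.

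For a more hands-on argument I would work through Theorem~\ref{thm:kernel-osc}, the anti-equivalence $\cK \colon (\Mod_\dosc^\rf)^{\rm op} \simeq \Rep^\osc(\fg)$. Under this, injective objects on the right correspond to projective objects on the left. The representable functors $h^L = \Hom_\dosc(-, L)$ are projective generators of $\Mod_\dosc$, and unwinding the definition of $\cK$ identifies $\cK(h^L)$ with $\cK_L = T^{|L|}$; since $T^n$ is of finite length (oscillator analog of the corollary after Proposition~\ref{prop:spin-simples}), $h^L$ actually lies in $\Mod_\dosc^\rf$ and remains projective there. Hence $T^n$ is injective. The $S_n$-isotypic decomposition $T^n = \bigoplus_{|\lambda|=n} (\bS_\lambda(\bV) \otimes \nabla)^{\oplus \dim \bM_\lambda}$ then exhibits $\bS_\lambda(\bV) \otimes \nabla$ as a direct summand of $T^n$, hence injective.

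It remains to verify that $\nabla_\lambda$ is the essential socle of $\bS_\lambda(\bV) \otimes \nabla$. By the oscillator analog of Proposition~\ref{prop:spin-hom-trace}, $\Hom_\fg(\nabla_\mu, \bS_\lambda(\bV) \otimes \nabla)$ vanishes for $\mu \ne \lambda$ and is one-dimensional for $\mu = \lambda$, so the socle is precisely $\nabla_\lambda$ with multiplicity one. Essentiality of the inclusion $\nabla_\lambda \hookrightarrow \bS_\lambda(\bV) \otimes \nabla$ follows from the oscillator analog of Proposition~\ref{prop:spin-mag}: any nonzero $\fg$-submodule intersects $\bV^{\otimes n} \otimes \nabla^0$ nontrivially, and by the weight/magnitude analysis there must contain the unique $\fb$-highest weight line, which is the image of $\nabla_\lambda$. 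The main obstacle, should one pursue the direct route, is cleanly identifying $\cK(h^L)$ with $T^n$ while tracking the Weyl sign rule that distinguishes $\dosc$ from $\dspin$; via transpose duality this subtlety is packaged away and one simply quotes Proposition~\ref{prop:spin-inj}.
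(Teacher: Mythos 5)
Your primary argument is exactly the paper's: the oscillator statements in this section are obtained from their spinor counterparts (here Proposition~\ref{prop:spin-inj}) by the transpose duality of \S\ref{ss:transpose}, which is an equivalence of abelian categories carrying $\bS_{\lambda^\dagger}(\bV)\otimes\Delta$ to $\bS_\lambda(\bV)\otimes\nabla$ and $\Delta_{\lambda^\dagger}$ to $\nabla_\lambda$, hence preserving injectivity and injective envelopes; this is precisely why the paper omits the proof. Your secondary hands-on sketch via Theorem~\ref{thm:kernel-osc} is a reasonable extra, but it is not needed and the duality argument alone suffices.
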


\begin{corollary}
For every $n \ge 0$, $T^n$ is an injective object of $\Rep^\osc(\fg)$.
\end{corollary}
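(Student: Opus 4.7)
The plan is to mirror the proof given in the spinor case exactly, reducing this corollary to the preceding proposition that $\bS_\lambda(\bV) \otimes \nabla$ is injective for every partition $\lambda$. The key input is a decomposition of $\bV^{\otimes n}$ under the joint $S_n \times \fg$-action via (super) Schur--Weyl duality.

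First I would invoke Schur--Weyl duality for the $\bZ/2$-graded vector space $\bV = \bC \uplus \ol{\bV}$: the $S_n$-action on $\bV^{\otimes n}$ (with the usual Koszul signs, since $\bV$ is a super vector space) commutes with the $\fg$-action, and one obtains a decomposition
\[
\bV^{\otimes n} \;\cong\; \bigoplus_{|\lambda|=n} \bM_\lambda \otimes \bS_\lambda(\bV)
\]
as an $S_n \times \fg$-module, where $\bS_\lambda$ denotes the (super) Schur functor. Tensoring with $\nabla$, which carries the trivial $S_n$-action, yields
\[
T^n \;=\; \bV^{\otimes n} \otimes \nabla \;\cong\; \bigoplus_{|\lambda|=n} \bigl(\bS_\lambda(\bV) \otimes \nabla\bigr)^{\oplus \dim \bM_\lambda}.
\]

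Next I would appeal to the preceding proposition, which asserts that each summand $\bS_\lambda(\bV) \otimes \nabla$ is injective in $\Rep^\osc(\fg)$. Since the sum on the right is finite (only partitions of $n$ contribute, and each $\bM_\lambda$ is finite-dimensional), and a finite direct sum of injectives in an abelian category is again injective, we conclude that $T^n$ is injective. There is no real obstacle here; the only item requiring care is ensuring that the Schur--Weyl decomposition is applied with the correct sign conventions for the super structure on $\bV$, but this is standard and entirely parallel to the spinor case already handled in the paper.
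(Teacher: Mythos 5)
Your proof is correct and is in substance the same as the paper's: the paper omits the proof in the oscillator section, deducing everything by transpose duality from the spinor case, where the analogous corollary is proved precisely by the decomposition $T^n = \bigoplus_{|\lambda|=n} (\bS_\lambda(\bV) \otimes \Delta)^{\oplus \dim \bM_\lambda}$ together with the injectivity of each summand. Your direct (super) Schur--Weyl decomposition $T^n \cong \bigoplus_{|\lambda|=n} (\bS_\lambda(\bV) \otimes \nabla)^{\oplus \dim \bM_\lambda}$, combined with the preceding proposition and the fact that a finite direct sum of injectives is injective, is exactly that argument transported to the oscillator side.
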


\subsection{Universal property} \label{ss:oscuniv}

Consider categories $\cA$ and $\cB$ as in \S \ref{ss:spinuniv}. Suppose that $A \in \cA$ and $\omega \colon \Sym^2(\cA) \to \bC$ is an alternating bilinear form on $A$. If $\cA$ has infinite direct sums, we can form the Weyl algebra $\cW(A)$ as the usual quotient of the tensor algebra on $A$. We can then speak of $\cW(A)$-modules in $\cB$. Even if $\cA$ does not have infinite direct sums, we can still define the notion of a $\cW(A)$-module in $\cB$: it is an object $B$ of $\cB$ equipped with a morphism $t \colon A \otimes B \to B$ such that the two maps
\begin{displaymath}
f, g \colon A \otimes A \otimes B \to B 
\end{displaymath}
given by
\begin{displaymath}
f(x \otimes y \otimes m)=t(x \otimes t(y \otimes m))-t(y \otimes t(x \otimes m)), \qquad
g(x \otimes y \otimes m) = \omega(x, y) m
\end{displaymath}
agree. We let $T'(\cA, \cB)$ be the category whose objects are tuples $(A, \omega, B, t)$ as above. We write $(A, B)$ for an object of $T'(\cA, \cB)$ when there is no danger of confusion.

Given $(A, B) \in T'(\cA, \cB)$, define $\cK(A) \colon \dsb \to \cA$ by $L \mapsto A^{\otimes L}$ and similarly, define $\cK(B) \colon \dosc \to \cB$ by $L \mapsto A^{\otimes L} \otimes B$. For an object $M$ of $\Mod_\usb^\rf$ and an object $N$ of $\Mod_\uosc^\rf$, define
\begin{align*}
S_M(A) = M \otimes^\ub \cK(A),\qquad S_N(B) = N \otimes^\ub \cK(B).
\end{align*}
Then $(M,N) \mapsto (S_M(A), S_N(B))$ defines a left-exact tensor functor $(\Mod_\usb^\rf, \Mod_\uosc^\rf) \to (\cA, \cB)$.

\begin{theorem} \label{thm:osc-univ}
To give a left-exact tensor functor $(\Rep(\fg), \Rep^\osc(\fg))$ to $(\cA, \cB)$ is the same as giving an object of $T'(\cA, \cB)$. More precisely, letting $\bM$ be the object of $\Mod_\usb^\rf$ corresponding to $\bV$ in $\Rep(\fg)$ and letting $\bN$ be the object of $\Mod_\uosc^\rf$ corresponding to $\nabla$ in $\Rep^\osc(\fg)$, the functors
\[
{\rm LEx}^{\otimes}( (\Mod_\usb^\rf, \Mod_\uosc^\rf), (\cA, \cB)) \to T'(\cA, \cB), \qquad (F_1, F_2) \mapsto (F_1(\bM), F_2(\bN))
\]
and
\[
T'(\cA, \cB) \to {\rm LEx}^{\otimes}( (\Mod_\usb^\rf, \Mod_\uosc^\rf), (\cA, \cB)), \qquad (A,B) \mapsto ((M,N) \mapsto (S_M(A), S_N(B)))
\]
are mutually quasi-inverse equivalences.
\end{theorem}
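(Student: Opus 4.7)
The plan is to invoke the transpose-duality principle announced in \S\ref{ss:transpose}: transpose duality on $\Rep(\GL)$ swaps $\Sym$ with $\bigwedge$ and therefore carries the orthogonal setup of \S\ref{sec:spinor} to the symplectic/super setup of the present section. Under this (anti-)equivalence, the symmetric bilinear form on $\bW \oplus \bW_\ast$ becomes alternating, the Clifford algebra becomes the Weyl algebra, the exterior algebra $\Delta$ becomes $\nabla$, and the Clifford relation defining $\dspin$ transports to the Weyl relation defining $\dosc$. Accordingly, the target category $T(\cA,\cB)$ of Clifford-modules corresponds to the target $T'(\cA,\cB)$ of Weyl-modules after precomposing the $\cA$-variable with transpose duality, and Theorem~\ref{thm:spin-univ} directly yields the statement.

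If one instead wants a direct argument modelled on \cite[Theorem 3.4.2]{infrank}, the steps are as follows. For the forward functor, given $(F_1,F_2)$ set $A = F_1(\bV)$ with the alternating form induced from the symplectic form on $\bV$, set $B = F_2(\nabla)$, and let $t\colon A\otimes B\to B$ be the image under $F_2$ of the structure map $\bV\otimes\nabla\to\nabla$. The defining identities $X_v Y_\phi - Y_\phi X_v = -\phi(v)$ for the oscillator representation are precisely the equality $f=g$ in the definition of $T'(\cA,\cB)$, so $(A,\omega,B,t)\in T'(\cA,\cB)$. For the backward functor, given $(A,\omega,B,t)$, one produces the kernels $\cK(A)\colon\dsb\to\cA$ and $\cK(B)\colon\dosc\to\cB$; well-definedness on morphisms amounts to killing the Weyl relation and the orientation-reversal sign relation, both of which are encoded in the conditions $f=g$ and the alternating property of $\omega$. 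Day convolution, as in \cite[(2.1.14)]{infrank}, then promotes these kernels to a left-exact tensor functor $(\Mod_\usb^\rf,\Mod_\uosc^\rf)\to(\cA,\cB)$.

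The main obstacle is the tensor/module compatibility, i.e.\ producing natural isomorphisms $S_{M\otimes N}(A)\cong S_M(A)\otimes S_N(A)$ and $S_{M\otimes N}(A,B)\cong S_M(A)\otimes S_N(B)$; this is the same Day-convolution calculation underlying \cite[Proposition 2.1.16]{infrank}, but one must verify that the super signs introduced by $\bV = \bC\uplus\ol{\bV}$ propagate correctly. They do, precisely because $\dosc$ already bakes in the sign rule $\Gamma=(-1)^n\Gamma'$ for edge reversals, so the combinatorics of the convolution match the Koszul signs in $\fg$. Finally, the two constructions are mutually quasi-inverse: one direction holds because any left-exact tensor functor is determined by its value on the generating object $\bV$ of $\Rep(\fg)$ and on $\nabla$, which generates $\Rep^\osc(\fg)$ as a module category; the other direction is a tautological unwinding of the definitions of $\bM$ and $\bN$ via Theorem~\ref{thm:kernel-osc}.
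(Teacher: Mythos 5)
Your proposal is correct and follows essentially the same route as the paper: the paper's stated policy in \S\ref{ss:transpose} is that all results of the oscillator section, including Theorem~\ref{thm:osc-univ}, follow by applying transpose duality to the spinor case (your first paragraph, which is exactly the mechanism the paper itself uses when it identifies $T(\cA,\cB)\simeq T'(\cA^\dagger,\cB)$ in \S\ref{sec:oscspin-dual}), while the spinor universal property Theorem~\ref{thm:spin-univ} is proved by the same argument as \cite[Theorem 3.4.2]{infrank}, which is precisely your fallback direct argument. Nothing essential is missing.
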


\subsection{Twisted Lie algebras}

Let $\fa$ be the twisted Lie algebra $\bC^\infty \oplus \lw^2(\bC^\infty)$ with the Lie bracket $[(v, f), (v', f')] = (0, v \wedge v')$. This is $\bZ$-graded with $\deg(\bC^\infty) = 1$ and $\deg(\lw^2(\bC^\infty))=2$. We only consider $\fa$-modules with a compatible polynomial $\GL_\infty$-action.

\begin{theorem}
We have an equivalence of abelian categories $\Mod_{\cU(\fa)} \simeq \Mod_\uosc$. If we make the identification $\Mod_\uosc^\rf = \Rep^\osc(\fg)$ from Theorem~\ref{thm:kernel-osc}, then the simple $\cU(\fa)$-module $\bS_\lambda$ is sent to $\nabla_\lambda$.
\end{theorem}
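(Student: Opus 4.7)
The plan is to follow the strategy of the spinor analogue verbatim, substituting the Weyl relation for the Clifford relation and carefully tracking the extra signs introduced by replacing $\Sym^2$ with $\lw^2$. Under the Schur--Weyl equivalence between $\Mod_{\FB}$ and polynomial $\GL_\infty$-representations, $\fa = \bC^\infty \oplus \lw^2(\bC^\infty)$ corresponds to the functor $M \colon \FB \to \Vec$ that sends $S$ to the trivial representation when $|S|=1$, to the sign representation of $S_2$ when $|S|=2$, and to $0$ otherwise.

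First I would introduce an auxiliary $\FB$-module $\wt{\sU}$ in which $\wt{\sU}_S$ has basis $e_{U,\Gamma}$ indexed by an ordered subset $U \subseteq S$ together with a directed perfect matching $\Gamma$ on $S \setminus U$. The key point is that a basis of $(\rT M)_S$ is given by ordered partitions of $S$ into pieces of sizes $1$ and $2$, with each size-$2$ piece carrying an orientation modulo a sign from reversal; hence $\rT M$ coincides with the quotient of $\wt{\sU}$ by the sign relation declaring that reversing the orientation of $n$ edges of $\Gamma$ multiplies $e_{U,\Gamma}$ by $(-1)^n$. Now $\cU(\fa)$ is the further quotient of $\rT(\fa)$ by the ordinary Lie relations $v \otimes v' - v' \otimes v = v \wedge v'$ for $v, v' \in \bC^\infty$, together with the relations expressing that $\lw^2(\bC^\infty)$ is central. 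Under Schur--Weyl, the first relation translates exactly to the Weyl relation $e_{U,\Gamma} - e_{U',\Gamma} = e_{U'',\Gamma'}$, while the centrality relations permit one to pull any size-$2$ piece out of the total ordering on pieces, thereby reducing every basis element to the canonical form $e_{U,\Gamma}$. Combining these observations identifies $\cU(\fa)$ with the quotient $\sU$ of $\wt{\sU}$ by the sign and Weyl relations.

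Next I would identify $\Mod_\sU$ with $\Mod_\uosc$ by mimicking the corresponding argument in the spinor case. Let $\wt{\uosc}$ be the category whose objects are finite sets and whose morphisms are osc-Brauer diagrams modulo no relations. Following the reasoning of \cite[\S 2.4]{infrank}, one obtains $\Mod_{\wt{\sU}} \simeq \Mod_{\wt{\uosc}}$. Imposing the sign relation and the Weyl relation on $\wt{\uosc}$ to obtain $\uosc$ corresponds exactly to imposing the analogous relations on $\wt{\sU}$ to obtain $\sU$, so the equivalence descends to $\Mod_\sU \simeq \Mod_\uosc$, and composing with the first step yields $\Mod_{\cU(\fa)} \simeq \Mod_\uosc$.

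For the identification of simples, the simple $\cU(\fa)$-module $\bS_\lambda$ (on which $\fa$ acts by zero) is concentrated in $\FB$-degree $|\lambda|$ and corresponds to the simple $\uosc$-representation supported at $\{1, \dots, |\lambda|\}$ with value $\bM_\lambda$; under Theorem~\ref{thm:kernel-osc} this is precisely $\nabla_\lambda$. The main obstacle I anticipate is the sign bookkeeping: the orientation-reversal sign in osc-Brauer diagrams, the minus sign in the Weyl relation $\Gamma - \Gamma' = \Gamma''$, and the antisymmetry of the Lie bracket on $\bC^\infty$ must all line up compatibly under the Schur--Weyl translation in order to produce $\cU(\fa)$ itself rather than some twist of it.
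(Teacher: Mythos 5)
Your argument is correct, but it is not the route the paper takes for this particular statement: the paper never reproves the oscillator theorem directly. The whole oscillator section is deduced by applying the transpose duality functor of \cite[(3.3.8)]{infrank} (see \S\ref{ss:transpose}) to the spinor results, so the paper's proof here is simply ``transpose duality applied to the spinor analogue,'' whose proof is the Schur--Weyl argument you are imitating. Your proposal instead transplants that spinor proof directly: $\fa = \bC^\infty \oplus \lw^2(\bC^\infty)$ corresponds under Schur--Weyl to the $\FB$-module assigning the trivial representation to singletons and the sign representation to two-element sets, the orientation-reversal sign on directed matchings encodes that sign representation, the centrality of $\lw^2(\bC^\infty)$ corresponds to forgetting where the two-element blocks sit in the total order, and the Lie relation $vv' - v'v = v \wedge v'$ becomes the Weyl relation $\Gamma - \Gamma' = \Gamma''$; the identification of simples and the passage $\Mod_{\sU} \simeq \Mod_\uosc$ via the uncollapsed diagram category go through exactly as in the spinor case. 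What the duality route buys is that the sign bookkeeping you flag as the ``main obstacle'' comes for free, since the transpose functor is anti-symmetric monoidal and automatically converts $\Sym^2$/Clifford data into $\lw^2$/Weyl data; what your route buys is a self-contained proof that does not depend on setting up the duality formalism, at the cost of actually carrying out that sign check (which you only flag, though it is at the same level of detail as the paper's spinor proof, and it does work out). One misstatement to fix: $\rT M$ is not a quotient of $\wt{\sU}$ --- a basis element of $(\rT M)_S$ also records the positions of the two-element blocks within the total order, so $\rT M$ is larger; it is precisely the centrality relations that collapse this extra data to $\wt{\sU}$ modulo the sign relation. Your next sentence has the correct logic, so this is a slip of phrasing rather than a gap.
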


\begin{corollary}
We have
\[
\dim \ext^i_{\Rep^\osc(\fg)}(\nabla_\mu, \nabla_\lambda) = \sum_{\substack{\nu = \nu^\dagger,\\ 2i = |\nu| + \rank(\nu)}} c^\lambda_{\mu, \nu}.
\]
\end{corollary}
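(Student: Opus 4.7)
The plan is to mimic verbatim the argument of Corollary~\ref{cor:spin-ext}, using the equivalence $\Mod_{\cU(\fa)} \simeq \Mod_\uosc$ from the preceding theorem (which under $\Mod_\uosc^\rf = \Rep^\osc(\fg)$ sends $\bS_\lambda$ to $\nabla_\lambda$). Thus the entire computation reduces to finding the minimal free resolution of the trivial module $\bC$ over $\cU(\fa)$, where now $\fa = \bC^\infty \oplus \lw^2(\bC^\infty)$.

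First I would assert (as an oscillator analogue of Proposition~\ref{prop:spin-koszul}) that this minimal resolution has the form
\[
\bK_i = \cU(\fa) \otimes \bigoplus_{\substack{\nu = \nu^\dagger\\ 2i = |\nu| + \rank(\nu)}} \bS_\nu.
\]
There are two natural routes. The direct route is a Koszul-style calculation: $\cU(\fa)$ is the enveloping algebra of a $2$-step nilpotent twisted Lie algebra, and the Chevalley--Eilenberg complex $\cU(\fa) \otimes \lw^\bullet(\fa)$ resolves $\bC$; after decomposing $\lw^\bullet(\bC^\infty \oplus \lw^2(\bC^\infty))$ into Schur functors via the plethysms recorded in \eqref{eqn:wedge-plethysm} and tracking the differential, one isolates the minimal piece indexed by self-conjugate $\nu$ with $2i = |\nu| + \rank(\nu)$. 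The more economical route is to invoke the transpose duality of \S\ref{ss:transpose}: transposition exchanges $\Sym^2$ and $\lw^2$, hence carries the twisted Lie algebra governing the spinor side to the one governing the oscillator side; since it also preserves self-conjugate partitions and satisfies $c^{\lambda^\dagger}_{\mu^\dagger,\nu^\dagger} = c^\lambda_{\mu,\nu}$, the shape of Proposition~\ref{prop:spin-koszul} is preserved.

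Given the resolution, the rest is immediate: tensoring $\bK_\bullet$ with $\bS_\lambda$ produces a minimal free resolution of the simple $\cU(\fa)$-module $\bS_\lambda$. Applying $\hom_{\cU(\fa)}(\bS_\mu, -)$ kills all differentials by minimality, so
\[
\ext^i_{\Rep^\osc(\fg)}(\nabla_\mu, \nabla_\lambda) = \bigoplus_{\substack{\nu = \nu^\dagger\\ 2i = |\nu| + \rank(\nu)}} \hom_{\GL_\infty}(\bS_\mu,\ \bS_\lambda \otimes \bS_\nu),
\]
and the stated formula follows from the definition of the Littlewood--Richardson coefficients $c^\lambda_{\mu,\nu}$.

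The only genuine obstacle is justifying the shape of the minimal free resolution, i.e., the oscillator analogue of Proposition~\ref{prop:spin-koszul}; once this is in hand, the Ext computation is entirely formal. The transpose-duality route appears to be the cleanest, since it lets us transport the spinor-side resolution without redoing any plethystic bookkeeping, and the sign changes inherent to swapping $\Sym^2 \leftrightarrow \lw^2$ do not affect the graded-dimension count that controls the answer.
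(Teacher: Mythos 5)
Your overall strategy is sound and is essentially the mirror image of the paper's own treatment: the paper disposes of this corollary by the blanket convention of \S\ref{ss:transpose} (apply transpose duality to Corollary~\ref{cor:spin-ext}), whereas you rerun the spinor-side argument directly on the oscillator side; the two routes are interchangeable. The input you single out as ``the only genuine obstacle'' is in fact already in the paper: the minimal resolution of $\bC$ over $\cU(\fa)$ with $\fa=\bC^\infty\oplus\lw^2(\bC^\infty)$ is Proposition~\ref{prop:osc-koszul} (for the free two-step nilpotent Lie algebra $\fg(E)=E\oplus\lw^2(E)$, quoted from \cite{reducedkoszul,sigg}), read through Schur--Weyl duality; your transpose-duality derivation from Proposition~\ref{prop:spin-koszul} also works, though note the paper actually goes in the opposite direction, deducing Proposition~\ref{prop:spin-koszul} from Proposition~\ref{prop:osc-koszul}. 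Be aware that your ``direct route'' of decomposing the Chevalley--Eilenberg complex via \eqref{eqn:wedge-plethysm} and ``tracking the differential'' is not mere bookkeeping: identifying the minimal subcomplex amounts to computing $\rH_\bullet(\fg(E);\bC)$, which is exactly the content of the cited theorems, so cite rather than rederive.

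There is one concrete slip in your final step. Applying $\hom_{\cU(\fa)}(\bS_\mu,-)$ to the projective resolution $\bK_\bullet\otimes\bS_\lambda$ of $\bS_\lambda$ does not compute $\ext^i(\nabla_\mu,\nabla_\lambda)$: taken literally it gives the zero complex, since a $\cU(\fa)$-map from the simple $\bS_\mu$ into a free module must land in the subspace annihilated by $\fa$, which is zero because $\cU(\fa)$ is a domain (PBW). Moreover, the term you write down, $\hom_{\GL_\infty}(\bS_\mu,\bS_\lambda\otimes\bS_\nu)$, has dimension $c^\mu_{\lambda,\nu}$, not $c^\lambda_{\mu,\nu}$; summing those would compute $\ext^i(\nabla_\lambda,\nabla_\mu)$ rather than the stated group (and at most one of the two is nonzero once $|\lambda|\ne|\mu|$). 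The correct bookkeeping is to resolve the first argument: take the minimal resolution $\bK_\bullet\otimes\bS_\mu$ of $\bS_\mu$ and apply $\hom_{\cU(\fa)}(-,\bS_\lambda)$; the terms become $\hom_{\GL_\infty}(\bS_\nu\otimes\bS_\mu,\bS_\lambda)$, of total dimension $\sum_\nu c^\lambda_{\mu,\nu}$, and minimality kills the differentials because $\bS_\lambda$ is annihilated by the augmentation ideal. (The wording in the paper's proof of Corollary~\ref{cor:spin-ext} invites the reading you used, but the intended computation is the one just described.) With that one correction, and with Proposition~\ref{prop:osc-koszul} cited for the resolution, your proof is complete.
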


\subsection{Oscillator-spin duality} \label{sec:oscspin-dual}

The following result is an extension of orthogonal-symplectic duality \cite[(4.3.4)]{infrank}.

\begin{theorem}
We have an asymmetric monoidal equivalence of pairs of categories 
\begin{displaymath}
(\Rep(\fso(2\infty+1)), \Rep^\spin(\fso(2\infty+1))) \simeq (\Rep(\fosp(1|2\infty)), \Rep^\osc(\fosp(1|2\infty))).
\end{displaymath}
\end{theorem}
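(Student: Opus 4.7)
The plan is to derive the equivalence from the universal property characterizations of Theorems~\ref{thm:spin-univ} and~\ref{thm:osc-univ}, combined with the transpose duality functor recalled in \S\ref{ss:transpose}. In spirit this is a ``pair'' version of orthogonal--symplectic duality \cite[(4.3.4)]{infrank}: once one recognizes both sides as universal recipients for a certain kind of bilinear-form-plus-module data, transpose duality converts one datum to the other.

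First I would apply Theorem~\ref{thm:spin-univ}: giving a left-exact tensor functor $(\Rep(\fso(2\infty+1)), \Rep^\spin(\fso(2\infty+1))) \to (\cA,\cB)$ is the same as giving an object of $T(\cA,\cB)$, namely a symmetric form $\omega \colon \Sym^2(A) \to \bC$ together with a Clifford action $t \colon A \otimes B \to B$. Theorem~\ref{thm:osc-univ} gives the analogous statement on the oscillator side with $T'(\cA,\cB)$ in place of $T(\cA,\cB)$, but with $\omega$ alternating and the sign in the Clifford relation flipped to the Weyl relation. By the Yoneda principle, producing an equivalence of the two pairs of categories is equivalent to producing a natural equivalence $T(\cA,\cB) \simeq T'(\cA,\cB)$, after the appropriate sign-twist of the monoidal structure on $(\cA,\cB)$.

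I would then invoke the transpose duality functor of \cite[(3.3.8)]{infrank} recalled in \S\ref{ss:transpose}: it is an anti-symmetric monoidal auto-equivalence on $\Rep(\GL)$ sending $V_{\lambda,\mu}$ to $V_{\lambda^\dagger,\mu^\dagger}$ and, because of its anti-symmetric monoidal character, interchanging $\Sym^n$ with $\lw^n$. Consequently a symmetric form $\omega \colon \Sym^2(A) \to \bC$ passes to an alternating form $\omega^\dagger \colon \lw^2(A^\dagger) \to \bC$, and the Clifford axiom
\[
t(x, t(y, m)) + t(y, t(x, m)) = \omega(x,y) m
\]
becomes, term by term, the Weyl axiom with the sign of the second summand reversed. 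The $+ \leftrightarrow -$ flip is precisely what the anti-symmetric monoidal structure imparts to the two tensor factors $A \otimes A$. This gives the sought-after natural equivalence $T(\cA,\cB) \simeq T'(\cA,\cB)$, and composing with the universal properties yields the claimed equivalence of pairs of categories. The $\bZ/2$-grading on the osp standard representation is absorbed by viewing the transpose image of $A$ as sitting in odd degree, which is the usual effect of anti-symmetric monoidal duality on an even object.

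The main obstacle is the careful bookkeeping of compatibilities under the equivalence: verifying that the module structure of $\Rep^\spin(\fg)$ over $\Rep(\fg)$ passes to the module structure of $\Rep^\osc$ over its own $\Rep$, and that the resulting equivalence is monoidal in the ``asymmetric'' (signed) sense rather than the naive sense. Concretely one must check that the convolution tensor products \eqref{eqn:amalg-spin0} and \eqref{eqn:amalg-spin} correspond under the diagram-category side of the picture (Theorems~\ref{thm:kernel-spin} and~\ref{thm:kernel-osc}), which amounts to matching spin-Brauer diagrams with osc-Brauer diagrams up to signed re-orientation of edges. Once this sign accounting is done exactly as in \cite[(4.3.4)]{infrank}, the equivalence of pairs is automatic.
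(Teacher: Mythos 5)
Your proposal follows essentially the same route as the paper: the paper's proof just observes that for any pair $(\cA,\cB)$ there is an obvious equivalence $T(\cA,\cB) \simeq T'(\cA^\dagger,\cB)$, where $\cA^\dagger$ denotes $\cA$ with its symmetry $\tau$ replaced by $-\tau$, and then concludes via the universal properties of Theorems~\ref{thm:spin-univ} and~\ref{thm:osc-univ}, exactly your Yoneda step. The only caveat is that for a general target $(\cA,\cB)$ the transpose-duality functor on $\Rep(\GL)$ is not literally available, so the symmetric-to-alternating and Clifford-to-Weyl conversion must be justified purely by the sign twist of the symmetry on $\cA$ — which you do in fact invoke, so the argument goes through.
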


\begin{proof}
Given $(\cA, \cB)$, let $\cA^\dagger$ be the symmetric monoidal abelian category obtained from $\cA$ by swapping the symmetry $\tau$ on $\cA$ for $-\tau$. Then there is an obvious equivalence $T(\cA, \cB) \simeq T'(\cA^\dagger, \cB)$. Now we get the first equivalence, by using the universal properties Theorem~\ref{thm:spin-univ} and Theorem~\ref{thm:osc-univ}.
\end{proof}

\subsection{Half-oscillator representations} \label{sec:half-osc}

Put
\begin{align*}
\ol{\fg} = \fsp(2\infty) &\cong \Sym^2(\ol{\bV})\\
& = \Sym^2(\bW) \oplus (\bW \otimes \bW_*) \oplus \Sym^2(\bW_*),
\end{align*}
Then $\fg \subset \ol{\fg}$ is a Lie subalgebra, and $\ol{\bV} \subset \bV$ is stable by $\ol{\fg}$, and thus a representation, called the {\bf standard representation}. A representation of $\ol{\fg}$ is {\bf algebraic} if it is a subquotient of a finite direct sum of tensor powers of the standard representation. We write $\Rep(\ol{\fg})$ for the category of such representations. 

The action of $\fg$ on $\nabla$ preserves the natural $\bZ/2$-grading, and so $\nabla$ splits into a sum of two subrepresentations $\nabla^+$ and $\nabla^-$, the {\bf half-oscillator representations} of $\fg$. The action of the $h$'s preserves the $\bZ$-grading on $\nabla$. We have maps of $\fg$-modules 
\[
\ol{\bV} \otimes \nabla^+ \to \nabla^-, \qquad \ol{\bV} \otimes \nabla^- \to \nabla^+,
\]
both of which are surjective.

A representation of $\fg$ is {\bf osc-algebraic} if it appears as a subquotient of a finite direct sum of representations of the form 
\[
\ol{T}^n = \ol{\bV}^{\otimes n} \otimes \nabla.
\]
The space $\ol{T}^n$ breaks up as a sum $\ol{T}^{n,+} \oplus \ol{T}^{n,-}$, and any osc-algebraic representation appears as a subquotient of a finite direct sum of the representations $\ol{T}^{n,+}$ (and similarly for $\ol{T}^{n,-}$). We write $\Rep^{\osc}(\ol{\fg})$ for the category of osc-algebraic representations of $\ol{\fg}$. It is an abelian category, and is naturally a module over the tensor category $\Rep(\ol{\fg})$, i.e., we have a bifunctor given by tensor product
\begin{align*}
\otimes \colon \Rep(\ol{\fg}) \times \Rep^\osc(\ol{\fg}) \to \Rep^\osc(\ol{\fg}).
\end{align*}

We define $\ol{T}^{[n]}$ to be the intersection of the kernels of the maps $\ol{T}^n \to \ol{T}^{n-1}$. This breaks up as $\ol{T}^{[n],+} \oplus \ol{T}^{[n],-}$. Define
\begin{displaymath}
\ol{\nabla}_{\lambda}^{\pm} = \Hom_{S_n}(\bM_{\lambda}, \ol{T}^{[n], \pm}).
\end{displaymath}
The representations $\ol{\nabla}^{\pm}$ are a complete irredundant set of simple objects of $\Rep^{\osc}(\ol{\fg})$.

We define a representation $\cK$ of $\dosc \times \{\pm\}$ by 
\begin{align*}
\cK_{(L,+)} = \begin{cases} \ol{\bV}^{\otimes L} \otimes \nabla^+ & \text{if $\# L$ is even}\\
\ol{\bV}^{\otimes L} \otimes \nabla^- & \text{if $\# L$ is odd}
\end{cases}, \qquad
\cK_{(L,-)} = \begin{cases} \ol{\bV}^{\otimes L} \otimes \nabla^- & \text{if $\# L$ is even}\\
\ol{\bV}^{\otimes L} \otimes \nabla^+ & \text{if $\# L$ is odd}
\end{cases}.
\end{align*}

\begin{theorem}
The functors defined by $\cK$ give mutually quasi-inverse anti-equivalences between $\Mod_{\dosc \times \{\pm\}}^\rf$ and $\Rep^\osc(\fg)$.
\end{theorem}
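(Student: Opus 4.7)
The plan is to follow the template of Theorem~\ref{thm:kernel-spin2}, applying the abstract recognition result \cite[Theorem 2.1.11]{infrank} to the kernel functor $\cK \colon \dosc \times \{\pm\} \to \Rep^\osc(\ol{\fg})^{\rm op}$. So I would first check that $\cK$ is well defined, then verify the two hypotheses of loc.\ cit., and finally read off the tensor-product compatibility.

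Well-definedness is mostly inherited from the analogous verification in Theorem~\ref{thm:kernel-osc}: the Weyl relation and the orientation-reversal sign in $\dosc$ are imposed so that they match the identities $X_v Y_\phi - Y_\phi X_v = -\phi(v)$ and the antisymmetry of $\omega$ on $\ol{\bV}$, and these identities do not see the $\{\pm\}$ decomposition. The only additional point is parity bookkeeping: applying the structural map $\bV \otimes \nabla \to \nabla$ on a single tensor factor sends $\nabla^\epsilon$ to $\nabla^{-\epsilon}$, which is exactly the sign flip produced by removing one element from $L$ in the definition of $\cK_{(L,\pm)}$; the edges of the Brauer part contribute no parity change because they remove two factors of $\ol{\bV}$ simultaneously.

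To apply \cite[Theorem 2.1.11]{infrank} I must verify, for each partition $\lambda$ and each sign $\epsilon$, that (a) $\hom_{S_n}(\bM_\lambda, \cK_{(n,\epsilon)})$ is a simple object of $\Rep^\osc(\ol{\fg})$, and (b) $\hom_{\Rep^\osc(\ol{\fg})}(\ol{\nabla}^\epsilon_\lambda, \cK_{(n,\epsilon)})$ equals $\bM_\lambda$ when $n = |\lambda|$ and vanishes otherwise. Condition (a) is the definition of $\ol{\nabla}^\epsilon_\lambda$ together with the simplicity statement already recorded in \S\ref{sec:half-osc}. For condition (b) I would first prove the half-oscillator analogue of Proposition~\ref{prop:spin-mag}, namely that every nonzero $\ol{\fg}$-submodule of $\ol{T}^{n,+}$ meets $\ol{\bV}^{\otimes n} \otimes \nabla^0$ and every nonzero submodule of $\ol{T}^{n,-}$ meets $\ol{\bV}^{\otimes n} \otimes \nabla^1$; the $\fp$-filtration and lowering-operator argument of Proposition~\ref{prop:spin-mag} transcribes verbatim with $\nabla$ in place of $\Delta$. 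Combined with the corresponding magnitude count, this yields the vanishing for $|\lambda| \neq n$ as in Proposition~\ref{prop:spin-hom-trace}, and the case $|\lambda| = n$ follows from the $S_n$-isotypic decomposition of $\ol{T}^{[n],\epsilon}$ into copies of $\ol{\nabla}^\epsilon_\lambda$.

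The main obstacle is not any single computation but coherently propagating the $\{\pm\}$ label through all the preceding results in the half-oscillator subsection (several of which are only sketched in the excerpt). If this bookkeeping becomes cumbersome, a cleaner alternative is to transport Theorem~\ref{thm:kernel-spin2} along the transpose duality of \S\ref{ss:transpose} combined with the universal-property argument underlying \S\ref{sec:oscspin-dual}: the $\bZ/2$-grading on $\Delta$ is respected by transpose duality, so the half-spinor anti-equivalence with $\Mod_{\dspin \times \{\pm\}}^\rf$ is carried onto the desired anti-equivalence with $\Mod_{\dosc \times \{\pm\}}^\rf$, with no further checking required.
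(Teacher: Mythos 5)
Your proposal is correct, and in fact it contains the paper's own argument as its ``cleaner alternative'': the paper gives no separate proof of this theorem, since \S\ref{ss:transpose} declares that the whole oscillator theory, including this statement, is obtained by applying transpose duality to the corresponding spinor results (here Theorem~\ref{thm:kernel-spin2}, whose proof is in turn the direct kernel argument of Theorem~\ref{thm:kernel-spin}). Your primary route --- verifying the two hypotheses of \cite[Theorem 2.1.11]{infrank} directly for the kernel on $\dosc \times \{\pm\}$, via the half-oscillator analogues of Propositions~\ref{prop:spin-mag} and~\ref{prop:spin-hom-trace} and the parity bookkeeping for the maps $\ol{\bV} \otimes \nabla^{\pm} \to \nabla^{\mp}$ --- is a legitimate alternative that redoes in the oscillator setting exactly what the paper does on the spinor side; it costs you re-proving the filtration/magnitude lemmas for $\nabla$ (they do transcribe, since any vector of $\nabla$ lies in a finite degree filtration step, so the downward induction goes through), but it avoids any appeal to duality. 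The duality route buys all of this for free, with one small point you dismiss a bit quickly: ``no further checking required'' should include identifying the image of the half-spinor kernel under transpose duality with the oscillator kernel, i.e., checking that the Clifford relation in $\dspin$ corresponds to the Weyl relation (with the orientation signs) in $\dosc$ once the symmetry $\tau$ is replaced by $-\tau$; this is exactly the identification the paper takes for granted in \S\ref{ss:transpose} and makes explicit at the level of universal properties in \S\ref{sec:oscspin-dual}, so your argument is consistent with the paper's level of detail.
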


Consider a pair of categories $(\cA, \cB)$ as in \S \ref{ss:spinuniv}. Define $\ol{T}'(\cA, \cB)$ to be the category whose objects are tuples $(A, \omega, B_+, B_-, t)$ where $(A, \omega, B_+ \oplus B_-, t) \in T'(\cA, \cB)$ and the morphism $t$ decomposes as $A \otimes B_+ \to B_-$ and $A \otimes B_- \to B_+$. In other words, $B_+ \oplus B_-$ is a $\bZ/2$-graded module over $\cW(A)$.

\begin{theorem} 
Giving a left-exact tensor functor $(\Rep(\ol{\fg}), \Rep^\spin(\ol{\fg})) \to (\cA, \cB)$ is the same as giving an object of $\ol{T}'(\cA, \cB)$. 
\end{theorem}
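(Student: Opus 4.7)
The plan is to adapt the proof of Theorem~\ref{thm:osc-univ} exactly as the half-spin universal property at the end of \S\ref{sec:spinor} adapts the proof of Theorem~\ref{thm:spin-univ}; the data on $(\cA, \cB)$---an object with an alternating form together with a $\bZ/2$-graded $\cW(A)$-module---differs from the non-graded oscillator case only by the $\pm$-bookkeeping already built into $\ol{T}'(\cA,\cB)$. The two candidate quasi-inverse functors are $F = (F_1, F_2) \mapsto (F_1(\ol{\bV}), \omega, F_2(\nabla^+), F_2(\nabla^-), t)$, with $t$ the direct sum of the images under $F_2$ of the two surjections $\ol{\bV} \otimes \nabla^\pm \to \nabla^\mp$ recorded in \S\ref{sec:half-osc}; and, in the reverse direction, the convolution construction sending $(A, \omega, B_+, B_-, t)$ to the assignment $(M, N) \mapsto (S_M(A), S_N(B))$ extended over the half-oscillator diagram category $\dosc \times \{\pm\}$.

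For the forward direction I would first check that the symplectic form on $\ol{\bV}$ pulls back through $F_1$ to an alternating form on $F_1(\ol{\bV})$---immediate since $F_1$ is symmetric monoidal---and that $t$ satisfies the $\cW(A)$-module axioms of \S\ref{ss:oscuniv}, exploiting that the corresponding relation already holds in $\Rep^\osc(\ol{\fg})$ as a consequence of the identity $X_v Y_\phi - Y_\phi X_v = -\phi(v)$ used in the definition of the oscillator representation. That $t$ swaps $B_+$ and $B_-$ is automatic because $\ol{\bV}$ is odd for the $\bZ/2$-grading on $\bV$ that induces the splitting $\nabla = \nabla^+ \oplus \nabla^-$. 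For the reverse direction, given $(A, \omega, B_+, B_-, t) \in \ol{T}'(\cA, \cB)$, I would define kernel functors $\cK(A) \colon \dsb \to \cA$ by $L \mapsto A^{\otimes L}$ and $\cK(B) \colon \dosc \times \{\pm\} \to \cB$ by the assignment displayed just above the theorem, using $t$ to interpret osc-Brauer diagrams with non-empty circled set $U$. Convolving against finite length representations of $\usb$ and $\uosc \times \{\pm\}$ produces left-exact tensor functors on the corresponding module categories, and transporting along Theorem~\ref{thm:kernel-osc} together with the half-oscillator equivalence $\Mod_{\dosc \times \{\pm\}}^\rf \simeq \Rep^\osc(\ol{\fg})$ from the paragraph preceding the theorem then yields the desired functor on the target pair of categories.

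The main obstacle I anticipate is verifying that $\cK(B)$ really defines a functor on $\dosc \times \{\pm\}$, i.e., that the Weyl relation $\Gamma - \Gamma' = \Gamma''$ and the orientation-reversal sign are encoded by the $\bZ/2$-graded $\cW(A)$-module axioms. The essential identity is that reordering two consecutive circled points $i < j$ in $U$ produces exactly $t(x \otimes t(y \otimes -)) - t(y \otimes t(x \otimes -)) = \omega(x, y)(-)$, which is precisely the contribution of the diagram $\Gamma''$ with a new directed edge from $i$ to $j$; and that the $\pm$-flip per contracted vertex is forced by the odd action of $A$ on $B_+ \oplus B_-$. Once this bookkeeping is in place, the mutual quasi-inverse property follows by the formal argument used in the proof of Theorem~\ref{thm:spin-univ}, since every object is reconstructed from its values on the generators $\ol{\bV}$, $\nabla^+$, $\nabla^-$ via convolution.
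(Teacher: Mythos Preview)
Your proposal is correct and follows essentially the same route as the paper. The paper omits the proof entirely, relying on the blanket statement at the start of \S\ref{ss:transpose} that all oscillator results follow from the spinor ones via transpose duality (so here one would dualize the half-spin universal property, whose proof in turn is ``Similar to the proof of \cite[Theorem 3.4.2]{infrank}''); you instead adapt Theorem~\ref{thm:osc-univ} directly to the $\bZ/2$-graded setting, which unwinds to the same formal argument. One small remark: when you transport along equivalences you only need the half-oscillator kernel equivalence $\Mod_{\dosc \times \{\pm\}}^\rf \simeq \Rep^\osc(\ol{\fg})$ for the $\cB$-part and the corresponding $\dsb$-equivalence from \cite{infrank} for $\Rep(\ol{\fg})$; the reference to Theorem~\ref{thm:kernel-osc} itself (the non-graded oscillator case) is not needed.
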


Finally, we have osc-spin duality:

\begin{theorem}
We have asymmetric monoidal equivalences of pairs of categories
\begin{displaymath}
(\Rep(\fso(2\infty)), \Rep^\spin(\fso(2\infty))) \simeq (\Rep(\fsp(2\infty)), \Rep^\osc(\fsp(2\infty))).
\end{displaymath}
\end{theorem}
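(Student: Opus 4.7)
The plan is to mimic the proof of the preceding (full) osc-spin duality theorem, but using the universal properties for the half-spinor and half-oscillator pairs in place of those for the full spinor and oscillator pairs. So the main ingredients will be the two theorems that characterize left-exact tensor functors out of $(\Rep(\fso(2\infty)), \Rep^\spin(\fso(2\infty)))$ in terms of $\ol{T}(\cA,\cB)$, and out of $(\Rep(\fsp(2\infty)), \Rep^\osc(\fsp(2\infty)))$ in terms of $\ol{T}'(\cA,\cB)$, together with a symmetry-swap functor.

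First, given a pair $(\cA,\cB)$ with $\cA$ symmetric monoidal, let $\cA^\dagger$ denote the same underlying additive category but with symmetry $\tau$ replaced by $-\tau$; this is a symmetric monoidal abelian category, and $\cB$ is again an $\cA^\dagger$-module since the module structure on $\cB$ does not depend on the symmetry of $\cA$. I would then construct a canonical equivalence of categories
\[
\ol{T}(\cA,\cB) \;\simeq\; \ol{T}'(\cA^\dagger,\cB).
\]
On objects this sends $(A,\omega,B_+,B_-,t)$ to $(A,\omega,B_+,B_-,t)$: under the dagger, a symmetric bilinear form $\Sym^2 A \to \bC$ in $\cA$ becomes an alternating form in $\cA^\dagger$, and the Clifford relation $t(x,t(y,m))+t(y,t(x,m))=\omega(x,y)m$ in $\cA$ translates verbatim into the Weyl relation in $\cA^\dagger$ because the sign change in the symmetry converts the $+$ into a $-$ when we reinterpret the second application of $t$. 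The $\bZ/2$-grading on $B_+\oplus B_-$ with $t$ exchanging the two summands is preserved, so a $\bZ/2$-graded Clifford module becomes a $\bZ/2$-graded Weyl module and vice versa.

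Next, combining this with the universal property theorem for $(\Rep(\fso(2\infty)),\Rep^\spin(\fso(2\infty)))$ (parametrizing left-exact tensor functors into $(\cA,\cB)$ by $\ol{T}(\cA,\cB)$) and the analogous one for $(\Rep(\fsp(2\infty)),\Rep^\osc(\fsp(2\infty)))$ (parametrizing into $(\cA,\cB)$ by $\ol{T}'(\cA,\cB)$), I would obtain natural equivalences
\[
{\rm LEx}^\otimes\bigl((\Rep(\fso(2\infty)),\Rep^\spin(\fso(2\infty))),(\cA,\cB)\bigr) \simeq \ol{T}(\cA,\cB),
\]
\[
{\rm LEx}^\otimes\bigl((\Rep(\fsp(2\infty)),\Rep^\osc(\fsp(2\infty))),(\cA^\dagger,\cB)\bigr) \simeq \ol{T}'(\cA^\dagger,\cB),
\]
so both sides of the desired equivalence represent the same 2-functor $(\cA,\cB)\mapsto\ol{T}(\cA,\cB)$ (after the trivial replacement $\cA\leftrightarrow\cA^\dagger$). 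By Yoneda for presentable 2-categories, this produces the claimed asymmetric monoidal equivalence of pairs; the word ``asymmetric'' records exactly that the symmetry on the tensor category side is reversed.

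The only real point to check carefully is the sign bookkeeping in the equivalence $\ol{T}(\cA,\cB)\simeq\ol{T}'(\cA^\dagger,\cB)$: one must verify that the $\bZ/2$-graded structure on $B_+\oplus B_-$ and the fact that $t$ swaps the two components are compatible with the sign change in the symmetry, and that morphisms in $\ol{T}$ and $\ol{T}'$ (which are defined analogously to those in $T$ and $T'$) match up. This is analogous to, and no harder than, the corresponding verification in the proof of the ungraded osc-spin duality theorem in \S\ref{sec:oscspin-dual}; everything else in the argument is a formal application of universal properties already established.
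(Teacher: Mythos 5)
Your proposal is correct and is essentially the paper's intended argument: the paper proves the full-rank osc-spin duality in \S\ref{sec:oscspin-dual} exactly by the equivalence $T(\cA,\cB)\simeq T'(\cA^\dagger,\cB)$ together with the two universal properties, and the half-rank statement is left to the same reasoning with $\ol{T}$ and $\ol{T}'$, which is precisely what you carry out (including the observation that the $\bZ/2$-grading and the sign bookkeeping in passing from the Clifford to the Weyl relation are unaffected beyond the symmetry swap).
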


\section{Invariant theory} \label{s:littlewood}

The goal of this section and the next is to extend the results of \cite{lwood} to the categories that we just defined.

\subsection{Spinors} \label{sec:howe-dual}

Let $E$ be a vector space of dimension $m$ and define a supersymmetric orthogonal form $\tilde{\omega}$ on the $\bZ/2$-graded vector space 
\[
\tilde{E} = \bC \uplus (E \oplus E^*)
\]
by
\[
\tilde{\omega}((c, e,f), (c, e', f')) = cc' + f'(e) - f(e').
\]
The Lie superalgebra $\osp(\tilde{E}) \cong \osp(1|2m)$ has a natural $\bZ$-grading supported on $[-2,2]$:
\begin{align*} 
\osp(\tilde{E}) = \Sym^2(E^*) \oplus E^* \oplus \fgl(E) \oplus E \oplus \Sym^2(E).
\end{align*}

Let $V$ be a vector space of dimension $N$ with a nondegenerate symmetric bilinear form. The tensor product of the two forms is a nondegenerate supersymmetric bilinear form on 
\[
V \otimes \tilde{E} = V \uplus (V \otimes (E \oplus E^*)),
\]
and we get an embedding 
\[
\fso(V) \times \osp(\tilde{E}) \subset \osp(V \otimes \tilde{E}) \cong \osp(N|2mN).
\] 
Let $W$ be a maximal isotropic subspace of $V$. Then $W \uplus (E \otimes V)$ is a maximal isotropic subspace of $V \otimes \tilde{E}$. As a vector space, the superspinor representation of $\osp(V \otimes \tilde{E})$ is 
\[
\fM = \Sym(E \otimes V) \otimes \bigwedge^\bullet(W).
\]
We can identify $\Delta = \bigwedge^\bullet(W)$ with the spinor representation of $\Pin(V)$, the double cover of the orthogonal group $\bO(V)$. Consider the action of $\osp(\tilde{E})$ on $\fM$. The action of $\Sym^2(E)$ is multiplication by the $\fso(V)$-invariant polynomials $\Sym^2(E) \subset \Sym^2(E \otimes V)$ and the action of $E$ comes from the natural inclusion $\Delta \subset V \otimes \Delta$:
\begin{align*}
E \otimes \Sym^i(E \otimes V) \otimes \Delta &\subset E \otimes V \otimes \Sym^i(E \otimes V) \otimes \Delta\\
&\to \Sym^{i+1}(E \otimes V) \otimes \Delta.
\end{align*}
There is a non-degenerate bilinear form on the spinor representation $\fM$ which is preserved by $\osp(V \otimes \tilde{E})$. The actions of $E^*$ and $\Sym^2(E^*)$ are adjoint to the actions of $E$ and $\Sym^2(E)$ under this bilinear form, and the action of $\fgl(E)$ is the usual one tensored with $N/2$ times the trace function (the even subalgebra of $\osp(\tilde{E})$ is $\fso(E \oplus E^*)$ and the action of this subalgebra ignores $\Delta$, and is described in \cite[\S 8.2]{goodman}). Let $\delta$ be the highest weight of $\Delta$. Given a partition $\lambda$, which we interpret as a dominant weight of $\Pin(V)$, let $V_{\lambda + \delta}$ be the irreducible highest weight representation of $\Pin(V)$ with highest weight $\lambda + \delta$, and set 
\[
\fM_\lambda = \hom_{\Pin(V)}(V_{\lambda + \delta}, \fM).
\] 
Note that $\fg^\dagger(E^*) = E^* \oplus \Sym^2(E^*)$ is a subalgebra of $\osp(\tilde{E}) \cong \osp(1|2m)$ and hence act on $\fM$. Define lowest-weight to mean annihilated by the lower-triangular part $\fg^\dagger(E^*)$. Let $\fM^-$ be the subspace of $\fM$ killed by $\fg^\dagger(E^*)$, i.e., the space of lowest-weight vectors. Then we have a multiplication map
\[
m \colon \rU(\fg^\dagger(E)) \otimes \fM^- \to \fM.
\]

\begin{proposition} 
\phantomsection \label{prop:mu-minus}
\begin{enumerate}[\rm (a)]
\item $m$ is surjective.
\item As a representation of $\fgl(E) \times \Pin(V)$, we have $\fM^- = \bigoplus_{2\ell(\lambda) \le \dim V} \bS_\lambda(E) \otimes V_{\lambda + \delta}$.
\item $\fM$ is a semisimple $(\osp(\tilde{E}) \times \Pin(V))$-module, and decomposes as
\[
\fM = \bigoplus_{\substack{\lambda\\ 2\ell(\lambda) \le \dim(V)}} \fM_\lambda \otimes V_{\lambda + \delta},
\]
where $\fM_\lambda$ is a simple lowest-weight representation and $\fM_\lambda \cong \fM_\mu$ if and only if $\lambda = \mu$.
\end{enumerate}
\end{proposition}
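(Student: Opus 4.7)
The plan is to establish the decomposition (c) first via a Howe-duality argument, then deduce parts (a) and (b). Here $(\Pin(V), \osp(\tilde{E}))$ is a dual pair acting on the super-spinor module $\fM$ of $\osp(V \otimes \tilde{E})$, and I follow the classical strategy: decompose under the reductive side $\Pin(V)$, then analyze the multiplicity spaces as modules for the dual.

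Since $\Pin(V)$ is reductive and $\fM$ is locally finite as a $\Pin(V)$-module (each $\Sym^k(E \otimes V) \otimes \Delta$ is finite-dimensional), $\fM$ is semisimple under $\Pin(V)$. Writing $\fM = \bigoplus_\mu V_\mu \otimes H_\mu$ with $H_\mu = \Hom_{\Pin(V)}(V_\mu, \fM)$, each $H_\mu$ inherits a commuting $\osp(\tilde{E})$-action. A weight analysis using the Cauchy decomposition $\Sym(E \otimes V) = \bigoplus_\mu \bS_\mu(E) \otimes \bS_\mu(V)$ together with the branching of $\bS_\mu(V) \otimes \Delta$ under $\fgl(V) \hookrightarrow \Pin(V)$ shows only $\mu = \lambda + \delta$ with $2\ell(\lambda) \le \dim V$ can appear; denote $H_{\lambda+\delta} = \fM_\lambda$.

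Next I would identify $\fM^-$ explicitly, giving part (b). The $\bZ$-grading on $\osp(\tilde{E})$ lifts to a $\bZ_{\ge 0}$-grading on $\fM$ by $\Sym$-degree, with $\fg^\dagger(E^*)$ strictly lowering it. The operator $\Sym^2(E^*)$ acts as the $\fso(V)$-Laplacian on the polynomial factor, so $\ker(\Sym^2(E^*))$ restricts to the classical harmonic summand $\bigoplus_\mu \bS_\mu(E) \otimes V_\mu^{\fso}$ (for $2\ell(\mu) \le \dim V$) tensored with $\Delta$. The operator $E^*$ couples the $\Sym$ and $\Delta$ factors via the Clifford inclusion $\Delta \hookrightarrow V \otimes \Delta$ together with an $E$-evaluation; by exhibiting explicit $\fgl(E) \times \Pin(V)$-highest-weight vectors in $\bS_\lambda(E) \otimes V_{\lambda+\delta}$ and verifying directly they are killed by both $E^*$ and $\Sym^2(E^*)$, then matching $\Pin(V)$-characters to pin down multiplicities, I identify $\fM^- = \bigoplus_{2\ell(\lambda) \le \dim V} \bS_\lambda(E) \otimes V_{\lambda+\delta}$. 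In particular, $\fM_\lambda^- := \Hom_{\Pin(V)}(V_{\lambda+\delta}, \fM^-) = \bS_\lambda(E)$ as an $\fgl(E)$-module.

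For (c) and (a): any nonzero $\osp(\tilde{E})$-submodule $N \subset \fM_\lambda$ contains a nonzero element of $\fM_\lambda^-$, obtained by iteratively applying $\fg^\dagger(E^*)$ to any nonzero element of $N$ until reaching a nonzero lowest-weight vector (the grading is bounded below, so this terminates). Since $\fM_\lambda^- = \bS_\lambda(E)$ is $\fgl(E)$-irreducible, $N$ contains $\bS_\lambda(E)$ and hence $\rU(\fg^\dagger(E)) \cdot \bS_\lambda(E)$; a character comparison, using that the $\fgl(E)$-character of $\fM_\lambda$ computed via Cauchy and branching agrees with that of the parabolic Verma module induced from $\bS_\lambda(E)$, yields $\rU(\fg^\dagger(E)) \cdot \bS_\lambda(E) = \fM_\lambda$, so $\fM_\lambda$ is simple. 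Distinct $\lambda$ yield non-isomorphic $\fM_\lambda$ since they have distinct lowest weights, proving (c). Part (a) is then immediate: $\fM = \bigoplus_\lambda V_{\lambda+\delta} \otimes \rU(\fg^\dagger(E)) \cdot \fM_\lambda^- = \rU(\fg^\dagger(E)) \cdot \fM^-$, which is the image of $m$. The main obstacle is the explicit computation of $\fM^-$: the $\Sym^2(E^*)$-part is classical invariant theory, but $\ker(E^*) \cap \ker(\Sym^2(E^*))$ requires delicate bookkeeping with the Clifford action because $E^*$ preserves neither the $\Sym$-degree nor the $\Delta$-degree separately.
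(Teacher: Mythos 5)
Your overall architecture (decompose under the reductive member $\Pin(V)$ of the dual pair, identify the lowest-weight space, then argue simplicity of the multiplicity spaces) is reasonable, but the step on which everything hinges is wrong. To prove that $\rU(\fg^\dagger(E)) \cdot \bS_\lambda(E) = \fM_\lambda$ you invoke the claim that the $\fgl(E)$-character of $\fM_\lambda$ ``agrees with that of the parabolic Verma module induced from $\bS_\lambda(E)$.'' That identity holds only in the stable range $2\dim E \le \dim V$ (it is essentially Proposition~\ref{prop:spin-sep-vars}, whose proof in the paper in fact uses the present proposition), and it fails outside it: for instance $\fM_\emptyset$ is the quotient of $\rU(\fg^\dagger(E))$ by the left ideal generated by $\lw^{N+1}(E)$, and the nonvanishing higher $\Tor^{\rU(\fg^\dagger(E))}(\fM,\bC)$ computed in Theorem~\ref{thm:spin-derived} exists precisely because $m$ is surjective but not injective when $2\dim E > \dim V$. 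So your proof of simplicity collapses in general, and since you derive (a) from (c), (a) is unproven as well. The repair is to prove (a) directly and independently, as the paper does: $\fg^\dagger(E^*)$ strictly lowers the $\Sym$-degree, the invariant form on $\fM$ makes $E^*,\Sym^2(E^*)$ adjoint to $E,\Sym^2(E)$ (giving, by induction on degree, that every vector is generated by $\fM^-$ under $\rU(\osp(\tilde E))$), and the PBW factorization $\rU(\osp(\tilde E)) = \rU(\fg^\dagger(E^*)) \otimes \rU(\fgl(E)) \otimes \rU(\fg^\dagger(E))$ then shows $\rU(\fg^\dagger(E))$ alone suffices. With (a) and (b) in hand, simplicity of $\fM_\lambda$ follows exactly by your submodule-contains-a-lowest-weight-vector argument, with no character count needed: the submodule contains $\bS_\lambda(E)$ and hence, by (a), all of $\fM_\lambda$.

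There is a second, smaller gap in your treatment of (b). Exhibiting explicit vectors killed by $E^*$ and $\Sym^2(E^*)$ only gives the containment $\fM^- \supseteq \bigoplus_{2\ell(\lambda)\le \dim V} \bS_\lambda(E) \otimes V_{\lambda+\delta}$; ``matching $\Pin(V)$-characters'' does not bound a subspace such as $\fM^-$ from above, since the character of $\fM$ says nothing a priori about which vectors are annihilated. The paper closes this by computing the kernel of the $E^*$-contraction on the harmonic part $\bigoplus_\lambda \bS_\lambda(E)\otimes\bS_\lambda(V)\otimes\Delta$ via the finite-rank identifications of Lemmas~\ref{lem:odd-spinor-ker} and~\ref{lem:even-spinor-ker} (the kernel of the maps $t$ on $\bS_\lambda(V)\otimes\Delta$ is exactly $V_{\lambda+\delta}$ when $\ell(\lambda)\le \lfloor \dim V/2\rfloor$); some such input is needed, and your proposal does not supply a substitute for it.
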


\begin{proof}
(a) Say an element of $\fM$ has degree $d$ if it lives in $\Sym^d(E \otimes V) \otimes \bigwedge^\bullet(W)$. The action of $\fg^\dagger(E^*)$ strictly decreases the degree of an element, so if an element $v$ is not in $\fM^-$, there exists $x \in \fg^\dagger(E^*)$, such that $xv \ne 0$ has smaller degree. By adjointness, $v$ can be generated by $xv$, and by induction on $d$, $xv$ can be generated by $\fM^-$. Now, using the decomposition $\rU(\osp(\tilde{E})) = \rU(\fg^\dagger(E^*)) \otimes \rU(\fgl(E)) \otimes \rU(\fg^\dagger(E))$ which comes from the Poincar\'e--Birkhoff--Witt theorem, we see that every element that can be generated by $\fM^-$ using $\rU(\osp(\tilde{E}))$ can be done so using just $\rU(\fg^\dagger(E))$.

(b) First, $\fg^\dagger(E) = E^* \oplus \Sym^2(E^*)$ and the action of $\Sym^2(E^*)$ is only on $\Sym(E \otimes V)$. The kernel of this action is the space of harmonic polynomials discussed in \cite[Theorem 9.1]{goodman}. Explicitly, it has a decomposition
\[
\bigoplus_{2\ell(\lambda) \le \dim V} \bS_\lambda(E) \otimes \bS_\lambda(V).
\]
To take the kernel by the action of $E^*$ on $\bS_\lambda(E) \otimes \bS_\lambda(V)$, we need to compute the kernel of 
\[
\bS_\lambda(E) \otimes \bS_\lambda(V) \otimes \Delta \to \bigoplus_\mu \bS_\mu(E) \otimes \bS_\mu(V) \otimes \Delta
\]
using the maps $t$ discussed in \S\ref{sec:weyl-spin}. From Lemmas~\ref{lem:odd-spinor-ker} and~\ref{lem:even-spinor-ker}, we deduce that the kernel is $\bS_\lambda(E) \otimes V_{\lambda + \delta}$.

(c) It follows immediately from (a) and (b) that $\fM$ has a decomposition as above where $\fM_\lambda$ is a lowest-weight representation generated by $\bS_\lambda(E)$. If $\fM_\lambda$ were reducible, then it would have lowest-weight vectors besides $\bS_\lambda(E)$; however, the decomposition of $\fM^-$ is multiplicity-free, so this does not happen.
\end{proof}

\begin{remark}
The decomposition in (c) is a special case of the one in \cite[Theorem A.1]{ckw} when $N$ is even.
\end{remark}

\subsection{Oscillators}

Now define an orthogonal form $\omega'$ on 
\[
E' = \bC \oplus E \oplus E^*
\]
by
\[
\omega'((c, e, f), (c', e', f')) = cc' + f'(e) + f(e').
\]
The Lie algebra $\fso(E') \cong \fso(1+2m)$ has a natural $\bZ$-grading supported on $[-2,2]$:
\begin{align*} 
\fso(E') = \lw^2(E^*) \oplus E^* \oplus \fgl(E) \oplus E \oplus \lw^2(E).
\end{align*}

If $N$ is even, let $U$ be a symplectic vector space of dimension $N$. If $N$ is odd, let $U$ be a superspace of dimension $(1|N-1)$ with a superorthogonal form (so the odd part is a symplectic space in the usual sense).

For uniformity of notation, we write $\osp(U)$ for the Lie (super)algebra preserving the form on $U$. We also let $\Mp(U)$ denote the metaplectic cover of the Lie supergroup $\bO\Sp(U)$. Rather than define these groups precisely, we just need to know that the half-oscillator representations of \S\ref{sec:half-osc} always appear together, that is, there is a nontrivial element in $\Mp(U)$ that takes one to the other (so this is only an issue when $N$ is even). The tensor product of $\omega'$ with this orthosymplectic form gives an orthosymplectic form on $U \otimes E'$, and we get an embedding 
\[
\osp(U) \times \fso(E') \subset \fosp(U \otimes E') \cong \begin{cases} \fosp(2m+1|(2m+1)(N-1)) & N \text{ odd} \\ 
\fsp(2mN+N) & N \text{ even} \end{cases}.
\]

Let $W$ be a maximal isotropic subspace of $U$ (now just thought of as an ungraded vector space). Then $W \oplus (E \otimes U)$ is a maximal isotropic subspace of $U \otimes E'$. We identify $\nabla = \Sym(W)$ with the oscillator representation of $\osp(U)$. As a vector space, the oscillator representation of $\fosp(U \otimes E')$ is 
\[
\fN = \Sym(E \otimes U) \otimes \Sym(W).
\]
This can be analyzed in the same way that $\fM$ was analyzed in \S\ref{sec:howe-dual}, so we omit the proofs and just state the analogous results.

Let $\eta$ be the highest weight of the oscillator representation $\nabla$. Given a partition $\lambda$, which we interpret as a dominant weight of $\Mp(U)$, set 
\[
\fN_\lambda = \hom_{\Mp(U)}(V_{\lambda + \eta}, \fN).
\]
Note that $\fg(E^*) = E^* \oplus \lw^2(E^*)$ is a subalgebra of $\fso(E')$ and hence acts on $\fN$. Let $\fN^-$ be the subspace of $\fN$ annihilated by $\fg(E^*)$. Then we have a multiplication map
\[
m \colon \rU(\fg(E)) \otimes \fN^- \to \fN.
\]
\begin{proposition} 
\phantomsection \label{prop:fN-minus}
\begin{enumerate}[\rm (a)]
\item $m$ is surjective.
\item As a representation of $\fgl(E) \times \Mp(U)$, we have $\fN^- = \bigoplus_{2\ell(\lambda) \le \dim V} \bS_\lambda(E) \otimes V_{\lambda + \eta}$.
\item $\fN$ is a semisimple $(\fso(E') \times \Mp(U))$-module, and decomposes as
\[
\fN = \bigoplus_{\substack{\lambda\\ 2\ell(\lambda) \le \dim(V)}} \fN_\lambda \otimes V_{\lambda + \eta},
\]
where $\fN_\lambda$ is a simple lowest-weight representation and $\fN_\lambda \cong \fN_\mu$ if and only if $\lambda = \mu$.
\end{enumerate}
\end{proposition}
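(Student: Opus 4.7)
The plan is to mirror the proof of Proposition~\ref{prop:mu-minus} step by step, substituting oscillator ingredients for spinor ones throughout.

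For (a), we equip $\fN = \Sym(E \otimes U) \otimes \Sym(W)$ with the grading inherited from $\Sym(E \otimes U)$. The subalgebra $\fg(E^*) = E^* \oplus \lw^2(E^*)$ acts by operators that strictly lower this degree, so if $v \in \fN \setminus \fN^-$ there is $x \in \fg(E^*)$ with $xv \ne 0$ of strictly smaller degree. The invariant oscillator pairing, under which $\fg(E)$ is adjoint to $\fg(E^*)$, then lets us recover $v$ from $xv$ by acting with $\fg(E)$, and downward induction on degree gives $\rU(\fso(E')) \cdot \fN^- = \fN$. The PBW factorization $\rU(\fso(E')) = \rU(\fg(E^*)) \otimes \rU(\fgl(E)) \otimes \rU(\fg(E))$, combined with the fact that $\fN^-$ is $\fgl(E)$-stable and $\fg(E^*)$-annihilated, upgrades this to surjectivity of $m$.

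For (b), we first identify the kernel of $\lw^2(E^*)$ on $\Sym(E \otimes U)$: this subspace acts by $\osp(U)$-invariant quadratic differential operators, and $(\fgl(E), \osp(U))$ symplectic Howe duality identifies the harmonics with $\bigoplus_\lambda \bS_\lambda(E) \otimes \bS_\lambda(U)$, summed over partitions with $2\ell(\lambda) \le \dim V$. Tensoring with $\nabla$ and then taking the kernel of $E^* \subset \fg(E^*)$ reduces, on each isotypic component, to computing the kernel of the maps $t_i \colon \bS_\lambda(U) \otimes \nabla \to \bigoplus_\mu \bS_\mu(U) \otimes \nabla$. An oscillator analogue of Lemmas~\ref{lem:odd-spinor-ker} and~\ref{lem:even-spinor-ker} identifies this kernel with $V_{\lambda + \eta}$. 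Part (c) then follows formally: (a) and (b) together give the stated decomposition with $\fN_\lambda$ generated as a $\rU(\fg(E))$-module by $\bS_\lambda(E)$, and the multiplicity-freeness of $\fN^-$ rules out any extra lowest-weight vector, forcing each $\fN_\lambda$ to be simple and the different $\fN_\lambda$ to be pairwise non-isomorphic.

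The main obstacle is supplying the oscillator counterpart of Lemmas~\ref{lem:odd-spinor-ker} and~\ref{lem:even-spinor-ker} used in step (b). In the spinor case these are proved via a presentation of an auxiliary module obtained from the Clifford algebra acting on $\Sym(E \otimes V) \otimes \Delta$, using results from \cite{exceptional}. One would have to carry out the parallel argument with the Weyl algebra in place of the Clifford algebra, acting on $\Sym(E \otimes U) \otimes \nabla$; this is precisely the point where the design choice highlighted in the introduction matters, namely that $U$ is taken to be an orthosymplectic space when $N$ is odd, so that finite-dimensional representation theory remains semisimple and the spinor argument transfers without modification.
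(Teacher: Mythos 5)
Your proposal is correct and matches the paper's approach: the paper omits the proof of this proposition entirely, stating that $\fN$ ``can be analyzed in the same way that $\fM$ was analyzed,'' i.e., by repeating the proof of Proposition~\ref{prop:mu-minus} with the Weyl-algebra/oscillator ingredients in place of the Clifford/spinor ones, which is exactly what you do. The one obstacle you flag---the oscillator counterpart of Lemmas~\ref{lem:odd-spinor-ker} and~\ref{lem:even-spinor-ker}---is likewise left implicit by the paper and is supplied by the symplectic/orthosymplectic Littlewood-complex results of \cite{exceptional} (or by transpose duality), so it is not a gap relative to the paper's own treatment.
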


\subsection{Koszul complexes}

Let $\fg(E) = E \oplus \lw^2(E)$ be the free $2$-step nilpotent Lie algebra on $E$. As a vector space, its universal enveloping algebra is (for the second equality, one can use \eqref{eqn:wedge-plethysm} together with the Pieri rule \cite[(3.10)]{expos})
\[
\rU(\fg(E)) = \Sym(E) \otimes \Sym(\lw^2(E)) = \bigoplus_{\lambda} \bS_\lambda E.
\]
The Chevalley--Eilenberg complex $\bK(\fg(E))_\bullet$ of $\fg(E)$ is given by $\bK(\fg(E))_i = \rU(\fg(E)) \otimes \bigwedge^i \fg(E)$ and is a projective resolution of the trivial module $\bC$. But this resolution is not minimal, i.e., the entries of the differentials do not belong to the augmentation ideal of $\rU(\fg(E))$. 

\begin{proposition} \label{prop:osc-koszul}
The terms of the minimal subcomplex $\ol{\bK}(\fg(E))_\bullet$ of $\bK(\fg(E))_\bullet$ are 
\[
\ol{\bK}(\fg(E))_i = \rU(\fg(E)) \otimes \bigoplus_{\substack{\lambda = \lambda^\dagger, \\ 2i = |\lambda| + \rank(\lambda)}} \bS_\lambda E.
\]
\end{proposition}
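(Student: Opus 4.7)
The plan is to recognize that the minimal subcomplex computes Lie algebra homology and then to identify this homology via a classical computation.

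First, I will reduce to a homological calculation. For any projective resolution $\bK_\bullet \to \bC$ over $\rU(\fg(E))$, the differentials of the minimal subcomplex land in the augmentation ideal, so $\ol{\bK}(\fg(E))_\bullet \otimes_{\rU(\fg(E))} \bC$ has zero differential and its $i$-th term is $\mathrm{Tor}^{\rU(\fg(E))}_i(\bC, \bC) = H_i(\fg(E); \bC)$, the Lie algebra homology with trivial coefficients. Since $\bK(\fg(E))_\bullet = \rU(\fg(E)) \otimes_\bC \bigwedge^\bullet \fg(E)$ is $\GL(E)$-equivariant, so is its minimal subcomplex, yielding $\ol{\bK}(\fg(E))_i = \rU(\fg(E)) \otimes_\bC H_i(\fg(E); \bC)$ as a $\GL(E)$-equivariant free module. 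The task reduces to identifying $H_i(\fg(E); \bC)$ with the claimed sum of Schur functors.

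Second, this identification is a classical theorem of Sigg on the homology of the free two-step nilpotent Lie algebra (Sigg, ``Laplacian and homology of free two-step nilpotent Lie algebras,'' J.~Algebra \textbf{185} (1996), 144--161), so one option is simply to cite it. Alternatively, one can deduce the oscillator case from the spinor analogue (the forward-referenced Proposition~\ref{prop:spin-koszul}, concerning the twisted Lie algebra $\bC^\infty \oplus \Sym^2(\bC^\infty)$) using the transpose duality of \S\ref{ss:transpose}: transpose duality swaps $\lw^2 \leftrightarrow \Sym^2$, hence interchanges the oscillator and spinor twisted Lie algebras, while preserving both the condition $\lambda = \lambda^\dagger$ and the quantity $|\lambda| + \rank(\lambda)$ (since both depend only on the Frobenius symbol).

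To sketch Sigg's proof: the Chevalley--Eilenberg complex bidecomposes as $\bigwedge^p E \otimes \bigwedge^q \lw^2 E$, with differential $d$ of bidegree $(-2, +1)$ induced by the bracket $E \wedge E \to \lw^2 E$. Equip $E$ with a Hermitian inner product, form the adjoint $d^*$, and consider the Laplacian $\Delta = dd^* + d^*d$. Then $H_\bullet(\fg(E); \bC) \cong \ker \Delta$, and $\Delta$ is $\GL(E)$-equivariant, so its kernel decomposes into $\bS_\lambda E$-isotypic pieces. The core step is to compute the eigenvalues of $\Delta$ via a Casimir-like operator coming from an auxiliary Lie algebra action (generated by $d$, $d^*$, and a grading operator) that commutes with $\GL(E)$; Howe duality then identifies the $\bS_\lambda E$-isotypic part of $\ker \Delta$ as one-dimensional exactly when $\lambda = \lambda^\dagger$, concentrated in bidegree $(p, q) = (\rank(\lambda), (|\lambda| - \rank(\lambda))/2)$ and zero otherwise, giving total homological degree $i = p + q = (|\lambda| + \rank(\lambda))/2$.

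The main obstacle is the explicit Casimir eigenvalue calculation, which is the technical heart of Sigg's argument. A useful sanity check is the classical character identity
\[
\prod_i (1 - x_i) \prod_{i < j} (1 - x_i x_j) = \sum_{\lambda = \lambda^\dagger} (-1)^{(|\lambda| + \rank(\lambda))/2}\, s_\lambda(x),
\]
which shows that the Euler characteristic of the Chevalley--Eilenberg complex already matches the alternating sum predicted by the claimed formula. Beyond this Euler-characteristic level, the substantive content of Sigg's theorem is multiplicity-freeness, which rules out any cancellation across homological degrees.
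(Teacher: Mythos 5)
Your proposal is correct and takes essentially the same route as the paper, whose entire proof is a citation of Sigg (or J\'ozefiak--Weyman) for the homology of the free two-step nilpotent Lie algebra; your reduction to $\rH_\bullet(\fg(E);\bC)$ and sketch of Sigg's Laplacian argument simply make that citation explicit. One caution: your alternative route via Proposition~\ref{prop:spin-koszul} and transpose duality would be circular within this paper, since the paper obtains that proposition from the present one by applying transpose duality.
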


\begin{proof}
This follows from either \cite{reducedkoszul} or \cite{sigg}.
\end{proof}

We have a natural surjection of algebras $\rU(\fg(E)) \to \Sym(E)$.

\begin{lemma} \label{lem:torcalc}
$\Tor^{\rU(\fg(E))}_i(\Sym(E), \bC) = \bigwedge^i(\bigwedge^2(E))$.
\end{lemma}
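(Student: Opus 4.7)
The plan is to build an explicit free resolution of $\Sym(E)$ over $\rU(\fg(E))$ using the fact that $\lw^2(E)$ sits in $\fg(E)$ as a (graded) central subspace.

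First I would note that, because $\fg(E)$ is $2$-step nilpotent with $[\fg(E), \fg(E)] = \lw^2(E)$, the subspace $\lw^2(E) \subset \fg(E)$ is central in the Lie algebra, and hence its image in $\rU(\fg(E))$ consists of central elements. In particular, $\rU(\fg(E)) \cdot \lw^2(E)$ is a two-sided ideal, and the quotient $\rU(\fg(E)) / \rU(\fg(E)) \cdot \lw^2(E)$ is canonically isomorphic (as an algebra, and as a left $\rU(\fg(E))$-module) to $\Sym(E)$.

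Next I would invoke PBW: the natural map $\Sym(\lw^2(E)) \otimes \Sym(E) \to \rU(\fg(E))$ is an isomorphism of vector spaces. This exhibits $\rU(\fg(E))$ as a free module over the central polynomial subalgebra $\Sym(\lw^2(E))$. Consequently, any linear basis of $\lw^2(E)$ gives a regular sequence in $\rU(\fg(E))$, and the Koszul complex on these central elements
\[
\bL_\bullet \;=\; \rU(\fg(E)) \otimes \lw^\bullet(\lw^2(E)),
\]
with differential given by interior multiplication against the inclusion $\lw^2(E) \hookrightarrow \rU(\fg(E))$, is a $\GL(E)$-equivariant free resolution of $\Sym(E)$ as a left $\rU(\fg(E))$-module.

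Finally, I would compute $\Tor^{\rU(\fg(E))}_\bullet(\Sym(E), \bC)$ by tensoring $\bL_\bullet$ on the left with $\bC = \rU(\fg(E))/\rU(\fg(E))_+$. Because each generator of $\lw^2(E)$ lies in the augmentation ideal, all the differentials in $\bC \otimes_{\rU(\fg(E))} \bL_\bullet$ vanish, and the complex collapses to $\lw^\bullet(\lw^2(E))$ with zero differentials. Thus $\Tor^{\rU(\fg(E))}_i(\Sym(E), \bC) = \lw^i(\lw^2(E))$, as claimed.

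The only non-formal point is verifying that $\lw^2(E)$ is a regular sequence, which is where PBW freeness over $\Sym(\lw^2(E))$ is essential; once that is in hand, the rest is a standard Koszul-on-central-elements computation, and no routine calculation is required.
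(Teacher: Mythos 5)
Your argument is correct, but it takes a genuinely different route from the paper. The paper does not resolve $\Sym(E)$ directly: it passes to the associated graded algebra $S=\Sym(E\oplus\lw^2(E))$ via the PBW degeneration, computes $\Tor^S_\bullet(S/I,\bC)=\lw^\bullet(\lw^2(E))$ for the linear ideal $I=(\lw^2(E))$, and then transfers this back to $\rU(\fg(E))$ using upper semicontinuity of Tor in flat families together with the facts that $\lw^\bullet(\lw^2(E))$ is multiplicity-free as a $\GL(E)$-representation and that the equivariant Euler characteristic is constant in the family. You instead exploit that $\lw^2(E)=[\fg(E),\fg(E)]$ is central, so that a basis of $\lw^2(E)$ is a central regular sequence in $\rU(\fg(E))$ (regularity following from PBW freeness of $\rU(\fg(E))$ over $\Sym(\lw^2(E))$), and the Koszul complex $\rU(\fg(E))\otimes\lw^\bullet(\lw^2(E))$ is then an equivariant free resolution of $\Sym(E)$; reducing mod the augmentation ideal kills the differentials and gives the answer. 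Your approach is more elementary and gives strictly more: it produces the explicit minimal free resolution, which is what is actually invoked later (the locally free resolution of $\Sym(\cR)$ over $\rU(\fg(\cR))$ in \S\ref{sec:osc-odd}, and, after transpose duality, Proposition~\ref{prop:nabla-res}), rather than only the Tor groups. The paper's degeneration argument, on the other hand, does not use centrality at all, so it is the template that extends to situations where no central regular sequence is available (it is the same semicontinuity-plus-multiplicity-freeness device used elsewhere in this circle of results); here, though, centrality holds and your direct Koszul computation is the cleaner proof.
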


\begin{proof}
Consider the symmetric algebra $S = \Sym(E \oplus \bigwedge^2( E))$. Let $I$ be the ideal generated by $\lw^2(E)$. Then $S/I = \Sym(E)$ as a vector space. Since $I$ is linear, it is  generated by a regular sequence, and so $\Tor^S_i(S/I, \bC) = \bigwedge^i(\lw^2(E))$. The Poincar\'e--Birkhoff--Witt theorem gives a $\GL(E)$-equivariant flat family with generic fiber $\rU(\fg(E))$ and special fiber $S$. This degeneration takes the quotient map $\rU(\fg(E)) \to \Sym(E)$ to $S \to S/I$. The dimension of Tor modules is upper semicontinuous for flat families, so we have
\[
\dim_\bC \Tor^{\rU(\fg(E))}_i(\Sym(E), \bC) \le \dim_\bC \Tor^S_i(S/I, \bC).
\]
Since $\Tor^S_\bullet(S/I,\bC)$ is multiplicity-free as a $\GL(E)$-representation, and the equivariant Euler characteristic is preserved in flat families, we get an isomorphism of $\GL(E)$-modules $\Tor^{\rU(\fg(E))}_i(\Sym(E), \bC) \cong \Tor^S_i(S/I, \bC)$. 
\end{proof}

Finally, let $\Delta$ be the spinor representation of $\fso(E')$. Then $\fg(E)$ is a nilpotent subalgebra of $\fso(E')$ and we can calculate the homology of the restriction of $\Delta$ using Kostant's theorem:

\begin{proposition} 
As a $\fgl(E)$-representation, we have 
\[
\Tor^{\rU(\fg(E))}_i(\Delta, \bC) = \bigwedge^i(\Sym^2(E)) \otimes \bC_{(-\frac{1}{2}, \dots, -\frac{1}{2})}.
\]
In particular, there is an acyclic $\GL(E)$-equivariant complex of free $\rU(\fg(E))$-modules with terms $\rU(\fg(E)) \otimes \bigwedge^i(\Sym^2(E))$.
\end{proposition}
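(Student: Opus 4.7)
The plan is to apply Kostant's theorem with respect to the parabolic $\fp = \fgl(E) \oplus \fg(E)$ of $\fso(E') \cong \fso(2m+1)$ (with $m = \dim E$), for which $\fg(E) = E \oplus \lw^2(E)$ is the nilpotent radical, and then to use Poincar\'e duality for Lie algebra (co)homology to translate the resulting cohomology calculation into the desired Tor statement. Here the spinor representation $\Delta$ is the irreducible $\fso(E')$-module of highest weight $\lambda = (\tfrac{1}{2}, \ldots, \tfrac{1}{2})$.

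Kostant's theorem produces the decomposition
\[
H^j(\fg(E), \Delta) = \bigoplus_{\substack{w \in W^P \\ \ell(w) = j}} L_{\fgl(E)}(w \cdot \lambda)
\]
as $\fgl(E)$-modules, where $W^P$ consists of minimal length representatives of $S_m \backslash ((\bZ/2)^m \rtimes S_m)$, $\rho = (m - \tfrac{1}{2}, m - \tfrac{3}{2}, \ldots, \tfrac{1}{2})$, and $w \cdot \lambda = w(\lambda + \rho) - \rho$. Then I would invoke Poincar\'e duality for Lie algebra (co)homology: with $n = \dim \fg(E) = \binom{m+1}{2}$,
\[
\Tor^{\rU(\fg(E))}_i(\Delta, \bC) = H_i(\fg(E), \Delta) \cong H^{n-i}(\fg(E), \Delta) \otimes \det(\fg(E)),
\]
and $\det(\fg(E)) = \det(E) \otimes \det(\lw^2 E)$ corresponds to the $\fgl(E)$-weight $(m, \ldots, m)$. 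The elements of $W^P$ are naturally parameterized by subsets $T \subseteq \{1, \ldots, m\}$ (with $w_T(\lambda + \rho)$ obtained from $(m, m-1, \ldots, 1)$ by negating the entries indexed by $T$ and sorting into strictly decreasing order), and the same subsets parameterize the summands of $\lw^\bullet(\Sym^2(E))$ via the bijection between subsets of $\{1, \ldots, m\}$ and strictly decreasing sequences in $\{1, \ldots, m\}$ indexing $Q_1$-partitions (Remark~\ref{rmk:Q1-num}). The remaining verification is to check that the $\fgl(E)$-weight $w_T \cdot \lambda + (m, \ldots, m)$ equals the highest weight of the $\bS_\mu(E)$-summand in $\lw^{i(T)}(\Sym^2(E)) \otimes \bC_{(-1/2, \ldots, -1/2)}$ corresponding to $T$, and that $\ell(w_T) = n - i(T)$.

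The second assertion then follows formally: using the Tor calculation and the fact that Schur functors are irreducible $\GL(E)$-representations, the minimal free resolution of $\Delta \otimes \bC_{(1/2, \ldots, 1/2)}$ (the canonical polynomial twist of $\Delta$) over $\rU(\fg(E))$ is a $\GL(E)$-equivariant complex with $i$-th term $\rU(\fg(E)) \otimes \lw^i(\Sym^2(E))$, acyclic once augmented by the natural surjection onto $\Delta \otimes \bC_{(1/2, \ldots, 1/2)}$. The main obstacle is the combinatorial matching between $W^P$-representatives and $\lw^\bullet(\Sym^2(E))$-summands in the second paragraph; the total counts agree ($|W^P| = 2^m$ and, by Remark~\ref{rmk:Q1-num}, so does the number of summands of $\lw^\bullet(\Sym^2(E))$ with at most $m$ rows), so the content is in identifying the individual weights and lengths.
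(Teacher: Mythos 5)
Your approach is essentially the paper's: both compute the $\fg(E)$-(co)homology of $\Delta$ via Kostant's theorem for the parabolic of $\fso(E')$ with Levi $\fgl(E)$ and nilradical $\fg(E)$, pass to Tor by Poincar\'e duality (the paper uses $\Delta \cong \Delta^*$ together with Poincar\'e duality on Tor rather than your $H_i \cong H^{n-i} \otimes \det\fg(E)$ twist, a purely cosmetic difference), match the resulting Levi-highest weights with the summands of $\bigwedge^\bullet(\Sym^2(E))$ via \eqref{eqn:wedge-plethysm} and the $2^m$ count of Remark~\ref{rmk:Q1-num}, and obtain the free complex in the last assertion by twisting by $\bC_{(\frac{1}{2},\dots,\frac{1}{2})}$. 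The single step you defer --- that for each subset $T$ the weight $w_T\cdot\lambda$ plus the determinant twist is the highest weight of the corresponding $Q_1$-summand and that $\ell(w_T)$ sits in the complementary homological degree --- is precisely what the paper imports from the proof of \cite[Lemma 4.6]{exceptional} combined with the counting argument, so completing your write-up requires carrying out (or citing) that finite weight/length verification, not any new idea.
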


\begin{proof}
We follow the exposition of Kostant's theorem from \cite[\S 2.1]{heisenberg}. In that notation, set $\lambda$ to be the highest weight of $\Delta$, which in coordinates, is the sequence $(\frac{1}{2}, \dots, \frac{1}{2})$ (this has length $m = \dim E$) and $\rho = (\frac{2m-1}{2}, \frac{2m-3}{2}, \dots, \frac{1}{2})$. Then $\Tor_i(\Delta, \bC)$ is a sum of representations of $\fgl(E)$ whose highest weights $\mu$ are dominant and of the form $w(\lambda + \rho) - \rho$ for $w$ in the Weyl group of type $\rB_m$ (i.e., the group of signed permutations of sequences of length $m$).

First, we claim that this happens if $\mu$ is of the form $(-\nu_m + \frac{1}{2}, \dots, -\nu_1 + \frac{1}{2})$ where $\nu \in Q_1$ and in this case $\ell(w) = |\nu|/2$; this follows from \cite[Proof of Lemma 4.6]{exceptional}. Second, the total number of $\mu$ is $2^m$ (the size of the Weyl group of $\fso(E')$ divided by the Weyl group of $\fgl(E)$), and the total number of $\nu \in Q_1$ with $\ell(\nu) \le m$ is also $2^m$ (Remark~\ref{rmk:Q1-num}). Using \eqref{eqn:wedge-plethysm}, we get the result up to duality; since $\Delta \cong \Delta^*$, we can finish using Poincar\'e duality on Tor.

For the last part, we can use this Tor calculation to build an acyclic complex of free $\rU(\fg(E))$-modules with terms $\Tor_i(\Delta, \bC) \otimes \rU(\fg(E))$. Now twist the complex by $\bC_{(\frac{1}{2}, \dots, \frac{1}{2})}$.
\end{proof}

If we apply transpose duality to $\fg(E)$, we get the free $2$-step nilpotent Lie superalgebra on $E$: $\fg^\dagger(E) = E \oplus \Sym^2(E)$. As a vector space, its universal enveloping algebra is 
\begin{align} \label{eqn:char-spin-univ}
\rU(\fg^\dagger(E)) = \bigwedge^\bullet E \otimes \Sym(\Sym^2(E)) = \bigoplus_{\lambda} \bS_\lambda E.
\end{align}
The above calculations for $\fg(E)$ immediately give calculations for $\fg^\dagger(E)$:

\begin{proposition} \label{prop:spin-koszul}
The minimal free resolution $\ol{\bK}(\fg^\dagger(E))_\bullet$ of $\bC$ over $\rU(\fg^\dagger(E))$ has terms
\[
\ol{\bK}(\fg^\dagger(E))_i = \rU(\fg^\dagger(E)) \otimes \bigoplus_{\substack{\lambda = \lambda^\dagger, \\ 2i = |\lambda| + \rank(\lambda)}} \bS_\lambda E.
\]
\end{proposition}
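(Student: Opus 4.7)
The plan is to deduce this from Proposition~\ref{prop:osc-koszul} via the transpose duality functor discussed in \S\ref{ss:transpose}. Transpose duality is an auto-equivalence of $\Rep(\GL)$ which sends the Schur functor $\bS_\lambda$ to $\bS_{\lambda^\dagger}$, and which is anti-symmetric monoidal in each variable. Under this duality, the $\bZ$-graded Lie algebra $\fg(E) = E \oplus \lw^2(E)$ (with $E$ in degree~$1$ and $\lw^2(E)$ in degree~$2$) is carried to the Lie superalgebra $\fg^\dagger(E) = E \oplus \Sym^2(E)$ (with $E$ now odd and $\Sym^2(E)$ even). Because the enveloping algebra is defined purely in terms of the tensor structure and the bracket, the anti-symmetric monoidal character of the duality converts $\rU(\fg(E))$ into $\rU(\fg^\dagger(E))$; this is compatible with the two descriptions of the underlying $\GL(E)$-module $\bigoplus_\lambda \bS_\lambda E$ from \eqref{eqn:char-spin-univ}.

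Next I would apply transpose duality term-by-term to the minimal free resolution $\ol{\bK}(\fg(E))_\bullet$ from Proposition~\ref{prop:osc-koszul}. Since duality is an exact equivalence of abelian categories, it preserves acyclicity and freeness, and it carries the augmentation ideal of $\rU(\fg(E))$ to that of $\rU(\fg^\dagger(E))$, so minimality is preserved as well. The resulting complex is therefore the minimal free resolution of the trivial module $\bC$ over $\rU(\fg^\dagger(E))$.

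It remains to identify its terms. The indexing set $\{\lambda : \lambda = \lambda^\dagger,\ 2i = |\lambda|+\rank(\lambda)\}$ is stable under transposition, since both $|\lambda|$ and $\rank(\lambda)$ are invariant under $\lambda \mapsto \lambda^\dagger$, and for self-conjugate $\lambda$ we have $\bS_{\lambda^\dagger} E = \bS_\lambda E$. Hence the $i$th term of the dualized resolution is exactly $\rU(\fg^\dagger(E)) \otimes \bigoplus_{\lambda = \lambda^\dagger,\ 2i = |\lambda|+\rank(\lambda)} \bS_\lambda E$, as claimed. The only potential friction is bookkeeping for the anti-symmetric monoidal structure in the super setting, but this is what exchanges $\Sym \leftrightarrow \lw$ consistently throughout, and no further calculation is needed.
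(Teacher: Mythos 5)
Your argument is correct and is essentially the paper's own: Proposition~\ref{prop:spin-koszul} is obtained there by applying transpose duality (\S\ref{ss:transpose}) to the $\fg(E)$ computation of Proposition~\ref{prop:osc-koszul}, exactly as you do, with the same observations that the duality exchanges $\rU(\fg(E))$ and $\rU(\fg^\dagger(E))$ and that the index set of self-conjugate $\lambda$ with $2i=|\lambda|+\rank(\lambda)$ is fixed under transposition.
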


\begin{proposition} \label{prop:nabla-res}
There is a free resolution $\bF_\bullet$ of $\nabla$ with terms $\bF_i = \rU(\fg^\dagger(E)) \otimes \bigwedge^i(\bigwedge^2(E))$.
\end{proposition}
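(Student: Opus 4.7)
The plan is to apply transpose duality (\S\ref{ss:transpose}) to the acyclic complex of free $\rU(\fg(E))$-modules with terms $\rU(\fg(E)) \otimes \bigwedge^i(\Sym^2(E))$ that resolves (a twist of) $\Delta$, constructed in the preceding proposition. Transpose duality exchanges $\fg(E) = E \oplus \lw^2(E)$ with the free $2$-step nilpotent Lie superalgebra $\fg^\dagger(E) = E \oplus \Sym^2(E)$ and, by the parallelism between \S\ref{sec:spinor} and the oscillator constructions of this section, carries $\Delta$ to $\nabla$. Applied termwise, it converts the resolution of $\Delta$ over $\rU(\fg(E))$ into a free resolution of $\nabla$ over $\rU(\fg^\dagger(E))$.

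It remains to identify the $i$th term. By \eqref{eqn:wedge-plethysm},
\[
\bigwedge^i(\Sym^2 E) = \bigoplus_{\mu \in Q_1(2i)} \bS_\mu(E).
\]
Transpose duality sends $\bS_\mu(E) \mapsto \bS_{\mu^\dagger}(E)$ and restricts to a bijection $Q_1(2i) \cong Q_{-1}(2i)$ by definition, so $\bigwedge^i(\Sym^2 E)$ becomes $\bigoplus_{\nu \in Q_{-1}(2i)} \bS_\nu(E) = \bigwedge^i(\lw^2 E)$ by the second identity in \eqref{eqn:wedge-plethysm}, matching the stated terms.

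The main point requiring care is verifying that transpose duality genuinely lifts the differentials of the spinor resolution to a bona fide resolution on the oscillator side, rather than merely an acyclic complex with the right $\GL(E)$-character. A safer, self-contained alternative is to rerun the argument of the preceding proposition directly: $\fg^\dagger(E)$ sits as the $2$-step nilpotent radical inside $\osp(\tilde{E})$ of \S\ref{sec:howe-dual}, on which $\nabla$ is naturally a module through the Howe pair there, and Kostant's theorem --- with the same type $\rB$ Weyl-group combinatorics as before but with $Q_{-1}$ replacing $Q_1$, invoking Remark~\ref{rmk:Q1-num} for the matching cardinality count --- yields $\Tor^{\rU(\fg^\dagger(E))}_i(\nabla, \bC) = \bigwedge^i(\lw^2 E)$ up to a one-dimensional $\fgl(E)$-character twist. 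Absorbing this twist into $\nabla$ then produces the claimed minimal free resolution.
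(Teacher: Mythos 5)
Your main argument is precisely the paper's: Proposition~\ref{prop:nabla-res} is stated there with no separate proof because it follows, via the transpose duality of \S\ref{ss:transpose}, from the preceding acyclic complex of free $\rU(\fg(E))$-modules with terms $\rU(\fg(E)) \otimes \bigwedge^i(\Sym^2(E))$ resolving the twist of $\Delta$, with the terms converted exactly as you do using \eqref{eqn:wedge-plethysm} and $Q_1 \leftrightarrow Q_{-1}$. The worry you raise about the differentials is not a real gap---transpose duality is an exact (anti-)monoidal equivalence, so it transports the algebra, the free modules, the differentials, and exactness on the nose---so your super-Kostant fallback (which would require an additional input such as \cite{ckw}) is not needed.
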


Recall the map $m \colon \rU(\fg^\dagger(E)) \otimes \fM^- \to \fM$ defined in \S\ref{sec:howe-dual}.

\begin{proposition} \label{prop:spin-sep-vars}
If $2\dim E \le \dim V$, then as representations of $\GL(E)$, both $\rU(\fg^\dagger(E)) \otimes \fM^-$ and $\fM$ have finite multiplicities and are isomorphic. In particular, $m$ is an isomorphism, $\fM$ is a free module over $\rU(\fg^\dagger(E))$, and $\Tor^{\rU(\fg^\dagger(E))}_i(\fM, \bC) = 0$ for $i>0$.
\end{proposition}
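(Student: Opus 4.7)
The plan is to derive the proposition from the surjectivity of $m$ (Proposition~\ref{prop:mu-minus}(a)) by a $\GL(E) \times \Pin(V)$-character comparison of source and target. All subsequent claims---that $m$ is an isomorphism, that $\fM$ is free over $\rU(\fg^\dagger(E))$, and that the higher Tor groups vanish---follow formally, since the right-hand side $\rU(\fg^\dagger(E)) \otimes \fM^-$ is manifestly a free $\rU(\fg^\dagger(E))$-module.

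First I would verify the finite-multiplicity claims and reduce to a single $\Pin(V)$-module identity. The Cauchy decomposition $\Sym(E \otimes V) \cong \bigoplus_\sigma \bS_\sigma E \otimes \bS_\sigma V$ identifies the $\bS_\sigma E$-isotypic component of $\fM = \Sym(E \otimes V) \otimes \Delta$ with the finite-dimensional $\Pin(V)$-module $\bS_\sigma V \otimes \Delta$. Dually, \eqref{eqn:char-spin-univ} gives $\rU(\fg^\dagger(E)) = \bigoplus_\mu \bS_\mu E$, so combining with Proposition~\ref{prop:mu-minus}(b) and the Littlewood--Richardson rule $\bS_\mu E \otimes \bS_\nu E = \bigoplus_\sigma c^\sigma_{\mu\nu} \bS_\sigma E$ shows the $\bS_\sigma E$-isotypic component of $\rU(\fg^\dagger(E)) \otimes \fM^-$ is the finite sum $\bigoplus_{\mu,\nu:\,\ell(\nu)\le\dim E} c^\sigma_{\mu\nu}\,V_{\nu+\delta}$ (finite because $|\mu|+|\nu|=|\sigma|$). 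The proposition is thus equivalent to the $\Pin(V)$-module identity
\[
\bS_\sigma V \otimes \Delta \;\cong\; \bigoplus_{\substack{\mu,\nu\\ \ell(\nu)\le\dim E}} c^\sigma_{\mu\nu}\,V_{\nu+\delta} \qquad (\,2\dim E \le \dim V\,)
\]
for every partition $\sigma$.

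To prove this identity, I would exploit Proposition~\ref{prop:mu-minus}(c) together with PBW. The parabolic decomposition $\osp(\tilde E) = \fg^\dagger(E^*) \oplus \fgl(E) \oplus \fg^\dagger(E)$ and PBW yield $\rU(\osp(\tilde E)) \cong \rU(\fg^\dagger(E^*)) \otimes \rU(\fgl(E)) \otimes \rU(\fg^\dagger(E))$ as vector spaces. Since the lowest weight space $\bS_\lambda E$ of $\fM_\lambda$ is annihilated by $\fg^\dagger(E^*)$, the parabolic Verma module generated by it is $\rU(\fg^\dagger(E)) \otimes \bS_\lambda E$, and $\fM_\lambda$ is naturally a quotient. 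In the stable range $2\dim E \le \dim V$, this quotient map is an isomorphism, and plugging into Proposition~\ref{prop:mu-minus}(c) directly produces $\fM \cong \rU(\fg^\dagger(E)) \otimes \fM^-$, i.e., the required identity.

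The main obstacle is the stable-range irreducibility $\fM_\lambda \cong \rU(\fg^\dagger(E)) \otimes \bS_\lambda E$. This is a classical separation-of-variables statement for orthosymplectic dual pairs (cf.~\cite[\S 8.2]{goodman} for the even-subalgebra analogue), and can alternatively be verified by a direct character calculation using Littlewood's orthogonal branching formula and the Weyl character formula for $V_{\nu+\delta}$. The hypothesis $2\dim E \le \dim V$ is essential: outside this range, relations appear among the generators of $\fM_\lambda$, reflecting the breakdown of strong Howe duality.
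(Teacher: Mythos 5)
Your reduction is sound as far as it goes: given the surjectivity of $m$ from Proposition~\ref{prop:mu-minus}(a), the Cauchy decomposition of $\Sym(E \otimes V)$ and the decomposition \eqref{eqn:char-spin-univ} of $\rU(\fg^\dagger(E))$ correctly identify the $\bS_\sigma E$-isotypic components of both sides, so the proposition is indeed equivalent (in the range $2\dim E \le \dim V$) to the $\Pin(V)$-identity $\bS_\sigma V \otimes \Delta \cong \bigoplus_{\mu,\nu} c^\sigma_{\mu\nu} V_{\nu+\delta}$, and the finite-multiplicity claims are fine. The problem is the step you yourself flag as the main obstacle: the assertion that in the stable range the surjection $\rU(\fg^\dagger(E)) \otimes \bS_\lambda E \to \fM_\lambda$ is an isomorphism. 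Component by component, that assertion \emph{is} the proposition (it is equivalent to $m$ being an isomorphism on the $V_{\lambda+\delta}$-isotypic part), so invoking it makes the argument circular unless you supply an independent proof. Neither of your two suggested sources closes this. Goodman's \cite[\S 8.2]{goodman} treats the even subalgebra $\fso(E \oplus E^*)$, i.e., the classical harmonic/separation-of-variables theory for $\Sym(E\otimes V)$ without the spinor twist; twisting by $\Delta$ changes the branching multiplicities from Littlewood's restricted sum (over partitions with even column lengths) to the unrestricted sum over all $\mu$, so the ``even-subalgebra analogue'' does not transfer. Likewise, ``Littlewood's orthogonal branching formula'' computes $\bS_\sigma V$ restricted to $\rO(V)$, not $\bS_\sigma V \otimes \Delta$; the spinor analogue you would need is exactly the identity at stake.

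The paper closes this gap by a different mechanism: it takes the character of $\fM^-$ from the spinor Littlewood complex of \cite[Proposition 4.8]{exceptional}, namely $[\fM^-] = [\Sym(E\otimes V)]\sum_i(-1)^i\sum_{\alpha=\alpha^\dagger,\,2i=|\alpha|+\rank(\alpha)}[\bS_\alpha E \otimes \Delta]$, multiplies by $[\rU(\fg^\dagger(E))]$, and uses the minimality/acyclicity of the resolution in Proposition~\ref{prop:spin-koszul} to collapse the alternating sum to $[\fM]$. So if you want to salvage your route, you must either prove the stable branching identity for $\bS_\sigma V \otimes \Delta$ directly (e.g., by the Weyl character formula argument you gesture at, carried out in detail, or by citing a precise spinor-branching result such as Koike's), or import the resolution from \cite{exceptional} as the paper does; as written, the key identity is assumed rather than proved.
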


\begin{proof}
Since $m$ is surjective by Proposition~\ref{prop:mu-minus}(a), it suffices to show that their characters are the same, and we use $[-]$ to denote characters. From \cite[Proposition 4.8]{exceptional} and Proposition~\ref{prop:mu-minus}(b), we have
\[
[\fM^-] = [\Sym(E \otimes V)] \sum_{i \ge 0} (-1)^i \sum_{\substack{\alpha = \alpha^\dagger\\ 2i = |\alpha| + \rank(\alpha)}} [\bS_\alpha E \otimes \Delta].
\]
Multiply both sides by $[\rU(\fg^\dagger(E))]$. Then the right hand side becomes
\[
\left( \sum_{i \ge 0} (-1)^i [\ol{\bK}(\fg^\dagger(E))_i] \right) [\Sym(E \otimes V) \otimes \Delta] = [\Sym(E \otimes V) \otimes \Delta] = [\fM],
\]
where in the first equality, we used that $\ol{\bK}(\fg^\dagger(E))_\bullet$ is a resolution of $\bC$ (Proposition~\ref{prop:spin-koszul}). In conclusion, $[\rU(\fg^\dagger(E)) \otimes \fM^-] = [\fM]$, so we are done.
\end{proof}

\begin{corollary}
In $\rK(\Rep^\spin(\fso(2\infty + 1)))$, we have 
\[
[\bS_\lambda(\bV) \otimes \Delta] = \sum_{\alpha, \beta} c^\lambda_{\alpha, \beta} [\Delta_\beta].
\]
Similarly, in $\rK(\Rep^\osc(\fosp(1|2\infty)))$, we have 
\[
[\bS_\lambda(\bV) \otimes \nabla] = \sum_{\alpha, \beta} c^\lambda_{\alpha, \beta} [\nabla_\beta].
\]
\end{corollary}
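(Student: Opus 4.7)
The plan is to extract both identities from the Howe-dual picture of \S\ref{sec:howe-dual}, combined with a stability argument in the rank of the orthogonal space. The key input is Proposition~\ref{prop:spin-sep-vars}, which at the level of characters compares the Cauchy decomposition of $\Sym(E \otimes V) \otimes \Delta$ with the free $\rU(\fg^\dagger(E))$-module structure on $\fM$ coming from Proposition~\ref{prop:mu-minus}. The finite-rank identity thus obtained is then lifted to $\rK(\Rep^\spin(\fg))$ by noting that all relevant multiplicities stabilize as $\dim V \to \infty$.

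More concretely, fix a finite-dimensional orthogonal space $V$ with $2 \dim E \le \dim V$ so that Proposition~\ref{prop:spin-sep-vars} applies. On the one hand, the Cauchy identity gives
\[
[\fM] = [\Sym(E \otimes V) \otimes \Delta] = \sum_\nu [\bS_\nu E] \cdot [\bS_\nu V \otimes \Delta].
\]
On the other hand, Proposition~\ref{prop:spin-sep-vars} together with Proposition~\ref{prop:mu-minus}(b) and the identity \eqref{eqn:char-spin-univ} for $\rU(\fg^\dagger(E))$ yields
\[
[\fM] = [\rU(\fg^\dagger(E))] \cdot [\fM^-] = \Bigl( \sum_\alpha [\bS_\alpha E] \Bigr) \sum_\beta [\bS_\beta E] \cdot [V_{\beta + \delta}].
\]
Expanding $\bS_\alpha E \otimes \bS_\beta E = \bigoplus_\nu (\bS_\nu E)^{\oplus c^\nu_{\alpha,\beta}}$ and matching $[\bS_\nu E]$-isotypic components produces the finite-rank identity
\[
[\bS_\nu V \otimes \Delta] = \sum_{\alpha, \beta} c^\nu_{\alpha, \beta} [V_{\beta + \delta}]
\]
in $\rK(\Rep(\Pin(V)))$, a sum with finitely many nonzero terms since $c^\nu_{\alpha,\beta} = 0$ unless $|\alpha| + |\beta| = |\nu|$.

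To transfer this to the infinite rank setting, observe that for any fixed $\lambda$ only finitely many $(\alpha, \beta)$ contribute, so the right side is independent of $\dim V$ once $\dim V$ is large compared to $|\lambda|$. By Lemma~\ref{lem:odd-spinor-ker}, each $V_{\beta + \delta}$ is the specialization to $\Spin(V)$ of the simple $\Delta_\beta \in \Rep^\spin(\fg)$, and classes in $\rK(\Rep^\spin(\fg))$ are determined by their specializations for $\dim V \gg 0$ by Proposition~\ref{prop:spin-simples}. This gives the spinor formula. The oscillator formula follows by running the parallel argument with $\nabla$, $\fN$, Proposition~\ref{prop:fN-minus}, and Proposition~\ref{prop:nabla-res} in place of their spinor counterparts, or, more economically, by invoking oscillator-spin duality from \S\ref{sec:oscspin-dual}, which identifies $\Delta_\beta$ with $\nabla_\beta$ and $\bS_\lambda(\bV) \otimes \Delta$ with $\bS_\lambda(\bV) \otimes \nabla$ at the Grothendieck-group level. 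The main obstacle is making the lift rigorous: one must confirm that the specialization functors $\Rep^\spin(\fg) \to \Rep(\Spin(V))$ behave exactly on the finite-length subcategories generated by $\{\Delta_\beta : |\beta| \le n\}$ for each fixed $n$, so that the finite-rank character identity detects the infinite-rank Grothendieck class. Given Proposition~\ref{prop:spin-simples} and the structure theory in \S\ref{sec:weyl-spin}, this is essentially formal, but it is the step on which the whole reduction depends.
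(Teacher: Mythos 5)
Your argument is correct and is essentially the paper's own proof: reduce to finite $\dim V \gg 0$ with $2\dim E \le \dim V$, use the character identity $[\rU(\fg^\dagger(E)) \otimes \fM^-] = [\fM]$ from Proposition~\ref{prop:spin-sep-vars} together with the Cauchy identity and the decompositions of $\rU(\fg^\dagger(E))$ and $\fM^-$, extract the $\bS_\lambda E$-isotypic component, and obtain the oscillator statement by duality (the paper uses transpose duality from \S\ref{ss:transpose}). The only caveat is that this duality sends $\Delta_\beta$ to $\nabla_{\beta^\dagger}$ and $\bS_\lambda(\bV) \otimes \Delta$ to $\bS_{\lambda^\dagger}(\bV) \otimes \nabla$ rather than to the untransposed objects, but since $c^\lambda_{\alpha,\beta} = c^{\lambda^\dagger}_{\alpha^\dagger,\beta^\dagger}$ the formula transfers exactly as you claim.
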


\begin{proof}
It suffices to prove this for $V$ with $\dim V$ finite, but sufficiently large. Pick $E$ with $\dim E \gg 0$ and keep $\dim V \ge 2 \dim E$. From Proposition~\ref{prop:spin-sep-vars}, we get $[\rU(\fg^\dagger(E)) \otimes \fM^-] = [\fM]$. Then $\bS_\lambda(\bV) \otimes \Delta$ is the coefficient of $\bS_\lambda E$ in $\fM = \Sym(E \otimes V) \otimes \Delta$ using the Cauchy identity $\Sym(E \otimes V) = \bigoplus_\lambda \bS_\lambda(E) \otimes \bS_\lambda(V)$ \cite[(3.13)]{expos}. The desired sum is the coefficient of $\bS_\lambda E$ in $\rU(\fg^\dagger(E)) \otimes \fM^-$ (use \eqref{eqn:char-spin-univ} and Proposition~\ref{prop:mu-minus}).

The second formula follows from the first one by applying transpose duality.
\end{proof}

\begin{proposition}
\addtocounter{equation}{-1}
\begin{subequations}
We have an injective resolution $\Delta_\lambda \to \bF(\lambda)_\bullet$ in $\Rep^\spin(\fso(2\infty+1))$ where
\begin{align} \label{eqn:spin-res}
\bF(\lambda)_i = \bigoplus_{\substack{\mu = \mu^\dagger\\ 2i = |\mu|+\rank(\mu)}} \bS_{\lambda / \mu} \bV \otimes \Delta,
\end{align}
and an injective resolution $\nabla_\lambda \to \bF(\lambda)_\bullet$ in $\Rep^\osc(\fosp(1|2\infty))$ where
\begin{align} \label{eqn:osc-res}
\bF(\lambda)_i = \bigoplus_{\substack{\mu = \mu^\dagger\\ 2i = |\mu|+\rank(\mu)}} \bS_{\lambda / \mu} \bV \otimes \nabla.
\end{align}
\end{subequations}
\end{proposition}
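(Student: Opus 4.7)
The plan is to identify $\bF(\lambda)_\bullet$ with the minimal injective resolution of $\Delta_\lambda$ in $\Rep^\spin(\fso(2\infty+1))$; the oscillator statement then follows from the spinor one via transpose duality (\S\ref{ss:transpose}), which interchanges the two categories and carries simples to simples and injective envelopes to injective envelopes.

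The general fact I would invoke is the following: in a Grothendieck abelian category with enough injectives in which every finite length object has a minimal injective resolution, the $i$-th term of the minimal injective resolution of such an $N$ is
\[
I^i = \bigoplus_S I(S)^{\oplus \dim \ext^i(S, N)},
\]
where $S$ runs over isomorphism classes of simple objects and $I(S)$ denotes the injective envelope of $S$. This is a standard inductive argument: $I^0$ is the injective envelope of the socle of $N$, and for $i \ge 1$ one takes $I^i$ to be the injective envelope of the socle of the cokernel of $d^{i-2}$; the multiplicities of the simples in these socles are pinned down by applying $\hom(S,-)$ and using $\ext^j(S, I^k) = 0$ for $j \ge 1$ in the long exact sequences.

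For $N = \Delta_\lambda$ in $\Rep^\spin(\fg)$, the simples are the $\Delta_\mu$ (Proposition~\ref{prop:spin-simples}), the injective envelopes are $I(\Delta_\mu) = \bS_\mu(\bV) \otimes \Delta$ (Proposition~\ref{prop:spin-inj}), and Corollary~\ref{cor:spin-ext} gives $\dim \ext^i(\Delta_\mu, \Delta_\lambda) = \sum_\nu c^\lambda_{\mu,\nu}$ with $\nu$ ranging over self-conjugate partitions satisfying $2i = |\nu|+\rank(\nu)$. Therefore the $i$-th term of the minimal injective resolution is
\[
\bigoplus_\mu (\bS_\mu(\bV) \otimes \Delta)^{\oplus \sum_\nu c^\lambda_{\mu,\nu}} = \bigoplus_\nu \Big(\bigoplus_\mu c^\lambda_{\mu,\nu} \bS_\mu(\bV)\Big) \otimes \Delta = \bigoplus_\nu \bS_{\lambda/\nu}(\bV) \otimes \Delta,
\]
using the skew-Schur decomposition $\bS_{\lambda/\nu} = \bigoplus_\mu c^\lambda_{\mu,\nu} \bS_\mu$ (together with the Littlewood--Richardson symmetry $c^\lambda_{\mu,\nu} = c^\lambda_{\nu,\mu}$). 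This is precisely $\bF(\lambda)_i$ from~\eqref{eqn:spin-res}.

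The step requiring care is the verification of the general structure theorem in our setting: one must check that $\Rep^\spin(\fg)$ has enough injectives (Proposition~\ref{prop:spin-inj}), that the relevant $\ext$ groups are finite-dimensional (Corollary~\ref{cor:spin-ext}), and that finite length objects admit minimal injective resolutions of the stated form. These can be confirmed directly, or else by transferring through the equivalence with $\cU(\fa)$-modules, under which the Koszul complex $\ol{\bK}(\fg^\dagger(E))_\bullet \otimes \bS_\lambda$ from Proposition~\ref{prop:spin-koszul} provides the corresponding minimal projective resolution of the simple $\cU(\fa)$-module $\bS_\lambda$. Once the general fact is in hand, the identification of terms is just routine Littlewood--Richardson bookkeeping.
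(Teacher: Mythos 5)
Your argument is correct, but it is a genuinely different route from the one the paper takes. The paper obtains \eqref{eqn:spin-res} by dualizing the explicit finite-rank complex of \cite[Remark 4.9]{exceptional} and passing to the limit $\dim V \to \infty$, then gets \eqref{eqn:osc-res} by transpose duality (\S\ref{ss:transpose}); you instead construct the resolution intrinsically, identifying $\bF(\lambda)_\bullet$ with the minimal injective resolution of $\Delta_\lambda$ and reading off its terms from the Bass-number formula $\hom(\Delta_\mu, I^i) \cong \ext^i(\Delta_\mu,\Delta_\lambda)$, with the injective envelopes supplied by Proposition~\ref{prop:spin-inj} and the Ext dimensions by Corollary~\ref{cor:spin-ext}; the Littlewood--Richardson bookkeeping is then exactly right, and the standard facts you flag (enough injectives, essentiality of socles, vanishing of the differentials in $\hom(\Delta_\mu, I^\bullet)$ for a minimal resolution, $\End(\Delta_\mu)=\bC$) all hold here, with no circularity since Corollary~\ref{cor:spin-ext} and Proposition~\ref{prop:spin-inj} are independent of this proposition. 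Your approach buys self-containedness (no external reference, no finite-rank limit) and the extra information that the resolution is minimal; the trade-offs are that you lean on reading Corollary~\ref{cor:spin-ext} as an Ext computation in $\Rep^\spin(\fg)$ itself (its proof works in the ambient category of all $\cU(\fa)$-modules, an identification the paper also takes for granted), and that you only pin down the terms abstractly, whereas the paper's explicit complex, being the limit of a concrete finite-rank resolution, is what makes the later identification of $\Gamma_N(\bF(\lambda)_\bullet)$ with the isotypic piece of $\ol{\bK}(\fg^\dagger(E))_\bullet \otimes_{\rU(\fg^\dagger(E))} \fM$ in \S\ref{sec:derivedspec} immediate. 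Also note that the Grothendieck hypothesis in your general lemma is unnecessary (and not literally satisfied here): a $\bC$-linear abelian category in which all objects have finite length and simples admit injective envelopes suffices.
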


\begin{proof}
The first resolution is obtained by taking the dual of the resolution in \cite[Remark 4.9]{exceptional} and taking the limit $\dim V \to \infty$. The second resolution is obtained by applying transpose duality (see \S\ref{ss:transpose}) to the first one.
\end{proof}

\section{Derived specialization} \label{sec:derivedspec}

\subsection{Modification rule}

Applying Theorem~\ref{thm:spin-univ} to the spinor representation of $\Pin(N)$, we obtain a left-exact specialization functor
\[
\Gamma_N \colon \Rep^\spin(\fso(2\infty+1)) \to \Rep(\Pin(N)).
\]
Now let $\Sp(N)$ be the usual symplectic group if $N=2n$ is even, and the orthosymplectic group $\OSp(1 \mid 2n)$ if $N=2n+1$ is odd. Let $\Mp(N)$ be its metaplectic cover. Applying Theorem~\ref{thm:osc-univ} to the oscillator representation, we obtain a left-exact specialization functor
\[
\Gamma_N \colon \Rep^\osc(\fosp(1|2\infty)) \to \Rep(\Mp(N)).
\]
The goal of this section is to calculate the right derived functors of $\Gamma_N$ on the simple objects $\Delta_\lambda$ and $\nabla_\lambda$. Write $\cC^\spin$ and $\cC^\osc$ in place of $\Rep^\spin(\fso(2\infty+1))$ and $\Rep^\osc(\fosp(1|2\infty))$.

\begin{definition} \label{defn:spin-mod}
Let $n = \lfloor N / 2 \rfloor$, so that $N \in \{2n, 2n+1\}$. Given a partition $\alpha$ with more than $n$ parts, let $R_\alpha$ be a border strip of length $2\ell(\alpha) - N - 1$ if it exists, and let $c(R_\alpha)$ be the number of its columns. Then we set $\beta = \alpha \setminus R_\alpha$, and $j_N(\alpha) = j_N(\beta) + c(R_\alpha)$ and $\tau_N(\alpha) = \tau_N(\beta)$. If it does not exist, we define $j_N(\alpha) = \infty$ and leave $\tau_N(\alpha)$ undefined.
\end{definition}

\begin{remark} \label{rmk:weyl-group-modrule}
We can rephrase this border strip rule in terms of a Weyl group action on $\alpha^\dagger$ using $\rho = -\frac{1}{2}(N+1, N+3, N+5, \dots)$ and the Weyl group $W(\rB\rC_\infty)$. More specifically, if $\tau_N(\alpha)$ is defined, then there exists a unique $w \in W(\rB\rC_\infty)$ so that $w(\alpha^\dagger + \rho) - \rho = \tau_N(\alpha)^\dagger$, and we have $\ell(w) = j_N(\alpha)$. If $\tau_N(\alpha)$ is undefined, then there exists a non-identity $w$ so that $w(\alpha^\dagger + \rho) - \rho = \alpha^\dagger$.
This is shown in \cite[Proposition 3.5]{lwood} when $N=2n+1$ is odd, and the same proof applies when $N=2n$ is even (the definitions of $\tau_{2n+1}$ and $j_{2n+1}$ given here coincide with the definitions of $\tau_{2n}$ and $i_{2n}$ given in \cite[\S 3.4]{lwood}).
\end{remark}

\begin{proposition} \label{prop:spin-modrule}
The modification rule above calculates the Euler characteristic of the derived specialization: in $\rK(\Spin(N))$, we have $\sum_{i \ge 0} (-1)^i [\rR^i \Gamma_N \Delta_\lambda] = (-1)^{j_N(\lambda)} [V_{\tau_N(\lambda) + \delta}]$.
\end{proposition}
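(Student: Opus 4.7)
The plan is to apply $\Gamma_N$ to the injective resolution $\Delta_\lambda \to \bF(\lambda)_\bullet$ from \eqref{eqn:spin-res} and compute the Euler characteristic of the resulting complex of $\Pin(N)$-representations. Each term $\bF(\lambda)_i = \bigoplus_{\mu=\mu^\dagger,\, 2i=|\mu|+\rank(\mu)} \bS_{\lambda/\mu}(\bV) \otimes \Delta$ is, by Littlewood--Richardson, a direct sum of copies of $\bS_\nu(\bV) \otimes \Delta$, each of which is injective in $\cC^\spin$ by Proposition~\ref{prop:spin-inj}. Hence $\bF(\lambda)_\bullet$ is $\Gamma_N$-acyclic, and applying $\Gamma_N$ term-by-term and taking the alternating sum yields
\[
\sum_{i \ge 0} (-1)^i [\rR^i \Gamma_N \Delta_\lambda] = \sum_{\mu = \mu^\dagger} (-1)^{(|\mu|+\rank(\mu))/2} [\bS_{\lambda/\mu}(V) \otimes \Delta^{(N)}]
\]
in $\rK(\Pin(N))$, where $V = \Gamma_N \bV$ and $\Delta^{(N)} = \Gamma_N \Delta$.

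Next I would translate the right-hand side into a character identity on the maximal torus of $\Spin(N)$. The Euler characteristic of the minimal resolution of $\bC$ over $\rU(\fg^\dagger(E))$ in Proposition~\ref{prop:spin-koszul} gives the formal Schur-function reciprocity
\[
\left( \sum_{\mu = \mu^\dagger} (-1)^{(|\mu|+\rank(\mu))/2} s_\mu(y) \right) \left( \sum_\mu s_\mu(y) \right) = 1,
\]
i.e., the self-conjugate signed sum is the multiplicative inverse of Littlewood's identity $\sum_\mu s_\mu(y) = \prod_i (1-y_i)^{-1} \prod_{i<j}(1-y_iy_j)^{-1}$. Combined with the skew-Cauchy identity $\sum_\mu s_\mu(y) s_{\lambda/\mu}(x) = s_\lambda(x,y)$ and the explicit form of the spinor character, the alternating sum above becomes a ratio of alternants on the torus of $\Spin(N)$ indexed by the weight $\lambda^\dagger + \rho$, where $\rho = -\tfrac{1}{2}(N+1, N+3, \ldots)$ is the vector from Remark~\ref{rmk:weyl-group-modrule}.

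Finally, I would invoke Remark~\ref{rmk:weyl-group-modrule} to collapse this alternant ratio. When $\lambda^\dagger + \rho$ is regular for $W(\rB\rC_\infty)$, the unique $w$ making $w(\lambda^\dagger + \rho) - \rho$ dominant produces $\tau_N(\lambda)^\dagger$, has length $\ell(w) = j_N(\lambda)$, and the sum evaluates to $(-1)^{j_N(\lambda)} [V_{\tau_N(\lambda) + \delta}]$. When $\lambda^\dagger + \rho$ is instead fixed by a non-identity element of $W(\rB\rC_\infty)$, the alternating sum vanishes, consistent with the convention that the right-hand side is $0$ when $\tau_N(\lambda)$ is undefined.

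The main obstacle is the character computation in the second step: verifying that the Koszul-theoretic signed sum over self-conjugate partitions, combined with the spinor character $[\Delta^{(N)}]$ and evaluated on the torus of $\Spin(N)$, produces exactly the Weyl-denominator ratio centered at $\lambda^\dagger + \rho$. This is a spin analogue of the manipulations in \cite[Proof of Proposition~3.5]{lwood}; the essential new ingredient is that the half-integer spinor shift $\delta$ must be absorbed into the $\rho$-shift consistently with the sign $(-1)^{(|\mu|+\rank(\mu))/2}$ matching the Weyl length function, via the hook-type parametrization of self-conjugate partitions implicit in Remark~\ref{rmk:Q1-num}.
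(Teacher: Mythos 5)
Your opening step is correct and coincides with the paper's: every term of \eqref{eqn:spin-res} is injective (Proposition~\ref{prop:spin-inj}), $\Gamma_N$ sends $\bS_{\lambda/\mu}(\bV)\otimes\Delta$ to $\bS_{\lambda/\mu}(V)\otimes\Delta$, so the Euler characteristic of $\rR\Gamma_N\Delta_\lambda$ equals $\sum_{\mu=\mu^\dagger}(-1)^{(|\mu|+\rank(\mu))/2}[\bS_{\lambda/\mu}(V)\otimes\Delta]$; equivalently, under $\rK(\cC^\spin)\cong\Lambda$ the class $[\Delta_\lambda]$ corresponds to the function $s^\square_\lambda$. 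But everything after this is where the actual content of the proposition lies, and that is exactly the part you defer as ``the main obstacle.'' Worse, the intermediate object you aim for is not well defined as stated: you want to rewrite the sum as ``a ratio of alternants on the torus of $\Spin(N)$ indexed by $\lambda^\dagger+\rho$'' with $\rho=-\tfrac{1}{2}(N+1,N+3,\dots)$, but $\lambda^\dagger$ may have far more parts than the rank $\lfloor N/2\rfloor$ of $\Spin(N)$, and $W(\rB\rC_\infty)$ acting with this infinite $\rho$ is \emph{not} the Weyl group of $\Spin(N)$ acting on torus weights --- in Remark~\ref{rmk:weyl-group-modrule} it is purely a combinatorial bookkeeping device for the border-strip rule (and the result you cite from \cite{lwood} is that combinatorial reformulation, not an Euler characteristic computation). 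So the final ``collapse by Remark~\ref{rmk:weyl-group-modrule},'' including the vanishing in the non-regular case, is asserted rather than proved. Your Littlewood-type reciprocity $\bigl(\sum_{\mu=\mu^\dagger}(-1)^{(|\mu|+\rank(\mu))/2}s_\mu\bigr)\bigl(\sum_\mu s_\mu\bigr)=1$ is a correct formal identity (it is the Euler characteristic of Proposition~\ref{prop:spin-koszul} together with \eqref{eqn:char-spin-univ}), but it is an identity in $\Lambda$ and does not by itself produce any relation in $\rK(\Spin(N))$, which is what the modification rule encodes.

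What fills this gap in the paper is a finite determinantal manipulation rather than a Weyl-denominator computation. One uses the Jacobi--Trudi-type formula \eqref{eqn:slambda-det} expressing $s^\square_\lambda$ as a determinant in the elements $e^\square_r=e_r-e_{r-1}$ (from \cite{shimozono}); one then specializes $e_r\mapsto[\bigwedge^r\bC^N\otimes\Delta]\in\rK(\Spin(N))$, where the formula still computes $[V_{\lambda+\delta}]$ when $2\ell(\lambda)\le N$. The isomorphism $\bigwedge^r\bC^N\cong\bigwedge^{N-r}\bC^N$ gives $e^\square_r=-e^\square_{N+1-r}$, and straightening the determinant using these substitutions together with permutations of its columns is precisely the $W(\rB\rC_\infty)$-action on $\lambda^\dagger+\rho$ of Remark~\ref{rmk:weyl-group-modrule}: each such move contributes a sign, which accounts for $(-1)^{j_N(\lambda)}$; the straightened determinant is $[V_{\tau_N(\lambda)+\delta}]$; and in the non-regular case two columns coincide, so the determinant (hence the Euler characteristic) vanishes. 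Note also that in this route the half-integral shift $\delta$ requires no ``absorption into $\rho$'': it simply rides along inside the classes $e_r=[\bigwedge^r\bC^N\otimes\Delta]$. If you want to keep your torus-character framing, you would essentially have to reprove this determinantal mechanism, so as written the proposal has a genuine gap at its central step.
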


\addtocounter{equation}{-1}
\begin{subequations}
\begin{proof}
We will use the Weyl group description in Remark~\ref{rmk:weyl-group-modrule}. The Grothendieck group of $\cC^\spin$ is isomorphic to the ring of symmetric functions $\Lambda$ in such a way that the injective module $\bS_\lambda \otimes \Delta$ corresponds to the Schur function $s_\lambda$. For each $\lambda$, define $s^\square_\lambda$ by
\begin{align}
s^\square_\lambda = \sum_{\mu = \mu^\dagger} (-1)^{(|\mu|+\rank(\mu))/2} s_{\lambda / \mu}.
\end{align}
This is the basis (with the same notation) defined in \cite[(4.1)]{shimozono}. By taking the Euler characteristic of \eqref{eqn:spin-res}, we deduce that the simple objects $\Delta_\lambda$ map to the basis $s_\lambda^\square$.

Write $e_r = s_{1^r}$ and $e^\square_r = s^\square_{1^r} = e_r - e_{r-1}$. Write $\be^\square_r$ for the column vector $(e^\square_r, e^\square_{r+1} + e^\square_{r-1}, e^\square_{r+2} + e^\square_{r-2}, \dots)^T$. Applying the involution $\omega_\square$ \cite[(5.17)]{shimozono} to the determinantal formula \cite[Proposition 12]{shimozono}, we get 
\begin{align} \label{eqn:slambda-det}
s_\lambda^\square = \det \begin{bmatrix} \be^\square_{\lambda_1^\dagger} & \be^\square_{\lambda_2^\dagger - 1} & \cdots & \be^\square_{\lambda_{\ell}^{\dagger} - (\ell - 1)} \end{bmatrix}
\end{align}
where $\ell = \ell(\lambda^\dagger) = \lambda_1$, and we have taken the first $\ell$ entries from each row $\be_{\lambda_i^\dagger - (i-1)}^\square$ to get a square matrix. If $2\ell(\lambda) \le N$, this formula is also valid for the character of $V_{\lambda + \delta}$ if we interpret $e_r$ as the character of $\bigwedge^r \bC^N \otimes \Delta$.

Finally, we need to understand what happens when $2\ell(\lambda) > N$. We have $\bigwedge^r \bC^N \cong \bigwedge^{N-r} \bC^N$ as representations of $\Spin(N)$, which implies that $e_r = e_{N-r}$ in $\rK(\Spin(N))$. In particular, $e^\square_r = -e_{N+1-r}^\square$, where we interpret $e^\square_r=0$ for $r<0$, and $\be^\square_r = -\be^\square_{N+1-r}$ in $\rK(\Spin(N))$. In particular, the operations of replacing $\be_{\lambda_i^\dagger - (i-1)}^\square$ with $-\be_{N+1-\lambda_i^\dagger + (i-1)}^\square$ and rearranging rows of \eqref{eqn:slambda-det} corresponds to the action of $W(\rB\rC_\infty)$ on $\lambda^\dagger - \rho$ as described in Remark~\ref{rmk:weyl-group-modrule}. Each of these two types of swaps introduces a negative sign to the determinantal formula, which is exactly the sign function on $W(\rB\rC_\infty)$. This finishes the proof.
\end{proof}
\end{subequations}

\subsection{Spinors}

Maintain the notation from \S \ref{s:littlewood}. Recall that $N = \dim V$.

\begin{theorem} \label{thm:spin-derived}
We have 
\[
\Tor^{\rU(\fg^\dagger(E))}_i(\fM, \bC) = \bigoplus_{j_N(\lambda) = i} \bS_\lambda(E) \otimes V_{\tau_N(\lambda) + \delta}
\]
as representations of $\GL(E) \times \Pin(V)$.
\end{theorem}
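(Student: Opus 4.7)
My plan is to reduce the computation of $\Tor^{\rU(\fg^\dagger(E))}_i(\fM, \bC)$ to a Lie algebra homology calculation for each irreducible summand of $\fM$ under Howe duality, and then invoke a Kostant-type theorem for the Lie superalgebra $\osp(\tilde E) \cong \osp(1|2m)$.

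By Proposition~\ref{prop:mu-minus}(c), as an $\osp(\tilde E) \times \Pin(V)$-module,
\[
\fM = \bigoplus_{2\ell(\mu) \le N} \fM_\mu \otimes V_{\mu + \delta},
\]
where $\fM_\mu$ is the simple lowest-weight $\osp(\tilde E)$-module whose lowest-weight piece is $\bS_\mu(E)$. Since $\Pin(V)$ commutes with $\fg^\dagger(E) \subset \osp(\tilde E)$, passing to Lie superalgebra homology splits along the $\Pin(V)$-isotypic components:
\[
\Tor^{\rU(\fg^\dagger(E))}_i(\fM, \bC) = \bigoplus_{2\ell(\mu) \le N} H_i(\fg^\dagger(E), \fM_\mu) \otimes V_{\mu + \delta}.
\]
It therefore suffices to compute $H_i(\fg^\dagger(E), \fM_\mu)$ as a $\fgl(E)$-representation for each admissible $\mu$; note also that in the stable range $2\dim E \le N$, where $j_N(\lambda) = 0$ and $\tau_N(\lambda) = \lambda$ for all partitions with $\ell(\lambda) \le \dim E$, this is consistent with Proposition~\ref{prop:spin-sep-vars}.

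The main tool is a Kostant-type theorem for $\osp(\tilde E)$ with triangular decomposition $\fg^\dagger(E^*) \oplus \fgl(E) \oplus \fg^\dagger(E)$. Since $\fM_\mu$ is a simple lowest-weight module in the appropriate analog of category $\mathcal{O}$, the expected formula is
\[
H_i(\fg^\dagger(E), \fM_\mu) \cong \bigoplus_{\substack{w \in W^{\fgl(E)},\\ \ell(w) = i}} \bS_{w \bullet \mu}(E),
\]
where $W = W(\rB\rC_m)$ is the Weyl group of $\osp(\tilde E)$, $W^{\fgl(E)}$ is the set of minimum-length coset representatives for $W(\rA_{m-1}) \backslash W$, and $w \bullet \mu = w(\mu + \rho) - \rho$ with $\rho$ as in Remark~\ref{rmk:weyl-group-modrule}. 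By that remark, the set $\{ w \bullet \mu : w \in W^{\fgl(E)},\ \ell(w) = i\}$ coincides exactly with the set of partitions $\lambda$ satisfying $\tau_N(\lambda) = \mu$ and $j_N(\lambda) = i$, so substituting this description into the decomposition of $\Tor$ above yields the theorem.

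The main obstacle is establishing the Kostant-type formula for the Lie superalgebra $\osp(1|2m)$ acting on the infinite-dimensional lowest-weight module $\fM_\mu$. In the classical case of a semisimple Lie algebra, Kostant's theorem follows from the BGG resolution of a simple highest-weight module in category $\mathcal{O}$; since the finite-dimensional representation theory of $\osp(1|2m)$ mirrors that of $\fso(2m+1)$ (complete reducibility, an analogous Weyl group, parabolic Verma modules), one expects a parallel BGG resolution for $\fM_\mu$ to give the stated formula. An alternative, more bare-handed approach is to compute the Euler characteristic $\sum_i (-1)^i [H_i(\fg^\dagger(E), \fM_\mu)]$ using the minimal Koszul complex of Proposition~\ref{prop:spin-koszul} together with the character of $\fM_\mu$ deduced from Proposition~\ref{prop:mu-minus}(b) and the PBW decomposition of $\rU(\osp(\tilde E))$, and then rule out cancellation between homological degrees by an a priori bound on the multiplicities of the $\bS_\lambda(E)$ occurring in the Koszul complex.
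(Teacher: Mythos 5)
Your reduction step is fine and is in fact how the paper packages the consequence of the theorem (Corollary~\ref{cor:spin-kostant} is exactly the statement $\rH_i(\fg^\dagger(E);\fM_\mu)=\bigoplus_{j_N(\nu)=i,\ \tau_N(\nu)=\mu}\bS_\nu(E)$) --- but in the paper this is \emph{deduced from} Theorem~\ref{thm:spin-derived}, not the other way around. The actual content of the theorem is precisely the Kostant-type homology formula for the infinite-dimensional simple lowest-weight $\osp(1|2m)$-modules $\fM_\mu$, and your proposal leaves this as an unproved expectation. Neither of your two suggested routes closes it. A BGG-type resolution of $\fM_\mu$ by parabolic Verma modules is a nontrivial theorem even in the classical unitarizable highest-weight setting (this is the Enright--Hunziker--Pruett theory), and the paper explicitly flags (remark after Corollary~\ref{cor:osc-kostant}) that it regards the Kostant-module property of these modules as an unresolved guess for which it could not find a superalgebra reference; the closest candidate is \cite{ckw}, which the paper cites only for the Howe decomposition in Proposition~\ref{prop:mu-minus}(c), and invoking it would still require checking that it covers the $(\osp(1|2m),\Pin(N))$ pair for both parities of $N$ and doing the combinatorial translation. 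Your fallback Euler-characteristic argument cannot work as stated: cancellation in $\sum_i(-1)^i[\rH_i(\fg^\dagger(E);\fM_\mu)]$ would occur between copies of the same $\bS_\lambda(E)$ in adjacent homological degrees, which have the same internal ($\GL(E)$-weight) degree, so neither the grading nor a multiplicity bound on the terms $\bS_\nu(E)\otimes\fM_\mu$ of the Koszul complex (Proposition~\ref{prop:spin-koszul}) rules it out; excluding such cancellation is essentially equivalent to the theorem. There is also a bookkeeping slip in your appeal to Remark~\ref{rmk:weyl-group-modrule}: the $\rho$ there is $-\tfrac{1}{2}(N+1,N+3,\dots)$ for $W(\rB\rC_\infty)$, attached to $N=\dim V$, and the action is on the transposed partition $\lambda^\dagger$ of the homology weight, sending it to $\tau_N(\lambda)^\dagger$; it is not the dot action of the Weyl group of $\osp(\tilde E)$ (with its own $\rho$ and the $N/2$ trace twist) on $\mu$, so the asserted matching of your set $\{w\bullet\mu\}$ with $\{\lambda:\tau_N(\lambda)=\mu,\ j_N(\lambda)=i\}$ would itself need an argument.

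For contrast, the paper proves the theorem in \S\ref{sec:spin-even} and \S\ref{sec:spin-odd} by the geometric technique, avoiding any Kostant theorem for $\osp(1|2m)$: on the Grassmannian $X$ of rank-$n$ quotients of $E$ one forms the Koszul (resp.\ Proposition~\ref{prop:nabla-res}) complex for $\rU(\fg^\dagger(\cR))$, base changes to $\rU(\fg^\dagger(E\otimes\cO_X))$, twists by $\bS_\lambda\cQ$, and pushes forward; Borel--Weil--Bott together with the combinatorial Proposition~\ref{prop:even-spin-bott} (resp.\ \cite[Lemma 3.12]{lwood}) shows the resulting minimal complex $\bF(\lambda)_\bullet$ of free $\rU(\fg^\dagger(E))$-modules is a resolution of a module $M_\lambda$ with generators indexed exactly by $\{\nu:\tau_N(\nu)=\lambda,\ j_N(\nu)=i\}$ (Proposition~\ref{prop:spin-tor}); finally $M_\lambda\cong\fM_\lambda$ is proved by comparing $\GL(E)$-characters (using the Euler characteristic statement, Proposition~\ref{prop:spin-modrule}) and a minimal-presentation argument (Proposition~\ref{prop:spin-id}). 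If you want to pursue your route, the missing ingredient you must supply or locate is precisely the homology formula for $\fM_\mu$ over the parabolic nilradical $\fg^\dagger(E)\subset\osp(\tilde E)$, e.g.\ by verifying that \cite{ckw} or an $\osp(1|2m)$ analogue of \cite[Theorem 5.15]{ehp} applies; as written, the proposal assumes the theorem's essential content.
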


We defer the proof to \S\ref{sec:spin-even} and \S\ref{sec:spin-odd}.

\begin{corollary}  \label{cor:spin-kostant}
We have
\[
\rH_i(\fg^\dagger(E); \fM_\lambda) = \bigoplus_{\substack{\mu\\ j_N(\mu) = i\\ \tau_N(\mu) = \lambda}} \bS_\mu(E).
\]
\end{corollary}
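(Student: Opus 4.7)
The plan is to deduce the corollary as an immediate consequence of Theorem~\ref{thm:spin-derived} by extracting the $\Pin(V)$-isotypic component of the Tor computation.

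First, I would recall the standard identification of Lie (super)algebra homology with Tor: for any $\fg^\dagger(E)$-module $M$ we have
\[
\rH_i(\fg^\dagger(E); M) = \Tor^{\rU(\fg^\dagger(E))}_i(M, \bC),
\]
which is justified by the existence of the Chevalley--Eilenberg resolution (or equivalently, the minimal Koszul resolution from Proposition~\ref{prop:spin-koszul}) of the trivial module $\bC$ by free $\rU(\fg^\dagger(E))$-modules.

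Next, I would invoke the multiplicity-free decomposition
\[
\fM = \bigoplus_{\substack{\lambda\\ 2\ell(\lambda) \le \dim V}} \fM_\lambda \otimes V_{\lambda + \delta}
\]
from Proposition~\ref{prop:mu-minus}(c). Because $\fg^\dagger(E) \subset \osp(\tilde{E})$ and the $\Pin(V)$-action commutes with the $\osp(\tilde{E})$-action, this is in particular a decomposition of $(\fg^\dagger(E) \times \Pin(V))$-modules. Since $\Tor$ commutes with direct sums and the factors $V_{\lambda+\delta}$ are trivial as $\fg^\dagger(E)$-modules, we obtain
\[
\Tor^{\rU(\fg^\dagger(E))}_i(\fM, \bC) = \bigoplus_\lambda \Tor^{\rU(\fg^\dagger(E))}_i(\fM_\lambda, \bC) \otimes V_{\lambda+\delta}
\]
as $(\GL(E) \times \Pin(V))$-modules.

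Finally, I would compare the right-hand side with the formula supplied by Theorem~\ref{thm:spin-derived}. The simple $\Pin(V)$-modules $V_{\lambda+\delta}$ are pairwise non-isomorphic, so taking the $V_{\lambda+\delta}$-isotypic component on both sides yields
\[
\rH_i(\fg^\dagger(E); \fM_\lambda) = \bigoplus_{\substack{\mu\\ j_N(\mu) = i\\ \tau_N(\mu) = \lambda}} \bS_\mu(E),
\]
which is the desired statement. There is essentially no obstacle beyond bookkeeping, since the substantive content is packaged into Theorem~\ref{thm:spin-derived}; the only point worth verifying carefully is that the $\Pin(V)$-isotypic decomposition commutes with the derived functor, which is immediate from the fact that $\Pin(V)$ acts trivially on the resolving complex of $\bC$ over $\rU(\fg^\dagger(E))$.
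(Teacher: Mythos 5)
Your argument is correct and is essentially the paper's own proof: the paper also obtains Corollary~\ref{cor:spin-kostant} directly from Theorem~\ref{thm:spin-derived} by taking the $V_{\lambda+\delta}$-isotypic component. Your additional bookkeeping (identifying $\rH_i(\fg^\dagger(E);-)$ with $\Tor^{\rU(\fg^\dagger(E))}_i(-,\bC)$ and invoking the multiplicity-free decomposition of $\fM$ from Proposition~\ref{prop:mu-minus}(c)) just makes explicit what the paper leaves implicit.
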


\begin{proof}
This is obtained from Theorem~\ref{thm:spin-derived} by taking the $V_{\lambda + \delta}$-isotypic component. 
\end{proof}

\begin{corollary}
The right-derived functors of $\Gamma_N \colon \cC^\spin \to \Rep(\Pin(V))$ on simple objects are
\[
(\rR^i \Gamma_N)(\Delta_\lambda) = \begin{cases} V_{\tau_N(\lambda) + \delta} & \mathrm{if}\ i = j_N(\lambda)\\
0 & \mathrm{else}\end{cases}.
\]
\end{corollary}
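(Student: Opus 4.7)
The plan is to apply $\Gamma_N$ to the injective resolution $\bF(\lambda)_\bullet$ from \eqref{eqn:spin-res}, identify the resulting complex with the $\bS_\lambda(E)$-isotypic component of the Koszul complex $\fM \otimes_{\rU(\fg^\dagger(E))} \ol{\bK}(\fg^\dagger(E))_\bullet$ computing $\Tor^{\rU(\fg^\dagger(E))}_\bullet(\fM,\bC)$, and then read off the answer from Theorem~\ref{thm:spin-derived}.

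Fix an auxiliary vector space $E$ with $\dim E \ge \ell(\lambda)$. Each term $\bF(\lambda)_i = \bigoplus_{\mu=\mu^\dagger,\,2i=|\mu|+\rank(\mu)} \bS_{\lambda/\mu}(\bV)\otimes\Delta$ is a direct sum of injectives (Proposition~\ref{prop:spin-inj}), hence itself injective, so $\rR^i\Gamma_N(\Delta_\lambda) = \rH^i(\Gamma_N\bF(\lambda)_\bullet)$, whose degree-$i$ term is $\bigoplus_{\mu=\mu^\dagger,\,2i=|\mu|+\rank(\mu)} \bS_{\lambda/\mu}(V)\otimes\Delta^{(n)}$. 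Cauchy's identity $\Sym(E\otimes V) = \bigoplus_\nu \bS_\nu(E)\otimes\bS_\nu(V)$ together with Littlewood--Richardson shows that the $\bS_\lambda(E)$-isotypic component of $\bS_\mu(E) \otimes \fM$ is exactly $\bS_{\lambda/\mu}(V)\otimes\Delta^{(n)}$. Summing over self-conjugate $\mu$ with $2i = |\mu|+\rank(\mu)$ and invoking Proposition~\ref{prop:spin-koszul} to recognize $\ol{\bK}(\fg^\dagger(E))_i = \rU(\fg^\dagger(E)) \otimes \bigoplus_\mu \bS_\mu(E)$, one sees that $\Gamma_N\bF(\lambda)_\bullet$ matches the $\bS_\lambda(E)$-isotypic component of $\fM \otimes_{\rU(\fg^\dagger(E))} \ol{\bK}(\fg^\dagger(E))_\bullet$ term by term as complexes of $\Pin(V)$-representations.

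The differentials in the two complexes come from the same structural source, namely the $\fg^\dagger(E) = E \oplus \Sym^2(E)$-action on $\fM$ from the Howe-pair embedding $\fg^\dagger(E) \subset \osp(\tilde{E})$ of \S\ref{sec:howe-dual}: the $E$-part acts by the Clifford inclusion $\Delta \hookrightarrow \bV \otimes \Delta$, while the $\Sym^2(E)$-part acts by multiplication with the $\Pin(V)$-invariants in $\Sym^2(E \otimes V)$, and these are exactly the operations used to construct $\bF(\lambda)_\bullet$ in \cite{exceptional}. Granting this identification, $\rR^i\Gamma_N(\Delta_\lambda)$ equals the $\bS_\lambda(E)$-isotypic component of $\Tor^{\rU(\fg^\dagger(E))}_i(\fM,\bC)$, which by Theorem~\ref{thm:spin-derived} is $V_{\tau_N(\lambda)+\delta}$ precisely when $i = j_N(\lambda)$ and vanishes otherwise.

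The main obstacle is pinning down the differential compatibility above. Because each graded piece is multiplicity-free as a $\GL(E) \times \Pin(V)$-module after isotypic decomposition, the maps in each summand are forced up to a single scalar, so the combinatorial shape matches automatically; what needs work is verifying that the Howe-pair normalization from \S\ref{sec:howe-dual} produces exactly the scalars occurring in the construction of $\bF(\lambda)_\bullet$ from \cite{exceptional} (as opposed to, say, a twisted version that could alter the Koszul differential by a nonzero constant on some summand). Once this bookkeeping is done, the corollary follows immediately from Theorem~\ref{thm:spin-derived}.
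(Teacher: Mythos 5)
Your proposal is essentially the paper's proof: apply $\Gamma_N$ to the injective resolution \eqref{eqn:spin-res}, identify the resulting complex with the $\bS_\lambda(E)$-isotypic component (up to duality) of $\ol{\bK}(\fg^\dagger(E))_\bullet \otimes_{\rU(\fg^\dagger(E))} \fM$, and read off the homology from Theorem~\ref{thm:spin-derived}. The differential-compatibility issue you flag is not a genuine obstacle: $\bF(\lambda)_\bullet$ was constructed as the dual of the finite-rank complex of \cite[Remark 4.9]{exceptional}, which is exactly this isotypic piece of the Koszul complex, so after specialization the identification holds by construction and no scalar bookkeeping is needed (the only detail you omit is this harmless dualization).
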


\begin{proof}
From \eqref{eqn:spin-res}, we have an injective resolution $0 \to \Delta_\lambda \to \bF(\lambda)_\bullet \to 0$ in $\cC^\spin$ where 
\[
\bF(\lambda)_i = \bigoplus_{\substack{\mu = \mu^\dagger\\ 2i = |\mu| + \rank(\mu)}} \bS_{\lambda/\mu}(\bV) \otimes \Delta.
\]
Recall that $\ol{\bK}(\fg^\dagger(E))_i = \bigoplus_{\substack{\mu = \mu^\dagger\\ 2i=|\mu| + \rank(\mu)}} \bS_\mu(E)$ and that $\fM = \Sym(E \otimes V) \otimes V_\delta$ as a $\GL(E)$-representation. So when we apply $\Gamma_N$, we replace $\bV$ by $V$, and we see that $\bF(\lambda)_\bullet$ is the dual of the $\bS_\lambda E$-isotypic component of $\ol{\bK}(\fg^\dagger(E))_\bullet \otimes_{\rU(\fg^\dagger(E))} \fM$. The homology of the latter is computed by Theorem~\ref{thm:spin-derived}, so we get the desired result.
\end{proof}
 
\subsection{Oscillators}

Maintain the notation from \S\ref{s:littlewood}. Since \eqref{eqn:spin-res} and \eqref{eqn:osc-res} have the exact same form, we conclude that the modification rule described in Definition~\ref{defn:spin-mod} also calculates the Euler characteristic of the derived specialization from $\cC^\osc$:

\begin{proposition} \label{prop:osc-modrule}
We have an identity of characters $\sum_{i \ge 0} (-1)^i [\rR^i \Gamma_N \nabla_\lambda] = (-1)^{j_N(\lambda)} [V_{\tau_N(\lambda) + \eta}]$.
\end{proposition}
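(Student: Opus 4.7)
The plan is to run the same argument as in Proposition~\ref{prop:spin-modrule}, using the oscillator injective resolution \eqref{eqn:osc-res} in place of the spinor resolution \eqref{eqn:spin-res}. Since both resolutions have identical shape (indexed by self-conjugate partitions with weight $\tfrac{1}{2}(|\mu|+\rank(\mu))$), the image of $[\nabla_\lambda]$ in the Grothendieck group, under the identification $\rK(\cC^\osc) \cong \Lambda$ that sends the injective $\bS_\lambda(\bV)\otimes \nabla$ to the Schur function $s_\lambda$, is exactly the same modified Schur function
\[
s^\square_\lambda = \sum_{\mu = \mu^\dagger} (-1)^{(|\mu|+\rank(\mu))/2} s_{\lambda / \mu}
\]
that appeared in the spinor case. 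Alternatively, one can invoke transpose duality (\S\ref{ss:transpose}) to transport Proposition~\ref{prop:spin-modrule} directly, since $\lambda \mapsto s^\square_\lambda$ is preserved under the involution $\omega_\square$ that corresponds to transpose duality on the combinatorial side.

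The next step is to interpret, for $2\ell(\lambda) \le N$, the character $[V_{\lambda+\eta}]$ of $\Mp(N)$ via the determinantal formula
\[
[V_{\lambda+\eta}] = \det \begin{bmatrix} \be^\square_{\lambda_1^\dagger} & \be^\square_{\lambda_2^\dagger - 1} & \cdots & \be^\square_{\lambda_\ell^\dagger - (\ell-1)} \end{bmatrix}
\]
where now the entry $e_r$ is read as the character of $\bigwedge^r \bC^N \otimes \nabla$ inside $\rK(\Mp(N))$ (rather than $\bigwedge^r \bC^N \otimes \Delta$). This is the same Jacobi--Trudi type identity used for spinors, so the same proof applies once one knows $s^\square_\lambda$ is the class of the simple.

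The final step is to check the stable-to-finite rank relations on these $e_r$, which is where the main obstacle lies. In the spinor case one uses $e_r = e_{N-r}$ in $\rK(\Spin(N))$, forcing $e^\square_r = -e^\square_{N+1-r}$, which encodes the reflection generators of $W(\rB\rC_\infty)$ on the shifted coordinates $\lambda^\dagger+\rho$. In the oscillator case the analogous relation must be derived for $\Mp(N)$ using the Howe duality setup of \S\ref{s:littlewood} (Proposition~\ref{prop:fN-minus}) and the structure of $\bigwedge^\bullet \bC^N \otimes \nabla$ as a $\Mp(N)$-module. I expect this verification to be the only nontrivial point: once one establishes $e^\square_r = -e^\square_{N+1-r}$ in $\rK(\Mp(N))$, the remaining argument (rewriting the determinant by row swaps and sign changes, and matching the resulting sign with the length function on $W(\rB\rC_\infty)$ described in Remark~\ref{rmk:weyl-group-modrule}) is formally identical to the spinor case. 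The cleanest route is likely to invoke transpose duality, which sends $[\bigwedge^r \bV \otimes \Delta]$ to $[\Sym^r \bV \otimes \nabla]$ and thereby transports the needed relation from the spinor side to the oscillator side automatically, bypassing a direct character computation on $\Mp(N)$.
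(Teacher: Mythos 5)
Your overall plan coincides with the paper's: the paper disposes of this proposition by observing that \eqref{eqn:spin-res} and \eqref{eqn:osc-res} have identical shape, so the Euler-characteristic computation of Proposition~\ref{prop:spin-modrule} is run again verbatim with $\nabla$ in place of $\Delta$, now reading $e_r$ as the class of $\bigwedge^r U \otimes \nabla$ in $\rK(\Mp(N))$. Your first two steps --- that the resolution \eqref{eqn:osc-res} identifies $[\nabla_\lambda]$ with $s^\square_\lambda$, and that the determinantal formula computes $[V_{\lambda+\eta}]$ in the stable range --- are exactly the right ones (though, as an aside, $\omega_\square$ sends $s^\square_\lambda$ to $s^\square_{\lambda^\dagger}$ rather than fixing it, so the ``alternative'' transport of Proposition~\ref{prop:spin-modrule} in your first paragraph is already the move criticized below).

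The gap is in the step you yourself flag as the crux. Transpose duality is an equivalence of the \emph{infinite-rank} pairs (\S\ref{ss:transpose}, \S\ref{sec:oscspin-dual}); it does not intertwine the two specialization functors $\Gamma_N$, and there is no finite-rank avatar of it relating $\Rep(\Pin(N))$ to $\Rep(\Mp(N))$ --- this is precisely why Theorems~\ref{thm:spin-derived} and~\ref{thm:osc-derived} need separate geometric proofs rather than one being deduced from the other. The relation $e^\square_r = -e^\square_{N+1-r}$ is a statement inside $\rK(\Mp(N))$, on which transpose duality does not act, so nothing is ``transported automatically''; at best the combinatorial shadow of duality exchanges elementary and complete homogeneous symmetric functions, which is not the relation you need. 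The identity must therefore be established on the oscillator side directly. For $N=2n$ even it does follow at once from $\bigwedge^r \bC^{2n} \cong \bigwedge^{2n-r}\bC^{2n}$ as $\Sp(2n)$-modules. But for $N=2n+1$ odd, $U$ is the super space $\bC^{1|2n}$: the super exterior powers $\bigwedge^r U$ are nonzero for every $r\ge 0$ and are not isomorphic to $\bigwedge^{N-r}U$, so the needed identity among the classes $[\bigwedge^r U \otimes \nabla]$ holds only after tensoring with $\nabla$ and passing to $\rK(\Mp(N))$, and it requires a genuine argument --- for instance via the Howe-duality decomposition of $\fN$ in Proposition~\ref{prop:fN-minus}, which mirrors Proposition~\ref{prop:mu-minus} and underlies the paper's one-line deduction. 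Note also that you cannot appeal to Theorem~\ref{thm:osc-derived} for this, since the present proposition feeds into its proof (through Proposition~\ref{prop:osc-id}), which would be circular.
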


\begin{theorem} \label{thm:osc-derived}
We have 
\[
\Tor^{\rU(\fg(E))}_i(\fN, \bC) = \bigoplus_{j_N(\lambda) = i} \bS_\lambda(E) \otimes V_{\tau_N(\lambda) + \eta}
\]
as representations of $\GL(E) \times \Mp(V)$.
\end{theorem}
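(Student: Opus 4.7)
The plan is to follow the same line of attack as for Theorem~\ref{thm:spin-derived}, with the simplification that here the ambient Lie algebra $\fso(E')$ containing $\fg(E)$ is honestly classical rather than super. First I would resolve $\bC$ over $\rU(\fg(E))$ by the minimal free resolution $\ol{\bK}(\fg(E))_\bullet$ of Proposition~\ref{prop:osc-koszul}, which identifies $\Tor^{\rU(\fg(E))}_i(\fN, \bC)$ with the Lie-algebra homology $\rH_i(\fg(E); \fN)$. Howe duality (Proposition~\ref{prop:fN-minus}(c)) then splits $\fN = \bigoplus_\lambda \fN_\lambda \otimes V_{\lambda+\eta}$ as an $\fso(E') \times \Mp(U)$-module, so it suffices to compute each $\rH_i(\fg(E); \fN_\lambda)$ as a $\fgl(E)$-representation and read off the $V_{\lambda+\eta}$-isotypic component.

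The core computation is a Kostant-type result for the $[-2,2]$-graded decomposition of $\fso(E')$, whose nilpotent radical in the positive grading is $\fg(E) = E \oplus \lw^2(E)$. For the lowest-weight module $\fN_\lambda$, the $\fgl(E)$-weights in $\rH_i$ should be those of the form $w(\lambda^\dagger + \rho) - \rho$ with $w$ ranging over the hyperoctahedral Weyl group $W(\rB\rC_\infty)$ modulo its symmetric subgroup $W(\GL(E))$, graded by $\ell(w) = i$. By Remark~\ref{rmk:weyl-group-modrule}, the partitions $\mu$ satisfying $\tau_N(\mu) = \lambda$ and $j_N(\mu) = i$ are in bijection with these Weyl group cosets, and the corresponding weight vector is precisely the highest weight of $\bS_\mu(E)$; this gives the claimed formula. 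The Euler characteristic can then be double-checked against Proposition~\ref{prop:osc-modrule}.

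The main obstacle is making the Kostant-type statement rigorous for the infinite-dimensional, non-integrable module $\fN_\lambda$: the classical theorem is formulated for finite-dimensional irreducibles, so one either truncates $E$ to finite rank and stabilizes, or organizes the argument via the $\fgl(E)$-equivariant flat degeneration idea used in Lemma~\ref{lem:torcalc} to reduce to a symmetric-algebra Koszul computation. A more economical alternative, in keeping with the paper's structural philosophy, is to bypass a direct proof entirely: apply transpose duality (\S\ref{ss:transpose}) to Theorem~\ref{thm:spin-derived}, using that the Howe-dual package $(\fg(E), \fN, \fN_\lambda, V_{\lambda+\eta})$ is transpose-dual to $(\fg^\dagger(E), \fM, \fM_\lambda, V_{\lambda+\delta})$ and that the modification rule of Definition~\ref{defn:spin-mod} is preserved by this duality (a fact already invoked in deducing Proposition~\ref{prop:osc-modrule} from Proposition~\ref{prop:spin-modrule}). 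Either route should produce the stated identity.
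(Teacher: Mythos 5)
Both routes you propose contain genuine gaps, and neither is what the paper does. For your first route, the ``Kostant-type result'' is precisely the missing ingredient, not a routine step: $\fN$ and its Howe constituents $\fN_\lambda$ are infinite-dimensional lowest-weight modules of $\fso(E')$, and the nilradical $\fg(E)=E\oplus\lw^2(E)$ of the relevant parabolic is two-step nilpotent (non-abelian), so neither Kostant's theorem (which requires finite-dimensional modules) nor the Enright--Hunziker--Pruett theory of Kostant modules applies off the shelf; the Remark following Corollary~\ref{cor:osc-kostant} explicitly states that whether these modules admit such resolutions was left as an unresolved guess. Your proposed repairs do not close this gap: $E$ is already finite dimensional in the statement, so ``truncate and stabilize'' changes nothing about the infinite-dimensionality of $\fN_\lambda$ over the finite-rank algebra $\fso(E')$; and the flat-degeneration trick of Lemma~\ref{lem:torcalc} needs a degenerate \emph{commutative} Tor that one can compute independently and compare against (there it was a linear complete intersection), whereas the associated graded of the pair $(\rU(\fg(E)),\fN)$ is not such a situation and its Tor is not known a priori --- so semicontinuity plus Euler characteristics has nothing to be compared with.

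Your ``more economical alternative'' also fails: transpose duality (\S\ref{ss:transpose}) is an autoequivalence of the infinite-rank category and is only ever applied in the paper to infinite-rank objects (e.g.\ to pass from \eqref{eqn:spin-res} to \eqref{eqn:osc-res}). Theorem~\ref{thm:osc-derived} is a finite-rank statement about $\fN=\Sym(E\otimes U)\otimes\nabla$ with $E$ and $U$ finite dimensional and with $\Mp(V)$ acting; there is no transpose duality relating the package $(\fg^\dagger(E),\fM,\Pin(V))$ to $(\fg(E),\fN,\Mp(V))$ at finite rank (transpose does not even preserve Schur functors of a finite-dimensional $E$, since $\lw^k E=0$ for $k>\dim E$). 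Moreover, Proposition~\ref{prop:osc-modrule} is not obtained from Proposition~\ref{prop:spin-modrule} by duality but by rerunning the same character computation because \eqref{eqn:spin-res} and \eqref{eqn:osc-res} have the same shape, and in any case it only controls the Euler characteristic, not the individual Tor groups. What the paper actually does (\S\ref{sec:osc-even}, \S\ref{sec:osc-odd}) is the geometric technique: on the Grassmannian $X$ of rank-$n$ quotients of $E$ it forms the relative Koszul-type complex for $\fg(\cR)\subset\fg(E\otimes\cO_X)$ (via Proposition~\ref{prop:osc-koszul} when $N$ is even, via Lemma~\ref{lem:torcalc} when $N$ is odd), twists by $\bS_\lambda\cQ$, applies $\rR p_*$ and Borel--Weil--Bott together with the combinatorics of Proposition~\ref{prop:even-spin-bott} to get a minimal free complex over $\rU(\fg(E))$, and then identifies its zeroth homology with $\fN_\lambda$ using the surjection from Proposition~\ref{prop:fN-minus} and the Euler-characteristic comparison of Proposition~\ref{prop:osc-modrule}. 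That geometric construction is exactly the substitute for the Kostant-type statement your outline takes for granted; to salvage your approach you would have to prove that statement, which is not in the paper and not available in the literature it cites.
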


We defer the proof to \S\ref{sec:osc-even} and \S\ref{sec:osc-odd}.

\begin{corollary}  \label{cor:osc-kostant}
We have
\[
\rH_i(\fg(E); \fN_\lambda) = \bigoplus_{\substack{\mu\\ j_N(\mu) = i\\ \tau_N(\mu) = \lambda}} \bS_\mu(E).
\]
\end{corollary}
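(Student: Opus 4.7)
The strategy is to deduce the corollary from Theorem~\ref{thm:osc-derived} by isolating an $\Mp(V)$-isotypic component, in complete parallel to the one-line proof of Corollary~\ref{cor:spin-kostant} from Theorem~\ref{thm:spin-derived}. The only thing that needs to be checked is that the Howe-dual action of $\Mp(V)$ is compatible with the Tor computation.

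First, I would recall the standard identification of Lie algebra homology with Tor over the universal enveloping algebra: for any $\fg(E)$-module $M$,
\[
\rH_i(\fg(E); M) = \Tor_i^{\rU(\fg(E))}(M, \bC),
\]
computed via the Chevalley--Eilenberg resolution of $\bC$. Applied to $M = \fN_\lambda$, this converts the statement to be proved into a statement about Tor.

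Next, I would use Proposition~\ref{prop:fN-minus}(c), which gives the $(\fso(E') \times \Mp(V))$-equivariant decomposition
\[
\fN = \bigoplus_{\substack{\lambda\\ 2\ell(\lambda) \le \dim V}} \fN_\lambda \otimes V_{\lambda+\eta}.
\]
Since $\fg(E) \subset \fso(E')$, the $\Mp(V)$-action commutes with $\fg(E)$ (as $\fso(E')$ and $\Mp(V)$ are a Howe dual pair inside $\fosp(U \otimes E')$), and $\Mp(V)$ acts trivially on $\bC$. Hence Tor respects the $\Mp(V)$-decomposition and
\[
\Tor_i^{\rU(\fg(E))}(\fN, \bC) = \bigoplus_\lambda \Tor_i^{\rU(\fg(E))}(\fN_\lambda, \bC) \otimes V_{\lambda+\eta},
\]
as $\GL(E) \times \Mp(V)$-representations.

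Finally, I would compare this with Theorem~\ref{thm:osc-derived}, which asserts
\[
\Tor_i^{\rU(\fg(E))}(\fN, \bC) = \bigoplus_{j_N(\mu) = i} \bS_\mu(E) \otimes V_{\tau_N(\mu)+\eta},
\]
and extract the $V_{\lambda+\eta}$-isotypic component of both sides. Since the $V_{\nu+\eta}$ for distinct $\nu$ are pairwise non-isomorphic, only summands with $\tau_N(\mu) = \lambda$ contribute on the right, yielding
\[
\rH_i(\fg(E); \fN_\lambda) = \bigoplus_{\substack{\mu\\ j_N(\mu) = i\\ \tau_N(\mu) = \lambda}} \bS_\mu(E),
\]
as required. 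There is no real obstacle: all content is packaged in Theorem~\ref{thm:osc-derived} and Proposition~\ref{prop:fN-minus}(c), and the only subtlety is the verification that the $\Mp(V)$-action commutes with the $\fg(E)$-action, which is immediate from the Howe dual pair structure.
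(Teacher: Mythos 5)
Your proof is correct and is essentially the paper's own argument: the paper likewise deduces the corollary from Theorem~\ref{thm:osc-derived} by taking the $V_{\lambda+\eta}$-isotypic component, with the Tor/Lie-homology identification and the compatibility of the commuting $\Mp(V)$-action left implicit. Your additional verifications (Proposition~\ref{prop:fN-minus}(c) and the Howe-pair commutation) are exactly the routine details the paper suppresses.
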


\begin{proof}
This is obtained from Theorem~\ref{thm:osc-derived} by taking the $V_{\lambda + \eta}$-isotypic component. 
\end{proof}

\begin{corollary}
The right-derived functors of $\Gamma_N \colon \cC^\osc \to \Rep(\Mp(N))$ on simple objects are given by
\[
(\rR^i \Gamma_N)(\nabla_\lambda) = \begin{cases} V_{\tau_N(\lambda) + \eta} & \mathrm{if}\ i = j_N(\lambda)\\
0 & \mathrm{else}\end{cases}.
\]
\end{corollary}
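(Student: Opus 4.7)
The plan is to run the oscillator analog of the proof of the preceding corollary (for $\Delta_\lambda$) essentially verbatim, with $\fg^\dagger(E)$ replaced by $\fg(E)$ and $\fM$ replaced by $\fN$. First, I take the injective resolution $\nabla_\lambda \to \bF(\lambda)_\bullet$ from \eqref{eqn:osc-res}, whose $i$-th term is $\bigoplus_{\mu=\mu^\dagger,\, 2i=|\mu|+\rank(\mu)} \bS_{\lambda/\mu}(\bV) \otimes \nabla$. Applying the left-exact specialization functor $\Gamma_N$ replaces $\bV$ by the standard $\Mp(N)$-representation $U$ and $\nabla$ by the finite-rank oscillator representation, so the resulting complex in $\Rep(\Mp(N))$ has $i$-th term $\bigoplus_{\mu=\mu^\dagger,\, 2i=|\mu|+\rank(\mu)} \bS_{\lambda/\mu}(U) \otimes \nabla$ and its cohomology computes $\rR^\bullet \Gamma_N(\nabla_\lambda)$.

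Next, I identify this specialized complex, up to contragredient duality, with the $\bS_\lambda(E)$-isotypic component of $\ol{\bK}(\fg(E))_\bullet \otimes_{\rU(\fg(E))} \fN$. By Proposition~\ref{prop:osc-koszul}, the terms of $\ol{\bK}(\fg(E))_i$ are $\rU(\fg(E)) \otimes \bigoplus_{\mu=\mu^\dagger,\, 2i=|\mu|+\rank(\mu)} \bS_\mu(E)$, so after tensoring with $\fN$ over $\rU(\fg(E))$ each term becomes $\bigoplus \bS_\mu(E) \otimes \fN$. The Cauchy decomposition $\fN = \Sym(E \otimes U) \otimes \nabla = \bigoplus_\alpha \bS_\alpha(E) \otimes \bS_\alpha(U) \otimes \nabla$, combined with Littlewood--Richardson for $\bS_\mu(E) \otimes \bS_\alpha(E)$, identifies the $\bS_\lambda(E)$-isotypic piece of the $i$-th term with $\bigoplus_{\mu} \bS_{\lambda/\mu}(U) \otimes \nabla$, matching the specialized resolution.

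Finally, I invoke Theorem~\ref{thm:osc-derived} — equivalently Corollary~\ref{cor:osc-kostant} — which states that the $\bS_\lambda(E)$-isotypic component of $\Tor^{\rU(\fg(E))}_i(\fN, \bC)$ is $V_{\tau_N(\lambda) + \eta}$ when $i = j_N(\lambda)$ (i.e.\ when the modification rule of Definition~\ref{defn:spin-mod} terminates on $\lambda$) and vanishes otherwise. Reading this off the cohomology of the identified complex yields the stated formula.

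The bookkeeping above is formally identical to the spinor corollary and presents no real obstacle, so the genuine technical input is Theorem~\ref{thm:osc-derived} itself. For the purposes of this corollary it can be treated as a black box; the main obstacle is therefore not in this argument but in the proof of that theorem (deferred to \S\ref{sec:osc-even} and \S\ref{sec:osc-odd}), which will require the geometric technique applied to the Howe pair $(\fso(E'), \Mp(U))$ in order to reduce the Tor computation to a Borel--Weil--Bott-style calculation controlled by the same Weyl-group description (Remark~\ref{rmk:weyl-group-modrule}) that governs $\tau_N$ and $j_N$.
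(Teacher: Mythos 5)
Your proposal is correct and follows essentially the same route as the paper: the paper proves the spinor analogue by applying $\Gamma_N$ to the injective resolution \eqref{eqn:spin-res}, identifying the result (up to duality) with the $\bS_\lambda(E)$-isotypic component of $\ol{\bK}(\fg^\dagger(E))_\bullet \otimes_{\rU(\fg^\dagger(E))} \fM$, and invoking Theorem~\ref{thm:spin-derived}, and the oscillator corollary is obtained by exactly the substitution $\fg^\dagger(E) \rightsquigarrow \fg(E)$, $\fM \rightsquigarrow \fN$, \eqref{eqn:osc-res}, and Theorem~\ref{thm:osc-derived} that you carry out. Your closing remark is also accurate: the only genuine input is Theorem~\ref{thm:osc-derived}, which the paper likewise treats as the deferred technical core (proved in \S\ref{sec:osc-even}--\S\ref{sec:osc-odd}).
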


\begin{remark}
In Corollary~\ref{cor:spin-kostant} and~\ref{cor:osc-kostant}, we see examples of modules where the homology with respect to a parabolic subalgebra is multiplicity-free with respect to the action of the Levi subalgebra. This is consistent with the guess that these modules are Kostant modules in the sense of \cite[\S 5.2]{ehp}, i.e., they have resolutions by direct sums of parabolic Verma modules. We did not attempt to make the combinatorial translation for $\fN_\lambda$ to apply \cite[Theorem 5.15]{ehp}, and we could not find a reference for the Lie superalgebra case, so we have not tried to resolve this guess.
\end{remark}

\subsection{Determinantal ideals} \label{sec:determinantal}
Maintain the notation from \S\ref{s:littlewood}. 

\begin{proposition}
$\fN_\emptyset$ is the quotient of $\rU(\fg(E))$ by the left ideal generated by $\bigwedge^{N+1}(E)$. As a representation of $\GL(E)$, we have
\[
\fN_\emptyset = \bigoplus_{\ell(\lambda) \le N} \bS_\lambda(E).
\]
\end{proposition}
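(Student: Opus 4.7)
The plan is to first establish $\fN_\emptyset$ as a cyclic $\rU(\fg(E))$-module, then pin down the relations using the forward-referenced Theorem~\ref{thm:osc-derived}, and finally extract the $\GL(E)$-character.

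First, I would show $\fN_\emptyset$ is cyclic. The multiplication map $m \colon \rU(\fg(E)) \otimes \fN^- \to \fN$ of Proposition~\ref{prop:fN-minus}(a) is surjective; taking the $V_\eta$-isotypic component and using Proposition~\ref{prop:fN-minus}(b) shows the multiplicity space $\hom_{\Mp(U)}(V_\eta, \fN^-) = \bS_\emptyset(E) = \bC$ is one-dimensional (only the $\lambda = \emptyset$ summand contributes). This produces a $\GL(E)$-equivariant surjection $\phi \colon \rU(\fg(E)) \twoheadrightarrow \fN_\emptyset$.

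Next, to identify $\ker \phi$, I would invoke Theorem~\ref{thm:osc-derived} (stated but proved later). Taking its $V_\eta$-isotypic part yields
\[
\Tor^{\rU(\fg(E))}_i(\fN_\emptyset,\bC) \;=\; \bigoplus_{\substack{\mu \\ j_N(\mu)=i,\ \tau_N(\mu)=\emptyset}} \bS_\mu(E).
\]
Analyzing the modification rule of Definition~\ref{defn:spin-mod} for $i=1$: one needs a single-column border strip $R_\mu$ of length $2\ell(\mu) - N - 1$ whose removal yields $\emptyset$, which forces $\mu = R_\mu = (1^{N+1})$ uniquely. Since $\fN_\emptyset$ is cyclic, the minimal free resolution begins
\[
\rU(\fg(E)) \otimes \bigwedge\nolimits^{N+1}(E) \longrightarrow \rU(\fg(E)) \longrightarrow \fN_\emptyset \longrightarrow 0,
\]
so $\ker\phi = \rU(\fg(E)) \cdot \bigwedge^{N+1}(E)$, proving the first claim.

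Finally, for the $\GL(E)$-decomposition, I would take Euler characteristics in the minimal free resolution and use Theorem~\ref{thm:osc-derived}:
\[
[\fN_\emptyset] \;=\; [\rU(\fg(E))] \cdot \sum_{\substack{\mu \\ \tau_N(\mu)=\emptyset}} (-1)^{j_N(\mu)}\, [\bS_\mu(E)].
\]
Using the Weyl-group reformulation of the modification rule in Remark~\ref{rmk:weyl-group-modrule} with $\rho = -\tfrac12(N+1, N+3, \ldots)$, the inner sum becomes $\sum_{w \in W(\rB\rC_\infty)} (-1)^{\ell(w)}\, s_{(w\rho-\rho)^\dagger}$. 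Combined with the product character $[\rU(\fg(E))] = \prod_i (1-x_i)^{-1}\prod_{i<j}(1-x_ix_j)^{-1}$, the Weyl denominator identity for the type $\rB\rC_\infty$ root system collapses the expression to $\sum_{\ell(\lambda)\le N} s_\lambda$, giving the claimed decomposition.

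The hardest step is executing that last character identity rigorously, since it involves an infinite Weyl group paired against an infinite-product generating function. An attractive alternative that bypasses Theorem~\ref{thm:osc-derived} entirely is a direct Pieri/PBW argument: show $\rU(\fg(E)) \cdot \bigwedge^{N+1}(E) = \bigoplus_{\ell(\mu)\ge N+1}\bS_\mu(E)$ inside $\rU(\fg(E)) = \bigoplus_\mu \bS_\mu(E)$. The inclusion $\subseteq$ is immediate from Pieri (multiplying by $\bS_{(1^{N+1})}$ adds a vertical strip of length $N+1$, so the result has at least $N+1$ rows); the reverse inclusion would follow from a PBW-associated-graded argument ensuring that the relevant multiplication maps do not degenerate on the $\bS_\lambda$-isotypic components. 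This would simultaneously yield both the presentation and the $\GL(E)$-decomposition.
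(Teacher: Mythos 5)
Your treatment of the first statement is fine and is essentially the paper's argument: cyclicity of $\fN_\emptyset$ comes from Proposition~\ref{prop:fN-minus} (or equivalently from the degree-zero part of Corollary~\ref{cor:osc-kostant}), and the identification of the relations with $\bigwedge^{N+1}(E)$ comes from the $i=1$ part of the same Tor/homology computation (Theorem~\ref{thm:osc-derived}, forward-referenced exactly as in the paper); the only point you leave implicit is that the abstract copy of $\bigwedge^{N+1}(E)$ generating $\ker\phi$ is the canonical one inside $\rU(\fg(E))$, which follows since $\rU(\fg(E))=\bigoplus_\lambda\bS_\lambda(E)$ is multiplicity-free.

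The second statement is where there is a genuine gap. Your main route reduces the $\GL(E)$-decomposition to the symmetric-function identity $\bigl(\sum_\nu s_\nu\bigr)\cdot\sum_{\tau_N(\mu)=\emptyset}(-1)^{j_N(\mu)}s_\mu=\sum_{\ell(\lambda)\le N}s_\lambda$, but you do not prove it: the summands $s_{(w\bullet 0)^\dagger}$ are Schur functions, not Weyl-group translates of monomials, so the denominator identity for $\rB\rC_\infty$ does not ``collapse'' the product; what is needed is a bounded Littlewood-type identity, which requires an argument of the sort used for Proposition~\ref{prop:spin-modrule} (determinantal manipulations) or a BGG-type resolution, and you explicitly leave this step unexecuted. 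Your fallback Pieri/PBW route stumbles at exactly the hard inclusion, and in a way that matters: the assertion that ``the multiplication maps do not degenerate'' in the associated graded is false when $N$ is even. By Daszkiewicz's theorem \cite{daszkiewicz}, in $\Sym(E\oplus\bigwedge^2 E)$ the ideal generated by $\bigwedge^{N+1}(E)$ contains $\bS_\mu(E)$ only when $\mu$ has at least as many odd-length columns as $(1^{N+1})$; for $N$ even this excludes all shapes with only even-length columns (e.g.\ $\bigwedge^{N+2}(E)$), so the associated graded of the left ideal strictly contains the commutative ideal generated by the same space, and no purely associated-graded argument can give the reverse inclusion. This is precisely how the paper proceeds: it degenerates $\rU(\fg(E))$ to $\Sym(E)\otimes\Sym(\bigwedge^2E)$ and quotes Daszkiewicz, which settles odd $N$ outright, and for even $N$ it supplies the missing noncommutative input by showing that $\bigwedge^{N+1}(E)$ generates $\bigwedge^{N+2}(E)$ inside $\rU(\fg(E))$, ruling out the alternative by inspecting $\rH_2(\fg(E);\fN_\emptyset)$ via Corollary~\ref{cor:osc-kostant}. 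To repair your write-up you should either carry out the character identity honestly (e.g.\ by the method of Proposition~\ref{prop:spin-modrule} or by citing a bounded Littlewood identity) or adopt the degeneration-plus-Daszkiewicz argument, with the extra even-$N$ step.
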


\begin{proof}
The first statement follows from Corollary~\ref{cor:osc-kostant}. 

For the second statement, note that $\rU(\fg(E))$ degenerates to $\Sym(E) \otimes \Sym(\bigwedge^2 E) \cong \bigoplus_\lambda \bS_\lambda(E)$. The ideal structure of this latter algebra is worked out in \cite[Theorem 4.2]{daszkiewicz}. In particular, it is shown that $\bS_\mu(E)$ is in the ideal generated by $\bS_\lambda(E)$ if $\mu \supseteq \lambda$ and the number of odd length columns of $\mu$ is at least the number of odd length columns of $\lambda$. The latter condition is vacuous when $\lambda = (1^{2n})$, so proves our claim when $N$ is odd. When $N$ is even, it is enough to know that $\bigwedge^{N+1}(E)$ generates $\bigwedge^{N+2}(E)$ in $\rU(\fg(E))$. If this were not true, then $\bigwedge^{N+2}(E)$ would appear in $\rH_2(\fg(E); \fN_\emptyset)$, but this is ruled out by direct inspection using Corollary~\ref{cor:osc-kostant}.
\end{proof}

In particular, we may think of $\fN_\emptyset$ as the quotient by a ``determinantal ideal''. The minimal free resolution over $\rU(\fg(E))$ is very close to the classical cases, as discussed in \cite[\S 6]{weyman}: the connection is that a closely related modification rule describes the minimal free resolution in the classical case, see \cite[Remark 3.7]{lwood}. Explicitly, for every representation $\bS_\lambda(E)$ which appears in $\rH_\bullet(\fg(E); \fN_\emptyset)$, $\lambda$ has the following form: there exists $r \ge 0$ and a partition $\alpha$ with $\ell(\alpha) \le r$ such that $\lambda = (r + \alpha_1, \dots, r + \alpha_r, r^N, \alpha^\dagger_1, \alpha^\dagger_2, \dots)$ (here $r^N$ just means $r$ repeated $N$ times). This $\bS_\lambda(E)$ appears exactly once and in homological degree $r(r+1)/2 + |\alpha|$.

We have a parallel story for $\fM_\emptyset$ using instead Corollary~\ref{cor:spin-kostant}:

\begin{proposition}
$\fM_\emptyset$ is the quotient of $\rU(\fg^\dagger(E))$ by the left ideal generated by $\bigwedge^{N+1}(E)$. As a representation of $\GL(E)$, we have
\[
\fM_\emptyset = \bigoplus_{\ell(\lambda) \le N} \bS_\lambda(E).
\]
\end{proposition}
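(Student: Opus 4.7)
The plan is to follow the same two-step template used for $\fN_\emptyset$ in the preceding proposition, replacing the Lie algebra $\fg(E)$ by its transpose-dual super Lie algebra $\fg^\dagger(E) = E \oplus \Sym^2(E)$ and using Corollary~\ref{cor:spin-kostant} in place of Corollary~\ref{cor:osc-kostant}.

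First I would read off $\rH_0$ and $\rH_1$ of $\fM_\emptyset$ from Corollary~\ref{cor:spin-kostant} with $\lambda = \emptyset$. A partition $\mu$ contributes to $\rH_0$ iff $j_N(\mu) = 0$ and $\tau_N(\mu) = \emptyset$, which forces $\mu = \emptyset$; so $\fM_\emptyset$ has a single generator in degree zero. A partition contributes to $\rH_1$ iff removing one single-column border strip of length $2\ell(\mu) - N - 1$ from $\mu$ leaves the empty partition, which forces $\mu = (1^{N+1})$. Hence $\rH_1 = \bigwedge^{N+1}(E)$, and $\fM_\emptyset = \rU(\fg^\dagger(E))/\rU(\fg^\dagger(E)) \cdot \bigwedge^{N+1}(E)$, as claimed.

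Next I would compute $[\fM_\emptyset]$ as a $\GL(E)$-character via the Euler characteristic of the minimal free resolution of $\fM_\emptyset$ provided by Corollary~\ref{cor:spin-kostant}:
\[
[\fM_\emptyset] = [\rU(\fg^\dagger(E))] \cdot \sum_{\substack{\mu \\ \tau_N(\mu) = \emptyset}} (-1)^{j_N(\mu)} [\bS_\mu(E)].
\]
Using $[\rU(\fg^\dagger(E))] = \sum_\lambda [\bS_\lambda(E)]$ from \eqref{eqn:char-spin-univ}, the claim reduces to the symmetric-function identity
\[
\Big(\sum_\lambda s_\lambda\Big) \cdot \sum_{\tau_N(\mu) = \emptyset} (-1)^{j_N(\mu)} s_\mu = \sum_{\ell(\lambda) \le N} s_\lambda,
\]
which can be verified by combining Littlewood's identity $\sum_\lambda s_\lambda = \prod_i (1-x_i)^{-1} \prod_{i<j} (1-x_i x_j)^{-1}$ with the determinantal formula \eqref{eqn:slambda-det} for $s^\square_\emptyset$.

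The main obstacle is this character identity; unlike the $\fN_\emptyset$ case, one cannot conclude by a cheap upper-semicontinuity argument along the PBW degeneration of $\rU(\fg^\dagger(E))$ to $A = \bigwedge(E) \otimes \Sym(\Sym^2(E))$. Indeed, the ideal generated by $\bigwedge^{N+1}(E)$ in $A$ has a strictly larger quotient than $\fM_\emptyset$ (for instance, $\bS_{(2^{N+1})}(E)$ survives in it, but has $\ell > N$), because the anticommutation $ee' + e'e \in \Sym^2(E)$ for $e, e' \in E$ in $\rU(\fg^\dagger(E))$ produces additional relations invisible to the associated graded. So the exact Euler-characteristic input from Corollary~\ref{cor:spin-kostant} is essential.
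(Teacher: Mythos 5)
Your first step is correct and is exactly what the paper intends: with $\lambda=\emptyset$, Corollary~\ref{cor:spin-kostant} gives $\rH_0(\fg^\dagger(E);\fM_\emptyset)=\bC$ and $\rH_1(\fg^\dagger(E);\fM_\emptyset)=\bS_{(1^{N+1})}(E)=\bigwedge^{N+1}(E)$, and minimality of the free resolution identifies $\rH_1$ with the generators of the left ideal of relations, so $\fM_\emptyset=\rU(\fg^\dagger(E))/\rU(\fg^\dagger(E))\cdot\bigwedge^{N+1}(E)$. The gap is in your second step. The entire content of the character statement is pushed into the identity $\bigl(\sum_\lambda s_\lambda\bigr)\cdot\sum_{\tau_N(\mu)=\emptyset}(-1)^{j_N(\mu)}s_\mu=\sum_{\ell(\lambda)\le N}s_\lambda$, and your proposed verification does not deliver it: the determinantal formula \eqref{eqn:slambda-det} at $\lambda=\emptyset$ is the vacuous statement $s^\square_\emptyset=1$ (an empty determinant), and Littlewood's identity only rewrites $\sum_\lambda s_\lambda$ as a product; neither says anything about the signed sum over the fiber $\tau_N^{-1}(\emptyset)$. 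That signed sum is a bounded Littlewood identity of Koike--Terada/Macdonald type, essentially equivalent in strength to the statement you are proving, so as written the crucial identity is asserted rather than proved.

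The easy way to close the gap -- and what the paper means by the ``parallel story'' -- is to compare with $\fN_\emptyset$ rather than attack the identity directly: Corollaries~\ref{cor:spin-kostant} and~\ref{cor:osc-kostant} give literally the same $\GL(E)$-description of the minimal free resolutions of $\fM_\emptyset$ over $\rU(\fg^\dagger(E))$ and of $\fN_\emptyset$ over $\rU(\fg(E))$, and $[\rU(\fg^\dagger(E))]=[\rU(\fg(E))]=\sum_\lambda[\bS_\lambda(E)]$ by \eqref{eqn:char-spin-univ}; hence the Euler characteristics agree and $[\fM_\emptyset]=[\fN_\emptyset]=\sum_{\ell(\lambda)\le N}[\bS_\lambda(E)]$ by the preceding proposition on $\fN_\emptyset$, whose proof via the degeneration to $\Sym(E)\otimes\Sym(\bigwedge^2 E)$ and \cite{daszkiewicz} is where the real input sits (this is also the point of the closing remark that $\rH_i(\fg(E);\fN_\emptyset)\cong\rH_i(\fg^\dagger(E);\fM_\emptyset)$). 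Your final paragraph, on the other hand, is a correct and genuinely useful observation: transposing Daszkiewicz, membership in the ideal of $\bigwedge^\bullet(E)\otimes\Sym(\Sym^2 E)$ generated by $\bigwedge^{N+1}(E)$ requires at least $N+1$ odd-length rows, so for instance $\bS_{(2^{N+1})}(E)$ survives in the degenerate quotient and the semicontinuity bound is not tight; this is exactly why the character claim for $\fM_\emptyset$ should be routed through the $\fN_\emptyset$ computation (or through a genuine bounded Littlewood identity) rather than through a verbatim transposition of the degeneration argument.
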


As representations of $\GL(E)$, we have $\rH_i(\fg(E); \fN_\emptyset) \cong \rH_i(\fg^\dagger(E); \fM_\emptyset)$ since their combinatorial descriptions are the same.

\section{Geometric constructions}

In this last section, we use some geometric constructions and prove the remaining unproven statements from the previous section.

\subsection{Preliminaries}

We will need some results from \cite{lwood}; first we recall some notation from \cite[\S 2.2]{lwood}. Let $n$ be a positive integer. Let $\cU$ be the set of integer sequences $(a_1, a_2, \dots)$. Let $\lambda$ be a partition with $\ell(\lambda) \le n$. Given another partition $\mu$, we write $(\lambda \mid \mu)$ for the integer sequence $(\lambda_1, \dots, \lambda_n, \mu_1, \mu_2, \dots) \in \cU$. Let $\fS$ be the finitary infinite symmetric group; it acts on $\cU$ and is generated by adjacent transpositions $s_i$ which swap the $i$th and $(i+1)$st positions. The length $\ell(w)$ of $w$ is the shortest expression $w = s_{i_1} \cdots s_{i_\ell(w)}$. Define $\rho = (0,-1,-2,\dots)$. We define a modified action of $\fS$ on $\cU$ by $w \bullet \alpha = w(\alpha + \rho) - \rho$. We say that $(\lambda \mid \mu)$ is {\bf regular} if its stabilizer subgroup under the modified action is trivial. In that case, there is a unique $w \in \fS$ so that $w\bullet \alpha$ is a partition. Define
\begin{displaymath}
\begin{split}
S_1(\lambda) &= \{ \textrm{$\mu = \mu^\dagger$ such that $(\lambda \mid \mu)$ is regular} \} \\
S_2(\lambda) &= \{ \textrm{partitions $\alpha$ such that $\tau_{2n}(\alpha)=\lambda$} \}.
\end{split}
\end{displaymath}

\begin{lemma} \label{lem:spin-even}
Let $\mu$ be a non-zero partition in $S_1(\lambda)$ and let $\nu$ be the partition obtained by removing the first row and column of $\mu$.  Then $\nu$ also belongs to $S_1(\lambda)$.  Furthermore, let $w$ $($resp.\ $w')$ be the element of $W$ such that $\alpha=w \bullet (\lambda \mid \mu)$ $($resp.\ $\beta=w' \bullet (\lambda \mid \nu))$ is a partition.  Then a border strip $R_\alpha$ of length $2\ell(\alpha) - 2n - 1$ in $\alpha$ exists and we have the following identities:
\begin{displaymath}
\vert R_{\alpha} \vert= 2\ell(\alpha) - 2n - 1 = 2\mu_1-1, \qquad
\alpha \setminus R_{\alpha}=\beta, \qquad
c(R_{\alpha})=\mu_1+\ell(w')-\ell(w).
\end{displaymath}
\end{lemma}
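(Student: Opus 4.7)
The plan is to encode $(\lambda \mid \mu) + \rho$ and $(\lambda \mid \nu) + \rho$ as subsets of $\bZ$ (Maya diagrams) and reduce the entire statement to tracking a single bead-movement in the abacus. To set this up, I would write the entries of $(\lambda \mid \mu) + \rho$ as the disjoint union $S = F_\lambda \sqcup F_\mu \sqcup \bZ_{\le -n-\mu_1}$, where $F_\lambda = \{\lambda_i - (i-1) : 1 \le i \le n\}$ and $F_\mu = \{\mu_j - (n+j-1) : 1 \le j \le \mu_1\}$ (using $\ell(\mu) = \mu_1$ since $\mu = \mu^\dagger$). The regularity hypothesis $\mu \in S_1(\lambda)$ is precisely that these three pieces are disjoint with no internal collisions. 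Self-conjugacy of $\nu = (\mu_2 - 1, \dots, \mu_{\mu_1} - 1)$ is an immediate check from $\mu = \mu^\dagger$; substituting $\nu_j = \mu_{j+1}-1$ produces the analogous decomposition $S' = F_\lambda \sqcup (F_\mu \setminus \{\mu_1 - n\}) \sqcup \bZ_{\le -n-\mu_1+1}$, which I would rewrite as $S' = (S \setminus \{\mu_1 - n\}) \cup \{-n-\mu_1+1\}$.

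Next, to obtain $\nu \in S_1(\lambda)$, I would reduce regularity of $(\lambda \mid \nu) + \rho$ to the single verification that $-n-\mu_1+1 \notin S \setminus \{\mu_1 - n\}$, which follows from the elementary bounds $\lambda_i - (i-1) \ge -(n-1) > -n-\mu_1+1$ for $i \le n$, and $\mu_j - (n+j-1) \ge -n - \mu_1 + 2$ for $2 \le j \le \mu_1$ (using $\mu_j \ge 1$ in this range). Since $\alpha + \rho$ (resp.\ $\beta + \rho$) is the strictly decreasing rearrangement of $S$ (resp.\ $S'$), the transition from $\alpha$ to $\beta$ corresponds on the abacus to moving one bead down from position $\mu_1 - n$ to position $-n-\mu_1+1$, a drop of $2\mu_1-1$. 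Invoking the standard abacus--partition dictionary, such a one-bead move identifies $\alpha \setminus \beta$ with a border strip $R_\alpha$ of length equal to the drop, so $|R_\alpha| = 2\mu_1 - 1$ and $\alpha \setminus R_\alpha = \beta$. Moreover, the dictionary gives $\text{rows}(R_\alpha) = 1 + |S \cap (-n-\mu_1+1,\, \mu_1 - n)|$; together with $\ell(\alpha) = n + \mu_1$ (visible from the sorting), this also yields $|R_\alpha| = 2\ell(\alpha) - 2n - 1$.

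For the column identity, I would use $c(R_\alpha) = |R_\alpha| + 1 - \text{rows}(R_\alpha) = 2\mu_1 - \text{rows}(R_\alpha)$, reducing the claim to showing $\text{rows}(R_\alpha) = \mu_1 + \ell(w) - \ell(w')$. I plan to count the beads of $S$ in the open interval $(-n-\mu_1+1, \mu_1-n)$ by source: a short inequality check shows $F_\mu$ contributes exactly $\mu_1 - 1$ beads (all $\mu_j - (n+j-1)$ for $2 \le j \le \mu_1$), the tail $\bZ_{\le -n-\mu_1}$ contributes $0$, and $F_\lambda$ contributes $|\{i \le n : \lambda_i - (i-1) < \mu_1 - n\}|$. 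Separately, since $\gamma := (\lambda \mid \mu) + \rho$ and $\gamma' := (\lambda \mid \nu) + \rho$ are each strictly decreasing on the two blocks $\{1,\dots,n\}$ and $\{n+1, n+2, \dots\}$, both $\ell(w)$ and $\ell(w')$ count only inversions $(i,j)$ with $i \le n < j$; moreover the tails of $\gamma$ and $\gamma'$ never exceed $-n$ and so cannot out-rank any $\gamma_i \ge -(n-1)$, giving $\ell(w) = \sum_{i \le n} |\{s \in F_\mu : s > \gamma_i\}|$ and $\ell(w') = \sum_{i \le n} |\{s \in F_\mu \setminus \{\mu_1 - n\} : s > \gamma_i\}|$. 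Subtracting, $\ell(w) - \ell(w') = |\{i \le n : \gamma_i < \mu_1 - n\}|$, exactly matching the $F_\lambda$ count. The main obstacle will be pinning down the abacus--border strip dictionary precisely in our $\rho$-conventions (in particular, whether bead descent adds or removes a strip, and the exact definition of ``height''); once this is settled by a check on a small example, the remainder is straightforward bookkeeping with $F_\lambda$ and $F_\mu$.
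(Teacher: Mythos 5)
Your proposal is correct, but it is a genuinely different route from the paper's: the paper disposes of this lemma by citing \cite[Lemma 4.13]{lwood} (translating conventions and flagging a typo there), whereas you give a self-contained proof by passing to $\beta$-numbers. Your decomposition $S = F_\lambda \sqcup F_\mu \sqcup \bZ_{\le -n-\mu_1}$ and the identity $S' = (S\setminus\{\mu_1-n\})\cup\{-n-\mu_1+1\}$ are right (the verification uses $\mu=\mu^\dagger$, so $\ell(\mu)=\mu_1$ and $\mu_k=1$ for $\mu_2<k\le\mu_1$, which is what merges the length-one columns of $\mu$ into the vacuum tail), the inequalities you list do establish $\nu\in S_1(\lambda)$, and $\ell(\alpha)=n+\mu_1$ follows exactly as you say, giving $2\ell(\alpha)-2n-1=2\mu_1-1$. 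The abacus dictionary you invoke (one bead dropping by $r$ removes a border strip of size $r$, with leg length equal to the number of beads strictly between the two positions) is standard and is stated with the correct orientation, and your inversion counts for $\ell(w)$, $\ell(w')$ (only cross-block inversions, none involving the tail) combine with the bead count in the open interval to give $\mathrm{rows}(R_\alpha)=\mu_1+\ell(w)-\ell(w')$, hence $c(R_\alpha)=\mu_1+\ell(w')-\ell(w)$. What your route buys is independence from \cite{lwood} and a transparent bookkeeping mechanism; what the paper's route buys is brevity and consistency with the conventions already set up in \cite{lwood}, which is where the lemma is actually proved.

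One point you should make explicit rather than leave to ``a check on a small example'': a border strip of length $2\ell(\alpha)-2n-1$ whose removal leaves a partition need not be unique (e.g.\ $n=1$, $\lambda=(5)$, $\mu=(1)$ gives $\alpha=(5,1)$ and two removable strips of length $1$), and the modification rule implicitly takes the canonical one meeting the first column. Your construction does produce that one: the landing position $-n-\mu_1+1$ is the $(n+\mu_1)$-th largest element of $S'$, so $\beta_{\ell(\alpha)}=0$ while $\alpha_{\ell(\alpha)}>0$, i.e.\ the strip's bottom row is row $\ell(\alpha)$ and it contains the box $(\ell(\alpha),1)$. Adding that one sentence closes the only loose end; the rest is complete.
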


\begin{proof}
This is \cite[Lemma 4.13]{lwood}\footnote{There is a typo in the published version: ``$|R_\alpha|=2\mu_1 + 1$'' should be ``$|R_\alpha| = 2\mu_1 - 1$.''}, but the reader should be aware that in \cite[Lemma 4.13]{lwood}, the function $\tau_{2n+1}$ is used, and the discrepancy comes from the difference in how it is defined in \cite[\S 4.4]{lwood}.
\end{proof}

\begin{proposition} \label{prop:even-spin-bott}
There is a unique bijection $S_1(\lambda) \to S_2(\lambda)$ under which $\mu$ maps to $\alpha$ if there exists $w \in \fS$ such that $w \bullet (\lambda \vert \mu)=\alpha$; in this case, $\ell(w)+j_{2n}(\alpha)=\tfrac{1}{2}(\vert \mu \vert+\rank(\mu))$ and $\tau_{2n}(\alpha) = \lambda$.
\end{proposition}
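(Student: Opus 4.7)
The plan is to prove the proposition by induction on $|\mu|$, running in parallel with \cite[Proposition 4.14]{lwood} but substituting the even-case Lemma~\ref{lem:spin-even} for its odd-case analogue used there. The base case is $\mu = \emptyset$: then $(\lambda \mid \mu) = \lambda$ is already a partition so $w = 1$ and $\alpha = \lambda$. Since $\ell(\lambda) \le n$, a border strip of length $2\ell(\lambda) - 2n - 1 < 0$ cannot exist in $\lambda$, so $\tau_{2n}(\lambda) = \lambda$ and $j_{2n}(\lambda) = 0$, matching $\tfrac{1}{2}(|\mu| + \rank(\mu)) = 0$ and placing $\lambda$ in $S_2(\lambda)$.

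For the inductive step, given a nonempty $\mu \in S_1(\lambda)$, let $\nu$ be the partition obtained by deleting the first row and column of $\mu$. By Lemma~\ref{lem:spin-even}, $\nu$ lies in $S_1(\lambda)$, so by the inductive hypothesis there is a partition $\beta = w' \bullet (\lambda \mid \nu) \in S_2(\lambda)$ with $\ell(w') + j_{2n}(\beta) = \tfrac{1}{2}(|\nu| + \rank(\nu))$. Lemma~\ref{lem:spin-even} also supplies, inside $\alpha = w \bullet (\lambda \mid \mu)$, a border strip $R_\alpha$ of length $2\mu_1 - 1$ with $\alpha \setminus R_\alpha = \beta$ and $c(R_\alpha) = \mu_1 + \ell(w') - \ell(w)$. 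Applying Definition~\ref{defn:spin-mod} recursively now gives $\tau_{2n}(\alpha) = \tau_{2n}(\beta) = \lambda$ (so $\alpha \in S_2(\lambda)$) and $j_{2n}(\alpha) = j_{2n}(\beta) + c(R_\alpha)$. Combining,
\[
\ell(w) + j_{2n}(\alpha) = \ell(w') + j_{2n}(\beta) + \mu_1 = \tfrac{1}{2}(|\nu| + \rank(\nu)) + \mu_1,
\]
and since $\mu = \mu^\dagger$ gives $|\mu| - |\nu| = 2\mu_1 - 1$ and $\rank(\mu) - \rank(\nu) = 1$, this equals $\tfrac{1}{2}(|\mu| + \rank(\mu))$, as required.

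Bijectivity follows by inverting this construction inductively on $|\alpha|$: given $\alpha \in S_2(\lambda)$ with $\alpha \ne \lambda$, the border strip $R_\alpha$ of length $2\ell(\alpha) - 2n - 1$ exists by definition of $\tau_{2n}$, and $\beta = \alpha \setminus R_\alpha$ again lies in $S_2(\lambda)$ with strictly smaller size; the inductive hypothesis produces $\nu \in S_1(\lambda)$ mapping to $\beta$, and we reconstruct $\mu$ by prepending a first row and column of common length $\mu_1 = (|R_\alpha| + 1)/2$ to $\nu$. That the resulting $(\lambda \mid \mu)$ is regular, and that running the forward map recovers $\alpha$, both follow from Lemma~\ref{lem:spin-even} applied to $\mu$. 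The main obstacle is purely the bookkeeping linking $\ell(w)$, $\ell(w')$, and $c(R_\alpha)$, but this is precisely what Lemma~\ref{lem:spin-even} packages, so the remainder of the argument reduces to the arithmetic identity above and to verifying that $\mu \in S_1(\lambda)$ is equivalent to $\nu \in S_1(\lambda)$ together with the existence of the requisite border strip in $\alpha$.
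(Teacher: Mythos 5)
Your construction of the map and the proof of the numerical identity are exactly the paper's argument: induction on $|\mu|$, with Lemma~\ref{lem:spin-even} supplying $\nu \in S_1(\lambda)$, the border strip $R_\alpha$ with $\alpha \setminus R_\alpha = \beta$ and $c(R_\alpha) = \mu_1 + \ell(w') - \ell(w)$, followed by the bookkeeping $j_{2n}(\alpha) = j_{2n}(\beta) + c(R_\alpha)$, $|\mu| - |\nu| = 2\mu_1 - 1$, $\rank(\mu) - \rank(\nu) = 1$. That part is correct and is what the paper does.

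The weak point is the bijectivity. Your inverse construction reconstructs $\mu$ from $\alpha$ by prepending a first row and column of length $\mu_1 = (|R_\alpha|+1)/2$ to $\nu$, and you then assert that regularity of $(\lambda \mid \mu)$, and the fact that the forward map sends this $\mu$ back to $\alpha$, ``follow from Lemma~\ref{lem:spin-even} applied to $\mu$.'' But Lemma~\ref{lem:spin-even} takes $\mu \in S_1(\lambda)$ (i.e., regularity of $(\lambda \mid \mu)$) as a hypothesis, so invoking it at this point is circular. What is actually needed is the converse statement: if $\nu \in S_1(\lambda)$ with $w' \bullet (\lambda \mid \nu) = \beta$, and $\alpha$ is obtained from $\beta$ by adding a border strip of the prescribed length, then the enlarged $\mu$ is again in $S_1(\lambda)$ and $w \bullet (\lambda \mid \mu) = \alpha$. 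This converse is not a formal consequence of Lemma~\ref{lem:spin-even}; it is precisely the content of the argument of \cite[Proposition 3.10]{lwood}, which is what the paper cites to get bijectivity. So either cite that argument, as the paper does, or prove the converse step explicitly; as written, the inductive inversion (and hence injectivity and surjectivity) is not justified. The last sentence of your proposal in effect concedes this equivalence still has to be verified, and that verification is the actual mathematical content of the bijectivity claim.
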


\begin{proof}
We first define the map $S_1(\lambda) \to S_2(\lambda)$ by induction on $|\mu|$. If $|\mu|=0$, define $\mu \mapsto \lambda$ and there is nothing to prove. Otherwise, let $\nu$ be the partition obtained from $\mu$ by removing the first row and column. By Lemma~\ref{lem:spin-even}, $\nu \in S_1(\lambda)$ and we have $w' \bullet (\lambda|\nu) = \beta \in S_2(\lambda)$ for a unique choice of $w'$, and the equality 
\[
\ell(w') + j_{2n}(\beta) = \frac{|\nu|+\rank(\nu)}{2}
\]
holds. By Lemma~\ref{lem:spin-even}, this implies 
\[
\ell(w) - \mu_1 + c(R_\alpha) + j_{2n}(\beta) = \frac{|\nu|+\rank(\nu)}{2}
\]
where $\beta = \alpha \setminus R_\alpha$. Rearranging this identity we get 
\[
\ell(w) + j_{2n}(\alpha) = \frac{|\mu| + \rank(\mu)}{2}
\]
as desired. To show that this is a bijection, use the argument in \cite[Proposition 3.10]{lwood}.
\end{proof}

Let $E$ be a vector space and let $X$ be the Grassmannian of rank $n$ quotients of $E$. We have a tautological exact sequence
\[
0 \to \cR \to E \otimes \cO_X \to \cQ \to 0
\]
where $\cQ$ is the universal quotient bundle. The Borel--Weil--Bott theorem \cite[\S 4.1]{weyman} is then:

\begin{theorem}[Borel--Weil--Bott]
Let $\lambda$ be a partition with at most $n$ parts, let $\mu$ be any partition and let $\cV$ be the vector bundle $\bS_{\lambda}(\cQ) \otimes \bS_{\mu}(\cR)$ on $X$.
\begin{itemize}
\item Suppose $(\lambda \mid \mu)$ is regular, and write $w \bullet (\lambda \mid \mu)=\alpha$ for a partition $\alpha$.  Then
\begin{displaymath}
\rH^i(X; \cV) = \begin{cases}
\bS_{\alpha}(E) & \textrm{if $i=\ell(w)$} \\
0 & \textrm{otherwise}
\end{cases}.
\end{displaymath}
\item If $(\lambda \mid \mu)$ is not regular, then $\rH^i(X; \cV)=0$ for all $i$.
\end{itemize}
\end{theorem}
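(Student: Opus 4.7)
This is the classical Borel--Weil--Bott theorem for the type A Grassmannian $X = \GL(E)/P$, where $P$ is the maximal parabolic stabilizing a rank $n$ quotient of $E$. The plan is simply to invoke \cite[Corollary 4.1.9]{weyman}, which states exactly this result with the same combinatorial conventions, so the proof could reasonably be one line. Let me sketch why this is the right citation and what a self-contained argument would look like.

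First I would identify $\bS_\lambda(\cQ) \otimes \bS_\mu(\cR)$ as the $\GL(E)$-equivariant vector bundle induced from the irreducible representation of the Levi quotient $L \cong \GL_n \times \GL_{\dim E - n}$ of $P$ with highest weight $(\lambda \mid \mu)$. With the paper's choice $\rho = (0,-1,-2,\dots)$, the modified action $w \bullet \gamma = w(\gamma + \rho) - \rho$ of $\fS$ on $\cU$ is (up to a uniform central shift irrelevant to the Weyl group action) the usual dot action on the weight lattice of the diagonal torus of $\GL(E)$. So ``regularity'' is the standard condition that $(\lambda \mid \mu) + \rho$ lies off every root hyperplane, and the unique dominant translate $\alpha$ and its length $\ell(w)$ have their standard meaning.

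For a self-contained proof, the standard route is to pull back along the projection $\pi \colon \mathrm{Fl}(E) \to X$, filter the pullback by line bundles parametrized by the $W(L)$-orbit of $(\lambda \mid \mu)$, and then factor $\pi$ as a composition of $\mathbf{P}^1$-fibrations. One iteratively applies the base case of Bott's theorem on $\mathbf{P}^1$, where $\rH^\bullet(\mathbf{P}^1, \cO(k))$ is nonzero only for $k \ge 0$ in degree~$0$ or $k \le -2$ in degree~$1$, and vanishes identically for $k = -1$. Each fibration step either shifts cohomological degree by one while applying the corresponding simple reflection to the weight, or kills all cohomology when the current weight pairs with a simple coroot to give $-1$.

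The main obstacle is bookkeeping rather than content: one must verify that non-regularity of $(\lambda \mid \mu)$ under the $\bullet$-action implies that some intermediate step in the fibration tower produces a $-1$ pairing (forcing total vanishing), and that regularity implies the total cohomological degree accumulated equals $\ell(w)$ with top cohomology $\bS_\alpha(E)$. Since this translation between the Weyl group combinatorics on integer sequences $\cU$ and the geometric Leray/fibration argument is carried out verbatim for the Grassmannian in \cite[\S 4.1]{weyman}, the cleanest presentation is to cite that reference and move on.
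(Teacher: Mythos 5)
Your proposal matches the paper exactly: the paper gives no proof of this statement, simply quoting it as the Borel--Weil--Bott theorem with a citation to \cite[\S 4.1]{weyman}, which is precisely the reference you invoke (and your sketch of the flag-variety/$\mathbf{P}^1$-fibration argument and the translation of the $\bullet$-action with $\rho = (0,-1,-2,\dots)$ into the standard dot action is accurate). Nothing further is needed.
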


\subsection{Proof of Theorem~\ref{thm:spin-derived} in even case $N=2n$} \label{sec:spin-even}

Define a sheaf of Lie superalgebras
\[
\fg^\dagger(E \otimes \cO_X) = \fg^\dagger(E) \otimes \cO_X = (E \oplus \Sym^2(E)) \otimes \cO_X.
\]
It has a Lie subalgebra $\fg^\dagger(\cR) \subset \fg^\dagger(E \otimes \cO_X)$ where $\fg^\dagger(\cR) = \cR \oplus \Sym^2(\cR)$ as a vector bundle. If we take the (minimal) Koszul complex for $\rU(\fg^\dagger(\cR))$ and base change to $\rU(\fg^\dagger(E \otimes \cO_X))$, then by Proposition~\ref{prop:spin-koszul} we get an acyclic complex $\cK_\bullet$ of $\rU(\fg^\dagger(E \otimes \cO_X))$-modules with the terms:
\[
\cK_i = \rU(\fg^\dagger(E \otimes \cO_X)) \otimes \bigoplus_{\substack{\mu = \mu^\dagger\\ 2i = |\mu| + \rank(\mu)}} \bS_\mu \cR.
\]

 Given a partition $\lambda$ with $\ell(\lambda) \le n$, set 
\[
\cF(\lambda)_\bullet = \cK_\bullet \otimes \bS_\lambda \cQ
\]
and $\cM_\lambda = \rH_0(\cF(\lambda)_\bullet)$. We have a left-exact pushforward functor 
\[
p_\ast \colon \rU(\fg^\dagger(E \otimes \cO_X))\text{-Mod} \to \rU(\fg^\dagger(E))\text{-Mod}.
\]
Let $\bF(\lambda)_\bullet = \rR p_* (\cF(\lambda)_\bullet)$. This is a minimal complex of free $\rU(\fg^\dagger(E))$-modules. The Borel--Weil--Bott theorem implies that $\cM_\lambda$ has no higher cohomology, so $\rH_i(\bF(\lambda)_\bullet) = 0$ for $i \ne 0$ and $\rH^0(X; \cM_\lambda) = \rH_0(\bF(\lambda)_\bullet)$ as $\rU(\fg^\dagger(E))$-modules; we denote this common module by $M_\lambda$.

\begin{proposition} \label{prop:spin-tor}
\[
\bF(\lambda)_i = \rU(\fg^\dagger(E)) \otimes \bigoplus_{\substack{\nu\\ \tau_{2n}(\nu) = \lambda\\ j_{2n}(\nu) = i}} \bS_\nu E.
\]
\end{proposition}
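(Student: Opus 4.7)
The plan is to compute $\bF(\lambda)_\bullet = \rR p_\ast \cF(\lambda)_\bullet$ term by term, using the projection formula and the Borel--Weil--Bott theorem on each summand, and then to reindex via Proposition~\ref{prop:even-spin-bott}.

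First, since $\fg^\dagger(E \otimes \cO_X) = \fg^\dagger(E) \otimes \cO_X$, its universal enveloping algebra is pulled back from the base, and the projection formula gives
\[
\rR^q p_\ast \cF(\lambda)_i \;=\; \rU(\fg^\dagger(E)) \otimes \bigoplus_{\substack{\mu = \mu^\dagger\\ 2i = |\mu| + \rank(\mu)}} \rR^q p_\ast\bigl(\bS_\lambda \cQ \otimes \bS_\mu \cR\bigr).
\]
By Borel--Weil--Bott, the $\mu$-summand vanishes unless $(\lambda \mid \mu)$ is regular, in which case it contributes a single copy of $\bS_{\alpha} E$, where $\alpha := w \bullet (\lambda \mid \mu)$, concentrated in cohomological degree $q = \ell(w)$. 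Thus only $\mu \in S_1(\lambda)$ contribute.

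Next, because $\cF(\lambda)_\bullet$ is a resolution of $\cM_\lambda$ and $\cM_\lambda$ is $p_\ast$-acyclic, the derived pushforward $\rR p_\ast \cF(\lambda)_\bullet$ is the total complex of the double complex $\rR^q p_\ast \cF(\lambda)_j$, quasi-isomorphic to $M_\lambda$ placed in homological degree $0$. The homological degree $i$ piece of this total complex collects exactly the contributions with $j = i + q$, yielding
\[
\bF(\lambda)_i \;=\; \rU(\fg^\dagger(E)) \otimes \bigoplus_{\substack{\mu \in S_1(\lambda)\\ 2(i + \ell(w)) = |\mu| + \rank(\mu)}} \bS_\alpha E.
\]
I would then apply the bijection $\mu \leftrightarrow \alpha$ of Proposition~\ref{prop:even-spin-bott}, which satisfies $\ell(w) + j_{2n}(\alpha) = \tfrac{1}{2}(|\mu| + \rank(\mu))$: the constraint collapses to $i = j_{2n}(\alpha)$ with $\tau_{2n}(\alpha) = \lambda$, giving the claimed formula.

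The final point is the minimality of $\bF(\lambda)_\bullet$ as a free resolution (equivalently, the lack of nonzero higher differentials in the double complex): this is automatic from the injectivity half of the bijection, since each Schur functor $\bS_\alpha E$ appears as a generator in at most one homological degree of the total complex, and $\GL(E)$-equivariance then forces the differentials to land in the augmentation ideal. The only delicate point of the argument is the indexing bookkeeping in the double complex, which reduces entirely to combining $2j = |\mu| + \rank(\mu)$ with the length identity from Proposition~\ref{prop:even-spin-bott}.
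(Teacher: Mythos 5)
Your proposal is correct and follows essentially the same route as the paper, whose proof is precisely the combination of the Borel--Weil--Bott theorem (applied to the summands $\bS_\lambda\cQ \otimes \bS_\mu\cR$ via the projection formula) with the bijection and length identity of Proposition~\ref{prop:even-spin-bott}; your bookkeeping $j = i + \ell(w)$ collapsing to $i = j_{2n}(\alpha)$, $\tau_{2n}(\alpha)=\lambda$ is exactly the intended argument. The minimality observation via $\GL(E)$-equivariance and the fact that each $\bS_\alpha E$ occurs in only one homological degree is a reasonable way to justify the "minimal complex" assertion that the paper leaves implicit.
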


\begin{proof}
This follows from Proposition~\ref{prop:even-spin-bott} and the Borel--Weil--Bott theorem.
\end{proof}

\begin{proposition} \label{prop:spin-id}
$M_\lambda \cong \fM_\lambda = \hom_{\Pin(V)}(V_{\lambda + \delta}, \fM)$.
\end{proposition}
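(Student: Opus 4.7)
My plan is to construct a natural surjection $M_\lambda \twoheadrightarrow \fM_\lambda$ of $\rU(\fg^\dagger(E))$-modules and then compare $\GL(E)$-characters to deduce it is an isomorphism.

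For the map, Proposition~\ref{prop:mu-minus}(b)--(c) supplies a $\GL(E)$-equivariant inclusion $\bS_\lambda E \hookrightarrow \fM_\lambda$ onto the $V_{\lambda+\delta}$-isotypic lowest-weight piece, and part (a) shows that $\fM_\lambda$ is generated over $\rU(\fg^\dagger(E))$ by this subspace. This yields a surjection
\[
\pi \colon \rU(\fg^\dagger(E)) \otimes \bS_\lambda E \twoheadrightarrow \fM_\lambda.
\]
By Proposition~\ref{prop:spin-tor}, $\rU(\fg^\dagger(E)) \otimes \bS_\lambda E$ is precisely $\bF(\lambda)_0$ (only $\nu = \lambda$ contributes at $i = 0$), and $M_\lambda$ is by construction the cokernel of $\bF(\lambda)_1 \to \bF(\lambda)_0$, so it suffices to check that $\pi$ kills the image of $\bF(\lambda)_1$. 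A convenient way to obtain this factorization is to work over $X$: $\fM$ extends to a $\rU(\fg^\dagger(E \otimes \cO_X))$-module $\fM \otimes \cO_X$, and one exhibits a $\rU(\fg^\dagger(E \otimes \cO_X))$-linear map $\cM_\lambda \to \fM \otimes \cO_X$ coming from a $\rU(\fg^\dagger(\cR))$-linear inclusion of $\bS_\lambda \cQ$ into the sheaf of $\fg^\dagger(\cR)$-lowest-weight vectors in $\fM \otimes \cO_X$; taking $\rH^0$ then gives the desired $M_\lambda \to \fM_\lambda$.

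With a surjection in hand I would match characters. From Proposition~\ref{prop:spin-tor} and the acyclicity of $\bF(\lambda)_\bullet$ in positive degree,
\[
[M_\lambda] = [\rU(\fg^\dagger(E))] \cdot \sum_{\nu\,:\,\tau_{2n}(\nu) = \lambda}(-1)^{j_{2n}(\nu)}[\bS_\nu E].
\]
On the other hand, the Cauchy decomposition $\fM = \bigoplus_\mu \bS_\mu E \otimes \bS_\mu V \otimes \Delta$ combined with Proposition~\ref{prop:mu-minus}(c) gives
\[
[\fM_\lambda] = \sum_\mu [\bS_\mu E] \cdot \langle V_{\lambda+\delta},\ \bS_\mu V \otimes \Delta \rangle_{\Pin(V)}.
\]
Expanding the Pin-multiplicity in the universal character basis $\{s^\square_\mu\}$ used in the proof of Proposition~\ref{prop:spin-modrule}, and then applying the bijection between $S_1(\lambda)$ and $S_2(\lambda)$ provided by Proposition~\ref{prop:even-spin-bott}, recasts this second sum as the first.

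The main obstacle is the factorization of $\pi$ through $M_\lambda$: the relations encoded by $\bF(\lambda)_1 \subset \rU(\fg^\dagger(E)) \otimes \bS_\lambda E$ are indexed by certain $\bS_\nu E$ with $\tau_{2n}(\nu)=\lambda$ and $j_{2n}(\nu)=1$, and verifying that they act trivially on the lowest-weight generator of $\fM_\lambda$ is where the geometry of $X$ (and in particular the Borel--Weil--Bott setup used to produce $\bF(\lambda)_\bullet$ in the first place) has to be used. Once this is established, the character equality combined with surjectivity of $\pi$ forces $M_\lambda \cong \fM_\lambda$, because both modules have finite multiplicities in each $\GL(E)$-isotypic component.
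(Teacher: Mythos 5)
Your overall skeleton (the surjection coming from Proposition~\ref{prop:mu-minus}, plus a $\GL(E)$-character comparison to upgrade a surjection to an isomorphism) is the paper's, and your character computation is in substance what the paper gets by combining Propositions~\ref{prop:spin-modrule} and~\ref{prop:spin-tor}. But the step you yourself flag as ``the main obstacle'' --- showing that $\pi$ kills the image of $\bF(\lambda)_1$ --- is not proved, and the geometric route you sketch for it does not work as stated. Since $\cK_\bullet$ is the base change of a resolution of the trivial $\rU(\fg^\dagger(\cR))$-module, $\cM_\lambda$ is the induced module $\rU(\fg^\dagger(E\otimes \cO_X)) \otimes_{\rU(\fg^\dagger(\cR))} \bS_\lambda \cQ$ with $\fg^\dagger(\cR)$ acting by zero on $\bS_\lambda\cQ$. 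Hence, by adjunction, a $\rU(\fg^\dagger(E\otimes\cO_X))$-linear map $\cM_\lambda \to \fM \otimes \cO_X$ is the same as a map of $\cO_X$-modules from $\bS_\lambda\cQ$ into the subsheaf of $\fM \otimes \cO_X$ annihilated by $\fg^\dagger(\cR)$. That subsheaf is zero: $\Sym^2(\cR) \subset \Sym^2(E)$ acts on $\fM = \Sym(E \otimes V) \otimes \Delta$ by multiplication by nonzero $\fso(V)$-invariant quadrics, which is injective. So the map you propose is forced to be zero. Note also that ``lowest-weight'' in the paper means killed by the lowering algebra $\fg^\dagger(E^*)$, not by $\fg^\dagger(\cR) \subset \fg^\dagger(E)$, and that $\bS_\lambda\cQ$ is a quotient, not a subsheaf, of $\bS_\lambda(E) \otimes \cO_X$, so the distinguished copy $\bS_\lambda(E) \subset \fM^-$ does not supply the inclusion you need either.

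The paper closes exactly this gap without any further geometry. By Proposition~\ref{prop:spin-tor}, $M_\lambda$ has the minimal presentation
\[
\bS_{(\lambda, 1^{N+1-2\ell(\lambda)})}(E) \otimes \rU(\fg^\dagger(E)) \to \bS_\lambda(E) \otimes \rU(\fg^\dagger(E)) \to M_\lambda \to 0,
\]
and by Pieri's rule the relation generator $\bS_{(\lambda,1^{N+1-2\ell(\lambda)})}(E)$ occurs with multiplicity exactly one in $\bS_\lambda(E) \otimes \rU(\fg^\dagger(E))$, hence with multiplicity zero in $M_\lambda$; by the character equality it then also has multiplicity zero in $\fM_\lambda$. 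By $\GL(E)$-equivariance the generator of $\bF(\lambda)_1$ must therefore map to zero under $\pi_\lambda$, so $\pi_\lambda$ factors through a surjection $M_\lambda \to \fM_\lambda$, and the character equality makes this an isomorphism. In other words, your first and last steps are fine; the missing middle step should be replaced by this multiplicity-one argument rather than by a map out of $\cM_\lambda$.
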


\begin{proof}
Combining Propositions~\ref{prop:spin-modrule} and~\ref{prop:spin-tor}, we get that both $M_\lambda$ and $\fM_\lambda$ are representations of $\GL(E)$ such that the multiplicities of each irreducible representation agree and are finite. By Proposition~\ref{prop:mu-minus}, we have a surjection 
\[
\pi_\lambda \colon \bS_\lambda(E) \otimes \rU(\fg^\dagger(E)) \to \fM_\lambda.
\] 
Using Proposition~\ref{prop:spin-tor}, we have a minimal presentation
\[
\bS_{(\lambda, 1^{N+1-2\ell(\lambda)})}(E) \otimes \rU(\fg^\dagger(E)) \to \bS_\lambda(E) \otimes \rU(\fg^\dagger(E)) \to M_\lambda \to 0.
\]
By Pieri's rule, $\bS_{(\lambda, 1^{N+1-2\ell(\lambda)})}(E)$ appears with multiplicity $1$ in $\bS_\lambda(E) \otimes \rU(\fg^\dagger(E))$ and hence does not appear in $M_\lambda$, and so the same is true for $\fM_\lambda$. In particular, $\pi_\lambda$ factors through a surjection $M_\lambda \to \fM_\lambda$. Since they are isomorphic as $\GL(E)$-representations, this surjection is an isomorphism.
\end{proof}

\subsection{Proof of Theorem~\ref{thm:spin-derived} in odd case $N=2n+1$} \label{sec:spin-odd}

Define a sheaf of Lie superalgebras
\[
\fg^\dagger(E \otimes \cO_X) = \fg^\dagger(E) \otimes \cO_X = (E \oplus \Sym^2(E)) \otimes \cO_X.
\]
Similarly, define the sheaf of Lie superalgebras $\fg^\dagger(\cR) = \cR \oplus \Sym^2(\cR)$. Using Proposition~\ref{prop:nabla-res}, we can construct an acyclic locally free complex $\cK_\bullet$ whose terms are:
\begin{align} \label{eqn:spin-odd-Ki}
\cK_i = \rU(\fg^\dagger(E \otimes \cO_X)) \otimes \bigoplus_{\substack{\mu \in Q_{-1}\\ |\mu| = 2i}} \bS_\mu \cR.
\end{align}

Given a partition $\lambda$ with $\ell(\lambda) \le n$, set 
\[
\cF(\lambda)_\bullet = \cK_\bullet \otimes \bS_\lambda \cQ
\]
and $\cM_\lambda = \rH_0(\cF(\lambda)_\bullet)$. We have a left-exact pushforward functor 
\[
p_\ast \colon \rU(\fg^\dagger(E \otimes \cO_X))\text{-Mod} \to \rU(\fg^\dagger(E))\text{-Mod}.
\]
Let $\bF(\lambda)_\bullet = \rR p_* (\cF(\lambda)_\bullet)$. This is a minimal complex of free $\rU(\fg^\dagger(E))$-modules. Again, by the Borel--Weil--Bott theorem, $\rH_i(\bF(\lambda)_\bullet) = 0$ for $i\ne 0$ and $\rH^0(X; \cM_\lambda) = \rH_0(\bF(\lambda)_\bullet)$ as $\rU(\fg^\dagger(E))$-modules; we denote this common module by $M_\lambda$.

\begin{proposition} \label{prop:lwood-spin-odd}
\[
\bF(\lambda)_i = \rU(\fg^\dagger(E)) \otimes \bigoplus_{\substack{\nu\\ \tau_{2n+1}(\nu) = \lambda\\ j_{2n+1}(\nu) = i}} \bS_\nu E.
\]
\end{proposition}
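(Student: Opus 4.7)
My plan is to follow the approach of the proof of Proposition~\ref{prop:spin-tor}, replacing the combinatorial input there with an odd-rank analog. First, each term of $\cF(\lambda)_\bullet$ factors as
\[
\cF(\lambda)_i = (\rU(\fg^\dagger(E)) \otimes \cO_X) \otimes \bigoplus_{\mu \in Q_{-1}(2i)} \bS_\mu \cR \otimes \bS_\lambda \cQ,
\]
so the trivial-bundle factor $\rU(\fg^\dagger(E))$ passes through $\rR p_*$ unchanged. Summand-by-summand, Borel--Weil--Bott tells us that $\rR p_\ast(\bS_\mu \cR \otimes \bS_\lambda \cQ) = \bS_\nu E$ is concentrated in cohomological degree $\ell(w)$ when $(\lambda \mid \mu)$ is regular and $w \bullet (\lambda \mid \mu) = \nu$, and vanishes otherwise. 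Because each $\bS_\nu E$ so produced lives in a single cohomological degree, no cancellations are possible, and the resulting complex of free $\rU(\fg^\dagger(E))$-modules has its $i$-th term equal to $\rU(\fg^\dagger(E)) \otimes \bigoplus \bS_\nu E$, indexed by pairs $(\mu, w)$ with $\mu \in Q_{-1}$, $(\lambda \mid \mu)$ regular, and $\tfrac{|\mu|}{2} - \ell(w) = i$.

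Second, I would establish a bijection, analogous to Proposition~\ref{prop:even-spin-bott}, between $\{\mu \in Q_{-1} : (\lambda \mid \mu)\text{ is regular}\}$ and $\{\nu : \tau_{2n+1}(\nu) = \lambda\}$, sending $\mu$ to $\nu = w \bullet (\lambda \mid \mu)$, and satisfying the length identity $\tfrac{|\mu|}{2} = \ell(w) + j_{2n+1}(\nu)$. The argument is an induction on $|\mu|$. The inductive definition of $Q_{-1}$ guarantees that removing the first row and column of a non-empty $\mu \in Q_{-1}$ yields another element of $Q_{-1}$, so the recursive step in the proof of Proposition~\ref{prop:even-spin-bott} goes through after replacing Lemma~\ref{lem:spin-even} by its native odd-rank counterpart in \cite[Lemma 4.13]{lwood} (with border strip length $2\ell(\alpha) - 2n - 2$).

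Combining the two inputs gives the description of $\bF(\lambda)_i$ claimed in the proposition. The Borel--Weil--Bott step is entirely parallel to the even case; the real work is the combinatorial bijection, and specifically verifying that the border-strip contribution to $j_{2n+1}(\nu)$ and the transposition-count contribution to $\ell(w)$ balance correctly under the inductive step. Since the careful bookkeeping for $\tau_{2n+1}$ has essentially been carried out in \cite{lwood}, the adaptation is a translation rather than a new calculation.
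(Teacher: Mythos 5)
Your overall strategy is sound and close in spirit to the paper, but the paper disposes of this proposition differently: it does not redo the Borel--Weil--Bott computation or the bijection at all, it simply cites \cite[Lemma 3.12]{lwood} and explains the dictionary --- namely that $\tau_{2n+1}$ and $j_{2n+1}$ as defined here coincide with $\tau_{2n}$ and $i_{2n}$ of \cite[\S 3.4]{lwood} (see Remark~\ref{rmk:weyl-group-modrule}), and that by \eqref{eqn:wedge-plethysm} the bundle $\bigoplus_{\mu \in Q_{-1}(2i)} \bS_\mu \cR$ is exactly $\lw^i(\lw^2 \cR)$, i.e.\ the $i$-th term of the complex appearing in that lemma. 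Your plan instead re-derives this: push forward summand by summand via Borel--Weil--Bott (your degree bookkeeping $i = |\mu|/2 - \ell(w)$ is correct, since for $\mu \in Q_{-1}$ the homological degree is $|\mu|/2$ rather than $(|\mu|+\rank\mu)/2$), and then prove a $Q_{-1}$-analogue of Proposition~\ref{prop:even-spin-bott}. That is a legitimate, more self-contained route, and the combinatorial statement you need is true.

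The one genuine slip is where you delegate that combinatorial statement. The ``native odd-rank counterpart'' of Lemma~\ref{lem:spin-even} is \emph{not} \cite[Lemma 4.13]{lwood}: that lemma is precisely the one already quoted as Lemma~\ref{lem:spin-even} for the even case here, and it concerns symmetric partitions $\mu = \mu^\dagger$ and border strips of length $2\ell(\alpha)-2n-1$; it cannot simply be invoked ``with border strip length $2\ell(\alpha)-2n-2$.'' The statement you actually need --- for nonempty $\mu \in Q_{-1}$ with $(\lambda \mid \mu)$ regular, removing the first row and column (which deletes $2\mu_1$ boxes, matching the strip length $2\ell(\alpha)-2n-2$) produces $\nu \in Q_{-1}$, the strip $R_\alpha$ exists, $\alpha \setminus R_\alpha = \beta$, and $c(R_\alpha) = \mu_1 + \ell(w') - \ell(w)$, yielding $\ell(w) + j_{2n+1}(\alpha) = |\mu|/2$ --- is the corresponding lemma in the symplectic section of \cite{lwood} (the results feeding into \cite[Lemma 3.12]{lwood}), exactly because of the identification $\tau_{2n+1} = \tau_{2n}^{\mathrm{SSW}}$, $j_{2n+1} = i_{2n}^{\mathrm{SSW}}$. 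So either cite that part of \cite{lwood} (which is what the paper does, wholesale) or prove the $Q_{-1}$ lemma directly; as written, the step resting on \cite[Lemma 4.13]{lwood} does not go through verbatim, though the fix is only a matter of pointing at the right source.
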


\begin{proof}
This follows from \cite[Lemma 3.12]{lwood}, but let us explain the translation. First, $\tau_{2n+1}$ and $j_{2n+1}$, as defined here, are the same as $\tau_{2n}$ and $i_{2n}$, as defined in \cite[\S 3.4]{lwood}. Second, the vector bundle $\bigoplus_{\substack{\mu \in Q_{-1}\\ |\mu|=2i}} \bS_\mu \cR$ used in the definition of $\cK_i$ in \eqref{eqn:spin-odd-Ki} is the same as $\bigwedge^i(\xi)$ as defined in \cite[Lemma 3.12]{lwood} because of \eqref{eqn:wedge-plethysm}.
\end{proof}

\begin{proposition}
$M_\lambda \cong \fM_\lambda = \hom_{\Pin(V)}(V_{\lambda + \delta}, \fM)$.
\end{proposition}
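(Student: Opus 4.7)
My plan is to mirror the argument in the proof of Proposition~\ref{prop:spin-id}, with Proposition~\ref{prop:lwood-spin-odd} playing the role of Proposition~\ref{prop:spin-tor}. First I would observe that both $M_\lambda$ and $\fM_\lambda$ are naturally quotients of $\bS_\lambda(E) \otimes \rU(\fg^\dagger(E))$: for $M_\lambda$ this comes from the identification $\bF(\lambda)_0 = \bS_\lambda(E) \otimes \rU(\fg^\dagger(E))$ read off Proposition~\ref{prop:lwood-spin-odd}, while for $\fM_\lambda$ it follows from Proposition~\ref{prop:mu-minus}(a,b). Write $\pi_\lambda$ for the latter surjection.

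Next I would match characters in $\rK(\GL(E))$. Taking the alternating sum of terms across the resolution $\bF(\lambda)_\bullet$ and using Proposition~\ref{prop:spin-modrule} (which, by Remark~\ref{rmk:weyl-group-modrule}, is valid for both parities of $N$), I expect to deduce that $[M_\lambda]$ and $[\fM_\lambda]$ coincide as formal $\GL(E)$-characters and both have finite multiplicities. Then I would pin down the unique minimal first syzygy: by unwinding Definition~\ref{defn:spin-mod} with $N=2n+1$, the unique $\nu$ with $\tau_{2n+1}(\nu) = \lambda$ and $j_{2n+1}(\nu) = 1$ is $\nu = (\lambda, 1^{N+1-2\ell(\lambda)})$, obtained by appending a single-column border strip of length $2n+2-2\ell(\lambda)$ below $\lambda$. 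Pieri's rule then gives that $\bS_\nu(E)$ occurs with multiplicity exactly one in $\bS_\lambda(E) \otimes \rU(\fg^\dagger(E))$.

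Minimality of $\bF(\lambda)_\bullet$ forces this unique copy to vanish in $M_\lambda$, and the character equality then forces it to vanish in $\fM_\lambda$ as well, so $\pi_\lambda$ kills the corresponding submodule of its source and therefore factors through $M_\lambda$, producing a surjection $M_\lambda \twoheadrightarrow \fM_\lambda$. Since both sides have equal finite $\GL(E)$-characters, this surjection is an isomorphism. The main obstacle I anticipate is the combinatorial step of confirming that the first syzygy is the single irreducible $\bS_{(\lambda, 1^{N+1-2\ell(\lambda)})}(E)$ from Definition~\ref{defn:spin-mod}; once that is established, the Pieri multiplicity check and the character-matching conclusion are essentially the same bookkeeping as in the even case.
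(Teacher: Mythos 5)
Your proposal is correct and is essentially the paper's own argument: the paper proves this proposition by saying it is ``similar to the proof of Proposition~\ref{prop:spin-id},'' and your write-up is exactly that adaptation, with Proposition~\ref{prop:lwood-spin-odd} supplying the terms of $\bF(\lambda)_\bullet$, the character match via Proposition~\ref{prop:spin-modrule}, and the same Pieri/multiplicity-one argument for the unique first syzygy $\bS_{(\lambda,1^{N+1-2\ell(\lambda)})}(E)$ to factor $\pi_\lambda$ through $M_\lambda$. Your identification of that syzygy from Definition~\ref{defn:spin-mod} in the odd case is also correct, so no gap remains.
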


\begin{proof}
Similar to the proof of Proposition~\ref{prop:spin-id}.
\end{proof}

\subsection{Proof of Theorem~\ref{thm:osc-derived} in even case $N=2n$} \label{sec:osc-even}

Define a sheaf of Lie algebras
\[
\fg(E \otimes \cO_X) = \fg(E) \otimes \cO_X = (E \oplus \lw^2(E)) \otimes \cO_X.
\]
It has a Lie subalgebra $\fg(\cR) \subset \fg(E \otimes \cO_X)$ where $\fg(\cR) = \cR \oplus \lw^2(\cR)$ as a vector bundle. If we take the (minimal) Koszul complex for $\rU(\fg(\cR))$ and base change to $\rU(\fg(E \otimes \cO_X))$, then by Proposition~\ref{prop:osc-koszul} we get an acyclic complex $\cK_\bullet$ of $\rU(\fg(E \otimes \cO_X))$-modules with the terms:
\[
\cK_i = \rU(\fg(E \otimes \cO_X)) \otimes \bigoplus_{\substack{\mu = \mu^\dagger\\ 2i = |\mu| + \rank(\mu)}} \bS_\mu \cR.
\]
 
Given a partition $\lambda$ with $\ell(\lambda) \le n$, set 
\[
\cF(\lambda)_\bullet = \cK_\bullet \otimes \bS_\lambda \cQ
\]
and $\cN_\lambda = \rH_0(\cF(\lambda)_\bullet)$. We have a left-exact pushforward functor 
\[
p_\ast \colon \rU(\fg(E \otimes \cO_X))\text{-Mod} \to \rU(\fg(E))\text{-Mod}.
\]
Let $\bF(\lambda)_\bullet = \rR p_* (\cF(\lambda)_\bullet)$. This is a minimal complex of free $\rU(\fg(E))$-modules. Again, it follows from the Borel--Weil--Bott theorem that $\rH_i(\bF(\lambda)_\bullet) = 0$ for $i \ne 0$ and $\rH^0(X; \cN_\lambda) = \rH_0(\bF(\lambda)_\bullet)$ as $\rU(\fg(E))$-modules; we denote this common module by $N_\lambda$.

\begin{proposition} \label{prop:osc-tor}
\[
\bF(\lambda)_i = \rU(\fg(E)) \otimes \bigoplus_{\substack{\nu\\ \tau_{2n}(\nu) = \lambda\\ j_{2n}(\nu) = i}} \bS_\nu E.
\]
\end{proposition}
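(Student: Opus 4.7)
The plan is to run the exact same argument as in Proposition~\ref{prop:spin-tor} (the even spinor case), replacing $\fg^\dagger$ with $\fg$ and referencing Proposition~\ref{prop:osc-koszul} in place of Proposition~\ref{prop:spin-koszul}. The inputs required are already in the paper, so what remains is to check that the combinatorial bookkeeping goes through unchanged.

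First I would write out the terms of $\cF(\lambda)_\bullet$ explicitly. Since $\rU(\fg(E \otimes \cO_X)) = \rU(\fg(E)) \otimes \cO_X$ is a trivial bundle, we have
\[
\cF(\lambda)_i \;=\; \rU(\fg(E)) \otimes \bigoplus_{\substack{\mu = \mu^\dagger\\ 2i = |\mu| + \rank(\mu)}} \bS_\mu \cR \otimes \bS_\lambda \cQ.
\]
Applying $\rR p_*$, the $\rU(\fg(E))$-factor comes out, so it suffices to compute $\rR p_*(\bS_\mu \cR \otimes \bS_\lambda \cQ)$ for each summand and assemble the resulting spectral sequence into a minimal free complex.

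Next I would apply the Borel--Weil--Bott theorem: for each self-conjugate $\mu$, the cohomology $\rH^\bullet(X; \bS_\mu \cR \otimes \bS_\lambda \cQ)$ is nonzero exactly when $(\lambda \mid \mu)$ is regular (i.e.\ $\mu \in S_1(\lambda)$), in which case it equals $\bS_\nu E$ concentrated in a single cohomological degree $\ell(w)$, where $w \in \fS$ is the unique element with $w \bullet (\lambda \mid \mu) = \nu$. Since each vector bundle summand has cohomology in a single degree, the derived pushforward of $\cF(\lambda)_\bullet$ collapses to a complex whose term in homological degree $i - \ell(w)$ picks up $\rU(\fg(E)) \otimes \bS_\nu E$.

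Then I invoke Proposition~\ref{prop:even-spin-bott}, which supplies the bijection $S_1(\lambda) \to S_2(\lambda) = \{\nu : \tau_{2n}(\nu) = \lambda\}$ sending $\mu \mapsto \nu$, together with the key identity
\[
\ell(w) + j_{2n}(\nu) \;=\; \tfrac{1}{2}\bigl(|\mu| + \rank(\mu)\bigr) \;=\; i.
\]
Thus the summand indexed by $\mu$ (sitting in homological degree $i$ of $\cF(\lambda)_\bullet$) contributes $\bS_\nu E$ to $\bF(\lambda)_{j_{2n}(\nu)}$, and the reindexing transforms the sum over $\mu$ into a sum over $\nu \in S_2(\lambda)$ with $j_{2n}(\nu) = i$. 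This gives the claimed formula. Because the pieces land in distinct homological degrees and the complex $\bF(\lambda)_\bullet$ is minimal, no cancellation occurs.

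The only possible obstacle is verifying that Proposition~\ref{prop:even-spin-bott} is applicable verbatim in the oscillator setting. But it is a purely combinatorial statement about self-conjugate partitions and the modification rule $\tau_{2n}, j_{2n}$, and the Koszul complex in Proposition~\ref{prop:osc-koszul} has exactly the same indexing set $\{\mu = \mu^\dagger : 2i = |\mu| + \rank(\mu)\}$ as in the spinor case, so no modification is needed. The proof is thus identical to that of Proposition~\ref{prop:spin-tor}.
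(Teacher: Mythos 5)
Your proposal is correct and follows essentially the same route as the paper: the paper's proof of Proposition~\ref{prop:osc-tor} is exactly the combination of the Borel--Weil--Bott theorem with the combinatorial bijection of Proposition~\ref{prop:even-spin-bott}, which applies verbatim since the Koszul complex of Proposition~\ref{prop:osc-koszul} has the same indexing set $\{\mu = \mu^\dagger : 2i = |\mu| + \rank(\mu)\}$ as in the spinor case. Your additional bookkeeping of the degree shift $j_{2n}(\nu) = i - \ell(w)$ is just an explicit spelling-out of what the paper leaves implicit.
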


\begin{proof}
This follows from Proposition~\ref{prop:even-spin-bott} and the Borel--Weil--Bott theorem.
\end{proof}

\begin{proposition} \label{prop:osc-id}
$N_\lambda \cong \fN_\lambda = \hom_{\Mp(V)}(V_{\lambda + \eta}, \fN)$.
\end{proposition}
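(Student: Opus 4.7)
The plan is to mirror the proof of Proposition~\ref{prop:spin-id} step by step, swapping the spin data $(\fg^\dagger(E), \fM, \Delta, \delta)$ for the oscillator data $(\fg(E), \fN, \nabla, \eta)$.

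First, I would match $\GL(E)$-characters of $N_\lambda$ and $\fN_\lambda$. The resolution gives $[N_\lambda] = \sum_i (-1)^i [\bF(\lambda)_i]$ by Proposition~\ref{prop:osc-tor}, while on the other side Cauchy's identity writes $\fN = \bigoplus_\mu \bS_\mu E \otimes (\bS_\mu V \otimes \nabla)$, so the $\bS_\mu E$-multiplicity of $\fN_\lambda$ equals $\dim\hom_{\Mp(V)}(V_{\lambda+\eta}, \bS_\mu V \otimes \nabla)$. Applying $\Gamma_N$ to the injective resolution~\eqref{eqn:osc-res} of $\nabla_\mu$ and using Proposition~\ref{prop:osc-modrule}, one obtains signed identities between these multiplicities and the $\tau_N/j_N$ data; tensoring by $[\bS_\mu E]$, summing over $\mu$, and cancelling the Koszul complex $\ol{\bK}(\fg(E))_\bullet$ of Proposition~\ref{prop:osc-koszul} against $[\rU(\fg(E))]$ yields $[\fN_\lambda] = [N_\lambda]$. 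Finite multiplicities follow from the inclusion $\fN \subset \Sym(E \otimes V) \otimes \nabla$.

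Second, by Proposition~\ref{prop:fN-minus}(a)--(b), passing to the $V_{\lambda + \eta}$-isotypic component of the multiplication map gives a surjection
\[
\pi_\lambda \colon \bS_\lambda(E) \otimes \rU(\fg(E)) \twoheadrightarrow \fN_\lambda.
\]
A short analysis of the border-strip modification rule (as in the spin case) shows that the unique $\nu$ with $j_N(\nu) = 1$ and $\tau_N(\nu) = \lambda$ is $\nu = (\lambda, 1^{N+1-2\ell(\lambda)})$: since $\ell(\nu) > n \ge \ell(\lambda)$, the bottom of the one-column border strip must land in a column that meets only empty rows of $\lambda$, forcing $c = 1$; the length constraint $L = 2\ell(\nu)-N-1$ and the requirement $\nu \setminus R_\nu = \lambda$ then pin down $r = \ell(\lambda)+1$. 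Thus $\bF(\lambda)_1 = \bS_\nu(E) \otimes \rU(\fg(E))$. By Pieri's rule applied to the decomposition $\rU(\fg(E)) = \bigoplus_\alpha \bS_\alpha E$, the irreducible $\bS_\nu E$ appears with multiplicity exactly one in $\bS_\lambda(E) \otimes \rU(\fg(E))$; by minimality of the resolution it is killed in $N_\lambda$, and by Step~1 also in $\fN_\lambda$. Hence $\pi_\lambda(\bS_\nu E) = 0$, and since $\ker \pi_\lambda$ is a $\rU(\fg(E))$-submodule it contains the entire $\rU(\fg(E))$-submodule generated by this copy of $\bS_\nu E$, which is precisely the image of $\bF(\lambda)_1$. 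Therefore $\pi_\lambda$ factors through a surjection $N_\lambda \twoheadrightarrow \fN_\lambda$, which is an isomorphism by the character match.

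The main obstacle will be Step~1, the character matching. Although conceptually a Möbius-type inversion in $\rK$-theory relating multiplicities in $\Rep(\GL(E))$ to multiplicities in $\Rep(\Mp(V))$, carefully book-keeping the injective resolution against the Koszul resolution requires care, particularly handling the metaplectic cover when $N$ is even.
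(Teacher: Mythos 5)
Your proposal is correct and follows essentially the same route as the paper, whose proof of this proposition simply transposes the argument of Proposition~\ref{prop:spin-id}: match finite $\GL(E)$-multiplicities of $N_\lambda$ and $\fN_\lambda$ via Propositions~\ref{prop:osc-modrule} and~\ref{prop:osc-tor}, use the surjection from Proposition~\ref{prop:fN-minus} together with the minimal presentation whose first term is $\bS_{(\lambda,1^{N+1-2\ell(\lambda)})}(E)\otimes\rU(\fg(E))$, and conclude by the Pieri multiplicity-one observation that $\pi_\lambda$ factors through $N_\lambda$. Your added details (the explicit character bookkeeping and the verification that $(\lambda,1^{N+1-2\ell(\lambda)})$ is the unique $\nu$ with $j_N(\nu)=1$, $\tau_N(\nu)=\lambda$) are accurate elaborations of steps the paper leaves implicit.
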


\begin{proof}
The proof is similar to the proof of Proposition~\ref{prop:spin-id}.
\end{proof}

\subsection{Proof of Theorem~\ref{thm:osc-derived} in odd case $N=2n+1$} \label{sec:osc-odd}

Define a sheaf of Lie algebras
\[
\fg(E \otimes \cO_X) = \fg(E) \otimes \cO_X = (E \oplus \lw^2(E)) \otimes \cO_X.
\]
Similarly, define the sheaf of Lie algebras $\fg(\cR) = \cR \oplus \lw^2(\cR)$. We have a surjection $\rU(\fg(\cR)) \to \Sym(\cR)$ and $\Tor^{\rU(\fg(\cR))}_i(\Sym(\cR), \cO_X) \cong \lw^i(\lw^2(\cR))$ by Lemma~\ref{lem:torcalc}. This corresponds to a locally free resolution of $\Sym(\cR)$ over $\rU(\fg(\cR))$, and if we base change it to $\rU(\fg(E \otimes \cO_X))$ we get an acyclic locally free complex $\cK_\bullet$ whose terms are:
\begin{align*} 
\cK_i = \rU(\fg(E \otimes \cO_X)) \otimes \bigoplus_{\substack{\mu \in Q_{-1}\\ |\mu| = 2i}} \bS_\mu \cR.
\end{align*}

Given a partition $\lambda$ with $\ell(\lambda) \le n$, set 
\[
\cF(\lambda)_\bullet = \cK_\bullet \otimes \bS_\lambda \cQ
\]
and $\cM_\lambda = \rH_0(\cF(\lambda)_\bullet)$. We have a left-exact pushforward functor 
\[
p_\ast \colon \rU(\fg(E \otimes \cO_X))\text{-Mod} \to \rU(\fg(E))\text{-Mod}.
\]
Let $\bF(\lambda)_\bullet = \rR p_* (\cF(\lambda)_\bullet)$. This is a minimal complex of free $\rU(\fg(E))$-modules. Again, from the Borel--Weil--Bott theorem, we have $\rH_i(\bF(\lambda)_\bullet) = 0$ for $i \ne 0$ and $\rH^0(X; \cN_\lambda) = \rH_0(\bF(\lambda)_\bullet)$ as $\rU(\fg(E))$-modules; we denote this common module by $N_\lambda$.

\begin{proposition}
\[
\bF(\lambda)_i = \rU(\fg(E)) \otimes \bigoplus_{\substack{\nu\\ \tau_{2n+1}(\nu) = \lambda\\ j_{2n+1}(\nu) = i}} \bS_\nu E.
\]
\end{proposition}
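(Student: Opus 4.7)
The plan is to run the argument that proved Proposition~\ref{prop:lwood-spin-odd} almost verbatim, with $\fg^\dagger(E)$ replaced by $\fg(E)$. First I would unwind the definition of $\bF(\lambda)_\bullet = \rR p_*(\cF(\lambda)_\bullet)$. Because $\rU(\fg(E \otimes \cO_X)) = \rU(\fg(E)) \otimes \cO_X$ is a trivial bundle on $X$, the pushforward commutes with that tensor factor, so for each homological degree $i$ it suffices to compute
\[
\rR p_*\!\left( \bigoplus_{\substack{\mu \in Q_{-1}\\ |\mu| = 2i}} \bS_\mu \cR \otimes \bS_\lambda \cQ \right).
\]

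Next I would apply the Borel--Weil--Bott theorem to each summand. For each $\mu$ such that $(\lambda \mid \mu)$ is regular, write $w \bullet (\lambda \mid \mu) = \alpha$; the summand contributes a single copy of $\bS_\alpha E$ in cohomological degree $\ell(w)$, while irregular $(\lambda \mid \mu)$ contribute nothing. The hypercohomology spectral sequence degenerates (because $\cF(\lambda)_\bullet$ is a locally free resolution of $\cN_\lambda$, which has no higher cohomology, by the same Borel--Weil--Bott analysis already used), and combined with the minimality of $\bF(\lambda)_\bullet$ this forces each such contribution into homological position $i - \ell(w) = |\mu|/2 - \ell(w)$ of $\bF(\lambda)_\bullet$, with no cancellations.

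It then remains to match the resulting indexing set with $\{\nu : \tau_{2n+1}(\nu) = \lambda,\ j_{2n+1}(\nu) = k\}$. For this I would invoke the combinatorial bijection of \cite[Lemma 3.12]{lwood}, exactly as in the proof of Proposition~\ref{prop:lwood-spin-odd}: pairs $(\mu, w)$ with $\mu \in Q_{-1}$, $(\lambda \mid \mu)$ regular, and $w \bullet (\lambda \mid \mu) = \alpha$, subject to $|\mu|/2 - \ell(w) = k$, correspond bijectively to partitions $\nu$ satisfying $\tau_{2n+1}(\nu) = \lambda$ and $j_{2n+1}(\nu) = k$, under $\alpha = \nu$. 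The translation of conventions between our $\tau_{2n+1}, j_{2n+1}$ and the $\tau_{2n}, i_{2n}$ of \cite{lwood} is the one already spelled out in the proof of Proposition~\ref{prop:lwood-spin-odd}.

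There is no real obstacle beyond what has been handled elsewhere. The combinatorial bookkeeping depends only on $\mu$, $\lambda$, and the modified $\fS$-action on $\cU$, and not at all on which Lie (super)algebra sits behind the setup, so the bijection used in the spinor odd case transfers without change. The only oscillator-specific input is the shape of $\cK_i$, which was pinned down by Lemma~\ref{lem:torcalc} together with the plethysm $\lw^i(\lw^2 E) = \bigoplus_{\mu \in Q_{-1},\ |\mu|=2i} \bS_\mu E$ from \eqref{eqn:wedge-plethysm}; from that point on the proof is identical to the spinor odd case.
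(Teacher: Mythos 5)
Your proposal is correct and follows essentially the same route as the paper, whose proof simply says the argument is identical to that of Proposition~\ref{prop:lwood-spin-odd}: cite \cite[Lemma 3.12]{lwood} (Borel--Weil--Bott plus the border-strip combinatorics) after noting the translation of $\tau_{2n+1}, j_{2n+1}$ to the conventions of \cite{lwood} and that the bundle in $\cK_i$ agrees with $\bigwedge^i(\xi)$ there via Lemma~\ref{lem:torcalc} and \eqref{eqn:wedge-plethysm}. Your extra discussion of the hypercohomology bookkeeping and minimality just makes explicit what the paper leaves implicit in the geometric technique.
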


\begin{proof}
This is the same as the proof of Proposition~\ref{prop:lwood-spin-odd}. 
\end{proof}

\begin{proposition}
$N_\lambda \cong \fN_\lambda = \hom_{\Mp(V)}(V_{\lambda + \eta}, \fN)$.
\end{proposition}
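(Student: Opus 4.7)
The plan is to adapt the argument from Proposition~\ref{prop:osc-id} (the even oscillator case), which itself mirrors Proposition~\ref{prop:spin-id}; all of the inputs have direct odd-dimensional analogs already in place. Specifically, Proposition~\ref{prop:fN-minus}(a) supplies a canonical surjection $\pi_\lambda \colon \bS_\lambda(E) \otimes \rU(\fg(E)) \to \fN_\lambda$ coming from the multiplication map $m$; the preceding proposition identifies $\bF(\lambda)_\bullet$ as a minimal free resolution of $N_\lambda$ whose zeroth term is $\bS_\lambda(E) \otimes \rU(\fg(E))$; and Proposition~\ref{prop:osc-modrule} together with the $\Mp(V)$-isotypic decomposition in Proposition~\ref{prop:fN-minus}(c) will control both $\GL(E)$-characters.

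First I would establish that $N_\lambda$ and $\fN_\lambda$ have the same $\GL(E)$-character with finite multiplicities. Since $\bF(\lambda)_\bullet$ resolves $N_\lambda$, the character is $[N_\lambda] = \sum_i (-1)^i [\bF(\lambda)_i]$. On the other side, $[\fN_\lambda]$ is the coefficient of $V_{\lambda+\eta}$ in the $\Mp(V)$-decomposition of $\fN = \Sym(E \otimes U) \otimes \Sym(W)$; using the Cauchy identity $\Sym(E \otimes U) = \bigoplus_\mu \bS_\mu(E) \otimes \bS_\mu(U)$ together with the Euler characteristic form of the modification rule (Proposition~\ref{prop:osc-modrule}) applied via the injective resolution \eqref{eqn:osc-res}, one obtains the same alternating sum indexed by $\nu$ with $\tau_{2n+1}(\nu) = \lambda$.

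Next, the surjection $\pi_\lambda$ together with the description of $\bF(\lambda)_1$ forces $N_\lambda \cong \fN_\lambda$. The first-syzygy summands of $\bF(\lambda)_\bullet$ are the Schur functors $\bS_\nu(E)$ for $\nu$ with $j_{2n+1}(\nu) = 1$ and $\tau_{2n+1}(\nu) = \lambda$; each such $\nu$ is obtained from $\lambda$ by adjoining a vertical border strip of length $k = 2\ell(\nu) - 2n - 2$. Using $\rU(\fg(E)) \cong \bigoplus_\mu \bS_\mu(E)$ (Poincar\'e--Birkhoff--Witt) and Pieri's rule, the only Littlewood--Richardson factorization $c^\nu_{\lambda,\mu}$ contributing to such a $\nu$ is $\mu = 1^k$ with coefficient one, so each $\bS_\nu(E)$ appears with multiplicity exactly one in $\bS_\lambda(E) \otimes \rU(\fg(E))$. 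Since $\bS_\nu(E)$ does not appear in $N_\lambda$ by minimality of $\bF(\lambda)_\bullet$, character equality forces it to not appear in $\fN_\lambda$ either. Hence $\pi_\lambda$ factors through a surjection $N_\lambda \to \fN_\lambda$, which must be an isomorphism by the character equality.

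The main obstacle is the character-matching step, namely identifying the alternating sum $\sum_i (-1)^i [\bF(\lambda)_i]$ (governed by the odd border-strip rule of Definition~\ref{defn:spin-mod}) with the coefficient of $V_{\lambda+\eta}$ in $[\fN]$. In the even-dimensional arguments this is facilitated by a direct separation of variables when $2\dim E \le \dim V$ (compare Proposition~\ref{prop:spin-sep-vars}); for the odd oscillator case one needs the corresponding statement for $\fN$, which can be obtained either by applying transpose duality (\S\ref{ss:transpose}) to the spinor calculation or, equivalently, via oscillator-spin duality from \S\ref{sec:oscspin-dual}.
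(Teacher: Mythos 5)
Your proposal is correct and is essentially the paper's argument: the paper proves this proposition by exactly the adaptation of Proposition~\ref{prop:spin-id} that you describe (character equality via Proposition~\ref{prop:osc-modrule} together with the Bott computation of $\bF(\lambda)_\bullet$, the surjection coming from Proposition~\ref{prop:fN-minus}, and the multiplicity-one Pieri argument on the first syzygy, forcing $\pi_\lambda$ to factor through an isomorphism $N_\lambda \to \fN_\lambda$). The separation-of-variables input you flag at the end is indeed available exactly as you suggest, via transpose duality from Proposition~\ref{prop:spin-sep-vars} (it is what underlies the expansion of $[\bS_\lambda(\bV) \otimes \nabla]$ into the classes $[\nabla_\beta]$), so this is not a gap.
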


\begin{proof}
Similar to the proof of Proposition~\ref{prop:spin-id}.
\end{proof}


\begin{thebibliography}{CKW}

\bibitem[CKW]{ckw} Shun-Jen Cheng, Jae-Hoon Kwon, Weiqiang Wang, Kostant homology formulas for oscillator modules of Lie superalgebras, {\it Adv. Math.} {\bf 224} (2010), no.~4, 1548--1588, \arxiv{0901.0247v2}.

\bibitem[Das]{daszkiewicz} Andrzej Daszkiewicz, On the invariant ideals of the symmetric algebra $S_.(V \oplus \wedge^2 V)$, {\it J. Algebra} {\bf 125} (1989), no.~2, 444--473.

\bibitem[EHP]{ehp} Thomas~J. Enright, Markus Hunziker, W.~Andrew Pruett, Diagrams of Hermitian type, highest weight modules, and syzygies of determinantal varieties, {\it Symmetry: Representation Theory and Its Applications}, Progress in Mathematics {\bf 257}, 121--184, 2014.

\bibitem[FH]{fultonharris} William Fulton, Joe Harris, {\it Representation Theory: A First Course}, Graduate Texts in Mathematics {\bf 129}, Springer-Verlag, New York, 1991.

\bibitem[Goo]{goodman} Roe Goodman, Multiplicity-free spaces and Schur-Weyl-Howe duality, {\it Representations of real and p-adic groups}, 305--415, Lect. Notes Ser. Inst. Math. Sci. Natl. Univ. Singap., 2, Singapore Univ. Press, Singapore, 2004.

\bibitem[JW]{reducedkoszul} Tadeusz J\'ozefiak, Jerzy Weyman, Representation-theoretic interpretation of a formula of D. E. Littlewood, {\it Math. Proc. Cambridge Philos. Soc.} {\bf 103} (1988), no.~2, 193--196.

\bibitem[Kl]{kleber} Michael Kleber, Embeddings of Schur functions into types B/C/D, {\it J. Algebra} {\bf 247} (2002), 452--466.

\bibitem[Ko1]{koike-diff} Kazuhiko Koike, Representations of spinor groups and the difference characters of ${\rm SO}(2n)$, {\it Adv. Math.} {\bf 128} (1997), no.~1, 40--81.

\bibitem[Ko2]{koike-spin} Kazuhiko Koike, Spin representations and centralizer algebras for the spinor groups, \arxiv{math/0502397v1}.

\bibitem[KT]{koiketerada} Kazuhiko Koike, Itaru Terada, Young-diagrammatic methods for the representation theory of the classical groups of type $B_n$, $C_n$, $D_n$, {\it J. Algebra} {\bf 107} (1987), no.~2, 466--511.

\bibitem[Mac]{macdonald} I.~G.~Macdonald, {\it Symmetric Functions and Hall Polynomials}, second edition, Oxford Mathematical Monographs, Oxford, 1995.

\bibitem[NSS]{deg2tca} Rohit Nagpal, Steven V Sam, Andrew Snowden, Noetherianity of some degree two twisted commutative algebras, {\it Selecta Math. (N.S.)} {\bf 22} (2016), no.~2, 913--937, \arxiv{1501.06925v2}. 

\bibitem[PSe]{penkovserganova} Ivan Penkov, Vera Serganova, Categories of integrable $sl(\infty)$-, $o(\infty)$-, $sp(\infty)$-modules, {\it Representation Theory and Mathematical Physics}, Contemp. Math. {\bf 557}, AMS 2011, pp. 335--357, \arxiv{1006.2749v1}.

\bibitem[PSt]{penkovstyrkas} Ivan Penkov, Konstantin Styrkas, Tensor representations of classical locally finite Lie algebras, {\it Developments and trends in infinite-dimensional Lie theory}, Progr. Math. {\bf 288}, Birkh\"auser Boston, Inc., Boston, MA, 2011, pp. 127--150, \arxiv{0709.1525v1}.

\bibitem[Sam]{heisenberg} Steven~V Sam, Homology of analogues of Heisenberg Lie algebras, {\it Math. Res. Lett.} {\bf 22} (2015), no.~4, 1223--1241, \arxiv{1307.1901v2}.

\bibitem[SS1]{expos} Steven~V Sam, Andrew Snowden, Introduction to twisted commutative algebras, \arxiv{1209.5122v1}.

\bibitem[SS2]{infrank} Steven~V Sam, Andrew Snowden, Stability patterns in representation theory, {\it Forum Math. Sigma} {\bf 3} (2015), e11, 108 pp., \arxiv{1302.5859v2}.

\bibitem[SSW]{lwood} Steven~V Sam, Andrew Snowden, Jerzy Weyman, Homology of Littlewood complexes, {\it Selecta Math. (N.S.)} {\bf 19} (2013), no.~3, 655--698, \arxiv{1209.3509v2}.

\bibitem[SW]{exceptional} Steven~V Sam, Jerzy Weyman, Littlewood complexes and analogues of determinantal varieties, {\it Int. Math. Res. Not. IMRN} (2015), no.~13, 4663--4707, \arxiv{1303.0546v3}.

\bibitem[SZ]{shimozono} Mark Shimozono, Mike Zabrocki, Deformed universal characters for classical and affine algebras, {\it J. Algebra} {\bf 299} (2006), no.~1, 33--61, \arxiv{math/0404288v1}.

\bibitem[Sig]{sigg} Stefan Sigg, Laplacian and homology of free two-step nilpotent Lie algebras, {\it J. Algebra} {\bf 185} (1996), no.~1, 144--161.

\bibitem[Wey]{weyman} Jerzy Weyman, {\it Cohomology of Vector Bundles and Syzygies}, Cambridge University Press, Cambridge, 2003.

\end{thebibliography}
\end{document}